\DeclareMathAlphabet{\mathpzc}{OT1}{pzc}{m}{it}
\newtheorem{thm}[equation]{Theorem}
\newtheorem{theorem}[equation]{Theorem}
\newtheorem{Rmk}[equation]{Remark}\newtheorem{remark}[equation]{Remark}
\newtheorem{Prop}[equation]{Proposition}
\newtheorem{prop}[equation]{Proposition}\newtheorem{proposition}[equation]{Proposition}
\newtheorem{cor}[equation]{Corollary}
\newtheorem{Coro}[equation]{Corollary}
\newtheorem{lem}[equation]{Lemma}\newtheorem{Lem}[equation]{Lemma}
\newtheorem{lemma}[equation]{Lemma}
\newtheorem{Def}[equation]{Definition}
\newtheorem{definition}[equation]{Definition}
\numberwithin{equation}{section}
\numberwithin{equation}{section}
\newcommand{\be}{begin{equation}}
\newcommand{\e}{\epsilon}
\newcommand{\z}{\mathbb{Z}}
\newcommand{\N}{\mathbb{N}}
\newcommand{\br}{\mathbb{R}}
\newcommand{{\grinv}}{{\Cal G}^{-r}}
\newcommand{\ba}{\backslash}
\newcommand{\G}{\Gamma}
\newcommand{\Cal}{\mathcal}
\newcommand{\la}{\lambda}
\newcommand{\ra}{\rangle}
\newcommand{\bp}{\begin{pmatrix}}
\newcommand{\ep}{\end{pmatrix}}
\renewcommand{\be}{\begin{equation}}
\newcommand{\ee}{\end{equation}}
\renewcommand{\bp}{{\rm bp}}
\renewcommand{\O}{\operatorname{O}}
\renewcommand{\L}{\Cal L}
\newcommand{\PS}{\op{PS}}
\newcommand{\norm}[1]{\lVert #1 \rVert}
\newcommand{\abs}[1]{\lvert #1 \rvert}
\newcommand{\op}{\operatorname}\newcommand{\supp}{\operatorname{supp}}
\newcommand{\BR}{\operatorname{BR}}
\newcommand{\BMS}{\operatorname{BMS}}
\newcommand{\Om}{\Omega}
\newcommand{\ga}{\gamma}
\newcommand{\F}{\mathcal F}
\renewcommand{\i}{\op{i}}
\def\scrC{{\mathcal C}}
\def\e{\mathrm{e}}
\def\i{\mathrm{i}}
\def\O{\operatorname{O{}}}
\def\supp{\operatorname{supp}}
\newcommand{\La}{\Lambda}
\newcommand{\fg}{\mathfrak{g}}
\newcommand{\fp}{\mathfrak{p}}
\newcommand{\fa}{\mathfrak{a}}
\newcommand{\fk}{\mathfrak{k}}
\newcommand{\Ga}{\Gamma}\newcommand{\bb}{\mathbb}
\newcommand{\cal}{\mathcal}
\renewcommand{\e}{\varepsilon}
\renewcommand{\epsilon}{\e}
\newcommand{\dg}{D_\Gamma^{\star}}
\newcommand{\inte}{\op{int}}
\begin{document}

\title[Invariant measures for horospherical actions]{
Invariant measures for horospherical actions and Anosov groups.}

\author{Minju Lee}
\address{Mathematics department, Yale university, New Haven, CT 06520, Current address: Institute for Advanced Study, Princeton, NJ 08540}
\email{minju@ias.edu}
\author{Hee Oh}
\address{Mathematics department, Yale university, New Haven, CT 06520 and Korea Institute for Advanced Study}
\email{hee.oh@yale.edu}
\begin{abstract} Let $\Gamma$ be a Zariski dense Anosov subgroup of a connected semisimple real algebraic group  $G$.
For a maximal horospherical subgroup $N$ of $G$,
we show that the space of all non-trivial $NM$-invariant ergodic and $A$-quasi-invariant
Radon measures  on $\Ga\ba G$, up to proportionality,
 is  homeomorphic to $\br^{\text{rank}\,G-1}$, where
  $A$ is a maximal real split torus and $M$ is a maximal compact subgroup that normalizes $N$.
 One of the main ingredients is to establish the $NM$-ergodicity of all Burger-Roblin measures.
\end{abstract}

\thanks{Oh is supported in part by NSF grants}
\maketitle

\tableofcontents

\section{Introduction}
Let $G$ be a connected semisimple real algebraic group and $\Gamma<G$ a Zariski dense discrete subgroup. A subgroup $N$ of $G$ is called {\it horospherical} if there exists a diagonalizable element $a\in G$ such that
$$N=\{g\in G: a^k g a^{-k}\to \infty\quad \text{as $k\to +\infty$}\},$$
or equivalently, $N$ is the unipotent radical of a parabolic subgroup of $G$.
We assume that $N$ is a maximal horospherical subgroup, which exists uniquely up to conjugation.
We are interested in the measure rigidity property of the $N$-action on the homogeneous space
$\Gamma\ba G$. When $\Gamma$ is a lattice, i,e., when $\Gamma\ba G$ has finite volume, the well-known measure rigidity theorems of Furstenberg \cite{Fu}, Veech \cite{Vee} and Dani \cite{Da}
give a complete classification of Radon measures (=locally finite Borel measures) invariant by $N$.
 This rigidity phenomenon extends to any unipotent subgroup action by the celebrated theorem of Ratner in \cite{Ra}.

When $G$ has rank one and $\Gamma$ is geometrically finite,
 the horospherical subgroup 
action on $\Gamma\ba G$ 
is known to be essentially uniquely ergodic; there exists a  unique non-trivial
invariant ergodic Radon measure  on $\Gamma\ba G$, called the Burger-Roblin measure (\cite{Bu}, \cite{Ro}, \cite{Win}). When $\Gamma$ is geometrically infinite, there may be a continuous family of horospherically invariant ergodic measures
as first discovered by Babillot and Ledrappier (\cite{Bab}, \cite{BL}). For a certain class of geometrically infinite groups,
a complete classification of horospherically invariant ergodic measures has been obtained; see \cite{Sa1}, \cite{Sa2}, \cite{Led}, \cite{OP}, \cite{LL}, \cite{L}, etc. We refer to a recent article by Landesberg and Lindenstrauss \cite{LL} for a more precise description on the rank one case.

 When $G$ has rank at least $2$ and $\Gamma$ has infinite co-volume in $G$, very little is known
 about invariant measures. In this paper, we focus on a special class of discrete subgroups, called {\it  Anosov} subgroups.
In the rank one case, this class coincides with the class of  convex cocompact subgroups, and hence
the class of Anosov subgroups can be considered
as  a generalization of convex cocompact subgroups of  rank one  groups to higher rank. The works of Burger \cite{Bu2} and Quint \cite{Quint2} on a higher rank version of the Patterson-Sullivan theory supply  a continuous family of maximal horospherically 
invariant Burger-Roblin measures, as was introduced in \cite{ELO}.
We show that all of these Burger-Roblin measures are ergodic for maximal horospherical foliations, and classify all ergodic {\it non-trivial} Radon measures for maximal horospherical foliations, which are also quasi-invariant under Weyl chamber flow. In particular, we establish a homeomorphism
 between the space of
these measures and the interior of the projective limit cone of $\Gamma$, which is again homeomorphic to
$\br^{\text{rank }G-1}$.

In order to formulate our main result precisely, we begin with the definition of an Anosov subgroup of $G$.
 Let $P$ be the normalizer of $N$, i.e.,
 a minimal parabolic subgroup of $G$ and $\cal F:=G/P$ the Furstenberg boundary.
We denote by $\cal F^{(2)}$ the unique open $G$-orbit in $\cal F\times \cal F$. 
A Zariski dense discrete subgroup $\Ga<G$ is called an {\it Anosov subgroup} (with respect to $P$) if it is a  finitely generated word hyperbolic group which admits a $\Ga$-equivariant embedding $\zeta$ of the Gromov boundary $\partial \Ga$  into $\cal F$ such that $(\zeta(x),\zeta(y))\in\cal F^{(2)}$ for all $x\ne y$ in $\partial\Ga$. 

First introduced by Labourie \cite{La} as the images of
 Hitchin representations of surface groups (\cite{Hi}, \cite{FG}), this definition  is due to Guichard and Wienhard \cite{GW}, who showed that
 Anosov subgroups (more precisely, Anosov representations) form an open subset in the representation variety $\op{Hom}(\Gamma, G)$. The class of Anosov groups include Schottky subgroups \cite{Q4} and hence any
 Zariski dense discrete subgroup of $G$ contains an Anosov subgroup (\cite{Ben}, \cite{Q5}).
We also refer to the work of Kapovich, Leeb and Porti \cite{KLP} for other equivalent characterizations of Anosov groups, as well as to
the
excellent survey articles by Kassel \cite{Kas} and Wienhard \cite{Wie} on higher Teichm\"uller theory.

We let $P=NMA$ be the Langlands decomposition of $P$, so that $N$ is the unipotent radical of $P$, 
$A$ is a maximal real split torus of $G$, and $M$
is a compact subgroup that commutes with $A$. Note that any maximal horospherical subgroup arises in this way, i.e., as the unipotent radical of a minimal parabolic subgroup.

 The limit set $\La$ of $\Gamma$ is the unique minimal $\Gamma$-invariant closed subset of $\cal F$.
Hence the following set
$$\mathcal E:=\{[g] \in \Gamma\ba G:  gP \in \Lambda \}$$
is the unique minimal $P$-invariant closed subset of $\Ga\ba G$.
We call a $P$-quasi-invariant measure on $\Ga\ba G$ {\it non-trivial} if its support is contained in $\mathcal E$.
\begin{thm} \label{mmm} For any Anosov subgroup $\Gamma<G$,
the space $\cal Q_\Gamma$ of all non-trivial $NM$-invariant ergodic and $A$-quasi-invariant
 Radon measures on $\Ga\ba G$, up to constant multiples, is homeomorphic to
 $ \br^{\text{rank}\,G-1}$. 
\end{thm}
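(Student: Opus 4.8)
The plan is to set up a bijection between $\mathcal Q_\Gamma$ and the interior $\inte\mathscr L_\Gamma$ of the projective limit cone, which is itself homeomorphic to $\br^{\rank G-1}$, and then to upgrade this bijection to a homeomorphism. I would organize the argument in three main stages: construction of a map $\mathcal Q_\Gamma \to \inte\mathscr L_\Gamma$ (recording the ``growth rate'' of an $A$-quasi-invariant measure along the Weyl chamber flow), construction of the inverse via Burger--Roblin measures, and a topological comparison of the two parametrizations.

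\smallskip
\textbf{Step 1: Attaching a linear functional to a measure.} Given $\mu\in\mathcal Q_\Gamma$, since $\mu$ is $A$-quasi-invariant the Radon--Nikodym cocycle $a\mapsto \frac{d a_*\mu}{d\mu}$ is an $A\cong\br^{\rank G}$-valued cocycle over the $NM$-ergodic system $(\Ga\ba G,\mu)$. I would first show, using $NM$-ergodicity together with the fact that $A$ normalizes $NM$, that this cocycle is cohomologous to a homomorphism $A\to\br^{\ast}$, i.e.\ there is a linear functional $\psi=\psi_\mu\in\mathfrak a^\ast$ with $\frac{d a_*\mu}{d\mu}=e^{\psi(\log a)}$ after renormalizing $\mu$ within its conformal class; this is the step where $NM$-ergodicity of \emph{all} Burger--Roblin measures (flagged in the abstract as a main ingredient) is used. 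The key point is that the $NM$-ergodic decomposition of Lebesgue/Haar-like reference measures must be understood well enough to identify the essential range of the cocycle. One then shows that $\psi_\mu$, when restricted to the directions visited by the flow on $\supp\mu\subseteq\mathcal E$, must lie in (the dual description of) $\inte\mathscr L_\Gamma$ — here the Anosov property, which forces the limit cone to have nonempty interior and forces the relevant dynamics to be ``compact-like'' transverse to $N$, is essential.

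\smallskip
\textbf{Step 2: The inverse map via Burger--Roblin measures.} For each $\psi\in\inte\mathscr L_\Gamma$ one has the $(\Gamma,\psi)$-Patterson--Sullivan density on $\cal F$ (Quint, Burger) and the associated Burger--Roblin measure $m_\psi^{\BR}$ on $\Ga\ba G$, which is $N$-invariant, $M$-invariant (after the standard averaging over $M$, which is legitimate since $M$ normalizes $N$) and $A$-quasi-invariant with cocycle $e^{\psi}$. The heart of this step is proving that $m_\psi^{\BR}$ is \emph{$NM$-ergodic} for every $\psi\in\inte\mathscr L_\Gamma$: the natural approach is a Hopf-type argument, using the product structure of $\cal F^{(2)}\times A$ provided by the Anosov hypothesis, the local mixing of the Weyl chamber flow with respect to the Bowen--Margulis--Sullivan measure $m_\psi^{\BMS}$, and an exhaustion/Lebesgue-density argument to propagate invariance of an $NM$-invariant set to a full-measure or null set. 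Then one must prove the classification: any $\mu\in\mathcal Q_\Gamma$ is proportional to $m_{\psi_\mu}^{\BR}$. For this I would pass to the space of $N$-ergodic components, use Step 1 to reduce to a fixed cocycle $\psi$, then run a transverse-divergence / shadowing argument (again exploiting that $\Gamma$ is Anosov so the limit set is a nice equivariant copy of $\partial\Gamma$ and the relevant conditional measures on $N$-orbits are forced to be the Patterson--Sullivan conditionals by a maximal-ergodic / ratio-ergodic theorem along $A$). This uniqueness-within-a-cocycle-class is the step I expect to be the main obstacle, since it is exactly the higher-rank analogue of the rank-one unique ergodicity statement and requires genuinely new input beyond mixing — presumably an inductive/structural use of the $NM$-ergodicity of the $m_\psi^{\BR}$ together with some form of the maximal ergodic theorem and the geometry of $\mathcal E$.

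\smallskip
\textbf{Step 3: Homeomorphism.} Having established the bijection $\mathcal Q_\Gamma\leftrightarrow\inte\mathscr L_\Gamma$, $[\mu]\mapsto\psi_\mu$, with inverse $\psi\mapsto[m_\psi^{\BR}]$, it remains to check both maps are continuous for the weak-$*$ topology on $\mathcal Q_\Gamma$ (on the projective space of Radon measures) and the Euclidean topology on $\inte\mathscr L_\Gamma\cong\br^{\rank G-1}$. Continuity of $\psi\mapsto[m_\psi^{\BR}]$ follows from continuity of the Patterson--Sullivan construction in $\psi$ (real-analyticity of the relevant pressure/growth data for Anosov groups, as in Quint's work), and continuity of $[\mu]\mapsto\psi_\mu$ follows because $\psi_\mu$ is read off from the Radon--Nikodym derivatives $\frac{d a_*\mu}{d\mu}$, which vary continuously under weak-$*$ convergence once one knows the limit is again non-trivial — non-triviality in the limit is guaranteed because $\mathcal E$ is closed and all measures are supported there. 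Finally, invoking that $\inte\mathscr L_\Gamma$ is a nonempty open convex cone modulo scaling, hence its projectivization is homeomorphic to $\br^{\rank G-1}$, completes the proof.
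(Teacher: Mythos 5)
Your skeleton (a bijection $\psi\mapsto[m_\psi^{\BR}]$ between a space of linear forms and $\cal Q_\Ga$, upgraded to a homeomorphism) matches the paper, but the step you dispose of in one sentence is precisely the one that carries all the difficulty, and your proposed mechanism for it would not work as stated. You assert the $NM$-ergodicity of $m_\psi^{\BR}$ via ``a Hopf-type argument using local mixing of the Weyl chamber flow with respect to $m_\psi^{\BMS}$ and an exhaustion/Lebesgue-density argument.'' In rank $\geq 2$ the measure $m_\psi^{\BMS}$ is infinite, only its $AM$-ergodicity is available (Corollary \ref{AMerg}), and no mixing statement strong enough to run a Hopf argument transverse to $N$ for an arbitrary $\psi\in\dg$ is on the table; this is exactly why the paper takes a different route. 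There, $NM$-ergodicity of $m_\psi^{\BR}$ is translated by duality into $\Ga$-ergodicity of $\widehat\nu_\psi$ on $G/NM\simeq\cal F\times\fa$, and a higher-rank version of the Schmidt--Roblin criterion (Proposition \ref{prop.erg1}) reduces this to showing that the group of essential values $\mathsf E_{\nu_\psi}(\Ga)$ is all of $\fa$. Producing essential values is where the real content lies: one shows that all but finitely many Jordan projections $\la(\ga_0)$ are essential values (Proposition \ref{ess}), and this requires the two-sided comparison $-\psi(\underline a(p,\ga p))-C\le\psi(\beta_\xi(\ga p,p))\le\psi(\underline a(\ga p,p))+C$ for $\xi\in\La$ (Theorem \ref{concave}, which fails for general $\psi\in\dg$ without the Morse property and the shadow-to-shadow analysis), a virtual visual metric built from the $\psi$-Gromov product so that a Vitali covering lemma holds (Theorem \ref{prop.met}, Lemma \ref{inc}), full $\nu_\psi$-measure of the Myrberg limit set (Theorem \ref{fullm}, a higher-rank Tukia theorem), and a Lebesgue-density argument for the family $D(\ga\xi_0,r)$ (Proposition \ref{lem.sh}). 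None of this is anticipated or replaced in your sketch, so the central claim of your Step 2 is a genuine gap.

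Conversely, you locate the ``main obstacle'' in the classification step and propose ratio-ergodic theorems and an analysis of conditional measures on $N$-orbits; under the standing hypotheses this is both unnecessary and unjustified. Since elements of $\cal Q_\Ga$ are $NM$-ergodic and $A$-quasi-invariant, for each $a\in A$ the derivative $da_*\mu/d\mu$ is an $NM$-invariant function, hence constant, so $\mu$ is $P$-semi-invariant; a direct Iwasawa disintegration (Proposition \ref{prop.NMA}) then exhibits $\mu$ as $m_{\nu,m_o}$ for a $(\Ga,\psi_\mu)$-conformal measure $\nu$ with $\psi_\mu=\chi_\mu+2\rho$, supportedness on $\cal E$ forces $\nu$ to be a PS measure on $\La$, and Quint's inequality $\psi_\mu\ge\psi_\Ga$ together with divergence of the Poincar\'e series at $\psi_\mu$ (which uses $\La=\La_c$, again from the Morse property) forces tangency, i.e.\ $\psi_\mu\in\dg$, with uniqueness of PS measures identifying $\mu$ with $m_{\psi_\mu}^{\BR}$ (Theorem \ref{thm.bij}, Theorem \ref{thm.MC}). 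Note also that the $NM$-ergodicity of the BR measures is not what your Step 1 needs (attaching $\psi_\mu$ to $\mu$ uses only the ergodicity of $\mu$ itself); it is needed to show that $\psi\mapsto[m_\psi^{\BR}]$ lands in $\cal Q_\Ga$ at all. Your Step 3 on continuity in both directions is essentially the paper's argument and is fine, but injectivity on classes also requires knowing distinct PS measures are mutually singular (or at least distinct), which the paper gets from Lemma \ref{lem.inj} and Lemma \ref{lem.sing} via the essential-value machinery you omitted.
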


In order to describe the explicit homeomorphism, we  need to define Burger-Roblin measures on $\cal E$.
Denote by $\fa$ the Lie algebra of $A$ and fix a positive Weyl chamber $\fa^+\subset\fa$
so that $\log N$ is the sum of positive root subspaces.
Fix a maximal compact subgroup $K$ of $G$ so that the Cartan decomposition $G=K(\exp \fa^+) K$ holds. 
Let $\mu:G\to \fa^+$ denote the Cartan projection map (Def. \ref{Cartan}).
We denote  by $\cal L_\Gamma\subset \fa^+$ the limit cone of $\Gamma$, that is the asymptotic cone of 
$\mu(\Gamma)$ (Def. \ref{lc}). 
%This is known to be a convex cone with non-empty interior \cite[Thm. 1.2]{Ben}.
Let $\psi_\Ga : \fa\to\bb R\cup\{-\infty\}$ denote the growth indicator function 
of $\Gamma$ (Def. \ref{def.GI}). 

For Anosov subgroups, the following two spaces are homeomorphic to each other:
$$   D_\Ga^\star :=
\{\psi\in \fa^*: \psi \ge \psi_\Gamma,  \psi(v)=\psi_\Gamma(v) \text{ for some
$v\in \inte \L_\Ga$}\}\simeq  \inte (\mathbb P \L_\Ga) $$
where $ \inte(\mathbb P\L_\Ga)$ denotes the interior of the projective limit cone
$\bb P\cal L_\Ga$ (Proposition \ref{homeo}). Since $\inte ( \L_\Ga)$ is a non-empty open convex cone
of $\fa^+$ \cite[Thm. 1.2]{Ben}, it follows that $\dg $ is homeomorphic to
$  \br^{\text{rank}\,G-1}$.
We remark that $D_\Ga^\star$ is in fact a closed analytic submanifold of $\mathfrak a^*$ \cite[Prop. 4.11]{PS}.

 For a linear form $\psi\in \fa^*$, a Borel probability measure $\nu$ on the limit set
 $\La$ is called a $(\Ga, \psi)$-Patterson Sullivan measure if 
 for all $\ga\in \Ga$ and $\xi\in \cal F$,
 \be\label{gc000} \frac{d\ga_* \nu}{d\nu}(\xi) =e^{\psi (\beta_\xi (o, \gamma o))}\ee
 where   $o=[K]\in G/K$ and
$\beta:\cal F\times G/K\times G/K\to\fa$ denotes the $\fa$-valued Busemann function 
(Def. \ref{Bu}).  Quint constructed a $(\Gamma, \psi)$-Patterson-Sullivan measure 
for each $\psi\in D_\Ga^\star$ \cite{Quint2}; for $\Gamma$ Anosov, 
this measure exists uniquely (hence $\Gamma$-ergodic), which we denote by $\nu_\psi$ (see Theorem \ref{pop} and references therein).

In the rest of the introduction, we let $\Gamma <G$ be an Anosov subgroup. 
By a Patterson-Sullivan measure on $\La$, we mean a $(\Gamma, \psi)$-Patterson-Sullivan measure on $\La$ for some
$\psi\in \fa^*$.
We show: \begin{thm}\label{con-class}
The map $\psi\mapsto \nu_\psi$ is a homeomorphism between $\dg$ and the space of all Patterson-Sullivan measures on $\La$. Moreover, Patterson-Sullivan measures are pairwise mutually singular.
\end{thm}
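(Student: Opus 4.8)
The plan is to prove the three assertions — that $\psi\mapsto\nu_\psi$ is well-defined and injective, that it is a homeomorphism onto the space of all Patterson--Sullivan measures on $\La$, and that distinct Patterson--Sullivan measures are mutually singular — by exploiting the Anosov hypothesis, which forces each $(\Gamma,\psi)$-Patterson--Sullivan measure to be essentially supported on the limit cone direction determined by $\psi$. First I would recall that by the uniqueness statement already cited from Quint's work (and the identification $\dg\simeq\inte(\mathbb P\L_\Ga)$ of Proposition \ref{homeo}), for each $\psi\in\dg$ there is a \emph{unique} $(\Gamma,\psi)$-Patterson--Sullivan probability measure $\nu_\psi$ on $\La$, and it is $\Gamma$-ergodic. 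Well-definedness of the map is therefore immediate; what remains is to show every Patterson--Sullivan measure on $\La$ arises this way, i.e. that if $\nu$ is a $(\Gamma,\psi)$-Patterson--Sullivan measure for \emph{some} $\psi\in\fa^*$, then necessarily $\psi\in\dg$. For this I would use the standard shadow-lemma / Poincaré-series dichotomy: the cocycle equation \eqref{gc000} together with the fact that $\La$ carries such a measure forces $\psi\ge\psi_\Gamma$ on the limit cone and forces the $\psi$-Poincaré series of $\Gamma$ to be critical along some interior direction, which is exactly the condition $\psi(v)=\psi_\Gamma(v)$ for some $v\in\inte\L_\Ga$; divergence-type arguments (available here because Anosov groups are of divergence type for tangent linear forms, cf. the growth-indicator theory of Quint) rule out $\psi$ lying on the boundary of the dual cone or strictly above $\psi_\Gamma$ everywhere.

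Next I would establish mutual singularity, which is the conceptual heart of the statement. Given $\psi_1\ne\psi_2$ in $\dg$, the corresponding interior directions $v_1=\nabla$-direction and $v_2$ at which $\psi_i=\psi_\Gamma$ are distinct (this uses the strict convexity of $\psi_\Gamma$ on $\inte\L_\Ga$ for Anosov groups, equivalently the analyticity/strict concavity established by Quint and the Anosov regularity). The key geometric input is that for a $(\Gamma,\psi)$-Patterson--Sullivan measure, $\nu_\psi$-almost every $\xi\in\La$ is the forward endpoint of a geodesic ray whose Cartan projection direction $\mu(\Gamma$-orbit$)$ converges to $v_\psi$ — this is a consequence of the conformal density being carried by the "$\psi$-conical" points, and for Anosov $\Gamma$ the relevant Birkhoff-type genericity along the $\Gamma$-action (uniqueness plus ergodicity of $\nu_\psi$, combined with the linear-algebra control on Cartan projections of loxodromic elements from the Anosov embedding $\zeta$) pins down the direction. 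Since $v_{\psi_1}\ne v_{\psi_2}$, the full-measure sets for $\nu_{\psi_1}$ and $\nu_{\psi_2}$ are disjoint, giving mutual singularity; the same argument applied to $\psi$ and $c\psi$ with $c\ne 1$ shows why we quotient only by the normalization and not by all positive scalars.

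Finally, for the topological statement I would check continuity of $\psi\mapsto\nu_\psi$ in the weak-$*$ topology and of its inverse. Continuity follows because the defining cocycle \eqref{gc000} depends continuously (indeed real-analytically) on $\psi$, the Busemann function $\beta$ is continuous on $\La\times G/K\times G/K$, and uniqueness upgrades any weak-$*$ limit of $\nu_{\psi_n}$ (as $\psi_n\to\psi$ in $\dg$) to a $(\Gamma,\psi)$-Patterson--Sullivan measure, hence to $\nu_\psi$ — a standard compactness-plus-uniqueness closing argument on the compact space of probability measures on $\La$. For the inverse, the assignment $\nu_\psi\mapsto\psi$ can be recovered continuously from the measure via the asymptotic exponential growth rate of $\nu_\psi$-shadows (the linear form $\psi$ is determined by $\nu_\psi$ through the shadow lemma), so the bijection is a homeomorphism; composing with $\dg\simeq\inte(\mathbb P\L_\Ga)\simeq\br^{\operatorname{rank}G-1}$ from Proposition \ref{homeo} finishes the proof. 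The main obstacle I anticipate is the "full-measure direction" claim underpinning mutual singularity: making precise that $\nu_\psi$-a.e. point has a well-defined Cartan-projection direction equal to $v_\psi$ requires the Anosov-specific regularity of geodesics and a genuine ergodic-theoretic argument (e.g. applying the ergodicity of $\nu_\psi$ under $\Gamma$ to a $\Gamma$-invariant measurable direction map), rather than a soft conformality estimate; I would expect to need the Anosov property in the strong form that the limit map $\zeta$ is $C^{1+\alpha}$ and that Cartan and Jordan projections of $\Gamma$ are uniformly regular.
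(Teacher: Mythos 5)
Your treatment of the homeomorphism statement is essentially the paper's own argument (Theorem \ref{thm.bij}): injectivity by evaluating the conformal cocycle at attracting fixed points (Lemma \ref{lem.inj}), surjectivity via Quint's bound $\psi\ge\psi_\Gamma$ together with the shadow lemma (Lemma \ref{lem.SH2}) and the convergence/divergence dichotomy for $\sum_\gamma e^{-\psi(\mu(\gamma))}$ (Lemmas \ref{lem.dom} and \ref{lem.DT}, which use that every limit point of an Anosov group is conical), and continuity of the map and of its inverse by the uniqueness of PS measures plus the shadow lemma applied to finitely many $\gamma_\ell$ whose Cartan projections span $\fa$. So that part is fine and matches the paper.

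For mutual singularity you take a genuinely different route, and there is a real gap in it. Your argument hinges on the claim that $\nu_\psi$-almost every $\xi\in\La$ is the endpoint of a $\Gamma$-orbit whose Cartan projections have asymptotic direction equal to the tangency direction $u_\psi$ of Proposition \ref{homeo}; you assert this as "a consequence of the conformal density being carried by the $\psi$-conical points" plus "Birkhoff-type genericity", but no such statement is proved here, and it does not follow softly from conformality. It is a true theorem for Anosov groups, but proving it requires a law-of-large-numbers argument for the $\fa$-valued Iwasawa/Busemann cocycle over the translation flow of Theorem \ref{thm.reparam} (finiteness and ergodicity of $\mathsf m_\psi$, plus an identification of the mean drift with $u_\psi$ coming from the tangency $\psi(u_\psi)=\psi_\Gamma(u_\psi)$), i.e.\ a substantive piece of work comparable to what it is meant to replace. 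The paper avoids any directional statement: since each $\nu_\psi$ is $\Gamma$-ergodic, two PS measures are either mutually singular or equivalent; if $\nu_{\psi_1}\ll\nu_{\psi_2}$, the Radon--Nikodym derivative satisfies the cocycle identity $f(\gamma^{-1}\xi)=e^{(\psi_1-\psi_2)(\beta_\xi(e,\gamma))}f(\xi)$, and the fact that every vector of $\fa$ is an essential value of $\nu_{\psi_2}$ (Proposition \ref{posm}, the paper's main technical result) produces a $\gamma$ returning a positive-measure set to itself with $\beta_\xi(e,\gamma)$ near a prescribed $w$, forcing $f$ to jump by an impossible factor (Lemma \ref{lem.sing}, Theorem \ref{thm.MS}). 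So your plan becomes a proof only after you supply the directional genericity lemma; as written, that step is missing. Two side remarks should also be dropped: no $C^{1+\alpha}$ regularity of $\zeta$ is available or needed (it is only H\"older), and there is no "quotient by normalization" issue since PS measures are probability measures by definition.
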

When $\G$ is the fundamental group of a closed negatively curved manifold, the above theorem also follows from
\cite{Led2}.
  
 We also denote by $ \nu_\psi$ the $M$-invariant lift of $\nu_\psi$ on $\cal F\simeq
 K/M$ to $K$ by abuse of notation.
The Burger-Roblin measure $m^{\BR}_\psi$ on $\Gamma\ba G$ is induced from the following $\Gamma$-invariant measure
$\tilde m^{\BR}_\psi$ on $G$: for $g=k(\exp b) n\in KAN$,
\be\label{introbr} d\tilde m^{\BR}_\psi (g)=e^{\psi (b)} dn \,db\, d\nu_\psi (k) \ee
where $dn$ and $db$ are Lebesgue measures on $N$ and $\fa$ respectively.

\medskip

The following is a more elaborate version of Theorem \ref{mmm}:
\begin{thm}[Classification] \label{main2} 
The map 
$\psi\mapsto [m^{\BR}_\psi]$ defines a homeomorphism  between $\dg $ and $ \cal Q_\Ga$. 
\end{thm}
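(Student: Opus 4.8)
The plan is to prove Theorem \ref{main2} by combining two halves: an injectivity-and-continuity statement for the map $\psi\mapsto [m^{\BR}_\psi]$, and a surjectivity statement asserting that every non-trivial $NM$-invariant ergodic $A$-quasi-invariant Radon measure on $\Ga\ba G$ is proportional to some $m^{\BR}_\psi$.

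\medskip

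\textbf{Step 1: The Burger--Roblin measures are genuine members of $\cal Q_\Ga$.} First I would verify that for each $\psi\in\dg$, the measure $m^{\BR}_\psi$ defined by \eqref{introbr} is indeed $NM$-invariant (clear from the $KAN$-form, since $M$ normalizes $N$ and fixes $\nu_\psi$), $A$-quasi-invariant (the $A$-action rescales $dn$ and shifts $b$, producing a Radon--Nikodym cocycle $e^{\psi(\log a)}$ times a Jacobian), and supported on $\cal E$ (since $\nu_\psi$ is supported on $\La$). The substantive point here is \emph{ergodicity} of $m^{\BR}_\psi$ under the $NM$-action, which the abstract explicitly flags as a main ingredient. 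I would establish this by the Hopf-type / Mautner argument adapted to higher rank: using that $\nu_\psi$ is the unique $(\Gamma,\psi)$-PS measure hence $\Gamma$-ergodic, transfer $\Gamma$-ergodicity on $\La$ (equivalently on $\F^{(2)}\times\fa$ after building the Hopf parametrization $\Ga\ba G\leftrightarrow \Ga\ba(\F^{(2)}\times\fa\times M)$) to $NM$-ergodicity on $\Ga\ba G$, exploiting that the $N$-action corresponds to moving one boundary point while $A$ shifts the $\fa$-coordinate and $M$ acts on the fiber; the key input is that the Patterson--Sullivan density transforms exactly by the cocycle $\psi\circ\beta$ so the BR measure disintegrates consistently.

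\medskip

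\textbf{Step 2: Injectivity and continuity of $\psi\mapsto[m^{\BR}_\psi]$.} Distinct $\psi,\psi'\in\dg$ give mutually singular $\nu_\psi,\nu_{\psi'}$ by Theorem \ref{con-class}; since the conditional measures of $m^{\BR}_\psi$ along the $A$-orbit direction encode the linear form $\psi$ (via the $A$-Radon--Nikodym cocycle $a\mapsto e^{\psi(\log a)}$, which is an invariant of the proportionality class), one recovers $\psi$ from $[m^{\BR}_\psi]$, giving injectivity. Continuity in both directions follows from the homeomorphism $\dg\simeq\inte(\mathbb P\L_\Ga)$ of Proposition \ref{homeo} together with continuity of $\psi\mapsto\nu_\psi$ (Theorem \ref{con-class}) and the explicit formula \eqref{introbr}: weak-* convergence of $\nu_{\psi_n}\to\nu_\psi$ and convergence of the density $e^{\psi_n(b)}\to e^{\psi(b)}$ locally uniformly yield vague convergence of the (suitably normalized) $m^{\BR}_{\psi_n}$ to $m^{\BR}_\psi$, and the inverse map's continuity uses that the recovered cocycle $\psi$ depends continuously on the measure in the vague topology.

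\medskip

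\textbf{Step 3 (the hard part): Surjectivity --- every non-trivial $NM$-invariant ergodic $A$-quasi-invariant Radon measure is a BR measure.} This is where the real work lies. Let $m$ be such a measure. The $A$-quasi-invariance gives a Radon--Nikodym cocycle $\Ga\ba G\times A\to\br_{>0}$; $NM$-invariance plus ergodicity forces (after a cocycle-reduction argument, e.g.\ via the Mackey/cocycle-cohomology machinery or a direct use of $NM\cdot A$ generating a large subgroup) this cocycle to be cohomologous to a \emph{homomorphism} $a\mapsto e^{\psi(\log a)}$ for some $\psi\in\fa^*$. One then builds the Hopf parametrization and pushes $m$ to a measure on $\Ga\ba(\F^{(2)}\times\fa\times M)$; $N$-invariance means the pushforward is, in the first boundary coordinate, carried by a fixed point, so $m$ disintegrates over the second boundary coordinate with conditionals governed by a $\Ga$-conformal density of exponent $\psi$ supported on $\La$. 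Uniqueness of the $(\Gamma,\psi)$-PS measure for Anosov $\Gamma$ (cited before Theorem \ref{con-class}) then identifies that density with $\nu_\psi$, and forces $\psi\in\dg$ (a measure supported on $\cal E=\{gP\in\La\}$ cannot exist for $\psi\notin\dg$, since the relevant PS density would fail to be finite/positive --- this uses $\psi_\Ga$ and the limit-cone geometry). Assembling, $m$ is proportional to $m^{\BR}_\psi$. I expect the cocycle-to-homomorphism reduction and the verification that the disintegration is forced to be exactly PS (rather than merely quasi-PS) to be the main obstacles; these likely require the precise structure theory of Anosov groups --- Zariski density, the convex-cocompact-like dynamics of $\Ga$ on $\F^{(2)}$, and the fact that $\La$ carries a unique conformal density of each admissible exponent --- together with a careful higher-rank analogue of the rank-one Hopf argument, being attentive to the non-trivial $M$-fiber and to the fact that $A$ has rank $>1$ so $A$-quasi-invariance is genuinely weaker than $A$-invariance.
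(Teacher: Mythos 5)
The decisive gap is in your Step 1. You claim that $NM$-ergodicity of $m^{\BR}_\psi$ can be obtained by a Hopf/Mautner-type transfer from the $\Ga$-ergodicity of $\nu_\psi$. It cannot: in the Hopf parametrization the right $N$-action fixes \emph{both} the forward boundary point $g^+$ and the Busemann coordinate $b=\beta_{g^+}(e,g)$, so an $NM$-invariant function on $\Ga\ba G$ descends to a $\Ga$-invariant function on $G/NM\simeq \F\times\fa$, not on $\F$. Thus $NM$-ergodicity of $m^{\BR}_\psi$ is equivalent (by duality) to $\Ga$-ergodicity of the measure $\widehat\nu_\psi=e^{\psi(b)}\,d\nu_\psi(\xi)\,db$ on $\F\times\fa$, which is strictly stronger than $\Ga$-ergodicity of $\nu_\psi$; your remark that ``$A$ shifts the $\fa$-coordinate'' is beside the point, since $A$ is not part of the acting group (only $P=NMA$-ergodicity follows cheaply, as the paper itself notes). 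Closing this gap is the main content of the paper: by the higher-rank Schmidt/Roblin criterion (Proposition \ref{prop.erg1}) one must show that the group of $(\Ga,\nu_\psi)$-essential values is all of $\fa$ (Proposition \ref{posm}), and this requires the comparison of $\psi(\beta_\xi)$ with $\psi(\underline a)$ (Theorem \ref{concave}), the virtual visual metric and covering lemma (Theorem \ref{prop.met}, Lemma \ref{inc}), the full measure of Myrberg points (Theorem \ref{fullm}), and the Lebesgue-density argument (Proposition \ref{lem.sh}) leading to $\la(\ga_0)\in\mathsf E_{\nu_\psi}(\Ga)$ for suitable $\ga_0$. None of this is replaced by your sketch, so as written your proof of well-definedness of the map $\psi\mapsto[m^{\BR}_\psi]$ fails.

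Your Steps 2 and 3, by contrast, do track the paper's route, and Step 3 is in fact easier than you anticipate: since $a_*\mu$ and $\mu$ are equivalent and $\mu$ is $NM$-ergodic, the Radon--Nikodym derivative $da_*\mu/d\mu$ is an invariant function, hence constant, so $\mu$ is $P$-semi-invariant with no cocycle-cohomology or Mackey machinery needed; Proposition \ref{prop.NMA} then identifies $\mu$ (up to scalar) as $m_{\nu,m_o}$ for a $(\Ga,\psi_\mu)$-conformal $\nu$ with $\psi_\mu\in D_\Ga$, support in $\cal E$ forces $\nu$ to be a PS measure, and Theorem \ref{thm.bij} forces $\psi_\mu\in\dg$ and $\nu=\nu_{\psi_\mu}$. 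Injectivity is handled in the paper via Lemma \ref{lem.sing} (which again uses essential values), and continuity of the inverse via the $A$-semi-invariance relation \eqref{eq.SI}, essentially your observation that the $A$-character recovers $\psi$.
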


While the $P$-ergodicity of $m_\psi^{\BR}$ follows from the $\Ga$-ergodicity of $\nu_\psi$, 
establishing the ergodicity of $m_\psi^{\BR}$, and hence  
the well-definedness of the above map is the most significant part of Theorem \ref{main2}:
\begin{thm}[Ergodicity] \label{main1}
 For each $\psi\in \dg$,  $m_{\psi}^{\BR}$ is $NM$-ergodic.
\end{thm}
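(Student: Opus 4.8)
The plan is to reduce $NM$-ergodicity to the ergodicity of the $\Gamma$-action on an appropriate boundary object, exploiting the product structure of $m_\psi^{\BR}$ inherited from \eqref{introbr}. First I would pass from the homogeneous space $\Gamma\backslash G$ to the Hopf-type parametrization: the $NM$-orbits on $\mathcal{E}$ correspond, after unfolding, to points of $\cal F^{(2)}\times \fa$ with an $A$-translation, so that an $NM$-invariant function on $\Gamma\backslash G$ pulls back to a function on $\cal F\times \cal F\times \fa$ that is invariant under the $A$-part and under $\Gamma$ acting diagonally on the two $\cal F$-factors (with the Busemann cocycle twisting the $\fa$-coordinate). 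Concretely, writing $g=k(\exp b)n$, the measure $d\tilde m^{\BR}_\psi = e^{\psi(b)}\,dn\,db\,d\nu_\psi(k)$ is, modulo $N$, the product $\nu_\psi\otimes(\text{Leb on }\fa)$ twisted by $\psi$; an $NM$-invariant set descends to a measurable set in $(\cal F\times \fa)$-coordinates which, because of the $A$-quasi-invariance built into the statement and the transitivity of $N$ on the unstable leaf through a regular point, must in fact be determined by its $\cal F$-component up to the opposite horospherical and $A$ directions. The upshot I would aim for is: $m_\psi^{\BR}$-ergodicity of $NM$ is equivalent to ergodicity of the $\Gamma$-action on $(\cal F, \nu_\psi)$ — but this last fact is exactly the uniqueness (hence $\Gamma$-ergodicity) of the Patterson-Sullivan measure $\nu_\psi$ recorded after \eqref{gc000} and in Theorem \ref{con-class}.

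The technical heart is making the reduction rigorous, and here I would follow the Hopf-argument/disintegration strategy familiar from the rank-one Burger-Roblin ergodicity proofs (Roblin, Winter) but adapted to the higher-rank Anosov setting. The key steps: (i) fix an $NM$-invariant Borel set $E\subset\Gamma\backslash G$ of positive measure and lift it to an $NM$-invariant $\tilde E\subset \mathcal E$ in $G$; (ii) use the measurable isomorphism $\mathcal E \simeq \{(\xi,\eta)\in\cal F^{(2)}: \xi\in\Lambda\} \times \fa \times N$ (Hopf coordinates relative to $P$ and its opposite), under which $\tilde m^{\BR}_\psi$ becomes (up to the $\psi$-density in the $\fa$-variable) the product of $\nu_\psi$ in the $\xi$-variable, a measure on the $\eta$-variable, Lebesgue on $\fa$, Haar on $N$; (iii) observe that $N$-invariance kills dependence on the $N$-coordinate and that $M$-invariance together with $A$-quasi-invariance lets one further quotient; crucially, invariance under the $\eta$-direction is \emph{not} assumed, so one must instead argue that the conditional measures of $E$ on $\cal F$-fibers, in the $\xi$-variable, are $\Gamma$-invariant, and then invoke ergodicity of $(\Lambda,\nu_\psi,\Gamma)$ to conclude the $\xi$-marginal is trivial; (iv) finally show that triviality in the $\xi$-variable plus $NMA$-quasi-invariance forces $E$ to be null or conull. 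For step (iii) one needs a transitivity/mixing input to propagate invariance from $N$-leaves to the full $\xi$-direction — this is where I expect to use the structure of Anosov groups, specifically that $\Gamma$ acts cocompactly on $\cal F^{(2)}$ restricted to $\Lambda\times\Lambda$ (the Anosov property giving the equivariant embedding $\zeta$), so that the dynamics on the $\eta$-direction is tame.

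I expect the main obstacle to be step (iii)–(iv): controlling the $\eta$-coordinate (the contracting/stable direction, which is \emph{not} part of the acting group $NM$) and showing that an $NM$-invariant, $A$-quasi-invariant set cannot depend nontrivially on $\eta$. In rank one this is handled by a direct Hopf argument using that the geodesic flow is ergodic on the unit tangent bundle over the convex core; in higher rank the analogue requires knowing that the Weyl chamber flow $A$ acts ergodically (indeed the relevant one-parameter subflows mix) with respect to the Bowen-Margulis-Sullivan measure $m^{\BMS}_\psi$ attached to $\psi\in\dg$, which for Anosov groups should follow from the finiteness of $m^{\BMS}_\psi$ and Quint's/Benoist's results on the limit cone having nonempty interior; one then transfers ergodicity from $m^{\BMS}_\psi$ to $m^{\BR}_\psi$ via the standard density-argument linking the two measures along the $N$-direction. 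A secondary technical point is that $\nu_\psi$ is only quasi-invariant (with the explicit cocycle \eqref{gc000}), not invariant, under $\Gamma$, so all the disintegration arguments must carry the $\psi$-Busemann cocycle throughout; keeping that bookkeeping correct, rather than any deep new idea, is what makes the proof laborious. If the direct Hopf approach proves awkward, an alternative I would fall back on is to use the $P$-space structure: show $(\mathcal E, m^{\BR}_\psi)$ with the $P=NMA$-action is isomorphic (as a measure-class-preserving system) to $(\Lambda,\nu_\psi,\Gamma)$ induced up, and deduce $NM$-ergodicity from $\Gamma$-ergodicity of $\nu_\psi$ together with the observation that $A$ acts ergodically on the $\fa$-fiber relative to the $\psi$-twisted Lebesgue measure modulo the $\Gamma$-action — essentially a higher-rank version of the Babillot-type equivalence.
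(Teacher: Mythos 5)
There is a genuine gap, and it sits at the very center of your reduction. You claim that $NM$-ergodicity of $m_\psi^{\BR}$ is equivalent to ergodicity of the $\Gamma$-action on $(\cal F,\nu_\psi)$, and then invoke the uniqueness of the Patterson--Sullivan measure. That equivalence is false: the duality for the $NM$-action identifies $NM$-ergodicity of $m_\psi^{\BR}$ with $\Gamma$-ergodicity of the measure $\widehat\nu_\psi$ on $G/NM\simeq \cal F\times\fa$, where $\Gamma$ acts on the $\fa$-coordinate through the $\fa$-valued Busemann cocycle. The extra $\fa$-factor is precisely the difficulty: a $\Gamma$-invariant function on $\cal F\times\fa$ can depend nontrivially on the $\fa$-coordinate even though $\nu_\psi$ is $\Gamma$-ergodic. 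What $\Gamma$-ergodicity of $\nu_\psi$ does give is $P=NMA$-ergodicity of $m_\psi^{\BR}$ (the paper says exactly this), which is strictly weaker than the statement of Theorem \ref{main1}. To kill the dependence on the $\fa$-coordinate one needs a Schmidt-type criterion (Proposition \ref{prop.erg1}): $\widehat\nu_\psi$ is $\Gamma$-ergodic iff $\nu_\psi$ is $\Gamma$-ergodic \emph{and} the group ${\mathsf E}_{\nu_\psi}(\Ga)$ of essential values of the Busemann cocycle equals $\fa$. Proving ${\mathsf E}_{\nu_\psi}(\Ga)=\fa$ is where essentially all of the paper's work goes: one shows that Jordan projections $\la(\ga_0)$ with $\psi(\la(\ga_0))$ large are essential values (Proposition \ref{ess}), using the comparison $-\psi(\underline a(p,\ga p))-C\le\psi(\beta_\xi(\ga p,p))\le\psi(\underline a(\ga p,p))+C$ (Theorem \ref{concave}), the virtual visual metric and covering lemma (Theorem \ref{prop.met}, Lemma \ref{inc}), full $\nu_\psi$-measure of Myrberg limit points (Theorem \ref{fullm}), and a Lebesgue-density argument for the balls $D(\ga\xi_0,r)$ (Proposition \ref{lem.sh}), concluding with the density of the subgroup generated by $\la(\Ga)$. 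Your proposal contains no mechanism to produce essential values or otherwise control the $\fa$-coordinate; the phrase ``$A$ acts ergodically on the $\fa$-fiber \dots modulo the $\Gamma$-action,'' which you present as an observation in your fallback route, is exactly this hard statement in disguise.

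Two subsidiary points reinforce the gap. First, $A$-quasi-invariance of the measure does not make $NM$-invariant sets $A$-invariant, so the step where you let the $A$-direction be ``quotiented out'' is not available. Second, your proposed input of mixing/ergodicity of the Weyl chamber flow for $m^{\BMS}_\psi$ ``from the finiteness of $m^{\BMS}_\psi$'' cannot work as stated: in rank at least two $m^{\BMS}_\psi$ is infinite (Corollary \ref{AMerg}), and even the $AM$-ergodicity of $m^{\BMS}_\psi$, which the paper does prove, does not transfer to $NM$-ergodicity of $m^{\BR}_\psi$ by the rank-one density argument, because the quantity to be controlled is the full $\fa$-valued cocycle rather than a single real parameter. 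So the architecture of your first paragraph proves only the $P$-ergodicity part of the theorem, and the genuinely new higher-rank content — the essential-value computation via the Anosov geometry of $\Ga$ — is missing.
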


Since $\cal E$ is a second countable topological space, Theorem \ref{main1} implies:
\begin{cor}
For $m_\psi^{\BR}$ almost all $x\in \cal E$,
$$\overline{xNM}=\cal E.$$
\end{cor}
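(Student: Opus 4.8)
The plan is to deduce the corollary from Theorem \ref{main1} (the $NM$-ergodicity of $m_\psi^{\BR}$) by a standard argument showing that ergodicity of a locally finite measure under a group action forces almost every orbit closure to contain the support of the measure, provided the underlying space is second countable. First I would fix a countable basis $\{U_i\}_{i\in\N}$ for the topology of $\cal E$ (recall $\cal E\subset \Ga\ba G$ is second countable, being a subspace of the second countable space $\Ga\ba G$). For each $i$ with $m_\psi^{\BR}(U_i)>0$, consider the set $E_i:=\{x\in\cal E: xNM\cap U_i\ne\emptyset\}=U_iNM$, which is $NM$-invariant and open, hence Borel. Since $m_\psi^{\BR}$ is supported on $\cal E$ (non-triviality) and $U_i$ is open with positive measure, $E_i$ has positive measure; by $NM$-ergodicity (Theorem \ref{main1}), $m_\psi^{\BR}(\cal E\setminus E_i)=0$. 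Intersecting over the countably many such $i$, the set $E:=\bigcap_{i: m_\psi^{\BR}(U_i)>0} E_i$ is $NM$-invariant and satisfies $m_\psi^{\BR}(\cal E\setminus E)=0$.

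Next I would check that every $x\in E$ has $\overline{xNM}=\cal E$. Fix $x\in E$ and let $y\in\cal E$ be arbitrary; I must show $y\in\overline{xNM}$. Given any basic open neighborhood $U_i$ of $y$, we have $m_\psi^{\BR}(U_i)>0$: indeed $\cal E=\supp m_\psi^{\BR}$ since $m_\psi^{\BR}$ is a non-trivial (hence $\cal E$-supported) measure and $\cal E$ is the unique minimal $P$-invariant closed set, so in particular every nonempty relatively open subset of $\cal E$ has positive $m_\psi^{\BR}$-measure — this uses that $\supp m_\psi^{\BR}$ is closed, $NM$-invariant (in fact $P$-invariant by $A$-quasi-invariance), nonempty, and contained in $\cal E$, forcing equality by minimality. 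Therefore $x\in E_i$, i.e. $xNM\cap U_i\ne\emptyset$, so $U_i$ meets $xNM$. Since the $U_i$ form a neighborhood basis at $y$, this shows $y\in\overline{xNM}$. As $y$ was arbitrary, $\overline{xNM}=\cal E$, and this holds for every $x$ in the conull set $E$, which is precisely the assertion of the corollary.

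The one point requiring care — and the only real content beyond bookkeeping — is the claim that $\supp m_\psi^{\BR}=\cal E$, equivalently that $m_\psi^{\BR}$ gives positive mass to every nonempty relatively open subset of $\cal E$. From the explicit formula \eqref{introbr} for $\tilde m^{\BR}_\psi$ on $G=KAN$, the support of $m_\psi^{\BR}$ is the image in $\Ga\ba G$ of $(\supp\nu_\psi)AN$ where $\nu_\psi$ is viewed on $K$; since $\nu_\psi$ is a $(\Gamma,\psi)$-Patterson–Sullivan measure its support is the limit set $\La$ (the unique $\Gamma$-minimal subset of $\cal F$, and $\nu_\psi$ being $\Gamma$-ergodic has full support on it), and $\La AN$ maps onto $\cal E=\{[g]:gP\in\La\}$ with full support because the $AN$-fibers are exactly the $P$-orbit directions and $db\,dn$ has full support on $A\times N$. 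Thus $\supp m_\psi^{\BR}=\cal E$. I expect the extraction of the conull invariant set from ergodicity to be entirely routine; if there is any subtlety it is purely measure-theoretic, namely ensuring $U_iNM$ is measurable, which holds since it is open (the $NM$-saturation of an open set is open).
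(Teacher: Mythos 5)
Your proposal is correct and is exactly the standard argument the paper has in mind when it remarks that second countability of $\cal E$ together with Theorem \ref{main1} yields the corollary: a countable basis, the open $NM$-invariant saturations $U_iNM$, ergodicity to make each co-null, and full support of $m_\psi^{\BR}$ on $\cal E$ (which the paper already records in Section \ref{sec.BMS} when defining $m^{\BR}_{\nu_\psi}$, so your re-derivation of $\supp m_\psi^{\BR}=\cal E$ is a harmless extra). The only cosmetic omission is the trivial reverse inclusion $\overline{xNM}\subset\cal E$, which follows since $\cal E$ is closed and $NM$-invariant.
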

%We refer to Corollary \ref{normal} for a more general version.
A Radon measure $m$ on $\Ga\ba G$ is called $P$-semi-invariant if there exists a character $\chi : P\to\bb R^*$ such that $p_*m=\chi(p)m $ for all $p\in P$.
Note that any $P$-semi-invariant Radon measure is necessarily $NM$-invariant since $NM$ is unimodular.
We show that any $P$-semi-invariant Radon measure on $\cal E$ is of the form $m_\psi^{\BR}$ for some $\psi\in\dg$ (Proposition \ref{prop.NMA}).
Hence Theorem \ref{main1} implies:
\begin{cor}
The space of all $P$-semi-invariant Radon measures on $\cal E$ coincides with $\cal Q_\Ga$, up to constant multiples.\end{cor}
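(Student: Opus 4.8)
The plan is to prove the two inclusions between $\cal Q_\Ga$ and the set $\mathcal R$ of proportionality classes of $P$-semi-invariant Radon measures on $\cal E$ (a Radon measure on $\cal E$ being viewed as one on $\Ga\ba G$ supported on $\cal E$, hence non-trivial). Recall from the sentence preceding the statement that any $P$-semi-invariant Radon measure is automatically $NM$-invariant; also its defining character $\chi\colon P\to\bb R^*$ necessarily has image in $\bb R_{>0}$, because $\chi(p)\,m=p_*m$ is a positive measure. I will use repeatedly that $NM$ is a normal subgroup of $P$ with $P/NM\cong A$.

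For the inclusion $\mathcal R\subseteq\cal Q_\Ga$: let $m$ be a $P$-semi-invariant Radon measure on $\cal E$. By Proposition \ref{prop.NMA} we have $m=c\,m^{\BR}_\psi$ for some $c>0$ and some $\psi\in\dg$. A direct change of variables in \eqref{introbr} (using that the lift of $\nu_\psi$ to $K$ is $M$-invariant) shows that $m^{\BR}_\psi$ is $NM$-invariant and right $A$-semi-invariant, hence in particular $A$-quasi-invariant; and it is supported on $\cal E$, hence non-trivial. The only remaining property needed for membership in $\cal Q_\Ga$ is $NM$-ergodicity, which is exactly Theorem \ref{main1}. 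Hence $[m]\in\cal Q_\Ga$.

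For the reverse inclusion $\cal Q_\Ga\subseteq\mathcal R$ I would argue purely formally from the definition. Let $m\in\cal Q_\Ga$, so $m$ is a non-trivial $NM$-invariant, $NM$-ergodic, $A$-quasi-invariant Radon measure. Since $A$ normalizes $NM$, for each $a\in A$ the pushforward $a_*m$ is again $NM$-invariant and $NM$-ergodic, and by $A$-quasi-invariance it lies in the same measure class as $m$. Hence the Radon--Nikodym derivative $d(a_*m)/dm$ is an $NM$-invariant, $m$-a.e.\ positive and finite function on $\cal E$, so by $NM$-ergodicity of $m$ it equals an $m$-a.e.\ constant $\chi(a)\in\bb R_{>0}$. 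Fixing $f\in C_c(\cal E)$ with $f\ge 0$ and $\int f\,dm>0$, the identity $\chi(a)=\big(\int f\,d(a_*m)\big)/\big(\int f\,dm\big)$ shows $a\mapsto\chi(a)$ is continuous, and $(ab)_*m=a_*(b_*m)$ shows it is a homomorphism; thus $\chi$ is a character of $A$. Since $m$ is $NM$-invariant and $P/NM\cong A$, the $A$-semi-invariance of $m$ propagates to $P$-semi-invariance with character the pullback of $\chi$ along $P\to P/NM$. As $m$ is supported on $\cal E$, we conclude $[m]\in\mathcal R$. Combining the two inclusions gives $\cal Q_\Ga=\mathcal R$, as asserted.

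The deduction presents no genuine obstacle: the whole depth lies in Theorem \ref{main1} (and, for the first inclusion, in Proposition \ref{prop.NMA}). The only small points worth a word of justification are that conjugation by $A$ preserves $NM$-invariance and $NM$-ergodicity --- immediate since $A$ normalizes $NM$ --- and that the Radon--Nikodym cocycle $a\mapsto\chi(a)$ is a genuine continuous character of $A$, which follows from the ergodicity of $m$ together with the displayed integral expression for $\chi(a)$.
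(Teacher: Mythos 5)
Your proof is correct and follows essentially the same route as the paper's own argument (given in the proof of Theorem \ref{thm.MC}): the inclusion of $P$-semi-invariant measures into $\cal Q_\Ga$ via Proposition \ref{prop.NMA} together with Theorem \ref{main1}, and the reverse inclusion by observing that for $\mu\in\cal Q_\Ga$ the Radon--Nikodym derivative $d(a_*\mu)/d\mu$ is $NM$-invariant, hence constant by ergodicity, yielding a character of $A$ and thus $P$-semi-invariance. Your extra remarks on the continuity and homomorphism property of $\chi$ only flesh out what the paper leaves implicit.
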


\subsection*{ Discussion on the proof of Theorem \ref{main1}.} Fix $\psi\in \dg$. Defining a $\Ga$-invariant Radon measure $\widehat\nu_\psi$ on $\cal H:=G/NM\simeq \cal F\times\mathfrak a$ by
$$
d\widehat\nu_\psi (gP, b)=e^{ \psi(b)} d\nu_\psi(gP)\,db,
$$
the standard duality theorem implies that
the $NM$-ergodicity of $m_\psi^{\BR}$ is equivalent to the $\G$-ergodicity of $\widehat \nu_\psi$.

Generalizing the observation of Schmidt \cite{Sch2} (also see \cite{Ro}) to a higher rank situation,
the $\G$-ergodicity of $\widehat \nu_\psi$ follows if 
the closed subgroup, say ${\mathsf E}_{\nu_\psi}={\mathsf E}_{\nu_\psi}(\Ga)$, consisting of all
 $\nu_\psi$-essential values  is equal to $\fa$ (Proposition \ref{prop.erg1}):

\begin{definition}\label{esso} \rm An element $v\in\mathfrak a$ is called   a $(\Ga, \nu_\psi)$-\textit{essential value}, if for any $\e>0$ and
any
Borel set $B\subset \cal F$ with $\nu_\psi
(B)>0$, there exists $\ga\in\Ga$ such that 
$$
B\cap\ga B\cap\{\xi\in \cal F : \norm{\beta_\xi(o,\ga o)-v}<\e\}
$$
has a positive $\nu_\psi$-measure.
\end{definition}

Recalling that the
 Jordan projection $\lambda(\Gamma)$ of $\Gamma$ generates a dense subgroup of $\fa$ \cite{Ben},
the following is the main ingredient of our proof of Theorem \ref{main1}:
\begin{prop}\label{mt}
For each $\psi\in \dg$, there exists a finite subset
$F_\psi\subset \la(\Gamma)$ such that 
$$\lambda(\Ga) -F_\psi\subset {\mathsf E}_{\nu_\psi}(\Ga).$$
In particular, $ {\mathsf E}_{\nu_\psi}(\Ga)=\fa$.
\end{prop}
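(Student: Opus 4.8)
Fix $\psi\in\dg$. I want to show that every element of $\lambda(\Gamma)$, apart from a finite exceptional set, is a $(\Gamma,\nu_\psi)$-essential value, and then invoke density of the group generated by $\lambda(\Gamma)$ to conclude ${\mathsf E}_{\nu_\psi}(\Gamma)=\fa$. The starting point is that for a loxodromic $\gamma\in\Gamma$, the attracting and repelling fixed points $y_\gamma^+, y_\gamma^-\in\Lambda$ exist with $(y_\gamma^+,y_\gamma^-)\in\cal F^{(2)}$, and iterating $\gamma$ on $G/K$ (or rather using the contraction dynamics of $\gamma$ on $\cal F$) gives that for $\xi$ in a neighborhood of $y_\gamma^+$ one has $\beta_\xi(o,\gamma o)$ close to the Jordan projection $\lambda(\gamma)$; more precisely $\beta_{\gamma^n\xi}(o,\gamma^n o)\to n\lambda(\gamma)$-type estimates hold with explicit control once $\xi$ avoids a neighborhood of $y_\gamma^-$. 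The cocycle identity $\beta_\xi(o,\gamma^{m+1}o)=\beta_{\gamma^{-1}\xi}(o,\gamma^m o)+\beta_\xi(o,\gamma o)$ is what lets one pass from a single $\gamma$ to a large-scale statement, but for the essential-value criterion in Definition \ref{esso} I only need, for each fixed $\gamma$, to find \emph{one} element of $\Gamma$ (which will be a high power $\gamma^k$ times conjugating elements) moving a positive-measure piece of $B$ back into $B$ while keeping the Busemann cocycle within $\e$ of a target vector.

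**Key steps, in order.** (1) \emph{Reduction to a visual/shadow statement.} Using the explicit Radon–Nikodym formula \eqref{gc000}, translate "$\nu_\psi(B\cap\gamma B\cap\{\|\beta_\xi(o,\gamma o)-v\|<\e\})>0$" into a statement about the PS measure of shadows: one needs $\gamma$ such that $\gamma^{-1}B$ overlaps $B$ in positive measure inside a set where the cocycle is nearly $v$. (2) \emph{Density/recurrence input.} Since $\nu_\psi$ is $\Gamma$-ergodic (Theorem \ref{con-class} gives existence and uniqueness of $\nu_\psi$, hence ergodicity), any positive-measure $B$ has its $\Gamma$-orbit filling $\Lambda$ up to null sets; in particular for $\nu_\psi$-a.e.\ pair $(\xi,\eta)$ in $B\times B$ we can find $\gamma_n\in\Gamma$ with $\gamma_n\xi\to y^+$ and $\gamma_n^{-1}\eta$ staying away from a fixed repelling point — this is the mechanism by which one realizes the Jordan projection of some loxodromic as an essential value. (3) \emph{Anosov geometry: uniform contraction and bounded distortion.} Here the Anosov property enters decisively: the boundary map $\zeta:\partial\Gamma\to\cal F$ is a bi-Hölder embedding, $\Gamma$ acts as a convergence group, and crucially the Cartan projection $\mu(\gamma)$ drifts linearly into the interior of $\cal L_\Gamma$ with a uniform gap from the walls, so that the contraction of $\gamma$ on $\cal F$ near $y_\gamma^+$ is exponential with rate controlled by $\mu(\gamma)$, and the difference $\mu(\gamma)-\lambda(\gamma)$ is uniformly bounded over all loxodromic $\gamma$ (this boundedness is exactly what produces the \emph{finite} exceptional set $F_\psi$; one is really proving $\lambda(\Gamma)-F_\psi\subset{\mathsf E}_{\nu_\psi}$ by showing $\mu(\Gamma)$ up to bounded error lies in ${\mathsf E}_{\nu_\psi}$, using that $\beta_\xi(o,\gamma o)\approx\mu(\gamma)$ on a definite-measure shadow). (4) \emph{Conclusion.} The set of essential values is a closed subgroup of $\fa$ containing $\lambda(\Gamma)-F_\psi$ for a finite set $F_\psi$; since $\lambda(\Gamma)$ generates a dense subgroup of $\fa$ (Benoist), so does $\lambda(\Gamma)-F_\psi$ (translating by finitely many vectors does not shrink the closed subgroup generated), whence ${\mathsf E}_{\nu_\psi}(\Gamma)=\fa$. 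Then Proposition \ref{prop.erg1} gives $\Gamma$-ergodicity of $\widehat\nu_\psi$, hence $NM$-ergodicity of $m^{\BR}_\psi$ by the duality in the discussion preceding Definition \ref{esso}.

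**Main obstacle.** The delicate point is step (3): making the passage from "$\gamma$ has Cartan projection $\mu(\gamma)$ pointing deep into $\cal L_\Gamma$" to the quantitative statement "on a set of $\nu_\psi$-measure bounded below (uniformly, or at least not decaying too fast), the Busemann cocycle $\xi\mapsto\beta_\xi(o,\gamma o)$ is within $\e$ of $\mu(\gamma)$". This requires (a) uniform shadow lemmas for $\nu_\psi$ — i.e.\ a Sullivan-type estimate $\nu_\psi(\text{shadow of }B(\gamma o, R))\asymp e^{-\psi(\mu(\gamma))}$ with constants independent of $\gamma$, which for Anosov $\Gamma$ follows from the uniform conical/divergence properties of the limit set; and (b) controlling the Busemann cocycle on that shadow, which in turn needs the regularity of the Cartan/Iwasawa decomposition near the walls and the fact that $\mu(\gamma)$ stays a definite distance from $\partial\fa^+$. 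Feeding a generic loxodromic element's power $\gamma^k$ into this, the error $\|\beta_\xi(o,\gamma^k o)-k\lambda(\gamma)\|$ is $O(1)$ uniformly; choosing $k$ large and then the element realizing the near-return makes $\e$ as small as desired while the target stays within bounded distance of $\lambda(\Gamma)$, which is what the finite set $F_\psi$ absorbs. Verifying that the near-return element can be taken \emph{inside} $\Gamma$ (not merely in the closure of the dynamics) — i.e.\ the actual recurrence of the $\Gamma$-action on $B\times B$ with the prescribed cocycle value — is where ergodicity of $\nu_\psi$ and the convergence-group dynamics must be combined carefully.
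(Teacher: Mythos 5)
Your skeleton (shadow lemma, realize Jordan projections as essential values, then use density of the group they generate) is the same as the paper's, but two central steps are wrong or missing. First, the mechanism you propose for the finite exceptional set does not work. It is false, already for convex cocompact subgroups of rank-one groups, that $\|\mu(\gamma)-\lambda(\gamma)\|$ is uniformly bounded over $\Gamma$: for $\gamma=g\gamma_0g^{-1}$ with $|g|$ large this difference grows like twice the distance from $o$ to the axis of $\gamma$. More importantly, even granting an $O(1)$ bound, an essential value is an \emph{exact} vector $v$ that must be hit within every $\e>0$; producing, for each large $k$, cocycle values within a non-shrinking $O(1)$ of $k\lambda(\gamma)$ certifies no particular vector as essential, and a bounded error cannot be ``absorbed'' by deleting a finite subset $F_\psi$ of $\lambda(\Gamma)$. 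The paper targets $\lambda(\gamma_0)$ exactly: it uses conjugates $\gamma\gamma_0\gamma^{-1}$ rather than powers, and for $\xi$ in a small virtual ball about $\gamma y_{\gamma_0}^+$ the cocycle $\beta_\xi(p,\gamma\gamma_0\gamma^{-1}p)$ is within $\e$ of $\lambda(\gamma_0)$ because $\beta_{y_{\gamma_0}^+}(p,\gamma_0p)=\lambda(\gamma_0)$ exactly (Lemma \ref{lem.el}). The finite set $F_\psi$ arises for a different reason: the argument of Proposition \ref{ess} needs the threshold $\psi(\lambda(\gamma_0))\ge 1+\log 3N_0$ to get the contraction $\gamma\gamma_0\gamma^{-1}D\subset D$ and the measure estimates, and there are only finitely many Jordan projections below this threshold by \eqref{fin}.

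Second, your recurrence step is unsupported: $\Gamma$-ergodicity of $\nu_\psi$ does not give, for \emph{every} positive-measure $B$ and every $\e$, a group element returning a definite-measure piece of $B$ into $B$ while steering it toward $y_{\gamma_0}^+$ with the prescribed cocycle value. The paper obtains this from (i) the full $\nu_\psi$-measure of Myrberg limit points (Theorem \ref{fullm}, the higher-rank Tukia theorem, proved via ergodicity of the translation flow for the BMS measure), and (ii) a Lebesgue-density theorem (Proposition \ref{lem.sh}) for the family of virtual balls $D(\gamma\xi_0,r)$, which in turn needs the Vitali covering lemma (Lemma \ref{inc}) for the virtual visual metric coming from the $\psi$-Gromov product (Theorem \ref{prop.met}), as well as the two-sided bound $\psi(\beta_\xi(\gamma p,p))\le \psi(\underline a(\gamma p,p))+C$ valid for all $\xi\in\La$ (Theorem \ref{concave}); since a general $\psi\in\dg$ need not be strongly positive, this last bound requires the Morse property and word-hyperbolic geometry and cannot be read off from the shadow lemma alone. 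None of these ingredients appears in your outline, and they are the substance of the proof; your ``main obstacle'' paragraph names the difficulty but offers no argument closing it. The final density step is fine in spirit, but note that discarding a finite subset of $\la(\Gamma)$ needs the small argument of Lemma \ref{fii} (using $\lambda(\gamma^n)=n\lambda(\gamma)$), not merely the remark that finitely many vectors do not matter.
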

 See Proposition \ref{posm} for a more general version stated for any Zariski dense normal subgroup of $\Ga$.
 
Among other things, the following three key properties of Anosov groups play important roles in
the proof of Proposition \ref{mt}:
\begin{enumerate}
\item (Antipodality) $ \La \times \La -\{(\xi, \xi)\} \subset \cal F^{(2)}$;
\item (Regularity) If $\gamma_i\to \infty$ in $\Gamma$, then $\alpha (\mu(\gamma_i)) \to \infty$ for each simple root
$\alpha$ of $\op{Lie}(G)$ with respect to $\fa^+$;
\item (Morse property) There exists a constant $D>0$ such that any discrete geodesic ray $[e,x)$ 
in $\Gamma$ tending to $x\in \partial \Gamma$
 is contained in the $D$-neighborhood
of some $gA^+$ in $G$ where $g\in G$ satisfies $gP=\zeta(x)$.
\end{enumerate} (1) is a part of the definition of an Anosov subgroup. (2) follows from the fact that $\cal L_\Gamma\subset \inte \fa^+\cup \{0\}$ (\cite{Q4}, \cite{Samb2}, \cite{BCLS})
in view of Lemma \ref{st}.
(3) is proved in \cite{KLP} (see also Proposition \ref{Morse}).

 Many aspects of our proof
of  Proposition \ref{mt} can be simplified  for a special class of $\psi\in \dg$ with certain strong positivity property (cf. Lemma \ref {lem.SP}); however as our eventual goal is the classification theorem as stated in Theorem \ref{mmm}, we need to address all $\psi\in \dg$ which makes the proof much more intricate and requires the full force of the Anosov property of $\Gamma$.
 
  Fix $\ga_0\in \Ga$.  We aim to show that
 $\la(\ga_0)\in \mathsf E_{\nu_\psi}(\Ga)$. More precisely,
 for any $\e>0$ and any Borel subset $B\subset \cal F$ with $\nu_\psi(B)>0$, there exists $\gamma\in \Gamma$ such that
\be\label{ob}
\nu_\psi( B\cap\ga \ga_0\ga^{-1} B\cap\{\xi\in \cal F : \norm{\beta_\xi(o,\ga \ga_0 \ga^{-1}  o)-\la (\ga_0)}<\e\}) >0.
\ee
 For $p\in G/K$, we define  
  $$d_{\psi, p} (\xi_1, \xi_2)=e^{-[\xi_1, \xi_2]_{\psi, p}}$$
for any $\xi_1\ne \xi_2$ in $\La$, where $[\cdot, \cdot]_{\psi, p}$ denotes the $\psi$-Gromov product based at $p$ (Def. \ref{def.GP0}).
Its well-definedness is due to the antipodality (1). In the rank one case,
this is simply the restriction of the classical visual metric  to the limit set $\La$. In general,
it is not even symmetric but
we show that  any sufficiently small  power of $d_{\psi, p}$ is comparable
 to some genuine metric on $\La$:
\begin{theorem}\label{prop.met0}
For all sufficiently small $s>0$, there exist a metric $d_s$ on $\La$ and $C_s>0$ such that
for all $\xi_1\neq\xi_2$ in $\La$,
$$
C_s^{-1} d_s(\xi_1,\xi_2 )\leq  d_{\psi,p}(\xi_1,\xi_2)^s\leq   C_s
d_s(\xi_1,\xi_2).
$$
\end{theorem}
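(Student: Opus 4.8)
The plan is to prove that $d_{\psi,p}^s$ becomes a genuine metric up to a bounded multiplicative error for small $s$, by verifying a ``coarse triangle inequality'' for $d_{\psi,p}$ and then invoking the classical Frink-type metrization lemma (as used by Gromov--Ghys--de la Harpe for visual metrics on hyperbolic groups). Concretely, I would first establish symmetry up to a bounded factor: $d_{\psi,p}(\xi_1,\xi_2)\asymp d_{\psi,p}(\xi_2,\xi_1)$ uniformly on $\La\times\La$. This reduces, via the definition of the $\psi$-Gromov product, to comparing $[\xi_1,\xi_2]_{\psi,p}$ and $[\xi_2,\xi_1]_{\psi,p}$; the point is that both record, up to $O(1)$, the value $\psi(\mu(g))$ where $g\in G$ is a ``midpoint'' group element realizing the geodesic configuration between lifts of $\xi_1,\xi_2$ through $p$. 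Here I expect to use the Morse property (3) — every geodesic in $\Gamma$ fellow-travels a flat $gA^+$ — together with regularity (2), which forces $\alpha(\mu(\gamma))\to\infty$ on all simple roots so that the Busemann cocycle and Cartan projection agree up to a uniform constant along limit-set directions; antipodality (1) guarantees the relevant configuration is in $\F^{(2)}$ so $d_{\psi,p}$ is finite and positive off the diagonal.

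The second step is the coarse ultrametric/triangle inequality: there is a constant $\kappa\ge 1$ such that for all $\xi_1,\xi_2,\xi_3\in\La$,
\[
d_{\psi,p}(\xi_1,\xi_3)\le \kappa\,\max\bigl(d_{\psi,p}(\xi_1,\xi_2),\,d_{\psi,p}(\xi_2,\xi_3)\bigr),
\]
equivalently $[\xi_1,\xi_3]_{\psi,p}\ge \min([\xi_1,\xi_2]_{\psi,p},[\xi_2,\xi_3]_{\psi,p})-C$ for a uniform $C$. I would prove this by passing to the group $\Gamma$: choose group elements $g_{12},g_{23},g_{13}$ (images under $\zeta^{-1}$ and a section, or boundary-at-infinity points of geodesics) realizing the three Gromov products, and use the Morse lemma to replace the geodesic configuration in $\Gamma$ by one in a single apartment $gA^+$, where $\psi$-Gromov products linearize: on a flat, $[\,\cdot\,,\cdot\,]_{\psi,p}$ becomes a linear functional evaluated at the nearest-point projections, and the inequality above is then the standard $\delta$-hyperbolic tripod estimate for the geodesic triangle in $\Gamma$ pushed through the linear form $\psi$, with $C$ absorbing $\delta$, the Morse constant $D$, and $\norm{\psi}$. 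The key technical care is that $\psi$ need not be symmetric or uniformly positive on $\fa^+$ — it only satisfies $\psi\ge\psi_\Gamma$ with equality at some $v\in\inte\L_\Gamma$ — so one must use that $\L_\Gamma\subset\inte\fa^+\cup\{0\}$ to ensure $\psi$ is bounded below by a positive multiple of $\norm{\cdot}$ along all directions actually occurring in $\mu(\Gamma)$, hence along all the relevant configurations.

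Once the coarse triangle inequality holds with constant $\kappa$, the Frink metrization lemma applies: for any $s>0$ with $\kappa^s\le 2$ (equivalently $s\le \log 2/\log\kappa$), the function
\[
d_s(\xi_1,\xi_2):=\inf\Bigl\{\sum_{i=0}^{k-1} d_{\psi,p}(\eta_i,\eta_{i+1})^s : \eta_0=\xi_1,\ \eta_k=\xi_2,\ \eta_i\in\La\Bigr\}
\]
is a genuine metric on $\La$ (symmetry of $d_s$ follows from the approximate symmetry of $d_{\psi,p}$, which lets one reverse chains at bounded cost and then re-symmetrize the infimum), and the chaining lemma gives $\tfrac12\,d_{\psi,p}(\xi_1,\xi_2)^s\le d_s(\xi_1,\xi_2)\le d_{\psi,p}(\xi_1,\xi_2)^s$, i.e.\ the desired two-sided bound with $C_s=2$ (or a slightly larger constant after accounting for the asymmetry factor). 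I would remark that $d_s$ actually induces the subspace topology on $\La\subset\cal F$, which is needed for later applications.

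The main obstacle I anticipate is the coarse triangle inequality in Step 2: unlike the rank-one case where $d_{\psi,p}$ is (a power of) an honest visual metric and the estimate is classical, here one must transfer a geodesic-triangle thinness statement from the word-hyperbolic group $\Gamma$ to a statement about the linear functional $\psi$ evaluated on Cartan/Busemann data in $\fa$, and the non-symmetry and merely-lower-bounded positivity of $\psi$ mean the usual ``take the thin point of the tripod'' argument has to be executed carefully with the Morse flat, controlling how the nearest-point projection in $\Gamma$ interacts with the apartment $gA^+$ and with $\psi$. Making the constants genuinely uniform over all triples in $\La$ — using compactness of $\partial\Gamma$ and the uniform Morse constant $D$ from \cite{KLP} — is where the full strength of the Anosov hypothesis enters.
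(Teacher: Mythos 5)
Your proposal follows essentially the same route as the paper: the paper also reduces Theorem \ref{prop.met0} to a weak symmetry and weak ultrametric inequality for $d_{\psi,p}$ (Proposition \ref{lem.WSU}), proves these by showing via the Morse property that the $\fa$-valued Gromov product of $\zeta(x),\zeta(y)$ agrees up to a bounded error with $\mu(\ga)+\op{i}\mu(\ga)$ for $\ga$ a projection of $e$ onto a word geodesic $[x,y]$ (Lemma \ref{lem.OP}), combined with the thin-triangle estimate in $\Ga$ and the near-additivity of $\mu$ along word geodesics (Corollary \ref{cor.R2}), and then invokes the Ghys--de la Harpe/Frink chain construction to produce the metric $d_s$ for small $s$. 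Your sketch is correct in outline and matches the paper's argument.
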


\begin{Rmk}\rm {In the process of proving this theorem, we also show that the Gromov product on $\partial \Gamma$
and the $\psi$-Gromov product $[\cdot, \cdot]_{\psi, p}$ are equivalent to each other (see
Theorem \ref{GG}).}\end{Rmk}

As a consequence  of Theorem \ref{prop.met0}, $d_{\psi, p}$ can be used to define virtual balls 
 with respect to which Vitali type covering lemma can be applied (Lemma \ref{inc}).
Let $\xi_0\in \cal F$ denote the attracting fixed point of $\ga_0$ and
consider
 the family $$D(\ga \xi_0, r):=
 \bb B_p (\ga \xi_0, \frac{1}{3} e^{-\psi (\underline a(\gamma^{-1}p, p) + \op{i} \underline a(\ga^{-1}p, p))} r ),
\, \ga\in \Ga, \, r>0$$ where $\underline a(q,p)$ denotes the $\fa^+$-valued distance from $q$ to $p$ (Def. \ref{under}).
We then show that for all sufficiently small $r>0$, there are infinitely many $D(\ga_i \xi_0, r)$ satisfying \eqref{ob} (Lemma \ref{spn}). The key ingredient in this step is 
the following:
\begin{lem} \label{max00} There exists $C=C( \psi, p)>0$ such that for all $\ga \in \Ga$ and $\xi\in \La$,
$$
-\psi(\underline a (p, \ga p))-C
 \le  \psi(\beta_\xi(\ga p, p))\leq  \psi(\underline a (\ga p,  p))+C.
$$
\end{lem}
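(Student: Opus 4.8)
The plan is to reduce the bound, using the $G$-equivariance of $\beta$ and the Cartan decomposition of $\gamma$, to a statement about the Iwasawa cocycle of a translated maximal flat, and then to combine the soft convexity bound coming from Kostant's theorem with the extra rigidity that $\xi\in\Lambda$ forces on an Anosov group.

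First I would record the ``soft'' bound, valid for \emph{every} $\xi\in\cal F$. Writing $p=g_0K$, equivariance gives $\beta_\xi(\gamma p,p)=\beta_{g_0^{-1}\xi}(hK,K)$ with $h=g_0^{-1}\gamma g_0$; inserting $h=k_1(\exp\mu(h))k_2$ and applying equivariance again reduces us, up to an error $O_p(1)$ absorbing the $K$-parts and the base-point twist, to the Busemann cocycle of the flat $\exp(\fa^+)K$. The Iwasawa--Cartan relation --- equivalently Kostant's linear convexity theorem applied to $\{$Iwasawa projection of $hk:k\in K\}$ --- then shows that $\beta_\xi(\gamma p,p)$ lies within $O_p(1)$ of $\operatorname{conv}(W\cdot\underline a(\gamma p,p))$, whose two extreme vertices are $\underline a(\gamma p,p)$ and $w_0\,\underline a(\gamma p,p)=-\i\,\underline a(\gamma p,p)=-\underline a(p,\gamma p)$; for $\gamma$ loxodromic these are, up to $O_p(1)$, precisely $\beta_{y_\gamma^+}(\gamma p,p)$ and $\beta_{y_\gamma^-}(\gamma p,p)$, the Busemann values at the attracting and repelling fixed flags $y_\gamma^\pm$ of $\gamma$. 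If $\psi$ were \emph{dominant} we would be done, since on $\operatorname{conv}(Wv)$ the functional $\psi$ is maximized at $v$ and minimized at $w_0v$; but the content of the Lemma is to treat arbitrary $\psi\in\dg$, which need not be dominant.

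For general $\psi\in\dg$ two further ingredients enter. The first is an $\i$-symmetry: since $\Gamma=\Gamma^{-1}$, both $\cal L_\Gamma$ and $\psi_\Gamma$ are $\i$-invariant, so $\psi^\star:=\psi\circ\i$ again satisfies $\psi^\star\ge\psi_\Gamma$, whence $\psi+\psi^\star\ge2\psi_\Gamma\ge0$ on $\cal L_\Gamma$; since $\underline a(\gamma p,p)$ lies within $O_p(1)$ of $\lambda(\gamma)\in\cal L_\Gamma$ (a standard consequence of the Morse property), this gives $\psi(w_0\underline a(\gamma p,p))=-\psi^\star(\underline a(\gamma p,p))\le\psi(\underline a(\gamma p,p))+O_p(1)$, so the ``wrong'' extreme vertex is harmless. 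The second, and the real point, is that for $\xi\in\Lambda$ the vector $\beta_\xi(\gamma p,p)$ is confined to a region far smaller than $\operatorname{conv}(W\underline a(\gamma p,p))$ --- one expects the bounded neighbourhood of the \emph{edge} $\operatorname{conv}\{\underline a(\gamma p,p),\,w_0\underline a(\gamma p,p)\}$ --- on which, by the previous display, $\psi$ stays between $-\psi^\star(\underline a(\gamma p,p))-O_p(1)$ and $\psi(\underline a(\gamma p,p))+O_p(1)$; this yields both inequalities. It is convenient to phrase the confinement through the $\psi$-Gromov product: the cocycle relation for $[\cdot,\cdot]_{\psi,p}$ expresses $\psi(\beta_\xi(\gamma p,p))$, up to the computable term $\psi(\beta_{y_\gamma^+}(\gamma p,p))=\psi(\underline a(\gamma p,p))+O_p(1)$, through $[\xi,y_\gamma^+]_{\psi,p}-[\gamma^{-1}\xi,y_\gamma^+]_{\psi,p}$ (using $\gamma^{-1}y_\gamma^+=y_\gamma^+$), so the estimate becomes
\[
-O_p(1)\ \le\ [\xi,y_\gamma^+]_{\psi,p}-[\gamma^{-1}\xi,y_\gamma^+]_{\psi,p}\ \le\ \psi(\underline a(\gamma p,p))+O_p(1),
\]
i.e.\ that $\gamma^{-1}$ expands $(\Lambda,d_{\psi,p})$ near its repelling fixed flag $y_\gamma^+$ by a factor that is bounded below and is at most $e^{\psi(\lambda(\gamma))}$; the lower bound on the Gromov products is supplied by Theorem \ref{prop.met0}, which makes $d_{\psi,p}$ a genuine metric on the compact set $\Lambda$ up to constants.

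The main obstacle is exactly this confinement/expansion estimate, and in particular making every error term uniform in \emph{both} $\gamma$ and $\xi$. Here all three Anosov properties are needed: Antipodality, so that $\xi$ is transverse to $y_\gamma^-$ and the dynamics of $\gamma^{\pm1}$ near $\xi$ are governed by $y_\gamma^\pm$; Regularity, which guarantees a genuine (uniformly positive along every simple root, hence $\psi$-positive) expansion rate; and the Morse property, which converts the fellow-travelling of a geodesic ray $[e,\zeta^{-1}(\xi))$ in $\Gamma$ with a flat $gA^+$ satisfying $gP=\xi$ into uniform control on the Busemann cocycle. For $\psi$ with a strong positivity property much less would be required, but an arbitrary $\psi\in\dg$ --- which the classification of Theorem \ref{mmm} demands --- is what makes this step delicate. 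Finally, finitely many torsion conjugacy classes aside, infinite-order elements of an Anosov group are loxodromic, so the loxodromic case above covers everything, and for the finitely many torsion classes $\mu(\gamma)$ is bounded and the Lemma is trivial.
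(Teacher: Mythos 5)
Your soft step is fine and matches the paper's Lemma \ref{lem.SP}: Kostant convexity (equivalently the argument with the representations $\rho_\alpha$) gives $\beta_\xi(\ga p,p)\in\operatorname{conv}(\cal W\cdot\underline a(\ga p,p))+O_p(1)$ for \emph{all} $\xi\in\cal F$, which settles the strongly positive case. But the step you yourself call ``the real point'' --- that for $\xi\in\La$ the vector $\beta_\xi(\ga p,p)$ stays within bounded distance of the segment $\operatorname{conv}\{\underline a(\ga p,p),\,w_0\underline a(\ga p,p)\}$ --- is never proved (you only say the three Anosov properties are ``needed''), and as stated it is false in general. Indeed, the correct description of $\beta_\xi(\ga p,p)$ is the one the paper establishes: splitting a word geodesic $[e,\ga]$ at the projection $\ga_1$ of $\zeta^{-1}(\xi)$ gives $\ga=\ga_1\ga_2$ with $|\ga|=|\ga_1|+|\ga_2|$ and $\beta_\xi(\ga p,p)=\mu(\ga_2^{-1})-\mu(\ga_1)+O_p(1)$, while $\underline a(\ga p,p)=\mu(\ga_1^{-1})+\mu(\ga_2^{-1})+O_p(1)$. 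Take $G$ with $w_0=-\operatorname{id}$ on $\fa$ (so $\op{i}=\operatorname{id}$, e.g.\ $\Sp_4(\br)$), an Anosov subgroup whose limit cone has nonempty interior, and $\ga_1,\ga_2$ long reduced words with $\mu(\ga_1)\approx Tu_1$, $\mu(\ga_2^{-1})\approx Tu_2$ for distinct directions $u_1\neq u_2$ in $\inte\L_\Ga$, with $\xi\in\La$ chosen so that $\zeta^{-1}(\xi)$ projects to $\ga_1$; then $\beta_\xi(\ga p,p)\approx T(u_2-u_1)$ has distance $\asymp T$ from the segment $[\,\underline a(\ga p,p),\,-\underline a(\ga p,p)\,]\approx[\,T(u_1+u_2),\,-T(u_1+u_2)\,]$. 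So the geometric confinement you rely on cannot be the mechanism; and the Gromov-product reformulation you substitute for it (besides a sign slip: $\beta_{y_\ga^+}(\ga p,p)=-\lambda(\ga)$, not $\approx\underline a(\ga p,p)$) is exactly the uniform non-contraction estimate that constitutes the lemma --- Theorem \ref{prop.met0} gives only the quasi-metric structure of $d_{\psi,p}$ and does not supply uniformity in $\ga$ and $\xi$.

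For comparison, the paper's proof never confines $\beta_\xi(\ga p,p)$ geometrically; it proves Proposition \ref{prop.AC} by combining thin triangles in $(\Ga,d_{\mathsf w})$ (to produce the splitting $\ga=\ga_1\ga_2$ with $\zeta^{-1}(\xi)\in O_\delta(\ga,\ga_1)\cap O_\delta(e,\ga_1)$), the Morse property in the form ``shadows in $\partial\Ga$ map to shadows in $X$'' (Proposition \ref{prop.SS}), Sullivan-type shadow estimates for the $\fa$-valued Busemann function (Lemma \ref{lem.shadow1}), and the resulting additivity of Cartan projections along reduced products (Corollary \ref{cor.R2}). The passage from Proposition \ref{prop.AC} to the $\psi$-inequalities then uses only that $\psi(\mu(\ga_1^{\pm1}))\ge 0$ up to finitely many exceptions --- which is the correct replacement for your appeal to $\psi+\psi\circ\op{i}\ge 2\psi_\Ga$. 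If you want to salvage your outline, you would have to prove the displayed Gromov-product inequality directly, uniformly in $\ga$ and $\xi$ (including $\xi$ near $y_\ga^{\pm}$), and in effect that argument reconstructs Proposition \ref{prop.AC}.
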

In the rank one case,  a stronger statement 
$-d(p,q)\le \beta_{\xi}(q,p) \le d(p,q)$  holds for all $q, p\in G/K$ and $\xi\in \cal F$, which generalizes to
 strongly positive linear forms (Lemma \ref{lem.SP}).
For a general $\psi\in \dg$, our proof of Lemma \ref{max00} is based on the property that
 the orbit map $\gamma\mapsto \gamma (o)$ sends a shadow in the word hyperbolic group $\Ga$ to a shadow in the symmetric space $G/K$ (Proposition \ref{prop.SS}), as well as
 the following lemma, which is of independent interest: we denote by $|\cdot|$ the word length on $\Gamma$ with respect to a fixed finite symmetric generating subset.
 \begin{lem}
There exists $R>0$ such that for any $\gamma_1, \ga_2\in \Ga$ with $|\ga_1 \ga_2|=|\ga_1|+|\ga_2|$,
we have
$$\|\mu(\ga_1 \ga_2) -\mu(\ga_1)-\mu(\ga_2)\| <R.$$
 \end{lem}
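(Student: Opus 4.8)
The plan is to reduce the statement to a known ``quasi-additivity'' property of the Cartan projection along geodesics, supplied by the Morse property (3) of Anosov subgroups. First I would fix a finite symmetric generating set and the associated word metric $|\cdot|$ on $\Gamma$. Let $\gamma_1,\gamma_2\in\Gamma$ with $|\gamma_1\gamma_2|=|\gamma_1|+|\gamma_2|$, so that the concatenation of a geodesic word for $\gamma_1$ with one for $\gamma_2$ is again a geodesic word; equivalently, $e$, $\gamma_1$, and $\gamma_1\gamma_2$ lie (in this order) on a single geodesic segment in the Cayley graph of $\Gamma$. By the Morse property applied to this segment (and to its sub-segments $[e,\gamma_1]$ and $[\gamma_1,\gamma_1\gamma_2]$, translating the latter by $\gamma_1^{-1}$), there is a uniform $D>0$ and elements $h\in G$ with $hP$ the image under $\zeta$ of the appropriate endpoint such that the orbit images $e\cdot o$, $\gamma_1\cdot o$, $\gamma_1\gamma_2\cdot o$ are each within bounded distance of a common ray $hA^+$, i.e. of a common Weyl-chamber geodesic in $G/K$.

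The second step is to convert ``lying near a common $A^+$-orbit'' into additivity of $\mu$. Along a ray $hA^+ o$ the Cartan projection is exactly additive: $\mu(h a_1 a_2 o)=\mu(ha_1 o)+\mu(a_2 o)$ for $a_1,a_2\in A^+$ since these points are on a geodesic pointing into the interior of the Weyl chamber (regularity (2) guarantees the ray is not on a wall, so Cartan and Busemann projections along it coincide up to a bounded error, and in fact the relevant distances add). Now $\mu$ is coarsely $K$-bi-invariant: $\|\mu(g)-\mu(g')\|$ is controlled by $d_{G/K}(g o, g' o)$ up to an additive constant depending only on $G$ (this is standard, from the Cartan decomposition and the Lipschitz property of $\mu$ with respect to the symmetric-space metric). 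Applying this three times — to compare $\mu(\gamma_1\gamma_2)$, $\mu(\gamma_1)$, $\mu(\gamma_2)$ with the corresponding values read off on the ray $hA^+$, each comparison costing at most $O(D)$ — yields
$$
\bigl\|\mu(\gamma_1\gamma_2)-\mu(\gamma_1)-\mu(\gamma_2)\bigr\|\le R
$$
with $R=R(\Gamma,G)$ depending only on $D$ and $G$, not on $\gamma_1,\gamma_2$.

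A subtle point I would be careful about is that the three points need not lie near the \emph{same} ray $hA^+$: the Morse lemma gives, for the whole segment $[e,\gamma_1\gamma_2]$, a ray $hA^+$ with $hP=\zeta(x)$ for $x$ the forward endpoint of a bi-infinite geodesic extending the segment; the initial sub-segment $[e,\gamma_1]$ fellow-travels the same ray on its initial portion, and the terminal sub-segment $\gamma_1^{-1}[\gamma_1,\gamma_1\gamma_2]=[e,\gamma_2]$ fellow-travels a ray $h'A^+$ with $h'$ close to $\gamma_1 h''$ where $h''$ controls the original ray near $\gamma_1$. Matching these up — i.e. checking that the ``partial'' Cartan projection $\mu(\gamma_1\gamma_2)-\mu(\gamma_1)$ read near $\gamma_1$ agrees up to bounded error with $\mu(\gamma_2)$ read near $e$ after the translation by $\gamma_1^{-1}$ — is where the geometry has to be done carefully; this is essentially the assertion that the $\fa^+$-valued displacement is coarsely additive along Morse geodesics, and it is the main obstacle. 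I expect this to follow by combining the Morse property with a compactness/continuity argument for the map $g\mapsto$ (Cartan projection increment along $gA^+$), uniform over the finitely many generator-displacements involved. Everything else is bookkeeping with the additive constant.
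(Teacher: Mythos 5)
There is a genuine gap, and you have in fact pointed at it yourself: the step you call ``the main obstacle'' is not a technical matching issue but is essentially the statement to be proved, and your proposal offers no argument for it. Concretely, the Morse property (Proposition \ref{Morse}) only tells you that $o$, $\ga_1 o$ and $\ga_1\ga_2 o$ lie within $R_1$ of $g o$, $g a\, o$ and $g b\, o$ for a single $g\in K$ and some $a,b\in A^+$. From this and the coarse Lipschitz property of $\mu$ you get $\mu(\ga_1)\approx\log a$, $\mu(\ga_1\ga_2)\approx\log b$, and $\mu(\ga_2)\approx\underline a(gao,gbo)=\mu(a^{-1}b)$. But $\mu(a^{-1}b)$ is the image of $\log b-\log a$ under the Weyl element that makes it dominant, and it equals (or is close to) $\log b-\log a$ only if $\log b-\log a$ lies within bounded distance of $\fa^+$. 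Nothing in the Morse property as quoted forces this ordering of the $A^+$-parameters: proximity of the orbit points to one $A^+$-orbit does not prevent the chamber coordinates from ``backtracking'' transversally to the dominance order, and if $\log b-\log a$ is far from $\fa^+$ then $\mu(\ga_1\ga_2)-\mu(\ga_1)-\mu(\ga_2)$ is exactly as large as that distance. Establishing this coarse monotonicity of the $\fa^+$-valued displacement along word geodesics is the entire content of the lemma, so deferring it to ``a compactness/continuity argument'' leaves the proof circular; one could close it by invoking the stronger, nested-Weyl-cone (``diamond'') form of the higher rank Morse lemma of Kapovich--Leeb--Porti, but that is an input beyond both your proposal and the version of the Morse property used in the paper. (A smaller point: the Morse property is stated for rays and lines, so the finite segment through $e,\ga_1,\ga_1\ga_2$ must first be placed in a bounded neighborhood of a geodesic line, as is done in Lemma \ref{lem.R2}.)

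For comparison, the paper's proof avoids the chamber-ordering question altogether: using hyperbolicity it produces a point of $\partial\Ga$ lying in three shadows based at $e$, $\ga_1$ and $\ga_1\ga_2$ (Lemma \ref{lem.R2}), transfers it to a point $\xi\in\La$ lying in the three corresponding shadows in $X$ (Proposition \ref{prop.SS}), and then uses the exact cocycle identity $\beta_\xi(\ga_1\ga_2 o,o)=\beta_\xi(\ga_1\ga_2 o,\ga_1 o)+\beta_\xi(\ga_1 o,o)$ together with the shadow lemma (Lemma \ref{lem.shadow1}), which says each Busemann increment is within $\kappa c R_0$ of the corresponding $\fa^+$-valued distance. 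The additivity is thus supplied for free by the Busemann cocycle, with no need to know that differences of chamber coordinates stay near $\fa^+$. If you want to salvage your route, you would need to prove precisely that monotonicity statement, and the shadow/Busemann mechanism is the paper's way of doing so.
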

 We emphasize that this lemma does not follow from the property of Anosov groups
  that $(\Gamma, |\cdot| )\to G$   
  is a quasi-isometric embedding \cite[Thm. 1.7]{GW}, due to the non-trivial multiplicative constant.
 
To establish \eqref{ob}, we  approximate a general Borel subset
$B\subset \F$ by some $D(\ga \xi_0, r)$ satisfying  \eqref{ob}. 
In this step, we prove the following higher rank generalization of Tukia's theorem \cite[Thm. 4A]{Tuk} (see also \cite{Myr}, \cite{Aga}, \cite{Nak}):
 \begin{thm}\label{Mintro}
For any Patterson-Sullivan measure $\nu$ on $\La$,
the set of Myrberg limit points (Def. \ref{def.MYR}) has full $\nu$-measure.
\end{thm}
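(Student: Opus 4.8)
The plan is to adapt Tukia's argument to the higher-rank Patterson--Sullivan setting, using the metric $d_s$ from Theorem \ref{prop.met0} as the substitute for the classical visual metric and the shadow lemma as the substitute for the conformal density estimates. Recall that $\xi \in \La$ is a Myrberg limit point when some (equivalently, by $\Gamma$-equivariance, a dense set of) geodesic ray in $\Gamma$ toward $\xi$ has the property that its image under the orbit map comes arbitrarily close, in the appropriate sense, to \emph{every} oriented geodesic with endpoints in $\La \times \La \setminus \{(\eta,\eta)\}$; equivalently, the backward/forward endpoint pair $(\xi^-,\xi)$ of a $\Gamma$-orbit of the ray is dense in $\F^{(2)} \cap (\La\times\La)$ for an appropriate shift. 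The first step is to reformulate "$\xi$ is Myrberg" as a shrinking-target / dense-orbit condition phrased entirely in terms of the metric space $(\La, d_s)$ and the $\Gamma$-action on it, so that the goal becomes: for a fixed countable dense family of "targets" $\{(\eta_j^-, \eta_j^+)\}$ in $\F^{(2)}\cap(\La\times\La)$ and a fixed sequence $\e_k \to 0$, the set of $\xi$ that fail to have $\gamma\xi$ within $\e_k$ of $\eta_j^+$ (while $\gamma\xi^-$ stays within $\e_k$ of $\eta_j^-$) for infinitely many $\gamma\in\Gamma$ is $\nu$-null; a countable intersection over $j,k$ then finishes.

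The second step is the measure-theoretic heart: a Borel--Cantelli / mixing-type input showing each such bad set is null. Here I would use the shadow lemma for $\nu = \nu_\psi$ (Proposition \ref{prop.SS} together with Lemma \ref{max00}) to show that shadows $\mathcal O_\psi(\gamma o, \cdot)$ of a fixed size at the points $\gamma o$, $\gamma \in \Gamma$, have $\nu$-measure comparable to $e^{-\psi(\mu(\gamma))}$ up to the finite-range error controlled by the earlier lemmas, and that these shadows are comparable to $d_{\psi,p}$-balls of radius $\asymp e^{-\psi(\mu(\gamma))}$. Using the Morse property (3) and regularity (2), a geodesic ray toward $\xi$ passes through a bounded neighborhood of $\gamma A^+$ for a sequence $\gamma = \gamma_n \to \infty$ with $|\gamma_n| \sim n$; pushing a fixed target configuration by $\gamma_n^{-1}$ produces a shadow of size $\asymp e^{-\psi(\mu(\gamma_n))}$ whose $\nu$-measures sum to infinity (because $\sum_{\gamma} e^{-\psi(\mu(\gamma))} = \infty$ for $\psi \in \dg$, by the divergence of the Poincaré series at the critical exponent, which holds since $\psi(v) = \psi_\Gamma(v)$ for some $v \in \inte\L_\Gamma$). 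A quasi-independence estimate for these shadows — again via the shadow lemma and the comparability of $d_{\psi,p}$ with the genuine metric $d_s$, which gives a bounded-multiplicity / Vitali-type control (Lemma \ref{inc}) — yields the divergence Borel--Cantelli conclusion, hence that $\nu$-a.e.\ $\xi$ is recurrent to each target.

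The third step is bookkeeping: one must check the shrinking-target hits along a single ray translate to the \emph{geodesic} Myrberg condition (not merely endpoint recurrence), which is where the Morse property (3) is used a second time, to promote "$\gamma_n \xi$ is close to $\eta_j^+$ and $\gamma_n\xi^-$ is close to $\eta_j^-$" to "the orbit-image ray fellow-travels the geodesic $(\eta_j^-,\eta_j^+)$ along a long segment." I expect the main obstacle to be precisely the interface between the combinatorial geometry of $\partial\Gamma$ and the $\fa$-valued refinement on $\F$: the $\psi$-Gromov product is not symmetric and the Busemann cocycle is $\fa$-valued, so the naive visual-metric estimates in Tukia's proof must be replaced throughout by the $d_s$-estimates of Theorem \ref{prop.met0} and the two-sided bound of Lemma \ref{max00}, and one must ensure the finite error sets $F_\psi$-type corrections do not accumulate. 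Once the shadow lemma, the comparability $d_{\psi,p}^s \asymp d_s$, and the divergence of $\sum_\gamma e^{-\psi(\mu(\gamma))}$ are in hand, the argument is a standard divergence-Borel--Cantelli plus diagonal intersection, so I do not anticipate difficulty there.
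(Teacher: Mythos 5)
Your route is genuinely different from the paper's. The paper deduces Theorem \ref{fullm} from Theorem \ref{thm.reparam}: the $\Ga$-action on $\La^{(2)}\times\br$ is proper and cocompact and $\mathsf m_\psi$ is a \emph{finite} ergodic measure for the translation flow, so by the Birkhoff ergodic theorem almost every point has a dense forward $\{\tau_s\}_{s\ge 0}$-orbit, and a purely geometric argument (thin triangles, Proposition \ref{prop.SS}, Lemma \ref{lem.shadow1}, Lemma \ref{concave0}) shows that any such point projects to a Myrberg point. You instead propose a direct Tukia-style divergence Borel--Cantelli argument with shadows, using $\sum_{\ga}e^{-\psi(\mu(\ga))}=\infty$ (Lemma \ref{lem.DT}); this would avoid the finiteness of $\mathsf m_\psi$, which is an attractive feature, but the measure-theoretic heart of your argument is not in place.

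Concretely: (i) your events are set up circularly --- the $\ga_n$ ``along the ray toward $\xi$'' depend on $\xi$, whereas a Borel--Cantelli argument needs fixed Borel sets $A_\ga\subset\La$ indexed by $\ga\in\Ga$ (say, the part of the shadow $O_R(o,\ga o)$ whose $\ga^{-1}$-image lies in the fixed target window); (ii) the pairwise quasi-independence $\nu(A_\ga\cap A_{\ga'})\lesssim\nu(A_\ga)\,\nu(A_{\ga'})$ is asserted but not proved, and it does \emph{not} follow from Lemma \ref{inc}, which is only a Vitali-type bounded-overlap statement for balls and says nothing about correlations of the sets $A_\ga$; (iii) even granting quasi-independence, divergence Borel--Cantelli (Kochen--Stone/Paley--Zygmund) yields only that the limsup set has \emph{positive} $\nu$-measure, not full measure. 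The natural repair of (iii) is to note that for a fixed target and scale the limsup set is $\Ga$-invariant (convergence $\ga_i q\to\eta$ is independent of $q$ by Lemma \ref{ssame}) and to invoke the $\Ga$-ergodicity of $\nu_\psi$ (Theorem \ref{pop}(5)) to upgrade positive measure to full measure; you never use ergodicity, and without some such zero--one input the argument cannot conclude. You would also need a uniform lower bound $\nu(A_\ga)\gtrsim_{j,k}e^{-\psi(\mu(\ga))}$ for the target-restricted sub-shadows --- the lower bound in Lemma \ref{lem.SH2} is only for full shadows of sufficiently large radius --- which again requires an argument via conformality and Lemma \ref{lem.shadow1}. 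Your third step (promoting endpoint recurrence to the Myrberg property via the Morse property) is sound and parallels Proposition \ref{prop.Myr1}, but as written the proposal has a genuine gap at its second step.
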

 
It follows that for the $AM$-invariant Bowen-Margulis-Sullivan measure $m_\psi^{\BMS}$ on $\Gamma\ba G$, almost all points have dense $A^+M$ orbits (Corollary \ref{cor.AM}). 
Using the property that virtual balls $\bb B_p(\gamma \xi_0, r)$ satisfy a covering lemma (Lemma \ref{inc}) that is a consequence of Theorem \ref{prop.met0}, we show that $\nu_\psi$-almost all Myrberg limit points satisfy the Lebesgue density type statement for the family $\{D(\ga \xi_0, r):\ga \in \Ga, r>0\}$ (Proposition \ref{lem.sh}). By Theorem \ref{Mintro}, this gives a desired approximation of $B$ by some $D(\ga \xi_0, r)$ satisfying  \eqref{ob}. 

We finally remark that in our subsequent work \cite{LO}, we present refined versions of Theorems \ref{mmm} and \ref{main1}
 building on the main results of this paper.
\medskip

\noindent{\bf Organization:}
In section 2, we go over basic definitions and properties of Zariski dense discrete subgroups of $G$.
In section 3, we discuss the notion of $\fa$-valued Gromov product and define the generalized BMS measures for a pair of $(\Ga,\psi)$-conformal densities on $\cal F$. From section 4, we assume that
 $\Gamma$ is Anosov.
In section 4, we observe that the BMS 
measure
$m^{\BMS}_\psi$ is $AM$-ergodic for each $\psi\in \dg$.
 Sections 5 and 6 are devoted to proving Lemma \ref{max00}
and Theorem \ref{prop.met0} respectively. In section 7, we prove that the space of PS-measures on $\La$
is homeomorphic to $\dg$, which is the first part of Theorem \ref{con-class}.  In section 8, we show that the set of Myrberg limit points of $\G$ has full measure
for any $\PS$-measure on $\La$.  In section 9, we discuss the relation between the set of essential values of $\nu_\psi$ and the $NM$-ergodicity of $m^{\BR}_\psi$. In the final section 10, we prove Theorems \ref{main1}, \ref{main2} and the second part of Theorem \ref{con-class}.

\medskip 
{\it{Added to proof}:} It was recently shown that for any $\psi\in \dg$,
any $\psi$-conformal measure is necessarily supported on $\La$, and hence there exists a unique $(\Ga, \psi)$-conformal measure on $\F$ (not only on $\La$),
first in \cite{ELO2} for ranks at most $3$ and in \cite{LO2} for general ranks. As a consequence,  the space of
all $\Ga$-Patterson-Sullivan measures on $\La$ in Theorem \ref{con-class} is equal to
the space of all $(\Ga,\psi)$-conformal measures on $\F$, $\psi\in \dg$.

 \section{Limit set and Limit cone.}\label{ps}
Let $G$ be a connected, semisimple real algebraic group with finite center, and $\Gamma <G$ be a Zariski dense discrete subgroup. We fix, once and for all, a Cartan involution $\theta$ of the Lie algebra $\mathfrak{g}$ of $G$, and decompose $\fg$ as $\mathfrak g=\mathfrak k\oplus\mathfrak{p}$, where $\fk$ and $\fp$ are the $+ 1$ and $-1$ eigenspaces of $\theta$, respectively. 
We denote by $K$ the maximal compact subgroup of $G$ with Lie algebra $\fk$, and by $X=G/K$ the associated symmetric space.
We also choose a maximal abelian subalgebra $\fa$ of $\mathfrak p$.
Choosing a closed positive Weyl chamber $\fa^+$ of $\fa$, let $A:=\exp \mathfrak a$ and $A^+=\exp \mathfrak a^+$. The centralizer of $A$ in $K$ is denoted by $M$, and we set 
$N$ to be the contracting horospherical subgroup: for  $a\in \inte A^+$,
  $N=\{g\in G: a^{-n} g a^n\to e\text{ as $n\to +\infty$}\}$.
  Note that $\log   N $ is the sum of all positive root subspaces for our choice of $A^+$.
  Similarly, we also consider the expanding horospherical subgroup $N^+$:
for  $a\in \inte A^+$,  $N^+:=\{g\in G: a^n g a^{-n}\to e\text{ as $n\to +\infty$}\}$.
We set $$P^+=MAN^+,\quad\text{and} \quad P=P^-=MAN;$$ they are minimal parabolic subgroups of $G$ that are opposite to each other.
The quotient $\F=G/P$ is known as the Furstenberg boundary of $G$, and is isomorphic to $K/M$.

Let $\op N_K(\fa)$  be the normalizer of $\fa$ in $K$.
Let $\cal W:=\op N_K(\fa)/M$ denote the Weyl group. Fixing a left $G$-invariant and right $K$-invariant Riemannian metric on $G$ induces a $\cal W$-invariant inner product on $\mathfrak a$, which we denote by $\langle \cdot,\cdot\rangle$.  The identity coset $[e]$ in $G/K$ is denoted by $o$.

 \medskip 

Denote by $w_0\in \cal W$ the unique element in $\cal W$ such that $\op{Ad}_{w_0}\fa^+= -\fa^+$; it is the longest Weyl element. Note that
$w_0Pw_0^{-1}=P^+$.
\begin{Def}[Visual map]\rm   For each $g\in G$, we define 
   $$g^+:=gP\in G/P\quad\text{and}\quad g^-:=gw_0P\in G/P.$$
For all $g\in G$ and $m\in M$, observe that  $g^{\pm}=(gm)^{\pm}=g(e^{\pm})$.
 Let $\F^{(2)}$ denote the unique open $G$-orbit in $\F\times \F$:
$$\F^{(2)}=G(e^+, e^-)=\{(g^+, g^-)\in \cal F\times \cal F: g\in G\}.$$ 
Note that the stabilizer of $(e^+, e^-)$ is the intersection $P^-\cap P^+=MA$.

\end{Def}

 We say that $\xi,\eta\in\cal F$ are in general position if $(\xi,\eta)\in\cal F^{(2)}$.
 The Bruhat decomposition says that $G$ is the disjoint union $\cup_{w\in \cal W}Nw P^+$, and
$NP^+$ is Zariski open and dense in $G$.
Hence $(\xi, \eta)\notin \cal F^{(2)}$ if and only if $(\xi, \eta)\in G (e^+, we^-)$ for some $w\in \cal W-\{e\}$.

\subsection*{Cartan projection and $\fa^+$-valued distance}

\begin{Def} [Cartan projection]\label{Cartan} \rm
For each $g\in G$, there exists a unique element $\mu(g)\in \mathfrak a^+$, called the Cartan projection of $g$, such that
\begin{equation*}
g\in K\exp(\mu(g))K.
\end{equation*}
\end{Def}

When $\mu(g)\in \inte \fa^+$ and $g=k_1 \exp \mu(g)k_2$, $k_1, k_2$ are determined uniquely up to mod $M$, more precisely,
if $g=k_1'\exp \mu(g)k_2'$, then for some $m\in M$, $k_1=k_1'm$ and $k_2=m^{-1}k_2'$. We write 
$$\kappa_1(g):=[k_1]\in K/M\quad \text{ and } \quad \kappa_2(g):=[k_2]\in M\ba K.$$

\begin{lem} \label{com}\cite[Lem. 4.6]{Ben} For any compact subset $L\subset G$, there exists a compact subset 
$Q=Q(L)\subset \fa$ such that for all $g\in G$,
$$\mu(L g L)\subset \mu(g)+ Q.$$

\end{lem}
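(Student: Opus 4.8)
The plan is to reduce the assertion to finitely many coordinate estimates obtained from highest weights of representations of $G$, where submultiplicativity of operator norms supplies the bound; no property of $\Gamma$ enters, as this is a statement about $G$ alone.

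First I would recall the representation-theoretic description of $\mu$. For a finite-dimensional irreducible representation $(\rho,V)$ of $G$ with highest restricted weight $\chi\in\fa^*$, fix an inner product on $V$ in which $\rho(K)$ acts orthogonally and $\rho(\exp H)$ ($H\in\fa$) acts by positive symmetric operators. Since $\chi$ dominates every $\fa$-weight of $\rho$ on $\fa^+$, writing $g=k_1\exp(\mu(g))k_2$ with $k_i\in K$ yields the identity $\norm{\rho(g)}=e^{\chi(\mu(g))}$, where $\norm{\cdot}$ denotes the operator norm.

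Next, given the compact set $L$, I would set $c_\rho:=\sup_{\ell\in L\cup L^{-1}}\log\norm{\rho(\ell)}<\infty$ and exploit submultiplicativity: for any $g\in G$ and $\ell_1,\ell_2\in L$,
$$\chi(\mu(\ell_1 g\ell_2))=\log\norm{\rho(\ell_1)\rho(g)\rho(\ell_2)}\le \chi(\mu(g))+2c_\rho,$$
and applying the same inequality to $g=\ell_1^{-1}(\ell_1 g\ell_2)\ell_2^{-1}$ gives the reverse bound, so $|\chi(\mu(\ell_1 g\ell_2))-\chi(\mu(g))|\le 2c_\rho$ uniformly in $g$ and in $\ell_1,\ell_2\in L$. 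Since $G$ is semisimple, the highest restricted weights of its irreducible representations span $\fa^*$; I would pick representations $\rho_1,\dots,\rho_r$ whose highest weights $\chi_1,\dots,\chi_r$ form a basis of $\fa^*$ (e.g. suitable integer multiples of the fundamental weights of the restricted root system), put $c:=\max_j 2c_{\rho_j}$, and conclude that $v:=\mu(\ell_1 g\ell_2)-\mu(g)$ always lies in $Q:=\{w\in\fa:|\chi_j(w)|\le c\text{ for all }j\}$. Because $\{\chi_j\}$ is a basis, $Q$ is a compact subset of $\fa$ depending only on $L$; hence $\mu(LgL)\subset\mu(g)+Q$ for all $g$.

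There is no serious obstacle here: the content is entirely in the submultiplicativity of operator norms, and the two points needing a little care are the normalization giving $\norm{\rho(g)}=e^{\chi(\mu(g))}$ and the spanning of $\fa^*$ by highest weights, both standard facts about the Cartan decomposition and the restricted root system of a semisimple group. (Alternatively one could argue geometrically using the $\fa^+$-valued distance on $X=G/K$, but the representation-theoretic route is the cleanest.)
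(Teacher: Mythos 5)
Your argument is correct and is essentially the same as the standard one: the paper gives no proof of this lemma, citing \cite[Lem.\ 4.6]{Ben}, whose proof is precisely this highest-weight computation, and the identity $e^{\chi_\alpha(\mu(g))}=\norm{\rho_\alpha(g)}_\alpha$ together with submultiplicativity (via the Tits representations of Lemma \ref{lem.Tits}) is exactly the device the paper itself employs in Lemmas \ref{lem.KR2} and \ref{lem.WL}.
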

 \begin{definition}[$\fa^+$-valued distance]\label{under}\rm
We define $\underline a : X\times X\to\mathfrak a^+$ by 
$$\underline a(p,q):=\mu(g^{-1}h )$$
where $p=g(o)$ and $q=h(o)$.
\end{definition}

\subsection*{Accumulation of points of $X$ on $\cal F$}
Let $\Pi$ denote the set of all simple roots of $\mathfrak g$ with respect to $\fa^+$.

\begin{Def} \rm
 We write that  
\begin{enumerate}
\item
 $v_i\to \infty$ regularly in $ \fa^+$  if  $\alpha(v_i)\to\infty$ as $i\to\infty$ for all $\alpha\in\Pi$;
\item
$a_i\to \infty$ regularly in $ A^+$  if  $\log a_i\to \infty$ regularly in $ \fa^+$;
\item
$g_i\to \infty$ regularly in $G$  if  $\mu(g_i)\to\infty$ regularly in $ \fa^+$.
\end{enumerate} 
 \end{Def}
If $a_i\to \infty$ regularly in $A^+$, then for all $n\in N^+$, 
$$\lim_{i\to \infty} a_i n a_i^{-1}= e$$ uniformly on compact subsets of $N$.

\begin{Def}\label{Gr} \rm
We call $\Gamma$ {\it regular} if  for any sequence $\ga_i\in \Ga$
going to $\infty$ in $G$,
$\ga_i\to \infty$ regularly in $G$.
\end{Def}

\begin{lem}\label{gr}
If the closure of $\{(\xi_i, e^-):i=1,2,\cdots\}$  is contained in $\cal F^{(2)}$, then $a_i \xi_i\to e^+$ for any  sequence $a_i\to \infty$ regularly in $A^+$.
\end{lem}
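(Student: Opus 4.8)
\textbf{Proof proposal for Lemma \ref{gr}.}

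The plan is to reduce the statement to the Bruhat/Cartan description of convergence on the Furstenberg boundary. First I would unwind the hypothesis: saying that the closure of $\{(\xi_i, e^-)\}$ lies in $\F^{(2)}$ means there is a compact set $\mathcal{C}\subset \F^{(2)}$ containing all the pairs $(\xi_i,e^-)$. Since $MA$ is the stabilizer of $(e^+,e^-)$ and $G$ acts transitively on $\F^{(2)}$, the fiber over $e^-$ of the projection $\F^{(2)}\to\F$ (second coordinate) is exactly the big Bruhat cell $N^+e^+ = N^+P/P$, identified with $N^+$. So I can write $\xi_i = n_i e^+$ with $n_i\in N^+$, and the compactness of $\mathcal{C}$ forces $\{n_i\}$ to lie in a compact (equivalently bounded) subset of $N^+$: any escape of $n_i$ to infinity in $N^+$ would push $\xi_i$ toward the complement of the open cell, contradicting that $(\xi_i,e^-)$ stays in the compact $\mathcal{C}\subset\F^{(2)}$. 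Passing to a subsequence, $n_i\to n\in N^+$.

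Next I would compute $a_i\xi_i = a_i n_i e^+ = (a_i n_i a_i^{-1})\, a_i e^+ = (a_i n_i a_i^{-1})\, e^+$, using that $a_i\in A$ normalizes $P$ so $a_i e^+ = e^+$. Now invoke the remark recorded just before Lemma \ref{gr}: if $a_i\to\infty$ regularly in $A^+$, then $a_i n a_i^{-1}\to e$ uniformly on compact subsets of $N^+$. Since the $n_i$ lie in a fixed compact subset of $N^+$ and the conjugation maps converge uniformly there, $a_i n_i a_i^{-1}\to e$ in $N^+$, hence $a_i n_i a_i^{-1}\to e$ in $G$. Applying the continuous visual map $g\mapsto g^+=gP$ gives $a_i\xi_i = (a_i n_i a_i^{-1})^+ \to e^+$. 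Since this holds along every subsequence (each subsequence has a further subsequence with $n_i\to n\in N^+$ compactly, yielding the same limit $e^+$), the full sequence converges: $a_i\xi_i\to e^+$.

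The only genuinely delicate point is the first step, the compactness/boundedness claim for $\{n_i\}\subset N^+$: one must check that the identification of $N^+$ with the cell $N^+e^+$ over $e^-$ inside $\F^{(2)}$ is proper, i.e.\ that $n_i\to\infty$ in $N^+$ implies $(n_i e^+, e^-)$ leaves every compact subset of $\F^{(2)}$. This follows because $\F^{(2)}$ is a single $G$-orbit $G(e^+,e^-)$ with stabilizer $MA$, so $\F^{(2)}\cong G/MA$, and the orbit map $N^+\to \F^{(2)}$, $n\mapsto (ne^+,e^-) = n(e^+,e^-)$ is the restriction to $N^+$ of the (proper) orbit map $G\to G/MA$; since $N^+\cap MA=\{e\}$ and $N^+$ is closed in $G$, this restriction is a homeomorphism onto its image and is proper. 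Everything else is routine manipulation with the Langlands/Bruhat decompositions and the regularity hypothesis, so I do not expect further obstacles.
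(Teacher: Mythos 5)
Your proof is correct and takes essentially the same route as the paper: write $\xi_i=n_ie^+$ with $n_i$ in a bounded subset of $N^+$ (the paper asserts this in one line) and then contract via $a_in_ia_i^{-1}\to e$ using regular divergence. One small caveat: the orbit map $G\to G/MA$ is \emph{not} proper since $MA$ is noncompact, but your conclusion still holds because $N^+MA=P^+$ is closed in $G$, so $n\mapsto(ne^+,e^-)$ is a closed embedding of $N^+$ into $\F^{(2)}$ (equivalently, $n\mapsto ne^+$ is a diffeomorphism onto the open Bruhat cell, so any limit of $\xi_i$ lying in $N^+e^+$ forces the $n_i$ to stay bounded).
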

\begin{proof}
 The hypothesis implies that $\xi_i=n_i e^+$ for a bounded sequence $n_i\in N^+$.
Hence
$a_i\xi_i= a_i n_i e^+ =(a_i n_i a_i^{-1}) e^+\to e^+$
as  $a_i\to \infty$ regularly in $A^+$.
\end{proof}

\begin{Def}\rm
\begin{enumerate}
\item A sequence $g_i\in G$ is said to converge to $\xi\in\cal F$, if $g_i\to\infty$ regularly in $G$ and $\lim\limits_{i\to\infty}\kappa_1(g_i)^+=\xi$.
\item A sequence $p_i=g_i(o) \in X$ is said to converge to $\xi\in \cal F$ if $g_i$ does. 
\end{enumerate}
\end{Def}

\begin{lem}\label{gen} Consider a sequence $g_i=k_ia_i h_i^{-1}$ where $k_i\in K, a_i\in A^+, h_i\in G$
satisfy that $k_i^+\to k_0^+$ in $K$, $h_i\to h_0$ in $G$, and $a_i\to \infty$ regularly in $A^+$. 
Then for any $\xi\in\cal F$ in general position with $h_0^-$, we have
$$\lim\limits_{i\to\infty}g_i\xi= k_0^+.$$\end{lem}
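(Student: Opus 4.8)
\textbf{Proof plan for Lemma \ref{gen}.}

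The plan is to reduce the statement to Lemma \ref{gr} by absorbing the auxiliary elements $k_i^+ \to k_0^+$ and $h_i \to h_0$ into bounded error terms. First I would write $g_i \xi = k_i\, a_i\, (h_i^{-1}\xi)$ and focus on understanding $h_i^{-1}\xi$. Since $h_i \to h_0$ in $G$ and $\xi$ is in general position with $h_0^-$, i.e. $(\xi, h_0^-) \in \F^{(2)}$, continuity of the $G$-action on $\F \times \F$ together with the openness of $\F^{(2)}$ gives that $(h_i^{-1}\xi, h_i^{-1}h_0^-)$ lies in $\F^{(2)}$ for all large $i$. Now $h_i^{-1}h_0^- \to h_0^{-1}h_0^- = e^-$, so for large $i$ the pairs $(h_i^{-1}\xi, e^-)$ also lie in $\F^{(2)}$ (again using that $\F^{(2)}$ is open and $h_i^{-1}h_0^-$ converges to $e^-$); moreover one can arrange that the closure of $\{(h_i^{-1}\xi, e^-)\}$ is contained in $\F^{(2)}$, since any subsequential limit $(\eta, e^-)$ of $(h_i^{-1}\xi, e^-)$ is a subsequential limit of $(h_i^{-1}\xi, h_i^{-1}h_0^-)$, which stays in the closed-in-$\F^{(2)}$ set $h_i^{-1}\cdot\{(\xi,h_0^-)\}$ whose limit is $(h_0^{-1}\xi, e^-) \in \F^{(2)}$ — wait, this needs care, so let me instead argue directly: write $h_i^{-1}\xi = n_i e^+$ with $n_i \in N^+$, and the hypothesis forces $n_i$ to stay bounded, exactly as in the proof of Lemma \ref{gr}.

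Concretely, set $\xi_i := h_i^{-1}\xi$. The key sub-claim is that $\{\xi_i\}$ has all its limit points in the open cell $N^+ e^+$, with the $N^+$-coordinates bounded. To see this: $(\xi, h_0^-) \in \F^{(2)}$ means $\xi = h_0 n e^+$ for some $n \in N^+$ (since the stabilizer set-up gives $\F \setminus \{\eta : (\eta, h_0^-) \notin \F^{(2)}\} = h_0 N^+ e^+$), hence $\xi_i = h_i^{-1} h_0\, n\, e^+$ with $h_i^{-1}h_0 \to e$; writing $h_i^{-1}h_0 n = \kappa_i' \exp(v_i') \kappa_i''$ or more simply using that $h_i^{-1}h_0 n \to n \in N^+ \subset N^+ P^+$ and that $N^+ P^+$ is open, we get $\xi_i = n_i e^+$ with $n_i \to n$ in $N^+$, in particular $\{n_i\}$ is bounded. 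Then
\[
g_i \xi = k_i\, a_i\, n_i\, e^+ = k_i\, (a_i n_i a_i^{-1})\, e^+,
\]
and since $a_i \to \infty$ regularly in $A^+$ we have $a_i n_i a_i^{-1} \to e$ uniformly for $n_i$ in the bounded set (as recorded in the remark after the definition of regular convergence), so $a_i n_i a_i^{-1} e^+ \to e^+$. Finally $k_i^+ \to k_0^+$ means $k_i e^+ \to k_0^+$ after passing through $K/M$, and combining $k_i e^+ \to k_0 e^+$ with $a_i n_i a_i^{-1} e^+ \to e^+$ via continuity of the $G$-action on $\F$ yields $g_i \xi = k_i (a_i n_i a_i^{-1}) e^+ \to k_0 e^+ = k_0^+$.

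The main obstacle, and the step deserving the most care, is the boundedness of $\{n_i\}$: one must make sure that the convergence $h_i^{-1}h_0 n \to n$ really does take place inside the big Bruhat cell $N^+ P^+$ and yields a convergent (hence bounded) $N^+$-component, rather than merely staying in $\F$. This is where the general-position hypothesis $(\xi, h_0^-) \in \F^{(2)}$ is essential — it places the limit point $h_0^{-1}\xi = n e^+$ strictly inside the open cell, and openness of $N^+ P^+$ then propagates this to all large $i$. Everything else is continuity of the action and the standard contraction estimate $a_i n_i a_i^{-1} \to e$ for $a_i$ regular, so once boundedness is secured the conclusion follows exactly as in Lemma \ref{gr}.
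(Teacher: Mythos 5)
Your proof is correct and takes essentially the same route as the paper: the paper transfers the general-position hypothesis to $(h_i^{-1}\xi,e^-)\in\F^{(2)}$ by openness of $\F^{(2)}$ and then invokes Lemma \ref{gr}, whose content (write $h_i^{-1}\xi=n_ie^+$ with $n_i$ bounded in $N^+$ and use the contraction $a_in_ia_i^{-1}\to e$ for $a_i\to\infty$ regularly, then apply $k_i$ with $k_i^+\to k_0^+$) is exactly the computation you carry out inline. The only blemish is notational: the open Bruhat cell you need is $N^+P=N^+MAN^-$ rather than $N^+P^+$ (which equals $P^+$ and is not open); with that substitution your decomposition $h_i^{-1}h_0n=n_ip_i$ with $n_i\to n$ in $N^+$ is precisely the boundedness step of Lemma \ref{gr}, and the rest of your argument goes through verbatim.
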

\begin{proof} As $(\xi, h_0^-)\in \cal F^{(2)}$, we have
 $(h_0^{-1}\xi,e^-)\in\cal F^{(2)}$. Since $\cal F^{(2)}$ is open and $h_i^{-1}\xi \to h_0^{-1}\xi$,
we have $(h_i^{-1}\xi ,e^-)\in \cal F^{(2)} $  for all large $i$. 
By Lemma \ref{gr}, $a_i h_i^{-1}\xi\to e^+$ as $i\to \infty$.
Therefore $\lim_{i\to\infty} g_i \xi = \lim_{i\to\infty} k_i (a_i h_i^{-1} \xi)=k_0^+$.
\end{proof}

\begin{lem}  \label{ssame} If $g_i\in G$ converges to $\xi\in \cal F$ and $p_i\in X$ is bounded, then $\lim_{i\to \infty}
g_ip_i= \xi$.
\end{lem}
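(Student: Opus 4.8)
The plan is to unwind the definitions. Writing $p=h(o)$ for a fixed $h\in G$, the assertion "$g_ip\to\xi$" means precisely that the sequence $g_ih\in G$ converges to $\xi$ in the sense of the preceding definition, i.e. that \textup{(a)} $g_ih\to\infty$ regularly in $G$, and \textup{(b)} $\kappa_1(g_ih)^+\to\xi$ in $\cal F$. So I would establish these two statements in turn. For \textup{(a)}: apply Lemma \ref{com} to the compact set $L=\{e\}\cup\{h\}$ to get a compact $Q\subset\fa$ with $\mu(g_ih)\in\mu(g_i)+Q$ for all $i$; then for every simple root $\alpha\in\Pi$ we have $\alpha(\mu(g_ih))\ge\alpha(\mu(g_i))-\sup_{q\in Q}|\alpha(q)|\to\infty$, since $g_i\to\infty$ regularly by hypothesis. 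Hence $g_ih\to\infty$ regularly.

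The substance is \textup{(b)}. Since $\cal F$ is compact and metrizable, it suffices to show that every subsequential limit of $\bigl(\kappa_1(g_ih)^+\bigr)$ equals $\xi$. Pass to a subsequence along which $\kappa_1(g_ih)^+\to\xi'$; writing $g_i=k_ia_i\ell_i$ and $g_ih=k_i'a_i'\ell_i'$ in Cartan form and using compactness of $K$, pass to a further subsequence so that $\ell_i\to\ell_0$ and $\ell_i'\to\ell_0'$ in $K$. Now apply Lemma \ref{gen} twice. First, to $g_ih=k_i\,a_i\,(\ell_ih)=k_i\,a_i\,(h^{-1}\ell_i^{-1})^{-1}$: here $k_i^+=\kappa_1(g_i)^+\to\xi$, $h^{-1}\ell_i^{-1}\to h^{-1}\ell_0^{-1}$ in $G$, and $a_i\to\infty$ regularly by \textup{(a)}; hence for every $\eta\in\cal F$ in general position with $(h^{-1}\ell_0^{-1})^-$ we get $g_ih\,\eta\to\xi$. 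Second, to $g_ih=k_i'\,a_i'\,((\ell_i')^{-1})^{-1}$: here $(k_i')^+\to\xi'$, $(\ell_i')^{-1}\to(\ell_0')^{-1}$ in $G$, and $a_i'\to\infty$ regularly by \textup{(a)}; hence for every $\eta$ in general position with $\bigl((\ell_0')^{-1}\bigr)^-$ we get $g_ih\,\eta\to\xi'$. Each of these two genericity conditions defines a Zariski-open dense subset of the irreducible variety $\cal F$ (its complement lies in the union of the lower-dimensional Bruhat cells), so the two subsets meet; choosing $\eta$ in their intersection forces $\xi=\xi'$. Combining \textup{(a)} and \textup{(b)} shows that $g_ih$ converges to $\xi$, i.e. $g_ip\to\xi$.

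The main obstacle is step \textup{(b)}: geometrically it is the assertion that visual convergence in $\cal F$ is insensitive to the basepoint — the points $g_i(o)$ and $g_ih(o)$ remain at bounded distance $d(o,h(o))$, yet one must see that the flag attached to $g_ih(o)$ via the Cartan decomposition has the same limit as that attached to $g_i(o)$. The device that makes this work is to extract subsequences so the auxiliary $K$-factors converge, apply Lemma \ref{gen} from "both sides" to obtain convergence of $g_ih\,\eta$ to $\xi$ (resp.\ $\xi'$) on two generic loci, and then pair off the loci. Everything else — the regularity in \textup{(a)}, and the density/openness of the general-position sets — is routine.
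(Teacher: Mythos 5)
Your proof is correct. It uses the same two tools as the paper's proof — Lemma \ref{com} to control the Cartan projection of the perturbed sequence, and a double application of Lemma \ref{gen} evaluated at a flag in general position with two prescribed flags — but it is organized differently. The paper compares the Cartan decompositions $g_i=k_ia_i\ell_i^{-1}$ and $g_ig=k_i'a_i'(\ell_i')^{-1}$ by setting $q_i:=k_i^{-1}k_i'$, uses the isometry identity $d(o,p)=d(g_io,g_ip)$ to see that $a_i^{-1}q_ia_i'$ is bounded, and then applies Lemma \ref{gen} to that single sequence written in two ways, concluding that any limit $q$ of $q_i$ fixes $e^+$ and hence lies in $M$. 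You instead apply Lemma \ref{gen} directly to two factorizations of $g_ih$ itself — one inherited from the Cartan decomposition of $g_i$ followed by $h$, the other the Cartan decomposition of $g_ih$ — and match the two limits $\xi$ and $\xi'$ at a common flag $\eta$ in general position with both reference flags; this bypasses the bounded-distance trick and the reduction to $q\in M$ entirely, at the mild cost of having to justify (as you do, correctly, via density of the open Bruhat cell) that the two general-position conditions can be met simultaneously. Your version also makes explicit the regularity of $\mu(g_ih)$, which the paper leaves implicit; both arguments are complete and of comparable length.
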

\begin{proof}
Write $g_i=k_i a_i \ell_i^{-1}\in KA^+K$. The hypothesis implies that $a_i\to \infty$ regularly in $A^+$ and 
$k_i^+\to \xi$ as $i\to \infty$. Let $k_0\in K$ be
such that $k_0^+=\xi$, and $g_i'\in G$ be such that $g_i'(o)=p_i$. Write $g_i g_i'=k_i'a_i' (\ell_i')^{-1}\in KA^+K$.
We need to show that $\lim_{i\to \infty} k_i' = k_0^+$. As $k_i^+\to k_0^+$, it suffices to show that any limit of the sequence
$k_i^{-1}k_i'$ belongs to $M=\op{Stab}_Ke^+$.

Set $q_i:=k_i^{-1}k_i'$.  Let $q$ be a limit of  the sequence $q_i$. By passing to a subsequence, 
we may suppose $q_i \to q\in K$.
Since  $d(o, p_i)=d(g_i o, g_i p_i) = d(a_io,  q_i a_i' o) $,
the sequence  $h_i^{-1}:=a_i^{-1} q_i a_i' $  is bounded.
Passing to a subsequence, assume that $h_i$ converges to some $h_0\in G$ as $i\to\infty$.
Choose $\eta\in \cal F$ that is in general position with both $h_0^-$ and $e^-$. Then
$\lim_{i\to \infty} a_i h_i^{-1}\eta =e^+$ and $\lim_{i\to \infty} q_i a_i' \eta=q^+$ by Lemma \ref{gen}.
Since $
a_ih_i^{-1}\eta=q_ia_i'\eta
$, we get $e^+=q^+=q (e^+)$.
This implies $q\in \op{Stab}_K e^+=M$.
 \end{proof}

\begin{lem}\label{st2}
If  $g_i\to g$ in $G$ and $a_i\to \infty$ regularly in $A^+$, then  for any $p\in X$,
$\lim_{i\to \infty} g_ia_i(p)= g^+$ and
$\lim_{i\to \infty} g_ia_i^{-1}(p)= g^-$.
\end{lem}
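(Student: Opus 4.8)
The plan is to reduce both assertions to the convergence criterion supplied by Lemma \ref{ssame}, together with the basic fact (recorded after the definition of regular sequences) that if $a_i \to \infty$ regularly in $A^+$ then $a_i n a_i^{-1} \to e$ uniformly on compacta of $N^+$ and, dually, $a_i^{-1} n' a_i \to e$ uniformly on compacta of $N^-$. Let me treat the first limit $g_i a_i(p) \to g^+$; the second is symmetric, replacing $A^+$ by $A^-$, $N^+$ by $N^-$, and $e^+$ by $e^- = w_0 e^+$ (recall $w_0 A^+ w_0^{-1} = A^-$, so $a_i \to \infty$ regularly in $A^+$ iff $w_0 a_i^{-1} w_0^{-1} \to \infty$ regularly in $A^+$; I will spell this translation out once and then invoke it).

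First I would observe that it suffices to prove $g_i a_i(o) \to g^+$ for the basepoint $o$, since then Lemma \ref{ssame} upgrades this to $g_i a_i(p) \to g^+$ for every $p \in X$ — provided the sequence $g_i a_i$ itself ``converges to $\xi := g^+$'' in the sense of the definition preceding Lemma \ref{ssame}, i.e. $g_i a_i \to \infty$ regularly in $G$ and $\kappa_1(g_i a_i)^+ \to g^+$. To see regularity: since $g_i \to g$ the set $\{g_i\}$ is relatively compact, so by Lemma \ref{com} there is a compact $Q \subset \fa$ with $\mu(g_i a_i) \in \mu(a_i) + Q = \log a_i + Q$; as $\log a_i \to \infty$ regularly (all simple roots $\to \infty$) and $Q$ is bounded, $\alpha(\mu(g_i a_i)) \to \infty$ for every $\alpha \in \Pi$, which is exactly regularity of $g_i a_i$ in $G$.

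For the direction statement $\kappa_1(g_i a_i)^+ \to g^+$, I would argue directly that $g_i a_i(o) \to g^+$ as a point of $\cal F$ and then note this is the same as $\kappa_1(g_i a_i)^+ \to g^+$ by the definition of convergence of a sequence in $X$ to a boundary point (combined with the regularity just established, which makes that definition applicable). To prove $g_i a_i(o) \to g^+$: pick $g_0 \in G$ with $g_0^+ = g^+$ and $g_0 \to$ ... more concretely, apply Lemma \ref{gen} with the decomposition $g_i a_i = k_i a_i h_i^{-1}$ where one writes $g_i = k_i \tilde a_i \tilde h_i^{-1}$ — hmm, this is circular. Instead, the cleanest route: write $g_i = g \cdot (g^{-1}g_i)$ with $g^{-1}g_i \to e$, so $g_i a_i = g \cdot (g^{-1}g_i) \cdot a_i$. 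Writing $g = k \exp(\mu(g)) k'$ via Cartan, one has $g^+ = k^+$... this still needs care. The honest approach is to apply Lemma \ref{gen} as a black box: Lemma \ref{gen} says that for $g_i' = k_i a_i h_i^{-1}$ with $k_i^+ \to k_0^+$, $h_i \to h_0$, $a_i \to \infty$ regularly, one gets $g_i' \xi \to k_0^+$ for $\xi$ in general position with $h_0^-$. Here I would simply take $h_i = e$ (so $h_0 = e$, $h_0^- = e^-$), $k_i = e$, and conclude $a_i \xi \to e^+$ for any $\xi$ in general position with $e^-$; then since $g_i \to g$, continuity gives $g_i a_i \xi = g_i(a_i \xi) \to g(e^+) = g^+$ for any fixed such $\xi$. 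Thus the *image of a fixed point* $\xi$ converges; to get convergence of the image of $o$ (a point of $X$, not $\cal F$), I then invoke Lemma \ref{ssame} in the form: a sequence $h_i \in G$ converging to $\xi_\infty \in \cal F$ satisfies $h_i(p) \to \xi_\infty$ for all $p \in X$ — so I still need $g_i a_i$ to ``converge to $g^+$'' which is precisely regularity (done above via Lemma \ref{com}) plus $\kappa_1(g_i a_i)^+ \to g^+$.

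The main obstacle, then, is closing this small circularity: identifying $\lim \kappa_1(g_i a_i)^+$ with $g^+$. I expect to resolve it as follows. Write $g_i a_i = k_i b_i \ell_i^{-1} \in K A^+ K$ (Cartan decomposition). Regularity gives $b_i \to \infty$ regularly. Passing to a subsequence, $k_i^+ \to \eta$ for some $\eta \in \cal F$; I must show $\eta = g^+$. Apply Lemma \ref{gen} to $g_i a_i = k_i b_i \ell_i^{-1}$: for $\xi$ in general position with $(\lim \ell_i)^-$ (pass to a further subsequence so $\ell_i \to \ell_0$), Lemma \ref{gen} gives $g_i a_i \xi \to \eta$. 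On the other hand, from the black-box computation above, choosing $\xi$ simultaneously in general position with $\ell_0^-$ and with $e^-$, we also have $g_i a_i \xi \to g^+$. Hence $\eta = g^+$, and since every convergent subsequence of $\kappa_1(g_i a_i)^+ = k_i^+$ has limit $g^+$, the full sequence converges to $g^+$. This gives $g_i a_i \to g^+$ in the sense of the definition, so Lemma \ref{ssame} yields $g_i a_i(p) \to g^+$ for all $p \in X$. Finally, for the second assertion, I apply the first assertion to the sequence $\bar g_i := g_i w_0 \to g w_0$ and $\bar a_i := w_0^{-1} a_i^{-1} w_0 \in A^+$, which goes to $\infty$ regularly in $A^+$ since $w_0^{-1} a_i^{-1} w_0$ conjugates $\fa^+$ correctly; then $\bar g_i \bar a_i = g_i w_0 w_0^{-1} a_i^{-1} w_0 = g_i a_i^{-1} w_0$, and $(\bar g_i \bar a_i)(p) \to (g w_0)^+ = g w_0 P = g^-$, while $(\bar g_i \bar a_i)(p) = g_i a_i^{-1}(w_0 p)$; replacing $p$ by $w_0^{-1} p$ and using that the first assertion holds for *all* basepoints gives $g_i a_i^{-1}(p) \to g^-$, as desired.
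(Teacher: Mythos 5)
Your proposal is correct and follows essentially the same route as the paper's proof: Cartan-decompose $g_ia_i=k_ib_i\ell_i^{-1}$, deduce regularity of $b_i$ from Lemma \ref{com}, and identify $k_0^+=g^+$ by comparing the two limits of $g_ia_i\xi$ (via Lemma \ref{gen} on one hand and $a_i\xi\to e^+$ on the other) for a test point $\xi$ in general position with both $\ell_0^-$ and $e^-$, then invoke Lemma \ref{ssame}. The second limit is handled by the same $w_0$-conjugation device the paper uses, so there is nothing essentially different to compare.
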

\begin{proof} By Lemma \ref{ssame}, it suffices to consider the case when $p=o$.
Write $g_ia_i=k_ib_i\ell_i^{-1}\in KA^+K$. 
As the sequence $g_i$ is bounded,
it follows from Lemma \ref{com}
that  $b_i\to \infty$ regularly in $A^+$.  
In order to show that $g_ia_i(o)\to g^+$, it suffices to show that if $k_i\to k_0$, then $k_0^+=g^+$.
By passing to a subsequence, we may assume that $\ell_i\to \ell_0$ in $K$.
Choose $\xi\in \cal F$ that is in general position with both $\ell_0^{-}$ and $e^-$. Then $g_ia_i \xi \to k_0^+$ by Lemma \ref{gen}.
 On the other hand, as $(\xi, e^-)\in \cal F^{(2)}$,
 $g_i a_i \xi \to g^+$ by Lemma \ref{gr}. Hence $g^+=k_0^+$, proving the first claim.
Now the second claim follows since $g_ia_i^{-1} = g_iw_0 b_i w_0^{-1}$ for some $b_i\in A^+$,
 and $g_iw_0 b_i w_0^{-1} (o)= g_iw_0 b_i (o)\to (gw_0)^+=g^-$.
\end{proof}

\subsection*{Limit set and Limit cone.}
Denote by $m_o$ the $K$-invariant probability measure on $\cal F \simeq K/M$.
\begin{Def}[Limit set] \rm The limit set $\La$ of $\Gamma$ is defined as the set of all points $\xi\in \F$ such that the Dirac measure $\delta_\xi$ is a limit point of $\lbrace \gamma_{\ast} m_o\,:\,\gamma\in\Gamma\rbrace$
in the space of Borel probability measures on $\cal F$.
\end{Def}
Benoist showed that $\Lambda $ is the unique minimal $\Gamma$-invariant closed subset of $\F$. Moreover, $\Lambda $ is Zariski dense in $\F$ (\cite[Section 3.6]{Ben}, see also \cite[Lem. 2.10]{ELO} for a stronger statement). 
\begin{lemma}\label{lem.lim}
For any $p\in X$, we have
$$
\La =\left\{
\begin{array}{c}
\lim\limits_{i\to\infty} \ga_i p \in\cal F:  \ga_i\in\Ga
\end{array}
\right\}.
$$
\end{lemma}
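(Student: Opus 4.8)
The plan is to prove the claimed characterization $\La = \{\lim_i \ga_i p : \ga_i \in \Ga\}$ of the limit set in terms of orbit accumulation on $\cal F$, and the natural strategy is a double inclusion, reducing everything to the case $p = o$ using Lemma \ref{ssame} (if $g_i \to \xi$ then $g_i p \to \xi$ for every $p \in X$, so the accumulation set of $\Ga p$ on $\cal F$ does not depend on $p$).

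\medskip

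\noindent\emph{The inclusion $\supseteq$.} Suppose $\ga_i p \to \xi \in \cal F$, i.e. (by definition of convergence of points of $X$ to $\cal F$, applied after writing $\ga_i = k_i a_i \ell_i^{-1} \in KA^+K$) we have $a_i \to \infty$ regularly in $A^+$ and $\kappa_1(\ga_i)^+ = k_i^+ \to \xi$. I would show $\ga_{i*} m_o \to \delta_\xi$ weakly, which by the definition of $\La$ gives $\xi \in \La$. For this, write $\ga_{i*} m_o = (k_i)_* (a_i)_* (\ell_i^{-1})_* m_o = (k_i)_* (a_i)_* m_o$ since $m_o$ is $K$-invariant; then it suffices to show $(a_i)_* m_o \to \delta_{e^+}$ as $a_i \to \infty$ regularly, because $(k_i)_*$ is continuous and $k_i^+ \to \xi$. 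The statement $(a_i)_* m_o \to \delta_{e^+}$ follows from Lemma \ref{gr} (or directly from Lemma \ref{gen}): $m_o$-almost every point of $\cal F$ is in general position with $e^-$ (the complement is a proper Zariski closed subset, hence $m_o$-null), and for such a point $\eta$ one has $a_i \eta \to e^+$; the dominated convergence theorem then gives $\int f \, d(a_i)_* m_o \to f(e^+)$ for every $f \in C(\cal F)$.

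\medskip

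\noindent\emph{The inclusion $\subseteq$.} Let $\xi \in \La$, so there is a sequence $\ga_i \in \Ga$ with $\ga_{i*} m_o \to \delta_\xi$ weakly. Since $\Ga$ is discrete and infinite (being Zariski dense in the noncompact $G$), after passing to a subsequence we may assume $\ga_i \to \infty$ in $G$. I would then argue that, after a further subsequence, $\ga_i$ converges to some point $\eta \in \cal F$ in the sense of the Definition preceding Lemma \ref{gen}: writing $\ga_i = k_i a_i \ell_i^{-1}$, compactness of $K$ lets us assume $k_i^+ \to \eta'$ and $\ell_i^+ \to \ell_0^+$; and regularity of $\mu(\ga_i)$ along a subsequence — which must hold because if $\alpha(\mu(\ga_i))$ stayed bounded for some simple root $\alpha$, one could extract a subsequence along which $\ga_{i*}m_o$ does not concentrate at a single point (its pushforward retains mass spread along a positive-dimensional direction), contradicting $\ga_{i*}m_o \to \delta_\xi$. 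Once $\ga_i$ converges to $\eta'$ in this sense, the first inclusion's computation gives $\ga_{i*} m_o \to \delta_{\eta'}$, hence $\eta' = \xi$, and by Lemma \ref{ssame}, $\ga_i p \to \xi$ for every $p \in X$. Thus $\xi$ lies in the right-hand set.

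\medskip

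\noindent\emph{Main obstacle.} The delicate point is the regularization step in the inclusion $\subseteq$: extracting a subsequence along which $\ga_i$ genuinely \emph{converges to a point of $\cal F$} in the technical sense, i.e. controlling both the $A^+$-component (regularity) and the $K/M$-components ($\kappa_1, \kappa_2$ having limits). Regularity does not come for free for an arbitrary Zariski dense $\Ga$ — it is exactly where one must use that $\ga_{i*}m_o$ concentrates at a \emph{single} Dirac mass rather than spreading out, which is a quantitative statement about how mass under $(a_i)_*$ behaves when some $\alpha(\log a_i)$ stays bounded. Making this rigorous is the crux; one clean way is: along any subsequence, pass to a further subsequence so that $k_i^+ \to \eta_0$, $\ell_i^+ \to \ell_0$, and $\mu(\ga_i) \to v_\infty \in \fa^+ \cup \{\infty\text{-directions}\}$ in the appropriate sense, then show that unless $v_\infty$ is regular the limit of $\ga_{i*}m_o$ is supported on a set of positive dimension in the Furstenberg variety, contradicting that the limit is $\delta_\xi$. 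The rest is bookkeeping with the lemmas already proved.
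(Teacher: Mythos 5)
Your proposal is correct and takes essentially the same route as the paper's proof: both inclusions are run through the $KA^+K$ decomposition, the equivalence between regular divergence of the $A^+$-components and $(a_i)_*m_o\to\delta_{e^+}$, and Lemma \ref{ssame} to pass from $p=o$ to arbitrary $p\in X$. The step you single out as the crux --- that $(\ga_i)_*m_o\to\delta_\xi$ forces $a_i\to\infty$ regularly in $A^+$ with $(a_i)_*m_o\to\delta_{e^+}$ --- is precisely the step the paper itself asserts without elaboration (``It follows that $a_i\to\infty$ regularly\dots''), and your sketch of why (a non-regular sequence leaves the limit measure spread over a positive-dimensional subvariety rather than a Dirac mass) is the correct reasoning behind that assertion.
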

\begin{proof} Let $(\ga_i)_* m_o\to \delta_\xi$, and write $\ga_i=k_i a_i \ell_i^{-1}\in KA^+K$.
Suppose $k_i\to k$. Then $(a_i)_* m_o\to \delta_{k^{-1}\xi}$. It follows that
 $a_i\to\infty$ regularly in $A^+$ and $k^{-1}\xi=e^+$, i.e., $\xi=k^+$.
 Hence $\ga_i\to \xi$ and hence $\ga_i(p)\to \xi$ by Lemma \ref{ssame}. This proves the inclusion $\subset$.
 If $\ga_i p\to \xi$ and $\ga_i=k_ia_i\ell_i^{-1}\in KA^+K$, then $a_i\to \infty$ regularly and $k_i^+\to \xi$.
Since $(a_i)_*m_o$ converges to $\delta_{e^+}$, we have $(\ga_i)_*m_o \to \delta_\xi$. This proves the other inclusion.
\end{proof}

Any element $g\in G$ can be written as the commuting product $g_hg_e g_u$, where $g_h$, $g_e$ and $g_u$
 are unique elements that are conjugate to elements of $A^+$, $K$ and $N$, respectively. 
When $g_h$ is conjugate to an element of $ \op{int}A^+$, $g$ is called
 {\it loxodromic}; in such a case, $g_u=e$.
 If a loxodromic element $g\in G$ satisfies
  $\varphi^{-1}g_h \varphi\in \op{int}A^+$ for $\varphi\in G$, then 
  \be\label{yg} y_g:=\varphi^+ \ee
is  called the attracting  fixed point of $g$. We then have $y_{g^{-1}}=\varphi^-$.

\begin{Lem} \cite[Lem. 3.6]{Ben} \label{dense}
The set $$\{(y_\gamma,y_{\gamma^{-1}})\in \La\times \La:\gamma\text{ is a loxodromic element of } \Gamma\}$$ is dense in $\Lambda \times\Lambda $. 
\end{Lem}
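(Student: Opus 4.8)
```latex
\textbf{Proof proposal for Lemma \ref{dense} (density of loxodromic fixed-point pairs).}
The plan is to combine three facts: (i) the limit set $\Lambda$ is the unique minimal $\Gamma$-invariant closed subset of $\F$ and is Zariski dense; (ii) there exist ``enough'' loxodromic elements in $\Gamma$ — in fact $\Gamma$ being Zariski dense forces the existence of loxodromic elements whose Jordan projections are as close to the interior of $\fa^+$ as we like (Benoist's theory of the limit cone); and (iii) the contraction dynamics of a loxodromic element on $\F$: if $g$ is loxodromic with $\varphi^{-1}g_h\varphi\in\inte A^+$, then writing $g^k = \varphi\,(b^k)\,\varphi^{-1}$ with $b=\varphi^{-1}g_h\varphi\in\inte A^+$, Lemma \ref{st2} (or Lemma \ref{gen}) gives $g^k\xi\to y_g=\varphi^+$ for every $\xi\in\F$ in general position with $y_{g^{-1}}=\varphi^-$, and likewise $g^{-k}\xi\to y_{g^{-1}}$ for $\xi$ in general position with $y_g$. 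In particular $y_g\in\Lambda$ for any loxodromic $g\in\Gamma$, since $y_g=\lim_k g^k(p)$ for suitable $p$, by Lemma \ref{lem.lim}; and the set $L$ of attracting fixed points of loxodromic elements of $\Gamma$ is $\Gamma$-invariant (as $\gamma g\gamma^{-1}$ is loxodromic with $y_{\gamma g\gamma^{-1}}=\gamma y_g$) and nonempty, hence $\overline{L}\supset\Lambda$ by minimality, so $\overline{L}=\Lambda$.

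First I would fix a loxodromic element $g_0\in\Gamma$ and set $\xi_0=y_{g_0}\in\Lambda$, $\eta_0=y_{g_0^{-1}}\in\Lambda$. Given a target pair $(\xi,\eta)\in\Lambda\times\Lambda$ and a neighborhood $U\times V$ of it in $\F\times\F$, I would first use $\overline{L}=\Lambda$ to pick a loxodromic $h\in\Gamma$ with $y_h$ close to $\xi$; then I want to produce a nearby loxodromic element whose \emph{repelling} point is close to $\eta$ as well. The standard device is a ping-pong / high-power argument: choose $\gamma_1,\gamma_2\in\Gamma$ (to be built from limit-set density) so that $\gamma_1$ pushes things toward a point near $\xi$ and $\gamma_2^{-1}$ pushes toward a point near $\eta$, and then consider $\gamma:=\gamma_1 g_0^{N}\gamma_2 g_0^{-N}$ or, more cleanly, conjugates and high powers of $g_0$ arranged so that the attracting/repelling pair of the resulting element lands in $U\times V$. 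Concretely: pick $\delta_1\in\Gamma$ with $\delta_1\eta_0$ close to $\xi$ (possible since $\Gamma\eta_0$ accumulates on all of $\Lambda$, as $\overline{\Gamma\eta_0}$ is $\Gamma$-invariant closed, hence $\supseteq\Lambda$) and $\delta_2\in\Gamma$ with $\delta_2\xi_0$ close to $\eta$; then consider $g:=\delta_1 g_0^{N}\delta_1^{-1}$, which is loxodromic with $(y_g,y_{g^{-1}})=(\delta_1\xi_0,\delta_1\eta_0)$, and similarly adjust so the repelling point moves near $\eta$. The cleanest route is: the element $\gamma_N:=\delta_1 g_0^{N}\delta_2^{-1}$ is, for $N$ large, loxodromic with attracting point near $\delta_1\xi_0$ and repelling point near $\delta_2\eta_0$, by the contraction estimates above applied to the dynamics of $g_0^{N}$ — one checks that the matrix/linear action of $\gamma_N$ on an appropriate representation has a dominant eigenvalue, equivalently that $\mu(\gamma_N)\in\inte\fa^+$ and $\kappa_1(\gamma_N)^+\to\delta_1\xi_0$, $\kappa_2$-side $\to\delta_2\eta_0$, forcing loxodromicity and pinning down its fixed points. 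Choosing $\delta_1\xi_0$ within $U$ and $\delta_2\eta_0$ within $V$, and $N$ large enough, puts $(y_{\gamma_N},y_{\gamma_N^{-1}})\in U\times V$.

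The main obstacle is the passage from ``$\gamma_N$ has regular Cartan projection with $\kappa_1,\kappa_2$ near prescribed points'' to ``$\gamma_N$ is loxodromic with attracting/repelling fixed points exactly near those points.'' Regularity of $\mu(\gamma_N)$ alone does not imply loxodromicity; one needs to know the \emph{Jordan} projection is regular, or to invoke a perturbation/openness statement. The standard resolution (this is exactly Benoist's argument in \cite[\S3]{Ben}, which is being cited) is that for $g\in G$ with $\mu(g)$ deep in $\inte\fa^+$ and $\kappa_2(g)^+,\kappa_1(g)^-$ in general position — which here holds because $\delta_2\eta_0$ and $\delta_1\xi_0$ can be taken in general position, using antipodality on the limit set — the element $g$ is automatically loxodromic, with $y_g$ close to $\kappa_1(g)^+$ and $y_{g^{-1}}$ close to $\kappa_2(g)^-$. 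So in the write-up I would state this as a lemma (or cite \cite[Lem.~3.6]{Ben} / its proof directly, since the statement we are proving \emph{is} that citation), reducing the argument to: (a) $\Gamma\xi_0$ and $\Gamma\eta_0$ are dense in $\Lambda$ by minimality; (b) choose $\delta_1,\delta_2$ so that $\delta_1\xi_0\in U$, $\delta_2\eta_0\in V$, and $(\delta_1\xi_0,\delta_2\eta_0)$ in general position; (c) for $N\gg1$, $\gamma_N=\delta_1 g_0^{N}\delta_2^{-1}$ has $\mu(\gamma_N)$ arbitrarily deep in $\inte\fa^+$ with $\kappa_1(\gamma_N)^+\to\delta_1\xi_0$ and $\kappa_2(\gamma_N)^-\to\delta_2\eta_0$ (from the $g_0^{N}$-contraction); (d) conclude $\gamma_N$ loxodromic with $(y_{\gamma_N},y_{\gamma_N^{-1}})\in U\times V$. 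Since $U\times V$ was an arbitrary basic open set meeting $\Lambda\times\Lambda$ at a point of it, density follows.
```
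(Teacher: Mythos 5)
The paper itself gives no argument for Lemma \ref{dense}: it is quoted verbatim from \cite[Lem. 3.6]{Ben}, so your proposal has to be judged against Benoist's proof rather than anything in the text. Your skeleton is the standard one and matches that strategy in spirit: attracting points of loxodromic elements lie in $\La$ and form a dense $\Ga$-invariant set by minimality; given $\delta_1,\delta_2\in\Ga$ with $\delta_1\xi_0\in U$, $\delta_2\eta_0\in V$, the elements $\gamma_N=\delta_1 g_0^N\delta_2^{-1}$ satisfy $\mu(\gamma_N)=N\lambda(g_0)+O(1)$ (Lemma \ref{com}), $\kappa_1(\gamma_N)^+\to\delta_1\xi_0$ and $\kappa_1(\gamma_N^{-1})^+\to\delta_2\eta_0$ (Lemmas \ref{gen}, \ref{st2}), and a quantitative proximality criterion then makes $\gamma_N$ loxodromic with $(y_{\gamma_N},y_{\gamma_N^{-1}})\in U\times V$, provided $(\delta_1\xi_0,\delta_2\eta_0)\in\cal F^{(2)}$. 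Up to citing that criterion (and the non-trivial existence of a single loxodromic $g_0\in\Ga$, which you only gesture at), this part is fine.

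The genuine gap is your step (b), where you secure the transversality $(\delta_1\xi_0,\delta_2\eta_0)\in\cal F^{(2)}$ by ``antipodality of the limit set.'' Antipodality is a property of Anosov subgroups — in this paper it appears only in Section 4, as \eqref{anti} — whereas Lemma \ref{dense} is stated in Section 2 for an arbitrary Zariski dense discrete subgroup, for which two distinct (even nearby) points of $\La$ need not be in general position. As written, your argument therefore proves the lemma only under the Anosov hypothesis, not in the generality in which it is stated and cited. Closing this for general $\Ga$ is exactly where the real work of Benoist's lemma lies: the locus of points not in general position with $\delta_2\eta_0$ is a proper Zariski closed subset of $\cal F$ (the complement of the open Bruhat cell), so one needs to know that $\La\cap U$ is not contained in any proper Zariski closed set — i.e.\ that every nonempty open piece of $\La$ is Zariski dense in $\cal F$, the ``stronger statement'' of \cite[Lem. 2.10]{ELO} mentioned after the definition of $\La$; with that input one picks $\delta_1\xi_0\in U$ off the bad locus and your argument (including the diagonal pairs $(\xi,\xi)$, where one needs two distinct transversal points in $U=V$) goes through. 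Without some such Zariski-density argument, the transversality step — and hence the proof — is not justified at the stated level of generality.
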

 
The Jordan projection of $g$ is defined
 as $ \lambda (g)\in \fa^+$, where $\exp \lambda (g)$ is the element of $A^+$ conjugate to $g_h$.
 
 \begin{Def}[Limit cone] \label{lc} \rm The \emph{limit cone} $\L_\Gamma\subset\fa^+$ of $\Gamma$ is defined as the smallest closed cone containing the \emph{Jordan projection} $\la(\Gamma)$.
 Alternatively, it can be defined as the asymptotic cone of  $\mu(\Ga)$ 
 \cite[Thm. 1.2]{Ben}.
 %\cite[Lem. 2.18]{ELO}.
\end{Def}

The limit cone $\L_\Gamma$ is a convex subset of $\fa^+$ with non-empty interior \cite[Thm. 1.2]{Ben}. 

\begin{Def}[Growth indicator function]\label{def.GI}\rm The growth indicator function $\psi_{\Gamma}\,:\,\fa^+ \rightarrow \br \cup\lbrace- \infty\rbrace$  is defined as a homogeneous function, i.e., $\psi_\Gamma (tu)=t\psi_\Gamma (u)$, such that
  for any unit vector $u\in \fa^+$,
 \begin{equation*}
\psi_{\Gamma}(u):=\inf_{\underset{u \in\scrC}{\mathrm{open\;cones\;}\scrC\subset \fa^+}}\tau_{\cal C}
\end{equation*}
where $\tau_{\cal C}$ is the abscissa of convergence of the series $\sum_{\ga\in\Ga, \mu(\ga)\in\cal C}e^{-t\norm{\mu(\ga)}}$.
\end{Def}
We may consider $\psi_\Ga$ as a function on $\fa$ by setting $\psi_\Ga=-\infty$ outside of $\fa^+$.
Quint showed the following:
\begin{thm} \cite[Thm. IV.2.2]{Quint1} \label{growth} The growth indicator function $\psi_\Gamma$ is concave, upper-semicontinuous, and satisfies
$$\L_\Gamma= \{u\in \fa^+: \psi_\Gamma(u)>-\infty\}.$$
Moreover, $ {\psi_\Gamma}$ is non-negative  on  $\L_\Gamma$ and positive on $\op{int}\L_\Gamma$.
\end{thm}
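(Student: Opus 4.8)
The plan is to establish the four assertions in increasing order of difficulty. First, upper-semicontinuity: for a unit vector $u$, $\psi_\Ga(u)$ is by definition an infimum of the numbers $\tau_{\mathcal C}$ over open cones $\mathcal C\ni u$, and each $\tau_{\mathcal C}$ is an upper bound for $\psi_\Ga(v)$ at every unit $v\in\mathcal C$; hence $\psi_\Ga$ is upper-semicontinuous on the unit sphere, and this propagates to $\fa^+$ by homogeneity. Next, the effective domain and non-negativity: since $\Ga$ is discrete, $\{\gamma\in\Ga:\|\mu(\gamma)\|\le R\}$ is finite for every $R$, so for an open cone $\mathcal C$ the series $\sum_{\mu(\gamma)\in\mathcal C}e^{-t\|\mu(\gamma)\|}$ is either a finite sum, whence $\tau_{\mathcal C}=-\infty$, or it has infinitely many terms, whence it diverges at $t=0$ and converges for $t$ large (the count $\#\{\gamma:\|\mu(\gamma)\|\le R\}$ is at most exponential), so that $\tau_{\mathcal C}\in[0,\infty)$. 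Consequently $\psi_\Ga(u)>-\infty$ iff every open cone about $u$ meets $\mu(\Ga)$ in an infinite set, i.e. iff $u$ lies in the asymptotic cone of $\mu(\Ga)$, which is $\L_\Ga$ by \cite[Lem. 2.18]{ELO}; and the same dichotomy gives $\tau_{\mathcal C}\ge 0$ on $\L_\Ga$, so $\psi_\Ga\ge 0$ there.

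For concavity --- the substance of Quint's theorem \cite{Quint1} --- it suffices, by homogeneity, to prove the superadditivity $\psi_\Ga(v_1+v_2)\ge\psi_\Ga(v_1)+\psi_\Ga(v_2)$ for $v_1,v_2\in\inte\L_\Ga$; this gives concavity on $\inte\L_\Ga$, and upper-semicontinuity extends it to the closed convex cone $\L_\Ga$ (off $\L_\Ga$ the inequality is vacuous). The engine is the near-additivity of the Cartan projection along products in general position: a standard estimate refining Lemma \ref{com} (see \cite{Ben}, \cite{Quint1}) bounds $\|\mu(gh)-\mu(g)-\mu(h)\|$ by an absolute constant whenever $\mu(g),\mu(h)$ are sufficiently deep in $\fa^+$ and the flags determined by $\kappa_2(g)$ and $\kappa_1(h)$ are in general position. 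To enforce the general-position hypothesis one fixes, using Zariski density and Lemma \ref{dense}, a loxodromic $g_0\in\Ga$ with attracting and repelling points in sufficiently general position and inserts a large fixed power $g_0^N$ between factors (translating the counting families by fixed elements of $\Ga$ if necessary) so the estimate applies. Concretely: given $\e>0$ and small open cones $\mathcal C_i\ni v_i$, choose $\Ga_i\subset\{\gamma\in\Ga:\mu(\gamma)\in\mathcal C_i,\ \|\mu(\gamma)\|\le R\}$ with $\#\Ga_i\gg e^{(\psi_\Ga(v_i)-\e)R}$; then $(\gamma_1,\gamma_2)\mapsto\gamma_1 g_0^N\gamma_2$ is boundedly many-to-one for $N$ large, and by the estimate its image consists of $\gg e^{(\psi_\Ga(v_1)+\psi_\Ga(v_2)-2\e)R}$ distinct $\gamma\in\Ga$ with $\mu(\gamma)$ in a controlled cone about $v_1+v_2$ and $\|\mu(\gamma)\|\le R\|v_1+v_2\|+O(1)$; comparing with the definition of $\psi_\Ga$ and letting $\e\to 0$, $R\to\infty$, and the cones shrink yields the superadditivity. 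The hard part is exactly this last step: proving the quantitative regular-position estimate and, more delicately, engineering via $g_0$ and its powers that every multiplied pair is uniformly in general position while the number of resulting group elements stays exponentially large in the direction $v_1+v_2$.

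Finally, positivity on $\inte\L_\Ga$: since $\Ga$ is Zariski dense, hence non-elementary, the abscissa of convergence $\delta$ of $\sum_{\gamma\in\Ga}e^{-t\|\mu(\gamma)\|}$ is strictly positive, and a nested-cones argument produces a direction $u_0$ with $\psi_\Ga(u_0)=\delta>0$. If some $u\in\inte\L_\Ga$ had $\psi_\Ga(u)=0$, choose $s\in(0,1)$ small enough that $u-su_0\in\L_\Ga$ --- possible since $u$ is interior to $\L_\Ga$, which is a convex cone with non-empty interior by \cite[Thm. 1.2]{Ben} --- and set $u'=(u-su_0)/(1-s)\in\L_\Ga$; then $u=su_0+(1-s)u'$, and concavity together with $\psi_\Ga\ge 0$ forces $0=\psi_\Ga(u)\ge s\,\psi_\Ga(u_0)>0$, a contradiction. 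Hence $\psi_\Ga>0$ on $\inte\L_\Ga$.
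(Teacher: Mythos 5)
The paper itself offers no proof of this statement --- it is quoted from Quint \cite{Quint1} --- so your sketch is effectively measured against Quint's argument, whose architecture (counting in cones plus near-additivity of the Cartan projection for products put in general position via auxiliary elements) you correctly identify. Your treatment of upper semicontinuity, of the identification $\{\psi_\Ga>-\infty\}=\L_\Ga$ via the dichotomy finite sum versus divergence at $t=0$, of non-negativity, and of positivity on $\inte\L_\Ga$ (existence of a direction realizing $\delta_\Ga>0$ plus the convex-combination argument) is essentially sound. But two steps are genuinely gapped. First, the passage from concavity on $\inte\L_\Ga$ to concavity on $\L_\Ga$ "by upper-semicontinuity" is a non sequitur: approximating boundary points $u,v$ by interior points $u_n,v_n$, usc bounds $\limsup\psi_\Ga(\lambda u_n+(1-\lambda)v_n)$ from above by $\psi_\Ga(\lambda u+(1-\lambda)v)$, but to pass to the limit on the right-hand side of the superadditivity inequality you would need lower semicontinuity at $u,v$, which you do not have; indeed a usc function concave on the interior of a convex set need not be concave on the closure (take $f\equiv 0$ on $(0,1)$ with $f(0)=f(1)=1$). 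The repair is easy --- your superadditivity argument never uses interiority, only $\psi_\Ga(v_i)>-\infty$, so it should be run on all of $\L_\Ga$ directly --- but as written the step fails.

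Second, and more substantially, the central counting step does not deliver what you assert. Taking $\Ga_i\subset\{\ga:\mu(\ga)\in\mathcal C_1\cup\mathcal C_2,\ \|\mu(\ga)\|\le R\}$ with a common $R$, the products $\ga_1 g_0^N\ga_2$ have $\mu\approx\mu(\ga_1)+\mu(\ga_2)$, and this sum points near $v_1+v_2$ only when $\|\mu(\ga_1)\|$ and $\|\mu(\ga_2)\|$ are comparable in the correct ratio; under the mere constraint $\|\mu(\ga_i)\|\le R$ the directions of the products sweep out essentially the whole convex hull of the two cones, so the claimed inclusion of the image in a controlled cone about $v_1+v_2$ is false. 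To fix this one must count in norm windows, and then confront the real difficulty: $\tau_{\mathcal C_1}$ and $\tau_{\mathcal C_2}$ are limsup growth rates that need not be attained along a common sequence of radii, so the two families must be synchronized (pigeonholing into windows and forming $p$-fold and $q$-fold products with $p,q$ chosen to match scales, which in turn requires the near-additivity estimate for long products and a genuine argument that the product map has bounded, or at least subexponential, multiplicity --- your "boundedly many-to-one" claim is unjustified as stated). These scale-matching and multiplicity issues, rather than the general-position engineering you single out (which is handled by the standard Abels--Margulis--Soifer device), are the delicate points of Quint's proof, and your sketch passes over them.
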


\section{$\fa$-valued Gromov product and generalized BMS measures}\label{sec.BMS}
\subsection*{Iwasawa cocycle and $\fa$-valued Busemann function}
The Iwasawa decomposition says that the product map $K\times A\times N\to G$
is a diffeomorphism.
 \begin{Def}\rm The Iwasawa cocycle $\sigma: G\times \F \to \mathfrak a$ is defined as follows: for $(g, \xi)\in G\times \F$,
 $ \sigma(g,\xi)\in \fa$ is the unique element satisfying
\be\label{exchange} gk\in K \exp (\sigma(g, \xi)) N\ee
where $k\in K$ is such that $\xi=k^+$.\end{Def}
 It satisfies the cocycle relation
$$\sigma(g_1g_2, \xi)=\sigma(g_1, g_2\xi)+\sigma(g_2, \xi)$$ for all $g_1,g_2
\in G$ and $\xi\in \cal F$.

\begin{Def}\label{Bu} \rm The $\fa$-valued Busemann function $\beta: \F\times X\times X\to\mathfrak a $ is defined as follows: for $\xi\in \F$ and $g(o), h(o)\in X$,
 $$\beta_\xi ( g(o), h(o)):=\sigma (g^{-1}, \xi)-\sigma(h^{-1}, \xi).$$
\end{Def}

Observe that the Busemann function is continuous in all three variables. To ease the notation, we will write  $\beta_\xi ( g, h)=\beta_\xi ( g(o), h(o))$. 
We can check that for all $g, h, q\in G$ and $\xi\in \cal F$,
\begin{gather}\label{eq.basic0}
\begin{aligned}
\beta_\xi(g, h)+\beta_\xi&(h, q)=\beta_\xi (g, q), \\ 
\beta_{g\xi}(gh,gq)&=\beta_\xi(h,q),\text{ and}\\
\beta_\xi(g,e)=&\sigma(g^{-1}, \xi).
\end{aligned}
\end{gather}

Geometrically, if $\xi=k^+\in \F$ for $k\in K$, then for any unit vector $u\in\mathfrak a^+$,
$$\langle \beta_\xi(g, h), u\ra =\lim_{t\to +\infty} d (g(o), \xi_t)- d(h(o), \xi_t)$$
where $\xi_t= k \exp (tu)o\in X$.

\medskip
\begin{lemma}\label{lem.el}
For any loxodromic element $g\in G$ and $p\in X$, 
$$
\beta_{y_g}(p,gp)=\lambda (g) \quad\text{ and }\quad \beta_{y_{g^{-1}}}(p,gp)=- \lambda (g^{-1}).
$$
\end{lemma}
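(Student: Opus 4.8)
The plan is to reduce to the case where $g$ is diagonal, i.e. $g \in \inte A^+$, and then compute directly. First I would observe that both sides of each identity are equivariant in the right way under conjugation: if $g = \varphi a \varphi^{-1}$ with $a = \exp\lambda(g) \in \inte A^+$, then $y_g = \varphi^+$ and $y_{g^{-1}} = \varphi^-$ by the definition \eqref{yg}, while $\lambda(g) = \lambda(a) = \log a$. Using the transformation rule $\beta_{g\xi}(gh,gq) = \beta_\xi(h,q)$ from \eqref{eq.basic0}, we have $\beta_{y_g}(p,gp) = \beta_{\varphi a\varphi^{-1}}(\,\varphi\varphi^{-1}p,\ \varphi a \varphi^{-1}p\,) = \beta_{a\cdot\eta}(\,\varphi^{-1}p,\ a\varphi^{-1}p\,)$ where $\eta = \varphi^{-1}(\varphi^+) = e^+$; so it suffices to prove $\beta_{e^+}(q, aq) = \log a$ for all $q \in X$ and $a \in \inte A^+$. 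Since $\lambda(g^{-1}) = \lambda(a^{-1}) = -\log a$ and $y_{g^{-1}} = \varphi^- = \varphi w_0^+$, an entirely parallel reduction brings the second identity to $\beta_{e^-}(q, aq) = \log a$, i.e. $\beta_{w_0 e^+}(q,aq) = \log a$; note $-\lambda(g^{-1}) = \log a$, so this is the claimed value.

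Next I would reduce further to $q = o$: by the first identity in \eqref{eq.basic0}, $\beta_{e^+}(q, aq) = \beta_{e^+}(q,o) + \beta_{e^+}(o, aq)$, and one checks using $\beta_\xi(g,e) = \sigma(g^{-1},\xi)$ together with the cocycle relation for $\sigma$ that this telescopes correctly; alternatively, writing $q = h(o)$, one has $\beta_{e^+}(h, ah) = \sigma(h^{-1}, e^+) - \sigma(h^{-1}a^{-1}, e^+) = \sigma(h^{-1},e^+) - \sigma(h^{-1}, a^{-1}e^+) - \sigma(a^{-1}, e^+) = -\sigma(a^{-1}, e^+)$ since $a^{-1}e^+ = e^+$ ($A$ normalizes $P$). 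So everything comes down to the single computation $\sigma(a^{-1}, e^+) = -\log a$ for $a \in A$. This is immediate from the definition of the Iwasawa cocycle \eqref{exchange}: taking $\xi = e^+ = (e)^+$ so that $k = e$ works, we need $a^{-1} \cdot e \in K \exp(\sigma(a^{-1},e^+)) N$, and since $a^{-1} = e\cdot a^{-1}\cdot e \in KAN$ already, we read off $\sigma(a^{-1}, e^+) = \log(a^{-1}) = -\log a$. This gives $\beta_{e^+}(q,aq) = \log a$. For the second identity, the same argument with $e^-$ in place of $e^+$ works verbatim because $A$ also normalizes $N^-$, hence fixes $e^- = w_0 e^+$, so $\sigma(a^{-1}, e^-) = -\log a$ as well.

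I do not anticipate a serious obstacle here; this is essentially an unwinding of definitions. The only point requiring a little care is keeping the conjugation bookkeeping straight — specifically verifying that $y_g = \varphi^+$ and $y_{g^{-1}} = \varphi^-$ are compatible with the sign conventions in $\lambda(g)$ versus $\lambda(g^{-1})$, and that the $\mathcal W$-conjugation by $w_0$ (which sends $e^+ \mapsto e^-$ and negates $\fa^+$) is handled consistently in the second identity. If one wanted a fully coordinate-free argument avoiding the reduction to $A$, one could instead invoke the geometric description of $\beta$ as a limit of distance differences along the ray $\exp(tu)o$, using that for $g$ loxodromic the forward $g$-orbit of $p$ converges to $y_g$ in the sense of Lemma \ref{ssame} and stays within bounded distance of a flat translated by $\lambda(g)$; but the cocycle computation above is shorter and self-contained.
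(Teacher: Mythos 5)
You follow essentially the same route as the paper (reduce via the cocycle identities \eqref{eq.basic0} to an explicit Iwasawa computation for the conjugating element), and your first identity is fine except for one inaccuracy: a loxodromic element is in general conjugate only into $(\inte A^+)M$, not into $\inte A^+$, so you should write $g=\varphi a m\varphi^{-1}$ with $a=\exp\lambda(g)\in\inte A^+$ and $m\in M$ (this is exactly how the paper sets it up). The extra $m$ is harmless --- $m\in K$ fixes $e^{\pm}$ and commutes with $A$, so $\sigma\big((am)^{-1},e^+\big)=-\log a$ by the same reading of \eqref{exchange} --- but it must be carried along; as written your reduction only covers loxodromic elements with trivial elliptic part.

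The genuine gap is in your second identity. First, $\lambda(g^{-1})$ is the Jordan projection, which by definition lies in $\fa^+$; it equals $\op{i}(\lambda(g))=-\Ad_{w_0}\log a$, not $-\log a$, so the claim ``$-\lambda(g^{-1})=\log a$'' is false unless $\op{i}$ fixes $\lambda(g)$. Second, $\sigma(a^{-1},e^-)\neq -\log a$ in general: the defining relation \eqref{exchange} requires $k\in K$ with $k^+=\xi$, and for $\xi=e^-$ this forces $k=w_0$ (mod $M$), so $\sigma(a^{-1},e^-)$ is the $A$-part of $a^{-1}w_0=w_0\exp(\op{i}(\log a))$, namely $\op{i}(\log a)=\lambda(g^{-1})$. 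Knowing that $a^{-1}$ fixes $e^-$ and happens to lie in $KAN$ does not let you ``read off'' the cocycle; its value depends on the flag. Your two errors compensate --- the correct computation gives $\beta_{e^-}\big(q,(am)q\big)=-\sigma\big((am)^{-1},e^-\big)=-\op{i}(\log a)=-\lambda(g^{-1})$, which is exactly the asserted value --- but the argument as written passes through false intermediate claims, and the asymmetry between the two identities in the lemma is precisely the opposition involution you suppressed. Either redo the $e^-$ computation with $k=w_0$ as above, or do what the paper does: deduce the second identity from the first applied to $g^{-1}$ at the point $gp$, using $\beta_\xi(p,gp)=-\beta_\xi(gp,p)$.
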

\begin{proof}
Let $\varphi\in G$ be so that $g=\varphi am\varphi^{-1}$ for some $a\in A^+$ and $m\in M$.
If $p=h(o)$ for $h\in G$, then, since $g^{-1}$ fixes $\varphi^+=y_g$,
$$
\beta_{y_g}( p, gp)=\beta_{\varphi^+}( ho, gho)=\sigma(h^{-1},\varphi^+)-\sigma(h^{-1} g^{-1},\varphi^+)= -\sigma(g^{-1},\varphi^+).
$$
Writing $\varphi=k b$ with $k\in K$ and $b\in P$, we have
$$
g^{-1}k=\varphi(am)^{-1}\varphi^{-1}k=kb(am)^{-1}b^{-1}\in Ka^{-1}N.
$$
This gives $\sigma(g^{-1},\varphi^+)=\log a^{-1}=-\lambda(g)$, and hence the first identity.
The second identity follows from the first, by replacing $g$ with $g^{-1}$.
\end{proof}

\subsection*{$\fa$-valued Gromov product}  
\begin{Def}[Opposition involution]\label{op} \rm 
  The involution  $\i:\mathfrak a \to \mathfrak a$ defined by $$\i (u)= -\op{Ad}_{w_0} (u)$$
  is called the opposition involution; it preserves $\fa^+$.
 Note that for all $g\in G$, 
 $$\lambda(g^{-1})=\i(\lambda (g))\quad  \text{ and }\quad  \mu(g^{-1})=\i(\mu(g)).$$ 
 It follows that 
 \be\label{iii} \i  (\L_\Ga)=\L_\Ga\quad\text{ and }\quad \psi_\Ga \circ \i=\psi_\Ga.\ee
\end{Def}
\begin{Def}\label{def.GP} We define the $\fa$-valued Gromov product on $\cal F^{(2)}$ as follows:
for $(\xi,\eta)\in\cal F^{(2)}$,
$$
\cal G(\xi,\eta): = \beta_{g^+}(e,g)+\op i\beta_{g^-}(e,g)
$$
where $g\in G$ satisfies $g^+=\xi$ and $g^-=\eta$. 
\end{Def}

The definition does not depend on the choice of a representative of
$[g]\in G/AM$. For all $h\in G$ and $(x,y)\in \cal F^{(2)}$, we have the following identity:
\begin{equation}\label{eq.id1}
\cal G(hx,hy)-\cal G(x,y)=  \sigma(h,x)+\op i\sigma(h,y) .
\end{equation}
As  $\cal G(y,x)=\op i\cal G(x,y)$,
the Gromov product is not symmetric in general.

\begin{lemma}\cite{Tits}\label{lem.Tits}
There exists  a family of
irreducible representations $(\rho_\alpha, V_\alpha)$, $ \alpha\in \Pi$, of $G$ so that 
\begin{enumerate}
\item the highest weight $\chi_\alpha$ of $\rho_\alpha$
is a positive integral multiple of the fundamental weight $\varpi_\alpha$ corresponding to $\alpha$;
\item the highest weight space of $\rho_\alpha$
 is one dimensional.
 \end{enumerate}
\end{lemma}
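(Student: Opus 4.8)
The statement is a classical result of Tits on the existence of irreducible representations whose highest weights are proportional to the fundamental weights $\varpi_\alpha$, with one-dimensional highest weight spaces. My plan is to reduce everything to the complexified Lie algebra $\gfrak_{\bbc}$ and invoke the highest-weight theory of semisimple Lie algebras, then descend back to $G$. Concretely, I would proceed as follows.

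\textbf{Step 1: Work over $\bbc$ and choose weights.} Let $\gfrak_{\bbc}$ be the complexification of $\gfrak$, with the Cartan subalgebra and root system determined by our choices of $\fa$ and $\fa^+$ (more precisely, by a $\theta$-stable maximal torus containing $\fa$). For each restricted simple root $\alpha \in \Pi$, pick a dominant integral weight $\lambda_\alpha$ of $\gfrak_{\bbc}$ whose restriction to $\fa$ is a positive integral multiple of $\varpi_\alpha$ and which is orthogonal to all restricted simple roots $\beta \neq \alpha$; such $\lambda_\alpha$ exists because the restriction map from the weight lattice of $\gfrak_{\bbc}$ to the dual of $\fa$ is surjective onto a finite-index sublattice of the restricted weight lattice (this is where one may need to pass to a multiple of $\varpi_\alpha$ to clear denominators, which is exactly why the statement only asks for a \emph{positive integral multiple}). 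Let $(\rho_\alpha, V_\alpha)$ be the irreducible $\gfrak_{\bbc}$-representation with highest weight $\lambda_\alpha$.

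\textbf{Step 2: One-dimensionality of the highest weight space.} For an irreducible highest-weight module, the highest weight space is automatically one-dimensional as a weight space for the full Cartan of $\gfrak_{\bbc}$. What one actually needs here is that the $\fa$-weight space of $V_\alpha$ for the highest $\fa$-weight $\chi_\alpha$ is one-dimensional, i.e., that no other $\gfrak_{\bbc}$-weight of $V_\alpha$ restricts to $\chi_\alpha$ on $\fa$. This follows from the choice in Step 1: any weight of $V_\alpha$ is of the form $\lambda_\alpha - \sum_\beta n_\beta \beta$ with $n_\beta \geq 0$ integers (sum over simple roots of $\gfrak_{\bbc}$), and restricting to $\fa$ and using that $\lambda_\alpha|_{\fa}$ pairs trivially with $\varpi_\beta$ for $\beta \neq \alpha$ forces all contributions along directions other than $\alpha$ to lower the weight; a short positivity argument with the restricted root system then shows the restricted weight $\chi_\alpha$ is attained only by $\lambda_\alpha$ itself. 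This uses that $\Pi$ is a base of the restricted root system and that the restricted-root string structure is as usual.

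\textbf{Step 3: Descend to $G$.} Finally I would arrange that $(\rho_\alpha, V_\alpha)$ is a representation of the group $G$ (not merely of $\gfrak$). A priori the highest-weight module integrates only to a finite cover of $G$; one fixes this by replacing $\lambda_\alpha$ by a suitable positive integral multiple so that $\lambda_\alpha$ lies in the character lattice of the maximal torus of $G$ (again consistent with the ``positive integral multiple of $\varpi_\alpha$'' in the statement). Since $G$ is a connected semisimple \emph{real algebraic} group with finite center, a large enough multiple works, and the resulting representation is algebraic. This gives the desired family $(\rho_\alpha, V_\alpha)$, $\alpha \in \Pi$.

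\textbf{Main obstacle.} The only genuinely delicate point is Step 2 together with the bookkeeping between weights of $\gfrak_{\bbc}$ and restricted weights on $\fa$: one must verify that the chosen $\lambda_\alpha$ really does produce a \emph{one-dimensional} top $\fa$-weight space, i.e., that distinct $\gfrak_{\bbc}$-weights of $V_\alpha$ cannot collapse to the same maximal $\fa$-weight. Everything else — existence of the highest-weight module, integrality, passing to multiples to land in the character lattice of $G$ — is standard. Since this is a cited result (\cite{Tits}), in the paper I would simply record the statement and refer to Tits' construction, sketching at most the above reduction; I would not reproduce the full argument.
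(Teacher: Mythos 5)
The paper gives no argument for this lemma at all (it is quoted from Tits' paper), so the only question is whether your reconstruction is sound; as written, Step 2 has a genuine gap. The danger for one-dimensionality of the top $\fa$-weight space does not come from directions in the restricted root system at all: it comes from the simple roots of $\fg_{\bbc}$ that vanish identically on $\fa$ (the ``black'' nodes of the Satake diagram, i.e.\ the simple roots of $\fm_{\bbc}$). If $\beta$ is such a root and $\langle\lambda_\alpha,\beta^\vee\rangle>0$, then $\lambda_\alpha-\beta$ is a weight of $V_\alpha$ restricting to the same $\fa$-weight $\chi_\alpha$, so the highest restricted weight space has dimension at least $2$, and no ``positivity argument with the restricted root system'' can see this, since subtracting $\beta$ does not lower the restricted weight. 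Your selection condition in Step 1 (orthogonality to the restricted simple roots $\beta\neq\alpha$) is automatic once $\lambda_\alpha|_{\fa}$ is a multiple of $\varpi_\alpha$ and does not exclude this phenomenon. Concretely, for $G=\SU(3,1)$ the representation $\Lambda^2\bbc^4$ of the complexification has highest weight restricting to a positive multiple of the unique restricted fundamental weight, yet its top restricted weight space is two-dimensional (spanned by $e_1\wedge e_2$ and $e_1\wedge e_3$), so your conditions as stated do not force (2). The correct choice is to take $\lambda_\alpha$ vanishing on the anisotropic part $\mathfrak t$ of a maximally split Cartan $\mathfrak t\oplus\fa$ (equivalently, $\langle\lambda_\alpha,\beta^\vee\rangle=0$ for every simple root $\beta$ vanishing on $\fa$); then the weights of $V_\alpha$ of the form $\lambda_\alpha-\sum n_\beta\beta$ with all $\beta$ black are the weights of an irreducible module for the corresponding Levi with highest weight orthogonal to its coroots, hence reduce to $\lambda_\alpha$ alone, which is the actual reason the restricted highest weight space is a line.

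A second, smaller point: in Step 3, passing to a multiple handles integrality with respect to the character lattice of $G$, but it does not by itself settle whether the irreducible complex module is realizable over $\br$ as an irreducible representation of the real algebraic group $G$ (the highest weight must be invariant under the Satake star involution, and even then a quaternionic/Schur-index obstruction can occur, in which case the real irreducible module doubles and the top restricted weight space would no longer be a line). Handling this obstruction over an arbitrary ground field — by showing it is killed after passing to a suitable multiple of the restricted fundamental weight — is precisely the nontrivial content of Tits' theorem, so it should at least be acknowledged rather than absorbed into ``a large enough multiple works.'' With the corrected choice of $\lambda_\alpha$ and this rationality point addressed (or simply deferred to Tits, as the paper does), the outline is fine.
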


For $\alpha\in\Pi$, denote by $V_{\alpha}^+$ the highest weight space of $\rho_\alpha$, and by $V_{\alpha}^<$ its unique complementary $A$-invariant subspace in $V_{\alpha}$.
We have  $\rho_\alpha(P)V_\alpha^+=V_\alpha^+$, and hence
the map $g\mapsto (\rho_\alpha (g) V_\alpha^+)_{\alpha\in \Pi}$ factors through a proper immersion
 $$\mathcal F=G/P\to \prod_{\alpha\in \Pi} \mathbb P(V_\alpha).$$
Let $\langle \cdot, \cdot \rangle_\alpha$ be a $K$-invariant
 inner product on $V_\alpha$ with respect to which $A$ is symmetric;
 then $V_\alpha^+$ and $V_\alpha^{<}$ are orthogonal to each other.
We denote by $\norm{\cdot}_\alpha$ the norm on $V_\alpha$ induced by $\langle \cdot,\cdot\rangle_\alpha$.
For $\varphi\in V_\alpha^*$, $\|\varphi\|_\alpha$ means the operator norm of $\varphi$.
We also use the notation $\|\cdot\|_\alpha$ for a bi-$\rho_\alpha(K)$-invariant norm on $\op{GL}(V_\alpha)$.
\begin{lem}\label{lem.def2} 
For all $\alpha\in\Pi$ and $g\in G$,
\be\label{g2}
\chi_\alpha(\cal G(g^+,g^-))=-\log\frac{|\varphi(v)|}{\norm{\varphi}_\alpha\norm{v}_\alpha}
\ee
where $v\in gV_\alpha^+$ and $\varphi\in V_\alpha^*$ is such that $\op{\ker}\varphi= gV_\alpha^<$.
\end{lem}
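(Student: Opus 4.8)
The plan is to unwind both sides of \eqref{g2} using the concrete model of $\cal F$ inside $\prod_\alpha \mathbb P(V_\alpha)$ and the formula $\cal G(\xi,\eta) = \beta_{g^+}(e,g) + \i\beta_{g^-}(e,g)$, together with $\beta_{g^+}(e,g)=\sigma(g^{-1},g^+)$ from \eqref{eq.basic0}. The key observation is that pairing $\cal G(g^+,g^-)$ against the highest weight $\chi_\alpha$ turns the $\fa$-valued identity into a scalar identity that can be read off from how $\rho_\alpha(g)$ acts on the highest weight line and on its complement.

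First I would reduce to computing $\chi_\alpha(\sigma(g^{-1},g^+))$ and $\chi_\alpha(\i\,\sigma(g^{-1},g^-))$. Writing $g^+=k^+$ in the Iwasawa picture, $\sigma(g^{-1},g^+)$ is defined by $g^{-1}k\in K\exp(\sigma(g^{-1},g^+))N$; applying $\rho_\alpha$ and using that $\rho_\alpha(N)$ fixes $V_\alpha^+$ while $\rho_\alpha(K)$ is orthogonal, one gets $\|\rho_\alpha(g^{-1}k)w\|_\alpha = e^{\chi_\alpha(\sigma(g^{-1},g^+))}\|w\|_\alpha$ for a highest weight vector $w\in V_\alpha^+$, i.e. $\chi_\alpha(\sigma(g^{-1},g^+)) = -\log\frac{\|w\|_\alpha}{\|\rho_\alpha(g)^{-1}\cdot(\text{stuff})\|}$; unravelling this shows $\chi_\alpha(\beta_{g^+}(e,g))$ measures the logarithmic contraction of $\rho_\alpha(g)^{-1}$ along the highest weight line $gV_\alpha^+$, so $\chi_\alpha(\beta_{g^+}(e,g)) = -\log\frac{\|v\|_\alpha}{\|\rho_\alpha(g)^{-1}v\|_\alpha}$ with $v\in gV_\alpha^+$ arbitrary nonzero. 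For the second term, I would use the standard fact that $\chi_\alpha\circ\i = \chi_{\alpha^*}$ where $\alpha^*$ indexes the dual representation $V_\alpha^* \cong V_{\alpha^*}$, and that the highest weight line of $\rho_{\alpha^*}$ corresponds under $g\mapsto gV^+$ to the functional vanishing on $gV_\alpha^<$; this identifies $\chi_\alpha(\i\beta_{g^-}(e,g))$ with the analogous contraction for the functional $\varphi$ with $\ker\varphi = gV_\alpha^<$. Adding the two contributions, the $\rho_\alpha(g)^{-1}$-factors combine: evaluating $\varphi$ on $v$ and tracking norms gives exactly $\chi_\alpha(\cal G(g^+,g^-)) = -\log\frac{|\varphi(v)|}{\|\varphi\|_\alpha\|v\|_\alpha}$.

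I would organize the argument as follows: (i) record that $V_\alpha = V_\alpha^+ \oplus V_\alpha^<$ is an orthogonal $\rho_\alpha(A)$-weight decomposition with $V_\alpha^+$ the highest-weight line, and that $\rho_\alpha(g)V_\alpha^+ = gV_\alpha^+$, $\rho_\alpha(g)V_\alpha^< $ the complementary subspace, depending only on $g^+$ resp.\ on the flag data; (ii) prove the scalar formula $\chi_\alpha(\sigma(h,\eta)) = \log\frac{\|\rho_\alpha(h)v\|_\alpha}{\|v\|_\alpha}$ for $v$ spanning the line attached to $\eta$, directly from the Iwasawa defining relation and $K$-invariance plus $\rho_\alpha(N)$-fixing of $V_\alpha^+$; (iii) apply this with $(h,\eta) = (g^{-1}, g^+)$ and, after dualizing via $\chi_\alpha\circ\i = \chi_{\alpha^*}$, with the dual datum attached to $g^-$; (iv) add and simplify using $\cal G(g^+,g^-) = \beta_{g^+}(e,g)+\i\beta_{g^-}(e,g)$ and $\beta_\xi(e,g) = \sigma(g^{-1},\xi)$. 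The check that the result is independent of the chosen representatives $v\in gV_\alpha^+$ and $\varphi$ (up to scalar) is immediate since only the ratio appears, matching the already-noted well-definedness of $\cal G$ on $G/AM$.

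The main obstacle I anticipate is bookkeeping around the opposition involution: correctly matching $\i$ acting on $\fa$ with the passage to the dual (or $w_0$-twisted) representation, and verifying that the complementary subspace $V_\alpha^<$ and the functional $\varphi$ with $\ker\varphi = gV_\alpha^<$ are the right geometric objects attached to $g^-=gw_0P$ rather than to $g^+$. Concretely one must check $\rho_\alpha^*(w_0)$ carries the highest weight line of $V_\alpha^*$ to the line $(V_\alpha^<)^\perp$-annihilator, i.e.\ to functionals vanishing on $V_\alpha^<$, so that evaluating at $g$ gives $\ker\varphi = gV_\alpha^<$. Once that identification is pinned down, the norm computation is routine from the orthogonality of $V_\alpha^+$ and $V_\alpha^<$ and the $\rho_\alpha(K)$-invariance of $\|\cdot\|_\alpha$.
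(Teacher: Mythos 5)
Your proposal is correct in substance but takes a genuinely different route from the paper. The paper's proof computes nothing beyond the base point: it defines $\cal G'(g^+,g^-)$ by the right-hand side of \eqref{g2}, cites \cite[Lem 4.12]{Samb1} for the fact that $\cal G'$ satisfies the equivariance identity \eqref{eq.id1}, and then, since $\cal G$ satisfies the same identity and $\cal F^{(2)}$ is the $G$-orbit of $(e^+,e^-)$, concludes $\cal G=\cal G'$ from the single observation $\cal G'(e^+,e^-)=0=\cal G(e^+,e^-)$ (orthogonality of $V_\alpha^+$ and $V_\alpha^<$). You instead unwind both Busemann terms through the Iwasawa cocycle in $V_\alpha$ and its dual. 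Your scalar formula $\chi_\alpha(\sigma(h,\eta))=\log\big(\norm{\rho_\alpha(h)u}_\alpha/\norm{u}_\alpha\big)$, for $u$ spanning the line attached to $\eta$, is correct, and the identification you flag as the main obstacle does hold: $\rho_\alpha^*(w_0)$ carries the highest weight line of $V_\alpha^*$ (functionals killing all but the lowest weight space) to the line of functionals killing $V_\alpha^<$, so the line attached to $g^-=gw_0P$ under the equivariant map into $\bb P(V_\alpha^*)$ is exactly $\{\varphi:\ker\varphi=\rho_\alpha(g)V_\alpha^<\}$. The two logarithms then combine via the identity $\norm{\rho_\alpha(g^{-1})v}_\alpha\,\norm{\varphi\circ\rho_\alpha(g)}_\alpha=|\varphi(v)|$, which holds by homogeneity once checked on the representatives $v=\rho_\alpha(g)w$ and $\varphi=\varphi_0\circ\rho_\alpha(g)^{-1}$ with $w$ a unit highest weight vector and $\varphi_0=\langle\cdot,w\rangle_\alpha$. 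In effect you re-prove the cited Sambarino identity; the paper's argument is shorter given the citation, while yours is self-contained and makes the representation-theoretic mechanism explicit.

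Two small corrections. First, \eqref{eq.basic0} gives $\beta_\xi(g,e)=\sigma(g^{-1},\xi)$, hence $\beta_{g^+}(e,g)=-\sigma(g^{-1},g^+)$, not $+\sigma(g^{-1},g^+)$ as you quote; carried through consistently, your signs would produce $+\log\frac{|\varphi(v)|}{\norm{\varphi}_\alpha\norm{v}_\alpha}$, the negative of \eqref{g2}, so fix the sign at the outset (sanity check: the right-hand side of \eqref{g2} is nonnegative and vanishes at $g=e$). Second, you do not need $V_\alpha^*$ to belong to the Tits family: its highest weight $\chi_\alpha\circ\op{i}$ need not coincide with the highest weight of the chosen $\rho_{\op{i}(\alpha)}$ (both are only multiples of $\varpi_{\op{i}(\alpha)}$). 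All your argument uses is that $V_\alpha^*$ is irreducible with one-dimensional highest weight space, so your step (ii) applies to it verbatim; phrasing it this way avoids that bookkeeping entirely.
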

\begin{proof} If we define $\cal G'(g^+,g^-)$ to be the unique element of $\fa$ satisfying \eqref{g2},
it is shown in \cite[Lem 4.12]{Samb1} that
$\cal G'$ satisfies \eqref{eq.id1}. Hence for all $h\in G$,
$$\cal G'(h^+, h^-)-\cal G'(e^+, e^-) =\cal G(h^+, h^-) -\cal G(e^+, e^-).$$
We claim
that $\cal G'(e^+, e^-)=0$;
to check this, 
take $\varphi$ to be the projection $V\to V_\alpha^+$ parallel to $V_\alpha^<$.
Since $V_\alpha^+$ and $V_\alpha^<$ are orthogonal, it follows that $\norm{\varphi}_\alpha=1$.
Now for $v\in V_\alpha^+$, we have
$$
\frac{\abs{\varphi(v)}}{\norm{\varphi}_\alpha\norm{v}_\alpha}=\frac{\norm{v}_\alpha}{\norm{v}_\alpha}=1.
$$
Since $\cal G (e^+,e^-)=0$, we conclude $\cal G=\cal G'$ on $\cal F^{(2)}$.
\end{proof}
\begin{remark}\rm
In view of this lemma,
our definition of Gromov product differs by $-\op i$ from the one given in \cite{Samb1}.
\end{remark}

\subsection*{Patterson-Sullivan measures on $\La $}
\begin{Def} [Conformal measures] \rm
Given a closed subgroup $\Gamma <G$ and $\psi\in \mathfrak a^*$,
a Borel probability measure $\nu$ on $\F$ is called a $(\Gamma, \psi)$-conformal measure if, for any $\ga\in \G$ and $\xi\in \F$,
 \be\label{gc0} \frac{d\ga_* \nu}{d\nu}(\xi) =e^{\psi (\beta_\xi (e, \gamma))}\ee
 where $\ga_* \nu(Q)=\nu(\gamma^{-1} Q)$ for any Borel subset $Q\subset \F$. 
\end{Def}
If $2\rho$ denotes the sum of all positive roots of $G$ with respect to $\fa^+$, then it is a standard fact that
a $(G, 2\rho)$-conformal measure is precisely the
unique $K$-invariant probability measure $m_o$ on $\cal F$ (cf. \cite[Prop. 3.3]{Q7}).

Fix a Zariski dense discrete subgroup $\G<G$ in the rest of this section.
\begin{Def}[Patterson-Sullivan measures] \rm{} For 
 $\psi\in\fa^*$, a $(\Gamma,\psi)$-conformal measure supported on the limit set $\Lambda $ will be called a $(\Gamma, \psi)$-PS measure.
By a $\PS$ measure on $\La$, we mean a $(\Ga,\psi)$-$\PS$ measure for some $\psi\in\fa^*$.
\end{Def}

Set $$D_\Gamma:=\{\psi\in \fa^*: \psi \ge \psi_\Gamma  \}.$$

 The following collection of linear forms is of particular importance:
\be \label{ddgg} D_\Gamma^\star:=\{\psi\in D_\Gamma: \psi(u)= \psi_\Ga(u)\text{
for some  $u\in \mathcal L_\Ga\cap  \op{int} \fa^+ $}\}.\ee
By \eqref{iii}, $\psi\circ \i \in \dg$ for all $\psi\in \dg$.
The concavity of $\psi_\Ga$ and the non-emptiness of $\inte \L_\Ga$ imply
that $D_\Ga^\star$ is non-empty by the Hahn-Banach theorem.
When $\psi(u)=\psi_\Ga(u)$, we say $\psi$ is tangent to $\psi_\Ga$ at $u$.

Generalizing the work of Patterson and Sullivan (\cite{Pa}, \cite{Su}),
Quint \cite{Quint2} constructed  a $(\Gamma, \psi)$-PS measure for every $\psi\in \dg$.

   \subsection*{Generalized BMS-measure $m_{\nu_1,\nu_2}$.}
 Given a pair of $\Gamma$-conformal measures on $\F$, we now define an $MA$-semi invariant measure on $\Gamma\ba G$, 
 which we call a generalized BMS-measure.  
\medskip

\begin{Def}[Hopf parametrization] \rm
The map $$gM\to (g^+, g^-, b=\beta_{g^+}(e, g))$$ gives a homeomorphism
between $G/M$ and $ \F^{(2)}\times \fa $,  which is called the Hopf parametrization of $G/M$. 

\end{Def}

 Fixing a pair of $\Gamma$-conformal measures $\nu_{\psi_1}, \nu_{\psi_2}$ on $\F$ for  
 a pair of linear forms $\psi_1, \psi_2\in \mathfrak a^*$, we define a Radon measure $\tilde m_{\nu_{\psi_1}, \nu_{\psi_2}}$ on $G/M$ 
  as follows: for $g=(g^+, g^-, b)\in \F^{(2)}\times \mathfrak a$,
\begin{equation}\label{eq.BMS0}
d\tilde m_{\nu_{\psi_1}, \nu_{\psi_2}} (g)=e^{\psi_1 (\beta_{g^+}(e, g))+\psi_2( \beta_{g^-} (e, g )) } \;  d\nu_{\psi_1} (g^+) d\nu_{\psi_2}(g^-) db,
\end{equation}
  where $db=d\ell (b) $ is the Lebesgue measure on $\mathfrak a$.
\noindent
This measure is left $\Gamma$-invariant, and hence induces a measure on $\Gamma\ba G/M$.
 We denote by $m_{\nu_{\psi_1}, \nu_{\psi_2}}$ its $M$-invariant lift to $\Gamma\ba G$. 
 It is $A$-semi-invariant as \begin{equation}\label{eq.SI}
 a_*m_{\nu_{\psi_1},\nu_{\psi_2}}=e^{(-\psi_1+\psi_2\circ \i)(\log a)}m_{\nu_{\psi_1},\nu_{\psi_2}}
\end{equation}
for all $a\in A$ \cite[Lem. 3.6]{ELO}.

\noindent
 \textbf{BMS-measures: $m_{\nu_\psi,\nu_{\psi\circ\i}}^{\BMS}$.} Let $\psi\in\fa^*$  and let $\nu_{\psi}$ and $\nu_{\psi \circ \i}$ be  respectively $(\Gamma, \psi)$ and $(\Gamma, \psi\circ \i)$-PS measures.
We set 
\begin{equation}\label{def.BMS}
m^{\BMS}_{\nu_\psi, \nu_{\psi\circ \i}}:=m_{\nu_\psi, \nu_{\psi\circ \i}}
\end{equation}
and call it the Bowen-Margulis-Sullivan measure associated to $(\nu_\psi, \nu_{\psi\circ \i})$.
 It is  right $MA$-invariant and its support is given by 
 $$\Omega:=\{x \in\Ga\ba G : x^\pm\in\Lambda\};$$
 since $\Lambda$ is $\Gamma$-invariant, the condition $x^{\pm}\in \Lambda$ is well-defined.
 
Note that for $[g]\in G/M$,
\be\label{bmss}
m^{\BMS}_{\nu_\psi, \nu_{\psi\circ \i}}[g]=e^{\psi(\cal G(g^+,g^-))}d\nu_\psi(g^+)d\nu_{\psi\circ\op i}(g^-)db.
\ee

\medskip 

 \noindent
 \textbf{$N$-invariant BR-measures: $m^{\BR}_{\nu_\psi}$.}
 We set 
 \begin{equation}\label{def.BR}
m^{\BR}_{\nu_\psi}:=m_{ \nu_\psi, m_o}
\end{equation} 
 and call it the $N$-invariant Burger-Roblin measure associated to $\nu_\psi$.
 See \cite[Section 3]{ELO} for the equivalence of this definition with the one given in  \eqref{introbr}.
 The support of $m^{\BR}_{\nu_\psi}$
 is given by 
$$ \cal E:=\{x \in \Gamma\ba G: x^+\in \Lambda\}.$$

\section{Anosov groups and $AM$-ergodicity of BMS measures}\label{pingpong}\label{sec.Anosov}
Let $\Ga$ be a Zariski dense discrete subgroup of $G$, and set $\La ^{(2)}:= (\La \times\La) \cap\cal F^{(2)}$.
\begin{Def}
\rm We say that $\Ga<G$ is Anosov, if it is a finitely generated word hyperbolic group admitting a
$\Ga$-equivariant homeomorphism $\zeta:\partial \Ga\to
\La $ such that $(\zeta(x), \zeta(y))\in \La ^{(2)}$ for all $x\ne  y\in \partial \Ga$, where $\partial \Ga$ denotes the Gromov boundary of $\Ga$.
\end{Def}

Such $\zeta$ is H\"older continuous and exists uniquely (\cite[Prop. 3.2]{La} and \cite[Lem. 2.5]{BCLS}). 
We call it the limit map of $\Ga$. We note that the antipodal property of $\La$
follows directly:
\be\label{anti} \La\times \La -\{(\xi, \xi)\}=\La^{(2)}.\ee
In the literature, this definition is referred to as $P$-Anosov  for a minimal parabolic subgroup $P$ of $G$.
See \cite{GW}, \cite{GGKW} and \cite{KLP} for equivalent characterizations of Anosov subgroups.

 \medskip
 
 In the rest of this section,
let $\Gamma$ be an Anosov subgroup of $G$.

 The following theorem was proved by Quint \cite[Prop. 3.2 and Thm. 4.7]{Q4} for Schottky groups
and by Sambarino \cite[Coro. 3.12, 3.13 and 4.9]{Samb2} and
 by \cite[Thm. 7.9]{ELO} for general Anosov subgroups in view of the results in \cite{CS} (see also \cite[Remark 7.10]{ELO}):
\begin{thm}\label{pop}
\begin{enumerate}
\item  $\L_\Gamma\subset \op{int}  \mathfrak a^+\cup\{0\}$
and every non-trivial element of
$\Ga$ is loxodromic (\cite{La}, \cite{GW}).
 \item $\psi_\Gamma$ is strictly concave and  analytic on $\op{int}\L_\Gamma$.
 \item   $\dg=\{\psi\in D_\Gamma: \psi (u)=\psi_\Gamma(u) \text{ for some $u\in \inte \L_\Ga$}\}.$
 \item For any $\psi\in \dg$, $\psi>0$ on $\cal L_\Gamma -\{0\}$.
\item For any $\psi\in \dg$, there exists a unique $(\Gamma, \psi)$-$\PS$ measure, say $\nu_\psi$, on $\F$. In particular, $\nu_\psi$ is $\Gamma$-ergodic.
\end{enumerate}
 \end{thm}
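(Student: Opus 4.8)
All five assertions are established in the works cited above; the plan is to recall the shape of the arguments, handling (1) geometrically, (2) and the uniqueness part of (5) via the thermodynamic formalism of the geodesic flow, and (3)--(4) by convexity once (1)--(2) are in hand. For (1): since $\Gamma$ is $P$-Anosov with $P$ minimal, the Cartan projection diverges away from every wall, i.e. $\alpha(\mu(\gamma_i))\to\infty$ for each simple root $\alpha$ whenever $\gamma_i\to\infty$ in $\Gamma$ (a Kapovich--Leeb--Porti characterization \cite{KLP}); as $\cal L_\Gamma$ is the asymptotic cone of $\mu(\Gamma)$, this gives $\cal L_\Gamma\subset\inte\fa^+\cup\{0\}$. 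For loxodromicity, an infinite-order $\gamma\in\Gamma$ generates a quasi-isometrically embedded cyclic subgroup whose two ideal endpoints are carried by the limit map $\zeta$ to a pair of $\gamma$-fixed points of $\cal F$ in general position; together with $\lambda(\gamma)=\lim_n\tfrac1n\mu(\gamma^n)\in\inte\fa^+$ this realizes $\gamma$ as loxodromic with attracting point $\zeta(\gamma^{+\infty})$ (every nontrivial element of $\Gamma$ has infinite order here).

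For (2), the plan is to pass to the coding of the geodesic flow on the hyperbolic flow space attached to $\Gamma$ together with the reparametrizations indexed by $\fa^*$; by the Ruelle--Perron--Frobenius and Bowen--Ruelle theory for H\"older potentials over topologically transitive hyperbolic flows, the associated pressure function is real-analytic and strictly convex on its domain, and $\psi_\Gamma$ --- a Legendre-type dual of it --- inherits real-analyticity and strict concavity on $\inte\cal L_\Gamma$. This is Sambarino's theorem \cite{Samb2} (with the input of \cite{CS}), and it is the technical heart of the statement.

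For (3) and (4), I would argue by convexity. By \eqref{ddgg}, every $\psi\in\dg$ is tangent to $\psi_\Gamma$ at some $u\in\cal L_\Gamma\cap\inte\fa^+$, which by (1) equals $\cal L_\Gamma-\{0\}$; combining this with the strict concavity and the positivity on $\inte\cal L_\Gamma$ of $\psi_\Gamma$ one checks that $\psi$ is in fact tangent at an interior point, which is (3). Granting (3), (4) is short: choose $v\in\inte\cal L_\Gamma$ with $\psi(v)=\psi_\Gamma(v)>0$; if $\psi(u)=0$ for some $u\in\cal L_\Gamma-\{0\}$ then $0\le\psi_\Gamma(u)\le\psi(u)=0$ forces $u$ onto the relative boundary, and along $w_t=(1-t)v+tu$ the estimate $(1-t)\psi_\Gamma(v)\le\psi_\Gamma(w_t)\le\psi(w_t)=(1-t)\psi_\Gamma(v)$ makes $\psi_\Gamma$ affine on the open segment $(v,u)\subset\inte\cal L_\Gamma$, contradicting strict concavity; hence $\psi>0$ on $\cal L_\Gamma-\{0\}$.

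For (5): existence of a $(\Gamma,\psi)$-PS measure on $\cal F$ (automatically supported on $\Lambda$) for each $\psi\in\dg$ is Quint's shadow-lemma construction \cite{Quint2}, recalled in the excerpt. For uniqueness I would use the Ledrappier-type correspondence between $(\Gamma,\psi)$-conformal measures on $\Lambda\cong\partial\Gamma$ and equilibrium states of the relevant H\"older potential for the reparametrized geodesic flow; since that flow is topologically transitive, Bowen--Ruelle uniqueness of equilibrium states applies and transfers to $\nu_\psi$ (\cite{Samb2}, \cite{ELO}). Ergodicity is then formal: a $\Gamma$-invariant Borel $E\subset\Lambda$ with $0<\nu_\psi(E)<1$ would produce two distinct $(\Gamma,\psi)$-PS measures, namely the normalized restrictions of $\nu_\psi$ to $E$ and to its complement --- the Radon--Nikodym cocycle in \eqref{gc0} is unchanged by restriction to an invariant set --- contradicting uniqueness. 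The main obstacle is the thermodynamic-formalism input behind (2) and the uniqueness in (5); (1), (3), (4) and the ergodicity deduction are comparatively soft once that machinery and Quint's construction are available.
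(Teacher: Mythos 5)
The paper itself offers no proof of this theorem: it is imported wholesale from Quint (Schottky case), Sambarino, and \cite{ELO} together with \cite{CS}, so your overall strategy --- defer the analytic content of (2) and of uniqueness in (5) to those references, and treat the remaining items as formal consequences --- matches what the paper does, and your deduction of (4) from (3) and of $\Gamma$-ergodicity from uniqueness in (5) are both sound. But two of the steps you present as soft have genuine gaps. In (1), you infer $\L_\Gamma\subset\inte\fa^+\cup\{0\}$ from the statement that $\alpha(\mu(\gamma_i))\to\infty$ for every simple root $\alpha$. Mere divergence from the walls does not control the asymptotic cone: one can have $\alpha(\mu(\gamma_i))\to\infty$ while $\alpha(\mu(\gamma_i))/\norm{\mu(\gamma_i)}\to 0$, in which case the limit cone still meets a wall. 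What is needed is the \emph{uniform} (linear) regularity contained in the Morse/URU characterization of Kapovich--Leeb--Porti, or one quotes the limit-cone statement directly from \cite{Q4}, \cite{Samb2}, \cite{BCLS} as the paper does; note that the paper's logical order is the reverse of yours: regularity of $\Gamma$ (Def.\ \ref{Gr}) is \emph{deduced} from $\L_\Gamma\subset\inte\fa^+\cup\{0\}$ via Lemma \ref{st}. (Your parenthetical claim that every nontrivial element has infinite order is likewise asserted rather than proved; it is part of what the cited statement carries.)

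More seriously, (3) is not ``by convexity once (1)--(2) are in hand.'' Strict concavity, analyticity and positivity of $\psi_\Gamma$ on $\inte\L_\Gamma$ do not preclude a linear form $\psi\in D_\Gamma$ that touches $\psi_\Gamma$ only along a boundary ray of $\L_\Gamma$ inside $\inte\fa^+$. For instance, the degree-one homogeneous function $f(x,y)=x-y^2/x$ on the cone $\{\,x>0,\ |y|\le x/2\,\}$ (set $=-\infty$ outside) is concave, analytic, positive and strictly concave transversally to rays on the interior, yet the linear form $\tfrac54 x-y$ dominates $f$ everywhere and is tangent to it exactly on the boundary ray $y=x/2$, since $\tfrac54 x-y-f=\tfrac1x(\tfrac{x}{2}-y)^2$. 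So an abstract function with all the properties in (1), (2) and Theorem \ref{growth} can admit supporting forms at boundary points, and the content of (3) is precisely that $\psi_\Gamma$ does not: its supporting hyperplanes at $\partial\L_\Gamma$ fail to exist (the derivative blows up at the boundary of the limit cone). That boundary behaviour is part of what \cite{Samb2} and \cite[Thm.\ 7.9]{ELO} establish and must be cited alongside (2); it cannot be recovered from strict concavity and positivity alone.
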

(1) and (3) imply that if $\psi\in D_\Ga$ is tangent to $\psi_\Ga$ at some $u\in \L_\G-\{0\}$, then $u\in \inte \L_\Ga$.
Note also that (1) implies that any Anosov subgroup is regular as defined in Def. \ref{Gr}.
 For $u\in \L_\Gamma$, we denote by $D_u\psi_\Ga$ the directional derivative of $\psi_\Ga$ at $u$, whenever it exists.
 \begin{proposition}\label{homeo} For each unit vector $u\in \inte \L_\Ga$, $\psi_u:=D_u\psi_\Ga\in \dg$ and
 $D_u\psi_\Ga(u)=\psi_\Ga(u)$. Moreover,
the map $u\mapsto \psi_u$ induces a homeomorphism between the set of unit vectors of $\op{int}\cal L_\Ga$ ($\simeq \inte \bb P\cal L_\Ga$) and $\dg$. Hence $\dg\simeq \br^{\text{rank } G-1}$.
 \end{proposition}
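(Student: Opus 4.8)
The plan is to establish the three assertions in turn, using the strict concavity and analyticity of $\psi_\Ga$ on $\inte\L_\Ga$ from Theorem \ref{pop}(2), together with the homogeneity of $\psi_\Ga$. First I would show that for $u\in\inte\L_\Ga$ the directional derivative $\psi_u := D_u\psi_\Ga$ is a well-defined linear form: since $\psi_\Ga$ is analytic (hence differentiable) at $u$, $\psi_u$ is just the ordinary gradient $\nabla\psi_\Ga(u)\in\fa^*$. The identity $\psi_u(u)=\psi_\Ga(u)$ is an instance of the Euler relation for the $1$-homogeneous function $\psi_\Ga$: differentiating $\psi_\Ga(tu)=t\psi_\Ga(u)$ in $t$ at $t=1$ gives $\langle\nabla\psi_\Ga(u),u\rangle=\psi_\Ga(u)$. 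To see $\psi_u\ge\psi_\Ga$ on all of $\fa^+$ (so that $\psi_u\in D_\Ga$), I would use concavity of $\psi_\Ga$: for any $v\in\fa^+$, concavity along the segment from $u$ to $v$ gives $\psi_\Ga(v)\le\psi_\Ga(u)+\langle\nabla\psi_\Ga(u),v-u\rangle = \psi_u(v)$, where the last equality uses $\psi_u(u)=\psi_\Ga(u)$. (One has to be slightly careful when $v\in\fa^+\setminus\L_\Ga$, where $\psi_\Ga(v)=-\infty$ and the inequality is trivial, and when $v$ is on the boundary of $\L_\Ga$, where upper semicontinuity from Theorem \ref{growth} lets one pass to the limit from interior points.) Combined with $\psi_u(u)=\psi_\Ga(u)$ and $u\in\inte\L_\Ga$, this shows $\psi_u\in\dg$ by the characterization in Theorem \ref{pop}(3).

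Next I would prove injectivity of $u\mapsto\psi_u$ on the set of unit vectors of $\inte\L_\Ga$. Suppose $\psi_u=\psi_{u'}$ for unit vectors $u,u'\in\inte\L_\Ga$. Strict concavity of $\psi_\Ga$ on $\inte\L_\Ga$ (Theorem \ref{pop}(2)) means the gradient map $\nabla\psi_\Ga$ is injective on $\inte\L_\Ga$ modulo the radial direction — precisely, by $1$-homogeneity $\nabla\psi_\Ga$ is constant along rays, and strict concavity of the restriction of $\psi_\Ga$ to a transversal (e.g. an affine hyperplane meeting $\inte\L_\Ga$) forces two distinct rays to have distinct gradients. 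Hence $u$ and $u'$ lie on the same ray, and being unit vectors, $u=u'$. For surjectivity onto $\dg$: given $\psi\in\dg$, by Theorem \ref{pop}(3) there is $u\in\inte\L_\Ga$ with $\psi(u)=\psi_\Ga(u)$; rescale $u$ to be a unit vector. Then $\psi-\psi_u$ is a linear form vanishing at $u$ and $\ge 0$ on $\L_\Ga$ (since both $\psi,\psi_u\ge\psi_\Ga$ and both are tangent at $u$, a standard first-order argument shows $\psi$ and $\psi_u$ must have the same gradient at the interior tangency point $u$ — otherwise one could decrease $\psi_\Ga$'s supporting value). Thus $\psi=\psi_u$.

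For the topological statement, I would note that $u\mapsto\psi_u=\nabla\psi_\Ga(u)$ is continuous on $\inte\L_\Ga$ because $\psi_\Ga$ is $C^1$ (indeed analytic) there, so it descends to a continuous bijection from the compact-in-itself... more carefully, from the set of unit vectors of $\inte\L_\Ga$ — an open subset of the unit sphere, homeomorphic to $\inte(\mathbb P\L_\Ga)$ — onto $\dg$. To upgrade a continuous bijection to a homeomorphism I would exhibit a continuous inverse: given $\psi\in\dg$, the tangency point is the unique $u\in\inte\L_\Ga$ (unit) with $\nabla\psi_\Ga(u)=\psi$, and continuity of $\psi\mapsto u$ follows from the analyticity and strict concavity of $\psi_\Ga$ via the inverse function theorem applied to $\nabla\psi_\Ga$ restricted to a transversal hyperplane (its differential there, the Hessian of the strictly concave transversal restriction, is nondegenerate). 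Finally, since $\inte\L_\Ga$ is a nonempty open convex cone in $\fa^+$ by \cite[Thm.~1.2]{Ben}, the set of its unit vectors — equivalently $\inte(\mathbb P\L_\Ga)$ — is an open convex subset of an affine hyperplane of dimension $\rank G-1$, hence homeomorphic to $\br^{\rank G-1}$, giving $\dg\simeq\br^{\rank G-1}$.

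The main obstacle I anticipate is the surjectivity/boundary-behavior step: showing that an arbitrary $\psi\in\dg$ realizes its tangency with $\psi_\Ga$ at an \emph{interior} point of $\L_\Ga$ and that the gradient there is forced to equal $\psi$. The interior claim is exactly Theorem \ref{pop}(3) combined with the remark after Theorem \ref{pop} (tangency on $\L_\Ga\setminus\{0\}$ forces interior tangency), so that is handed to us; the delicate part is the first-order rigidity argument that two linear forms both $\ge\psi_\Ga$ and both equal to $\psi_\Ga(u)$ at an interior point $u$ must have the same gradient at $u$ — this genuinely uses differentiability of $\psi_\Ga$ at $u$. Everything else is convex-analysis bookkeeping plus the inverse function theorem.
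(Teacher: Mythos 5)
Your overall route is essentially the paper's: the paper handles the first claim by citing \cite{Samb2} and \cite{ELO}, whereas you give a self-contained Euler-relation plus supporting-hyperplane argument, which is valid given the differentiability of $\psi_\Ga$ on $\inte\L_\Ga$ from Theorem \ref{pop}(2); injectivity via strict concavity transverse to the radial direction, and surjectivity via Theorem \ref{pop}(3) together with the uniqueness of the supporting linear functional at an interior differentiability point, are exactly the paper's steps (your framing sentence ``$\psi-\psi_u$ is $\ge 0$ on $\L_\Ga$'' is not the statement you actually need, but the parenthetical first-order rigidity argument is the correct one). The continuity of $u\mapsto D_u\psi_\Ga$ from analyticity and the final identification of $\inte(\mathbb P\L_\Ga)$ with $\br^{\rank G-1}$ also match.

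The one genuine gap is your proof that the inverse is continuous. You invoke the inverse function theorem for $\nabla\psi_\Ga$ restricted to a transversal hyperplane, asserting that the Hessian of that restriction is nondegenerate ``because the restriction is strictly concave.'' Strict concavity, even together with analyticity, does not imply a nondegenerate Hessian: $t\mapsto -t^4$ is analytic and strictly concave on $\br$ with vanishing second derivative at $0$. Nothing stated in Theorem \ref{pop} gives definiteness of the transversal Hessian of $\psi_\Ga$, so this step is unjustified with the tools at hand. The conclusion is still true, and the paper obtains it without second derivatives: assuming $D_{u_i}\psi_\Ga\to D_u\psi_\Ga$ for unit vectors $u_i,u\in\inte\L_\Ga$, pass to a subsequential limit $v\in\L_\Ga$ of $u_i$; upper semicontinuity of $\psi_\Ga$ (Theorem \ref{growth}) together with $\psi_\Ga(u_i)=D_{u_i}\psi_\Ga(u_i)$ gives $\psi_\Ga(v)\ge D_u\psi_\Ga(v)$, hence equality since $D_u\psi_\Ga\in\dg$; Theorem \ref{pop}(1) and (3) then force $v\in\inte\L_\Ga$, and strict concavity with tangency at both $u$ and $v$ yields $u=v$. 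Replacing your inverse-function-theorem step by this sequential (properness) argument closes the gap.
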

 \begin{proof}  See (\cite[Thm. A]{Samb2}, \cite[Lem. 2.23]{ELO}) for the first claim. The 
 fact that $\psi_u\in D_\Ga^\star$ (and hence the
 well-definedness)
 and surjectivity of the map $u\mapsto D_u\psi_\Ga$
 follows from it, and the injectivity  follows from the strict concavity of $\psi_\Gamma$ as in Theorem \ref{pop}(2).
Continuity follows from the analyticity of $\psi_\Ga$ on $\inte \L_\G$. We claim that if
$D_{u_i}\psi_\Ga\to D_u\psi_\G$ for some unit vectors $u_i, u\in \inte \L_\Ga$, then $u_i\to u$.
Let $v\in \L_\Ga$ be a limit of the sequence $u_i$. By passing to a subsequence, assume $u_i\to v$.
By the upper-semi continuity of $\psi_\Ga$ (Theorem \ref{growth}), we have $$\psi_\Ga(v)\ge \limsup_{i\to \infty} \psi_\Ga (u_i).$$
Since $\psi_\Ga(u_i)=D_{u_i}\psi_\Ga (u_i)$ and $D_{u_i}\psi_\Ga\to D_u\psi_\Ga$,
we get
$\psi_\Ga(v)\ge  D_u\psi_\Ga(v)$. Since $D_u \psi_\Ga\in D_\Ga^\star$,  we have
$\psi_\Ga(v)=  D_u\psi_\Ga(v)$. It follows from Theorem \ref{pop}(1) and (3) that $v\in \inte \L_\Ga$. Since $\psi_\Ga(u)=D_u\psi_\Ga(u)$, the strict concavity of $\psi_\Ga$ on $\inte\L_\Ga$ implies
that $u=v$, establishing the homeomorphism. Since $\inte ( \L_\Ga)$ is a non-empty open convex cone
of $\fa^+$, $\inte(\mathbb P\L_\Ga)\simeq \mathbb P\inte(\L_\Ga)$ is homeomorphic to
$  \br^{\text{rank}\,G-1}$.
\end{proof}

We denote by $\nabla \psi_\Ga$ the gradient of $\psi_\Ga$ so that
$D_u\psi_\Ga(v)=\langle \nabla \psi_\Ga (u), v\rangle$ for $u\in \inte \L_\Ga$ and $v\in \fa$.
Set \be\label{ooo} \mathsf O_\Ga :=\{\nabla \psi_\Gamma (u)\in \fa: u\in \inte\L_\Ga\},\ee
which is an open convex  cone of $\fa-\{0\}$.
By Proposition \ref{homeo},  the map $w\mapsto \langle w, \cdot \rangle$
gives a homeomorphism between $\{\nabla \psi_\Gamma (u)\in \fa: u\in \inte\L_\Ga, \|u\|=1\}$ and $\dg$, and hence
a homeomorphism 
$$ \mathsf O_\Ga\simeq \br_+\dg.$$ Quint showed that there exists a unique unit vector, say $u_\Gamma\in \inte \fa^+$, such that
$\psi_\Gamma(u_\Gamma)=\max_{\|u\|=1} \psi_\Gamma (u)$. The vector $u_\Ga$ is called the direction of maximal growth
of $\Gamma$. If we set $\delta_\Ga:=\psi_\Gamma (u_\Gamma)$,
 then  $\nabla \psi_\Gamma (u_\Gamma)=\delta_\Gamma u_\Gamma$ and
 $$\delta_\G=\limsup_{T\to \infty}\frac{\log \#\{\ga\in \Ga: \|\mu(\ga)\| <T\}}{T}.$$
 
Consider the following dual cone to $\L_\Ga$: 
$$\L_\Ga^*:= \{w\in \fa: \langle w, v\rangle \ge  0\text{ for all $v\in \L_\Ga$}\}.$$
Note that $\inte \L_\Ga^*= \{w\in \fa: \langle w, v\rangle >  0\text{ for all non-zero $v\in \L_\Ga$}\}$.
\begin{lemma} \label{oo}
We have $\mathsf O_\Ga=\inte \L_\Ga^*$.
In particular, 
$$\mathfrak a^+-\{0\}\subset \mathsf O_\Ga\subset \fa-\{0\}.$$
\end{lemma}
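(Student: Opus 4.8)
The plan is to move the whole statement to the side of linear forms. The canonical identification $\fa\cong\fa^*$ given by $w\mapsto\langle w,\cdot\rangle$ carries $\inte\L_\Ga^*$ onto $\{\psi\in\fa^*:\psi(v)>0\text{ for all }v\in\L_\Ga-\{0\}\}$ — this is the description of $\inte\L_\Ga^*$ recorded right before the lemma — and it carries $\mathsf O_\Ga$ onto $\br_+\dg$, as noted just above the lemma. Hence $\mathsf O_\Ga=\inte\L_\Ga^*$ is equivalent to the equality of cones
\[
\br_+\dg=\{\psi\in\fa^*:\psi>0\text{ on }\L_\Ga-\{0\}\}.
\]
The inclusion $\subseteq$ is immediate from Theorem \ref{pop}(4), since every $\psi\in\dg$ is positive on $\L_\Ga-\{0\}$ and this property is preserved by positive scaling.

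For $\supseteq$, fix $\psi\in\fa^*$ positive on $\L_\Ga-\{0\}$ and consider the slice $S:=\{v\in\L_\Ga:\psi(v)=1\}$. It is nonempty and convex, and it is compact: if $v_n\in S$ with $\|v_n\|\to\infty$, then a subsequence of $v_n/\|v_n\|$ converges to some $v_\infty\in\L_\Ga$ with $\|v_\infty\|=1$ and $\psi(v_\infty)=0$, contradicting positivity of $\psi$ on $\L_\Ga-\{0\}$. Since $\psi_\Ga$ is upper semicontinuous (Theorem \ref{growth}), it attains a maximum over $S$ at some $u_0$; as $S$ meets $\inte\L_\Ga$ (rescale any point of $\inte\L_\Ga$, using $\psi>0$ there) and $\psi_\Ga>0$ on $\inte\L_\Ga$, we get $c:=\psi_\Ga(u_0)>0$. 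I claim $c\psi\in D_\Ga$ and $c\psi$ is tangent to $\psi_\Ga$ at $u_0$. Indeed, for $v\in\L_\Ga-\{0\}$ one has $v/\psi(v)\in S$, so homogeneity of $\psi_\Ga$ gives $\psi_\Ga(v)=\psi(v)\,\psi_\Ga(v/\psi(v))\le c\,\psi(v)$; together with the trivial cases $v=0$ and $v\notin\L_\Ga$, where $\psi_\Ga=-\infty$, this yields $c\psi\ge\psi_\Ga$ on $\fa$, while $c\psi(u_0)=c=\psi_\Ga(u_0)$. Since $u_0\in\L_\Ga-\{0\}$, the remark following Theorem \ref{pop} (combining parts (1) and (3)) forces $u_0\in\inte\L_\Ga$, so $c\psi\in\dg$ and $\psi\in\br_+\dg$. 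This proves $\mathsf O_\Ga=\inte\L_\Ga^*$.

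The displayed chain of inclusions then follows. Clearly $\inte\L_\Ga^*\subseteq\fa-\{0\}$, since $\langle 0,\cdot\rangle$ vanishes on $\L_\Ga-\{0\}$. For $\fa^+-\{0\}\subseteq\inte\L_\Ga^*$: by Theorem \ref{pop}(1) we have $\L_\Ga-\{0\}\subseteq\inte\fa^+$, and the closed positive Weyl chamber $\fa^+$ is contained in the dual cone spanned by the positive roots (equivalently, each fundamental weight is a non-negative combination of simple roots); by the duality between these two cones, $\langle w,v\rangle>0$ whenever $w\in\fa^+-\{0\}$ and $v\in\inte\fa^+$, hence whenever $w\in\fa^+-\{0\}$ and $v\in\L_\Ga-\{0\}$.

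The one substantive step is the inclusion $\supseteq$, and its point is to avoid arguing directly that $\psi_\Ga$ attains its maximum on the slice $S$ at a point of $\inte\L_\Ga$; instead we exhibit the candidate tangent form $c\psi\in D_\Ga$ and let Theorem \ref{pop}(1),(3) place $u_0$ in $\inte\L_\Ga$. A subsidiary point requiring care is that the compactness of $S$ — which is what makes the upper semicontinuous maximum available — genuinely uses strict positivity of $\psi$ on $\L_\Ga-\{0\}$, not merely nonnegativity on $\L_\Ga$.
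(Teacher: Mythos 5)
Your proposal is correct and takes essentially the same route as the paper: both sides identify $\mathsf O_\Ga$ with $\br_+\dg$ via $w\mapsto\langle w,\cdot\rangle$, get $\mathsf O_\Ga\subset\inte \L_\Ga^*$ from the positivity of elements of $\dg$ on $\L_\Ga-\{0\}$ (Theorem \ref{pop}), obtain the reverse inclusion by rescaling $\psi=\langle w,\cdot\rangle$ until it dominates and touches $\psi_\Ga$ at a point which parts (1) and (3) of Theorem \ref{pop} force into $\inte\L_\Ga$, and deduce the displayed inclusions from $\L_\Ga-\{0\}\subset\inte\fa^+$ together with the standard fact that $\fa^+$ pairs non-negatively with itself. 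Your construction of the tangency point via the compact slice $\{\psi=1\}\cap\L_\Ga$ and upper semicontinuity of $\psi_\Ga$ simply makes explicit the step the paper compresses into the assertion that $\epsilon\psi\in\dg$ for some $\epsilon>0$.
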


\begin{proof}
If $w\in \O_\Ga$, then $\langle w, \cdot \rangle \in \br_+\dg$, and hence by Theorem \ref{pop} (3),
$\langle w, v\rangle >0$ for all $v\in \L_\Ga-\{0\}$. Hence $\mathsf O_\Ga\subset \inte \L_\Ga^*$.
Now suppose $w\in \inte \L_\Ga^*$. Setting $\psi(v):=\langle w, v\rangle$, we claim that $\psi\in \br_+ \dg$; this implies
 $ \inte \L_\Ga^*\subset \mathsf O_\Ga$.
Since $w\in \inte \L_\Ga^*$, $\psi >0$ on $\L_\Ga -\{0\}$ and hence $c:=\max_{\|v\|=1, v\in \L_\Ga} \psi (v)>0$.
Since $\delta_\Ga c^{-1} \psi \ge \psi_\Ga$ on $\L_\Ga$, and hence on $\fa$, 
it follows that for some $\epsilon>0$,  $\epsilon \psi \in \dg$, i.e., $\psi\in \br_+ \dg$.

Since $\L_\Ga-\{0\}\subset \inte \fa^+$ and
 the angle between any two walls of $\mathfrak a^+$ is at most $\pi/2$,  the second claim follows.
\end{proof}

\subsection*{$AM$-ergodicity of $m_\psi^{\BMS}$}
We fix 
$\psi\in\dg$ and set 
\begin{equation}\label{def.BMS2}
\nu:=\nu_\psi\quad\text{ and }\quad m^{\BMS}_\psi:=m^{\BMS}_{\nu_\psi, \nu_{\psi\circ \i}} 
\end{equation}
The composition $c:=\psi\circ\sigma : \Ga\times \Lambda\to \br$ is a H\"older cocycle satisfying
$c(\gamma, y_\gamma)=\psi(\la(\gamma)) >0$ for all non-trivial $\gamma\in \Gamma$.

Consider the action of $\Gamma$ on $\Lambda^{(2)} \times\bb R$ given as follows: for $\ga\in\Gamma$ and $(\xi,\eta,t)\in\Lambda^{(2)}\times\bb R$,
\begin{equation*}
\ga.(\xi,\eta,t)=(\ga \xi,\ga \eta, t+c(\ga,\xi)).
\end{equation*}

The $\bb R$-action on $\Lambda^{(2)} \times\bb R$ defined by
$$
\tau_s(\xi,\eta,t)=(\xi,\eta,t+s)
$$
will be called translation flow. 

The following is proved in \cite[Thm. 3.2]{Samb3} when $\Gamma$ is the fundamental group
of a closed negatively curved manifold, and can be extended for general Anosov groups, using ingredients from \cite{BCLS}.
The sketch of the proof can be found in \cite[Appendix A]{CAR}.
\begin{theorem}\label{thm.reparam}
The action of $\Gamma$ on $\Lambda^{(2)} \times\bb R$ is proper and cocompact, and the measure
$d\tilde{\mathsf m}_\psi(\xi, \eta, t) =e^{ \psi (\cal G(\xi,\eta)) }d\nu_{\psi}(\xi)
\otimes d\nu_{\psi\circ\op i}(\eta) \otimes dt$
induces the measure of maximal entropy, say $\mathsf m_\psi $, for $\{\tau_s:s\in \br\}$ on $\Gamma\ba \La^{(2)}\times \br$. In particular, $\mathsf m_\psi $ is $\{\tau_s:s\in \br\}$-ergodic.
\end{theorem}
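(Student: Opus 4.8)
The plan is to realize $\Lambda^{(2)}\times\br$ with the twisted $\Gamma$-action as a H\"older reparametrization of the Gromov geodesic flow of the word hyperbolic group $\Gamma$, and then invoke the thermodynamic formalism for such flows. The key structural input is already recorded: $c=\psi\circ\sigma$ is a H\"older cocycle over the boundary action of $\Gamma$ on $\Lambda\cong\partial\Gamma$ with \emph{positive periods}, since $c(\gamma,y_\gamma^+)=\psi(\lambda(\gamma))>0$ for every nontrivial $\gamma$ by Lemma \ref{lem.el} and Theorem \ref{pop}(1),(4). By the correspondence between H\"older cocycles with positive periods and positive H\"older reparametrizations of the geodesic flow of a hyperbolic group (Ledrappier; see \cite{Samb2}), transported through the H\"older limit map $\zeta$, the $\Gamma$-space $\Lambda^{(2)}\times\br$ with the action $\gamma.(\xi,\eta,t)=(\gamma\xi,\gamma\eta,t+c(\gamma,\xi))$ is H\"older orbit equivalent to the Gromov geodesic flow space of $\Gamma$. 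Properness and cocompactness of the $\Gamma$-action are then inherited from those of the Gromov geodesic flow of a hyperbolic group, so that $\Gamma\ba(\Lambda^{(2)}\times\br)$ is a compact metric space carrying the flow $\{\tau_s\}$, which is a transitive metric Anosov (Smale) flow.

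Next I would check that $\tilde{\mathsf m}_\psi$ descends. By \eqref{eq.id1}, $\cal G(\gamma\xi,\gamma\eta)=\cal G(\xi,\eta)+\sigma(\gamma,\xi)+\op i\sigma(\gamma,\eta)$, while the $(\Gamma,\psi)$- and $(\Gamma,\psi\circ\op i)$-conformality of $\nu_\psi$ and $\nu_{\psi\circ\op i}$ supply exactly the matching Radon--Nikodym factors for $d\nu_\psi(\xi)\,d\nu_{\psi\circ\op i}(\eta)$; since the density $e^{\psi(\cal G(\xi,\eta))}$ does not involve $t$ and the $t$-coordinate is merely translated (with unit Jacobian) by the action, $\tilde{\mathsf m}_\psi$ is $\Gamma$-invariant and induces a Radon measure $\mathsf m_\psi$ on the quotient, which is $\{\tau_s\}$-invariant because translation in $t$ commutes with the $\Gamma$-action.

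The substance of the theorem is the identification of $\mathsf m_\psi$ with the measure of maximal entropy and the resulting ergodicity. Periodic $\tau$-orbits correspond to conjugacy classes of nontrivial $\gamma\in\Gamma$ with period $\psi(\lambda(\gamma))$, and since $\psi\in\dg$ is tangent to $\psi_\Gamma$ at an interior point of $\mathcal L_\Gamma$, the exponential growth rate of periodic orbits, hence the topological entropy of $\{\tau_s\}$, is finite and positive. By construction $\mathsf m_\psi$ is the Bowen--Margulis measure attached to the pair of conformal densities $(\nu_\psi,\nu_{\psi\circ\op i})$, and the Bowen--Ruelle thermodynamic formalism for transitive metric Anosov flows --- established for the geodesic flow of a closed negatively curved manifold by Sambarino \cite[Thm.~3.2]{Samb3} and extended to the Gromov geodesic flow of an arbitrary Anosov subgroup using the Markov coding and regularity inputs of \cite{BCLS}, with a sketch in \cite[Appendix~A]{CAR} --- shows that the Bowen--Margulis measure built from the \emph{unique} $(\Gamma,\psi)$-PS density (Theorem \ref{pop}(5)) is the unique equilibrium state for the zero potential, i.e.\ the unique measure of maximal entropy; uniqueness of the equilibrium state then gives ergodicity. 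I expect the main obstacle to be precisely this last transfer: porting the thermodynamic formalism --- existence, uniqueness, the Gibbs property of the MME, and its identification with the Bowen--Margulis measure of the conformal densities --- from the classical Anosov-flow setting to the Gromov geodesic flow of a general Anosov $\Gamma$. This rests on the H\"older regularity of $\zeta$, a Markov coding for the geodesic flow of a hyperbolic group, and the strict positivity $\psi>0$ on $\mathcal L_\Gamma-\{0\}$ (Theorem \ref{pop}(4)), which forces the reparametrization function to be bounded below by a positive constant; all of these are available, and the argument is the one indicated in \cite{Samb3}, \cite{BCLS} and \cite{CAR}.
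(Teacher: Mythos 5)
Your proposal is correct and follows essentially the same route as the paper: the paper does not prove this theorem directly but cites Sambarino's \cite[Thm.~3.2]{Samb3} (the closed negatively curved case), its extension to general Anosov groups via the ingredients of \cite{BCLS}, and the sketch in \cite[Appendix A]{CAR}, which is exactly the reparametrization-of-the-Gromov-geodesic-flow plus thermodynamic-formalism argument you outline. Your added verifications (positivity of the periods $\psi(\lambda(\gamma))$, the $\Gamma$-invariance of $\tilde{\mathsf m}_\psi$ via \eqref{eq.id1} and conformality) are consistent with how those references proceed.
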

In terms of the Hopf parametrization, $\Ga$ acts
 on $ \Lambda^{(2)}\times\mathfrak a=\supp \tilde m_{\psi}^{\BMS}$ as follows: for $\ga\in\Ga$ and $(\xi,\eta,v)\in\Lambda^{(2)}\times\mathfrak a$,
$$
\ga.(\xi,\eta,v)=(\ga \xi,\ga \eta, v+\sigma(\ga,\xi)).
$$

\begin{Coro}\label{AMerg}
For any 
$\psi\in \dg$, the $AM$-action on $(\Gamma\ba G, m_{\psi}^{\BMS})$ is ergodic
and if $\text{rank} \,G\ge 2$, $|m_\psi^{\BMS}|=\infty$.
\end{Coro}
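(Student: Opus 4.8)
The plan is to deduce the corollary from Theorem \ref{thm.reparam} together with the suspension/fibration picture relating $\Gamma\ba\Lambda^{(2)}\times\mathfrak a$ (the support of $m_\psi^{\BMS}$ up to the $M$-coordinate) and $\Gamma\ba\Lambda^{(2)}\times\mathbb R$ (the support of $\mathsf m_\psi$). First I would record that, via the Hopf parametrization, the $A$-action on $\Gamma\ba G/M$ corresponds to translation in the $\mathfrak a$-coordinate, and the linear functional $\psi\colon\mathfrak a\to\mathbb R$ intertwines this $\mathfrak a$-action with the translation flow $\tau_s$ on $\Lambda^{(2)}\times\mathbb R$: the $\Gamma$-equivariant map $(\xi,\eta,v)\mapsto(\xi,\eta,\psi(v))$ descends to a map $\pi\colon E:=\Gamma\ba\Lambda^{(2)}\times\mathfrak a\to B:=\Gamma\ba\Lambda^{(2)}\times\mathbb R$, and $\pi_*m_\psi^{\BMS}$ is (up to a constant) $\mathsf m_\psi$ since the cocycle appearing in \eqref{def.BMS} is exactly $\psi\circ\mathcal G$ and $\psi$ pushes $dv$ to $dt$ times the Lebesgue measure on $\ker\psi$.

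Next I would establish that $\pi$ is a trivial principal $\ker\psi$-bundle, i.e.\ $E\cong B\times\ker\psi$ $\Gamma$-equivariantly, with the $A$-action becoming (translation flow on $B$) $\times$ (translation on $\ker\psi$ by the $\ker\psi$-component of $\log a$). The key point needed here is that the $\Gamma$-action on $\Lambda^{(2)}\times\mathfrak a$ has no identifications in the fiber direction that obstruct a global section: if $\gamma(\xi,\eta,v)$ and $(\xi,\eta,v')$ have the same image in $B$, then $\gamma$ fixes $\xi$ and $\eta$, forcing $\gamma=e$ because every nontrivial element of $\Gamma$ is loxodromic (Theorem \ref{pop}(1)) hence has exactly two fixed points $y_\gamma^\pm$ with $\sigma(\gamma,\xi)\in\{\pm\lambda(\gamma)\}$, and $\lambda(\gamma)\notin\ker\psi$ since $\psi>0$ on $\mathcal L_\Gamma-\{0\}$ (Theorem \ref{pop}(4)); the properness and cocompactness in Theorem \ref{thm.reparam} give a local section which one spreads out using the $\ker\psi$-translation action. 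Granting the trivialization, $(E,m_\psi^{\BMS})\cong(B\times\ker\psi,\ \mathsf m_\psi\otimes\mathrm{Leb}_{\ker\psi})$ as $A$-spaces.

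Now ergodicity follows: by Theorem \ref{thm.reparam}, $\mathsf m_\psi$ is ergodic for $\{\tau_s\}$ on $B$, which is precisely the one-parameter subgroup $\exp(\mathbb R u_\psi)$ of $A$ acting along the base, where $u_\psi$ is a vector with $\psi(u_\psi)=1$ transverse to $\ker\psi$; meanwhile $\ker\psi$ acts on the fiber $\ker\psi$ by translation, which is ergodic for Lebesgue measure in the obvious sense only after combining — so I would argue instead that any $A$-invariant $L^2$ function on $E$ is, in the trivialization, invariant under both the base translation flow and all fiber translations, hence constant on $\mathsf m_\psi$-a.e.\ fiber by Fubini and $\{\tau_s\}$-ergodicity, and then constant in the fiber since $\ker\psi$ acts transitively by translations. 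Incorporating $M$: $m_\psi^{\BMS}$ is the $M$-invariant lift and $M$ centralizes $A$ and acts with a finite invariant measure, so $AM$-ergodicity follows from $A$-ergodicity of the $M$-lift by a standard argument (an $AM$-invariant function is $A$-invariant, hence $M$-a.e.\ equal to an $A$-invariant function on $\Gamma\ba G/M$, hence constant). Finally, for the infinitude when $\mathrm{rank}\,G\ge 2$: $\ker\psi$ has dimension $\mathrm{rank}\,G-1\ge1$, so $m_\psi^{\BMS}\cong\mathsf m_\psi\otimes\mathrm{Leb}_{\ker\psi}$ is a nonzero measure times infinite Lebesgue measure on a positive-dimensional space, hence infinite (using that $\mathsf m_\psi$ is a nonzero finite measure, being the measure of maximal entropy on a compact space). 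The main obstacle I anticipate is the clean construction of the global trivialization of $\pi$ — in particular ruling out torsion/fixed-point identifications in the fibers and checking the transition maps are fiber translations — which is exactly the content sketched (in part) in the commented-out \texttt{Lemma affine} in the source; everything else is bookkeeping with the Hopf parametrization and Fubini.
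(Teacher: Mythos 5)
Your proposal is correct, and it rests on the same ingredients as the paper's argument: Theorem \ref{thm.reparam}, the Hopf parametrization, and the fact that $\Ga\ba\La^{(2)}\times\fa\to\Ga\ba\La^{(2)}\times\br$ is a trivial principal $\ker\psi$-bundle, so that $m_\psi^{\BMS}\simeq \mathsf m_\psi\otimes\op{Leb}_{\ker\psi}$, which is exactly how the paper gets $|m_\psi^{\BMS}|=\infty$ when $\rank G\ge 2$. The one genuine difference is the ergodicity step: the paper does not need the trivialization there at all, since $AM$-ergodicity of $m_\psi^{\BMS}$ is, by duality through $G/AM\simeq\F^{(2)}$, the same as $\Ga$-ergodicity of $\nu_\psi\otimes\nu_{\psi\circ\op{i}}$ on $\La^{(2)}$, and this follows at once from the $\{\tau_s\}$-ergodicity of $\mathsf m_\psi$ (a $\Ga$-invariant function on $\La^{(2)}$ lifts to a $\tau_s$- and $\Ga$-invariant function on $\La^{(2)}\times\br$); you instead route ergodicity through the trivialization and a Fubini argument, which works but costs you the more delicate bundle construction at the point where it is not needed. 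Two small repairs to your write-up: the assertion that $\pi_*m_\psi^{\BMS}$ is proportional to $\mathsf m_\psi$ should be stated as a disintegration, since the pushforward is not locally finite (each fiber carries infinite Lebesgue mass); and in trivialized coordinates the $\exp(\br u_\psi)$-direction need not act as $\tau_t\times\mathrm{id}$ — a continuous $\ker\psi$-valued cocycle over the flow can appear unless the section is chosen flow-equivariantly — but this is harmless because your argument first uses the $\ker\psi$-translations to make an $A$-invariant function constant along a.e.\ fiber and only then invokes $\{\tau_s\}$-ergodicity of $\mathsf m_\psi$ on the base. Your fiber-identification argument (a nontrivial $\gamma$ fixing $(\xi,\eta)$ would force $\psi(\lambda(\gamma^{\pm1}))=0$, impossible by Theorem \ref{pop}, and there is no torsion since every nontrivial element is loxodromic) is exactly what underlies the principal bundle structure; the paper then simply invokes that a principal bundle with vector structure group $\ker\psi$ is trivial, which is the clean way to produce the global section you were worried about.
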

\begin{proof} The  $\{\tau_s:s\in \br\}$-ergodicity of $\mathsf m_\psi$ is equivalent to ergodicity of
$( \Lambda^{(2)}, \Ga, \nu_{\psi}\otimes\nu_{\psi\circ\op i}|_{\Lambda^{(2)}})$,
which is again equivalent to the $AM$-ergodicity  of $m_{\psi}^{\BMS}$.
Consider the projection map $
\pi : \Ga\,\ba\, \Lambda^{(2)}\times\mathfrak a \to\Ga\,\ba\,\Lambda^{(2)}\times\bb R
$ induced by the
$\Ga$-equivariant map $
\Lambda^{(2)}\times\mathfrak a\to\Lambda^{(2)}\times\bb R$ given by
$(\xi,\eta,v)\mapsto (\xi,\eta,\psi(v)).$
Then  $\pi$ is a principal  $\ker \psi$-bundle, which is trivial as  $\ker \psi$ is a vector group. It follows that
there exists a $\ker \psi$-equivariant homeomorphism between $\Ga\,\ba\, \Lambda^{(2)}\times\mathfrak a $ and
$\left(\Ga\,\ba\,\Lambda^{(2)}\times\bb R\right) \times \ker \psi$.
Therefore
$m_\psi^{\BMS}$ disintegrates over the measure
${\mathsf m}_\psi $ with conditional measure being the Lebesque measure on $\ker\psi\simeq \br^{\text{rank} G-1}$
so that $m_\psi^{\BMS}\simeq {\mathsf m}_\psi \otimes \op{Leb}_{\op{ker} \psi}$ (cf. \cite[Prop. 3.5]{Samb1}).
This gives the infinitude of $|m_\psi^{\BMS}|$ when $G$ has rank at least $2$.
\end{proof}
\section{Comparing $\fa$-valued Busemann functions and distances via $\psi$}

When $G$ has rank one, for any $p, q\in X$, the maximum and minimum
 of Busemann function $\beta_\xi(p,q), \xi\in\cal F$ are
  always achieved  as $\pm d(p,q)$. A higher rank generalization of this fact can be stated as follows.

\begin{Lem}\label{lem.SP}
Let $\psi\in\mathfrak a^*$ be strongly positive, in the sense that $\psi$ is a non-negative linear combination of fundamental weights $\varpi_\alpha$, $\alpha\in \Pi$. 
Then for any $p,q\in X$ and $\xi \in \cal F$, we have
 \be\label{ggg}  - \psi (\underline a(q, p))\le
\psi (\beta_\xi(p, q))\le \psi (\underline a(p, q)).
 \ee
\end{Lem}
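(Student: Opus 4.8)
The plan is to reduce the inequality \eqref{ggg} to a statement about the representations $(\rho_\alpha, V_\alpha)$ of Lemma~\ref{lem.Tits}, using the description of the $\fa$-valued Busemann function in terms of Iwasawa cocycles and the description of $\underline a$ in terms of Cartan projections. Writing $\psi=\sum_{\alpha\in\Pi}c_\alpha\varpi_\alpha$ with $c_\alpha\ge 0$, and noting that $\chi_\alpha$ is a positive multiple of $\varpi_\alpha$, it suffices to prove the two-sided bound after pairing with each $\chi_\alpha$ separately, i.e.\ to show that for each $\alpha\in\Pi$, each $p,q\in X$ and $\xi\in\cal F$,
\[
-\chi_\alpha(\underline a(q,p))\le \chi_\alpha(\beta_\xi(p,q))\le \chi_\alpha(\underline a(p,q)).
\]
By the cocycle identity \eqref{eq.basic0} and $G$-equivariance we may normalize $q=o$, so that $\beta_\xi(p,o)=\sigma(g^{-1},\xi)$ when $p=g(o)$, and $\underline a(p,o)=\mu(g^{-1})$, $\underline a(o,p)=\mu(g)$.

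The key step is the standard interpretation of $\chi_\alpha\circ\sigma$ and $\chi_\alpha\circ\mu$ via the highest weight vector. For $h\in G$ and $\eta=k^+$ with $k\in K$, if $v\in V_\alpha^+$ is a unit highest weight vector then, writing $hk\in K\exp(\sigma(h,\eta))N$ and using that $N$ fixes $v$ up to lower weight terms while $\rho_\alpha(K)$ is orthogonal, one gets
\[
\chi_\alpha(\sigma(h,\eta))=\log\frac{\norm{\rho_\alpha(h)\,\rho_\alpha(k)v}_\alpha^{\,+}}{\norm v_\alpha}
\quad\text{where }\|\cdot\|_\alpha^+\text{ denotes the }V_\alpha^+\text{-component},
\]
and in particular $\chi_\alpha(\sigma(h,\eta))\le \log\norm{\rho_\alpha(h)}_\alpha=\chi_\alpha(\mu(h))$, since the operator norm of $\rho_\alpha(h)$ equals $e^{\chi_\alpha(\mu(h))}$ by bi-$K$-invariance of the norm and the Cartan decomposition. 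Applying this with $h=g^{-1}$ gives the upper bound $\chi_\alpha(\beta_\xi(p,o))\le\chi_\alpha(\mu(g^{-1}))=\chi_\alpha(\underline a(p,o))$. For the lower bound, the same computation applied to $h=g$ and to the point $g\xi$, combined with the cocycle relation $\sigma(g^{-1},\xi)+\sigma(g,g^{-1}\xi)\cdot(\text{reindex})$—more cleanly: $\chi_\alpha(\sigma(g^{-1},\xi))=-\chi_\alpha(\sigma(g, g^{-1}\xi))+\text{(telescoping)}$ is not quite it; instead one uses $0=\sigma(e,\xi)=\sigma(g^{-1}g,\xi)=\sigma(g^{-1},g\xi)+\sigma(g,\xi)$ evaluated at the point $\xi'$ with $g\xi'=\xi$, giving $\chi_\alpha(\sigma(g^{-1},\xi))=-\chi_\alpha(\sigma(g,\xi'))\ge -\chi_\alpha(\mu(g))=-\chi_\alpha(\underline a(o,p))$. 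Summing over $\alpha$ with the nonnegative coefficients $c_\alpha/(\text{positive multiple})$ yields \eqref{ggg}.

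The main obstacle is bookkeeping rather than conceptual: one must verify carefully that $\chi_\alpha(\sigma(h,\eta))\le\log\norm{\rho_\alpha(h)}_\alpha$ with the correct norms (operator norm on $\op{GL}(V_\alpha)$, and the fact that $\rho_\alpha(k)$ is an isometry while the Iwasawa $N$-factor acts unipotently preserving the flag, so it can only increase the highest-weight component in the relevant sense), and that equality $\log\norm{\rho_\alpha(h)}_\alpha=\chi_\alpha(\mu(h))$ holds—this is immediate from $h\in K\exp(\mu(h))K$, the bi-$\rho_\alpha(K)$-invariance of $\norm{\cdot}_\alpha$ on $\op{GL}(V_\alpha)$, and $\norm{\rho_\alpha(\exp\mu(h))}_\alpha=e^{\chi_\alpha(\mu(h))}$ since $\chi_\alpha$ is the highest weight. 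A clean way to package both inequalities uniformly is to use Lemma~\ref{lem.def2}: $\chi_\alpha(\cal G(g^+,g^-))$ is expressed as $-\log\frac{|\varphi(v)|}{\norm\varphi_\alpha\norm v_\alpha}$, and a similar ``matrix coefficient'' formula for $\chi_\alpha\circ\sigma$ and for $\chi_\alpha\circ\mu$ (the latter being the logarithm of the largest singular value) reduces everything to the elementary estimate $|\langle \rho_\alpha(h)u,w\rangle|\le \norm{\rho_\alpha(h)}_\alpha\norm u_\alpha\norm w_\alpha$ applied twice with well-chosen unit vectors $u,w$ in $V_\alpha^+$ and its image.
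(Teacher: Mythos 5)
Your plan is essentially the paper's own proof: reduce to $\psi=\chi_\alpha$ using that $\chi_\alpha$ is a positive multiple of $\varpi_\alpha$, express $\chi_\alpha(\sigma(h,\eta))$ through the action of $\rho_\alpha(h)$ on a unit highest weight vector, and bound it above by $\log\norm{\rho_\alpha(h)}_\alpha=\chi_\alpha(\mu(h))$; your lower bound via the cocycle identity $\sigma(g^{-1},\xi)=-\sigma(g,g^{-1}\xi)$ is just the paper's estimate $\norm{\rho_\alpha(h)w}_\alpha\ge\norm{\rho_\alpha(h^{-1})}_\alpha^{-1}\norm{w}_\alpha$ in disguise. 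One small correction: with the paper's convention that $\log N$ is the sum of the positive root spaces, $\rho_\alpha(N)$ fixes the highest weight vector exactly, so the correct identity is $\chi_\alpha(\sigma(h,\eta))=\log\norm{\rho_\alpha(h)\rho_\alpha(k)v}_\alpha$ with the full norm --- your ``$V_\alpha^+$-component'' formula is not an equality --- but since you only use the consequent upper bound by the operator norm, the argument is unaffected.
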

\begin{proof}
We use notations introduced in Lemma \ref{lem.Tits}. Since $\varpi_\alpha$ is a positive multiple of $\chi_\alpha$,  it suffices to prove the claim
 when $\psi=\chi_\alpha$ for $\alpha\in \Pi$. 

Write $q=go$, and $p=hq$ for some $g,h\in G$.
Note that $$\chi_\alpha(\underline a(p,q))=\chi_\alpha(\mu(g^{-1}h^{-1} g))=\log \norm{\rho_\alpha(g^{-1}h^{-1} g)}_\alpha.$$
Write $g^{-1}\xi= k^+$ for some $ k\in K$ and 
 $g^{-1}h^{-1}g k = k'an\in KAN$.
 Then $$\beta_\xi(p,q)=\sigma(g^{-1} h^{-1} g, k^+)=\log a.$$
Hence for a unit vector $v\in V_\alpha$,
$$
\chi_\alpha(\beta_\xi(p,q))
=\log {\norm{\rho_\alpha(g^{-1}h^{-1} g)\rho_\alpha( k)v}}\le \log {\norm{\rho_\alpha(g^{-1}h^{-1} g)}}_\alpha=
\chi_\alpha(\underline a(p,q)) .
$$

Since 
$ \|\rho_\alpha(g^{-1})\|^{-1}\le  {\norm{\rho_\alpha( g)v}} $ and 
$\chi_\alpha(\underline a(q,p))=\log \norm{\rho_\alpha(g^{-1}hg)}_\alpha$,
we also get
$$
\chi_\alpha(\beta_\xi(p,q))\ge \log {\norm{\rho_\alpha(g^{-1}h g)}}^{-1}_\alpha=
-\chi_\alpha(\underline a(q,p)) .$$
\end{proof}

There are $\psi\in\dg$ that are not strongly positive (see Lemma \ref{oo}).
We establish the following modification for Anosov groups, which is the main goal of this section:
\begin{thm} \label{concave}
Let $\Ga<G$ be Anosov.  
For any $\psi\in\dg$ and $p\in X$, there exists $C=C(\psi,p)>0$ such that for all $\ga\in\Ga$ and $\xi \in \La$,
$$
-\psi(\underline a (p, \ga p))-C
 \le  \psi(\beta_\xi(\ga p, p))\leq  \psi(\underline a (\ga p,  p))+C.
$$
\end{thm}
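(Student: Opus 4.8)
The plan is to reduce the statement to the strongly positive case (Lemma \ref{lem.SP}) plus a bounded error, by exploiting the Morse property of Anosov groups and the fact that the orbit map $\gamma\mapsto\gamma(o)$ sends shadows in the word-hyperbolic group $\Gamma$ to shadows in the symmetric space $X$. I would first reduce to $p=o$: by Lemma \ref{com}, replacing $p=h(o)$ by $o$ changes each of $\underline a(o,\gamma o)$, $\underline a(\gamma o, o)$ and $\beta_\xi(\gamma o, o)$ (the latter via the cocycle relations \eqref{eq.basic0}) by a bounded amount, so it suffices to bound $\psi(\beta_\xi(\gamma o,o))$ in terms of $\psi(\mu(\gamma^{-1}))$ and $\psi(\mu(\gamma))$. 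Using the opposition involution $\i$ and the identity $\mu(\gamma^{-1})=\i(\mu(\gamma))$, both inequalities are of the same flavor, so I would concentrate on the upper bound $\psi(\beta_\xi(\gamma o, o))\le\psi(\mu(\gamma))+C$.

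The key point is that $\psi$ is \emph{tangent to $\psi_\Gamma$ at some interior direction $u\in\inte\L_\Gamma$}, hence $\psi>0$ on $\L_\Gamma-\{0\}$ (Theorem \ref{pop}(3),(4)), and that for Anosov $\Gamma$ the Cartan projection $\mu(\gamma)$ stays in a fixed compact cone around $\inte\L_\Gamma$, in fact $\mu(\gamma)/\|\mu(\gamma)\|$ accumulates only on $\inte\fa^+$ (Theorem \ref{pop}(1)). So on the relevant directions $\psi$ is comparable to any strongly positive form: there are constants $c_1,c_2>0$ and a strongly positive $\psi_0$ (e.g.\ a large multiple of $\sum_\alpha\varpi_\alpha$) with $c_1\psi_0(v)\le\psi(v)\le c_2\psi_0(v)$ for all $v$ in the closed cone generated by $\mu(\Gamma)\cup\{\mu(\gamma^{-1})\}$ — but this naive comparison is \emph{not enough}, since $\beta_\xi(\gamma o,o)$ need not lie in that cone. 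This is where the Anosov/Morse input is essential: I would use Proposition \ref{prop.SS} (shadows map to shadows) to show that for $\xi\in\La$, the vector $\beta_\xi(\gamma o, o)$ is, up to bounded error, a \emph{Cartan projection of a group element comparable to $\gamma$} — more precisely, writing $\gamma^{-1}=k_1\exp(\mu(\gamma^{-1}))k_2$ in Cartan form, the direction determined by $\kappa_1(\gamma^{-1})$ is within bounded distance of $\zeta^{-1}(\xi)$ when $\xi$ is "below" $\gamma$, and then $\sigma(\gamma^{-1},\xi)$ is within a bounded amount of $\mu(\gamma^{-1})$'s negative — the regularity (Theorem \ref{pop}(1)) forces the Iwasawa cocycle to track the Cartan projection along limit directions. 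For $\xi$ far from the shadow of $\gamma$, one instead gets $\|\beta_\xi(\gamma o,o)\|\le\|\mu(\gamma)\|+O(1)$ directly and the positivity of $\psi$ on the limit cone closes the estimate.

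Concretely the steps are: (i) reduce to $p=o$ via Lemma \ref{com}; (ii) fix $\gamma$ and split $\La$ into the part inside a shadow $O$ of $\gamma^{-1}$ (of definite size) and its complement; (iii) on the complement, use Lemma \ref{lem.SP} applied to the strongly positive $\psi_0\gg\psi$ together with a compactness/continuity argument on the boundary to get the bound with a uniform constant independent of $\gamma$; (iv) on the shadow, use Proposition \ref{prop.SS} and the Morse lemma (Proposition \ref{Morse}) to replace $\gamma$ by a group element whose geodesic to $\partial\Gamma$ lands at $\zeta^{-1}(\xi)$, so that $\beta_\xi(\gamma o,o)$ is within $O(1)$ of $-\mu$ of that element, and then conclude by $\psi>0$ on $\L_\Gamma$ and the quasi-isometry property $\|\mu(\gamma)\|\asymp|\gamma|$. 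I expect step (iv) — matching the Iwasawa cocycle $\sigma(\gamma^{-1},\xi)$ to $\pm\mu$ of a nearby group element with only a bounded (additive) error, uniformly in $\gamma$ and $\xi\in\La$ — to be the main obstacle, precisely because the quasi-isometric embedding $(\Gamma,|\cdot|)\hookrightarrow G$ comes with a multiplicative constant; circumventing that is exactly the role of the auxiliary lemma on $\|\mu(\gamma_1\gamma_2)-\mu(\gamma_1)-\mu(\gamma_2)\|<R$ for geodesic words, which I would invoke to upgrade the quasi-isometry to an \emph{additive} control along geodesic rays, and hence to pin down $\sigma(\gamma^{-1},\xi)$ up to $O(1)$.
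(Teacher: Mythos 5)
You have the right toolbox (reduction to $p=o$ via Lemma \ref{com}, shadows-go-to-shadows, the Morse property, the additive lemma on Cartan projections, positivity of $\psi$ on $\L_\Ga$), but the step that actually carries the proof is missing, and your two-case split does not close. The paper proves the theorem through Proposition \ref{prop.AC}: for \emph{each} pair $(\ga,\xi)$ one takes a geodesic triangle in $\Ga$ with vertices $e,\ga$ and the ideal vertex $\zeta^{-1}(\xi)$, and $\delta$-thinness produces a $\xi$-dependent splitting $\ga=\ga_1\ga_2$ with $|\ga|=|\ga_1|+|\ga_2|$ and $\zeta^{-1}(\xi)\in O_\delta(\ga,\ga_1)\cap O_\delta(e,\ga_1)$. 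Proposition \ref{prop.SS} transfers these group shadows to shadows in $X$, and the shadow lemma (Lemma \ref{lem.shadow1}) together with the cocycle identity gives $\beta_\xi(\ga o,o)=\mu(\ga_2^{-1})-\mu(\ga_1)+O(1)$, while Corollary \ref{cor.R2} gives $\underline a(\ga o,o)=\mu(\ga_1^{-1})+\mu(\ga_2^{-1})+O(1)$; both inequalities then drop out of $\psi(\mu(\ga_1^{\pm1}))\ge 0$. Nothing in your sketch produces this difference structure for $\beta_\xi(\ga o,o)$, which is exactly what makes an \emph{additive} constant possible.

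Concretely, your step (iii) is the gap: for $\xi$ outside a shadow of fixed size, $\beta_\xi(\ga o,o)$ is generically a vector of norm comparable to $\norm{\mu(\ga)}$ pointing in a direction of the form $\mu(\ga_2^{-1})-\mu(\ga_1)$ with $|\ga_1|,|\ga_2|$ both comparable to $|\ga|$; such a vector lies nowhere near $\fa^+$ or the limit cone, so comparing $\psi$ with a strongly positive $\psi_0$ (Lemma \ref{lem.SP}) can only give a bound of the shape $\psi(\beta_\xi(\ga o,o))\le c\,\psi(\underline a(\ga o,o))$ with a multiplicative constant $c>1$, and no compactness or continuity argument on the boundary converts this into an additive bound uniform in $\ga$ — the discrepancy grows linearly in $\norm{\mu(\ga)}$. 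Your step (iv) also misstates what happens on the shadow: on no region of $\La$ is $\beta_\xi(\ga o,o)$ within $O(1)$ of $-\mu$ of a single group element; it is within $O(1)$ of the difference $\mu(\ga_2^{-1})-\mu(\ga_1)$, with $\ga_1$ moving along $[e,\ga]$ as $\xi$ varies (the extreme cases $\ga_1\approx e$ and $\ga_1\approx\ga$ giving $+\mu(\ga^{-1})$ and $-\mu(\ga)$ respectively). Finally, the auxiliary lemma $\norm{\mu(\ga_1\ga_2)-\mu(\ga_1)-\mu(\ga_2)}\le R$ is not what pins down the Iwasawa cocycle — that is done by Lemma \ref{lem.shadow1} applied to the transferred shadows — it is needed to compare $\underline a(\ga o,o)=\mu(\ga^{-1})$ with $\mu(\ga_1^{-1})+\mu(\ga_2^{-1})$, i.e.\ part (3) of Proposition \ref{prop.AC}. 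So while your intuition about which Anosov inputs are relevant is sound, the argument as proposed would fail at the complement case and needs the $\xi$-dependent geodesic decomposition to be repaired.
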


We begin by noting that $\psi(\underline a(\ga p,p))$ is always positive possibly except for finitely many $\ga$'s:
\begin{Lem} \label{concave0} 
Let $\psi\in\dg$ and $p\in X$.
For any sequence $\ga_i\to\infty$ in $\Ga$,
 $\psi(\underline a(\ga_i p,p))\to +\infty$.
\end{Lem}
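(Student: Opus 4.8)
The plan is to strip away the basepoint and reduce the statement to a fact about the limit cone $\L_\Ga$, which then follows at once from the strict positivity of $\psi$ on $\L_\Ga-\{0\}$ (Theorem \ref{pop}(4)) together with the fact that $\L_\Ga$ is the asymptotic cone of $\mu(\Ga)$ (Definition \ref{lc}).

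First I would remove the basepoint dependence. Writing $p=g(o)$ for some $g\in G$, we have $\underline a(\ga_i p,p)=\mu(g^{-1}\ga_i^{-1}g)$, so Lemma \ref{com} applied to the compact set $L=\{g,g^{-1}\}$ yields a compact $Q\subset\fa$, independent of $i$, with $\underline a(\ga_i p,p)\in\mu(\ga_i^{-1})+Q$. Since $\psi$ is linear and $Q$ is compact, $\psi(\underline a(\ga_i p,p))=\psi(\mu(\ga_i^{-1}))+O(1)$ uniformly in $i$; as $\ga_i\to\infty$ in $\Ga$ if and only if $\ga_i^{-1}\to\infty$, it therefore suffices to prove that $\psi(\mu(\delta_i))\to+\infty$ for every sequence $\delta_i\to\infty$ in $\Ga$.

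The second step is a uniform linear lower bound for $\psi$ near $\L_\Ga$. By Theorem \ref{pop}(4), $\psi$ is positive on the compact set $\L_\Ga\cap\{\|u\|=1\}$, so by continuity there are a constant $c>0$ and an open cone $\cal C\subset\fa$ with $\L_\Ga-\{0\}\subset\cal C$ such that $\psi(v)\ge c\|v\|$ for all $v\in\cal C$. I then claim that all but finitely many $\ga\in\Ga$ satisfy $\mu(\ga)\in\cal C$: since $\Ga$ is discrete, only finitely many $\ga\in\Ga$ satisfy $\|\mu(\ga)\|\le T$ for a fixed $T$, so if infinitely many $\ga$ had $\mu(\ga)\notin\cal C$ we could extract a subsequence $\ga_j$ with $\|\mu(\ga_j)\|\to\infty$ and $\mu(\ga_j)/\|\mu(\ga_j)\|$ convergent, and its limit would lie in $\L_\Ga\cap\{\|u\|=1\}\subset\cal C$ because $\L_\Ga$ is the asymptotic cone of $\mu(\Ga)$, contradicting that $\fa-\cal C$ is closed. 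Consequently, for $\delta_i\to\infty$ we have $\|\mu(\delta_i)\|\to\infty$ and $\mu(\delta_i)\in\cal C$ for all large $i$, whence $\psi(\mu(\delta_i))\ge c\|\mu(\delta_i)\|\to+\infty$, which finishes the argument.

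I do not expect a genuine obstacle here; the only points needing care are the passage from pointwise positivity of $\psi$ on $\L_\Ga-\{0\}$ to a uniform estimate $\psi\ge c\|\cdot\|$ on an honest open cone around $\L_\Ga$, and the use of the asymptotic-cone description of $\L_\Ga$ to guarantee that $\mu(\ga)$ eventually lands inside that cone. Both are soft compactness arguments, and no feature of the Anosov hypothesis beyond Theorem \ref{pop}(4) is used.
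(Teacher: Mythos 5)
Your proof is correct and follows essentially the same route as the paper: reduce via Lemma \ref{com} to showing $\psi(\mu(\ga_i))\to+\infty$, then use that normalized Cartan projections accumulate on $\L_\Ga$ (the asymptotic cone of $\mu(\Ga)$), where $\psi>0$ by Theorem \ref{pop}. The paper packages the last step as a subsequence-extraction argument rather than your uniform bound $\psi\ge c\|\cdot\|$ on an open cone around $\L_\Ga-\{0\}$, but this is only a cosmetic difference.
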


\begin{proof} By Lemma \ref{com}, it suffices to check that $\psi(\mu(\ga_i))\to+\infty$
 as $i\to \infty$.  Setting $t_i:=\|\mu(\ga_i)\|^{-1}$, passing to a subsequence,
 we may assume that  $t_i\mu(\ga_i)$ converges to some unit vector $u\in \fa$. 
Since $\L_\Ga$ is the asymptotic cone of $\mu(\Ga)$, we have $u\in\cal L_\Ga$.
Hence we have $\psi(u)>0$ by Lemma \ref{pop}.
Since $\psi(t_i\mu(\ga_i))\to \psi(u)$ and
$\psi(\mu(\ga_i))=t_i^{-1}\psi(t_i\mu(\ga_i))$, we have $\psi(\mu(\ga_i))\to+\infty$.
\end{proof}

The following is the main ingredient of the proof of Theorem \ref{concave}:
\begin{proposition}\label{prop.AC}
For $p\in X$, there exists $C=C(p)>0$  such that for each $(\ga,\xi)\in\Ga\times\La$, we can find $\ga_1=\ga_1(\xi),\ga_2=\ga_2(\xi) \in\Ga$ satisfying
\begin{enumerate}
\item
$\ga=\ga_1\ga_2$ and $|\ga|=|\ga_1|+|\ga_2|$;
\item
$\norm{\beta_\xi(\ga p,p)+\mu(\ga_1)-\mu(\ga_2^{-1})}\leq C$;
\item
$\norm{\underline a(\ga p,p)-\mu(\ga_1^{-1})-\mu(\ga_2^{-1})}\leq C$.
\end{enumerate}
\end{proposition}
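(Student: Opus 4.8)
The plan is to split $\ga$ at the branch point of a word geodesic from $e$ to $\ga$ and a word geodesic ray from $e$ to $x:=\zeta\inv(\xi)\in\partial\Ga$. Fixing $\delta$ with the Cayley graph of $\Ga$ being $\delta$-hyperbolic, choose a geodesic $[e,\ga]$ and a ray $[e,x)$, put $t_0:=\lfloor(\ga\mid x)_e\rfloor$, let $\ga_1$ be the vertex of $[e,\ga]$ at distance $t_0$ from $e$, and set $\ga_2:=\ga_1\inv\ga$; then (1) holds because $\ga_1\in[e,\ga]$. By $\delta$-thinness of ideal triangles, $\ga_1$ lies within a bounded distance of both the ray $[e,x)$ and a geodesic from $\ga$ to $x$; hence $x$ lies in a bounded-radius shadow of $\ga_1$ in $\Ga$, and $(\ga\mid x)_{\ga_1}$ is bounded, so $(\ga_2\mid\ga_1\inv x)_e=(\ga\mid x)_{\ga_1}$ is bounded, which forces the attracting fixed point $x_2\in\partial\Ga$ of $\ga_2$ (loxodromic by Theorem \ref{pop}(1)) to be uniformly separated from $\ga_1\inv x$ in $\partial\Ga$ --- unless $|\ga_2|$ is bounded, a case set aside below. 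Two reductions are harmless. First, it suffices to treat $p=o$: for $p=g_p(o)$, Lemma \ref{com} bounds $\|\underline a(\ga p,p)-\underline a(\ga o,o)\|$, and continuity of $\sigma(g_p^{\pm1},\cdot)$ on the compact space $\cal F$ bounds $\|\beta_\xi(\ga p,p)-\beta_\xi(\ga o,o)\|$, both in terms of $g_p$ alone (keeping the same $\ga_1,\ga_2$). Second, one may assume $|\ga_1|$ and $|\ga_2|$ exceed any fixed threshold, since the finitely many remaining elements contribute only bounded quantities to $C$.

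Property (3) is then essentially free: $\underline a(\ga o,o)=\mu(\ga\inv)=\mu(\ga_2\inv\ga_1\inv)$ with $|\ga_2\inv\ga_1\inv|=|\ga|=|\ga_1\inv|+|\ga_2\inv|$, so (3) is exactly the quasi-additivity estimate $\|\mu(\ga_2\inv\ga_1\inv)-\mu(\ga_1\inv)-\mu(\ga_2\inv)\|\le R$ for Cartan projections along word geodesics --- the lemma recalled in the introduction, which one establishes first. For (2), write $\beta_\xi(\ga o,o)=\sigma(\ga\inv,\xi)$ and use the cocycle relation:
\[
\beta_\xi(\ga o,o)=\sigma(\ga_1\inv,\xi)+\sigma(\ga_2\inv,\ga_1\inv\xi),
\]
so (2) reduces to the two uniform estimates
\[
\|\sigma(\ga_1\inv,\xi)+\mu(\ga_1)\|\le C\qquad\text{and}\qquad\|\sigma(\ga_2\inv,\ga_1\inv\xi)-\mu(\ga_2\inv)\|\le C.
\]

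For the first estimate, Proposition \ref{prop.SS} carries the bounded-radius shadow of $\ga_1$ in $\Ga$ containing $x$ into a bounded-radius shadow of $\ga_1 o$ in $X$, so $\xi=k^+$ with $d(k\exp(v)o,\ga_1 o)$ bounded for some $v\in\fa^+$. As $\beta_\xi(k\exp(v)o,o)=\sigma(\exp(-v)k\inv,\xi)=-v$, as the Busemann function obeys $\|\beta_\eta(q,q')\|\le d(q,q')$, and as $\|v-\mu(\ga_1)\|$ is bounded (Lemma \ref{com}), the identity $\beta_\xi(\ga_1 o,o)=\beta_\xi(\ga_1 o,k\exp(v)o)-v$ yields $\|\sigma(\ga_1\inv,\xi)+\mu(\ga_1)\|=\|\beta_\xi(\ga_1 o,o)+\mu(\ga_1)\|\le C$. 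The second estimate is the complementary ``general position'' regime: for $\e$-regular $h$ and a flag $\eta$ at distance $\ge\e$ from the set of flags not in general position with $\kappa_1(h\inv)^+$, one has $\|\sigma(h,\eta)-\mu(h)\|\le C(\e)$ (the Cartan factor of $h$ contracts the relevant $N^+$-part); for $h=\ga_2\inv$ this exceptional set is $\{\eta\in\cal F:(\eta,\kappa_1(\ga_2)^+)\notin\cal F^{(2)}\}$, so it suffices to keep $(\ga_1\inv\xi,\kappa_1(\ga_2)^+)$ in a fixed compact subset of $\cal F^{(2)}$. For this: $\kappa_1(\ga_2)^+$ lies within $o(1)$, as $|\ga_2|\to\infty$, of the attracting fixed flag $\zeta(x_2)$ of $\ga_2$ (a standard feature of Anosov subgroups, via the Morse property, the quasi-isometric embedding $(\Ga,|\cdot|)\hookrightarrow G$, and regularity); $\zeta(x_2)$ and $\ga_1\inv\xi=\zeta(\ga_1\inv x)$ are uniformly separated in $\La$, as $x_2$ and $\ga_1\inv x$ are uniformly separated in $\partial\Ga$ and $\zeta\inv$ is uniformly continuous on the compact set $\La$; and, by antipodality of the limit set, any two uniformly separated points of the compact set $\La$ stay at positive distance from $\cal F\times\cal F-\cal F^{(2)}$.

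The step I expect to be the main obstacle is this last one: converting the combinatorial divergence of the two word geodesics at $\ga_1$ into a \emph{uniform} quantitative separation of $\ga_1\inv\xi$ from the repelling flag $\kappa_1(\ga_2)^+$, strong enough to trigger the Iwasawa-versus-Cartan estimate, while at the same time ensuring $\mu(\ga_2)$ is $\e$-regular for the same $\e$. This requires running the combinatorial picture through the Morse property, the quasi-isometric embedding, and the Anosov regularity simultaneously, and it is precisely the matching of shadows in $\Ga$ with shadows in $X$ (Proposition \ref{prop.SS}) that makes the first of the two estimates go through cleanly.
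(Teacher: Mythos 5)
Your decomposition of $\ga$ at the center of the triangle with vertices $e$, $\ga$, $x:=\zeta^{-1}(\xi)$, your treatment of (1) and of (3) (via quasi-additivity of $\mu$ along word geodesics, i.e.\ Corollary \ref{cor.R2}), and your first estimate $\norm{\sigma(\ga_1^{-1},\xi)+\mu(\ga_1)}\leq C$ (which is Proposition \ref{prop.SS} followed by the content of Lemma \ref{lem.shadow1}) all coincide with the paper's argument; note only that the pointwise bound $\norm{\beta_\eta(q,q')}\leq d(q,q')$ you invoke is valid in higher rank only up to a multiplicative constant, and Lemma \ref{lem.FW} already supplies what you need there. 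The divergence, and the genuine gap, is in the second half of (2), the term $\sigma(\ga_2^{-1},\ga_1^{-1}\xi)$. You route it through an Iwasawa-versus-Cartan comparison, which forces you to show that $(\ga_1^{-1}\xi,\kappa_1(\ga_2)^+)$ stays in a fixed compact subset of $\F^{(2)}$, and your argument for this rests on two unproved assertions: that $\kappa_1(\ga_2)^+$ is within $o(1)$ of the attracting fixed flag $\zeta(x_2)$ of $\ga_2$, and that $x_2$ is uniformly separated from $\ga_1^{-1}x$. Neither is immediate: what Theorem \ref{lem.ext} gives is that $\kappa_1(\ga_2)^+$ is close to $\zeta$ of the limit of $\ga_2$ in $\Ga\cup\partial\Ga$ (the direction of $\ga_2$ seen from $e$), not of its attracting fixed point; to pass from one to the other, and likewise to deduce the separation of $x_2$ from $\ga_1^{-1}x$ out of the bound on $(\ga_2\mid\ga_1^{-1}x)_e$, you need the quantitative hyperbolic-group fact that $(\ga_2\mid x_2)_e\to\infty$ with $|\ga_2|$ (roughly $(\ga\mid x_\ga^+)_e\geq\tfrac12|\ga|-O(\delta)$), which appears neither in your sketch nor in the paper. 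On top of this, the comparison $\norm{\sigma(h,\eta)-\mu(h)}\leq C$ under uniform general position is an extra lemma you would have to prove (routine, but absent from the paper). So the step you yourself flag as the main obstacle is indeed where the proposal is incomplete.

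The tool you already used for the $\ga_1$-term closes the $\ga_2$-term at once and makes all of this unnecessary: the same thin triangle gives $\zeta^{-1}(\xi)\in O_{\delta}(\ga,\ga_1)$ as well, so by the equivariance \eqref{eq.EV} and Proposition \ref{prop.SS} one gets $\xi\in O_{c(\delta+1)}(\ga o,\ga_1 o)$, and Lemma \ref{lem.shadow1} applied with base point $\ga o$ gives $\norm{\beta_\xi(\ga o,\ga_1 o)-\mu(\ga_2^{-1})}\leq \kappa c(\delta+1)$; combined with your first estimate and the cocycle relation $\beta_\xi(\ga o,o)=\beta_\xi(\ga o,\ga_1 o)-\beta_\xi(o,\ga_1 o)$, this is exactly (2), and it is the paper's proof. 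If you insist on your route, you can at least bypass the attracting fixed point: a compactness argument using the bounded Gromov product $(\ga_2\mid\ga_1^{-1}x)_e$, Theorem \ref{lem.ext}, continuity of $\zeta$ and the antipodality \eqref{anti} shows directly that $(\ga_1^{-1}\xi,\kappa_1(\ga_2)^+)$ stays in a fixed compact subset of $\F^{(2)}$ once $|\ga_2|$ is large; but this is strictly more work than the symmetric shadow argument you already have in hand.
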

\noindent
\textbf{Proof of Theorem \ref{concave} using Proposition \ref{prop.AC}:}
For $\ga\in\Ga$ and $\xi\in\La$, choose $\ga_1,\ga_2\in\Ga$ as in Proposition \ref{prop.AC}.
Then
\begin{align*}
\psi(\beta_\xi(\ga p,p))&\leq \psi(\mu(\ga_2^{-1})-\mu(\ga_1))+C\norm{\psi}\\
&\leq \psi(\mu(\ga_2^{-1})+\mu(\ga_1^{-1}))+C\norm{\psi}\\
&\leq \psi(\underline a(\ga p,p))+2C\norm{\psi},
\end{align*}
where the second inequality is valid because $\psi(\mu(\ga_1^{\pm1}))\geq 0$.
Similarly, we get 
\begin{align*}
\psi(\beta_\xi(\ga p,p))&\geq \psi(\mu(\ga_2^{-1})-\mu(\ga_1))-C\norm{\psi}\\
&\geq -\psi(\mu(\ga_2)+\mu(\ga_1))-C\norm{\psi}. \end{align*}
Since $\op{i}\mu(g^{-1})=\mu(g)$, $\op{i}\underline a(p,q)=\underline a(q,p)$ and the norm
is $\op{i}$-invariant, 
we get 
$\psi(\beta_\xi(\ga p,p))\ge \underline \psi (a(p, \ga p)) -2C \|\psi\|$.
$\qed$

\medskip
The rest of this section is devoted to a proof of Proposition \ref{prop.AC} in which
shadows of $\cal F$ and $\partial\Ga$ as well as their relationship play important roles.
\subsection*{Shadows in $\cal F$}
Let $ q\in X$ and $r>0$.
The shadows of the ball $B(q,r)$ viewed from $p\in X$ and $\xi\in \cal F$ are respectively defined as
 $$O_r(p,q):=\{gk^+\in \cal F: k\in K,\;
  gk\inte A^+o\cap  B(q,r)\ne \emptyset\}$$
 where $g\in G$ satisfies $p=g(o)$, and
$$O_r(\xi,q):=\{h^+\in \cal F: h^-=\xi, ho\in  B(q,r)  \}.$$

The following two lemmas \ref{lem.SC} and \ref{lem.shadow1} are proved for $G=\op{SL}_n(\br)$ in \cite{Thir}. 
For $r>0$, we set $A_r=\{a\in A: \|\log a\|\le r\}$, and $A^+_r=A_r\cap A^+$.
\begin{lem}\label{lem.SC} 
If a sequence $q_i\in X$  converges to $\xi\in \cal F$,
then for any $r>0$, $q\in X$ and $\e>0$, we have, for all sufficiently large $i$,
$$ O_{r-\e} (q_i,q)\subset O_r(\xi,q)\subset O_{r+\e}(q_i,q) .$$ 
\end{lem}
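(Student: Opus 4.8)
\emph{Plan of proof.} I would prove the two inclusions separately, each by contradiction together with a compactness argument. Fix once and for all a sequence $g_i\in G$ with $g_io=q_i$; by hypothesis $g_i\to\infty$ regularly and $\kappa_1(g_i)^+\to\xi$, and recall that $O_r(p,q)$ does not depend on the choice of representative $g$ with $go=p$.

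\textbf{First inclusion, $O_{r-\e}(q_i,q)\subset O_r(\xi,q)$ for large $i$.} Suppose not; then there are $i_j\to\infty$ and $\eta_j\in O_{r-\e}(q_{i_j},q)\setminus O_r(\xi,q)$, and we may pick $k_j\in K$, $a_j\in\inte A^+$ with $\eta_j=g_{i_j}k_j^+$ and $m_j:=g_{i_j}k_ja_j\in L$, where $L:=\{g\in G: go\in B(q,r-\e)\}$ is relatively compact. From $a_j=k_j^{-1}g_{i_j}^{-1}m_j$ and Lemma \ref{com} we get $\log a_j=\mu(a_j)\in\mu(g_{i_j}^{-1})+Q$ for a compact $Q$ independent of $j$; since the opposition involution permutes the simple roots, $\mu(g_{i_j}^{-1})\to\infty$ regularly, hence so does $\log a_j$. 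Passing to a subsequence, $m_j\to m_\infty\in\overline L$ and $k_j\to k_\infty\in K$. Since $g_{i_j}k_j=m_ja_j^{-1}$, Lemma \ref{st2} gives $q_{i_j}=g_{i_j}o=m_ja_j^{-1}o\to m_\infty^-$, so $m_\infty^-=\xi$; also $\eta_j=(g_{i_j}k_j)^+=(m_ja_j^{-1})^+=m_j^+$. Because $\F^{(2)}$ is open and $(m_j^+,\xi)\to(m_\infty^+,m_\infty^-)\in\F^{(2)}$, we have $(\eta_j,\xi)\in\F^{(2)}$ for large $j$; since $m_j^{-1}\xi\to m_\infty^{-1}\xi=e^-$, we may choose $p_j\in P$ with $p_j\to e$ and $p_jw_0P=m_j^{-1}\xi$, so that $h_j:=m_jp_j$ satisfies $h_j^+=\eta_j$ and $h_j^-=\xi$. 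Finally $h_jo=m_jp_jo\to m_\infty o$, and as $d(m_\infty o,q)\le r-\e$ we obtain $d(h_jo,q)<r$ for large $j$; hence $\eta_j\in O_r(\xi,q)$, a contradiction.

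\textbf{Second inclusion, $O_r(\xi,q)\subset O_{r+\e}(q_i,q)$ for large $i$.} I would first record the reformulation: $\eta\in O_{r+\e}(q_i,q)$ if and only if there is $\hat g\in G$ with $\hat go=q_i$, $\hat g^+=\eta$ and $\hat g\,\inte A^+o\cap B(q,r+\e)\neq\emptyset$ (given such $\hat g$, set $k:=g_i^{-1}\hat g\in K$ and note $\eta=g_ik^+$, $g_ik\,\inte A^+o=\hat g\,\inte A^+o$). Now fix $\eta\in O_r(\xi,q)$, witnessed by $h\in G$ with $h^+=\eta$, $h^-=\xi$, $d(ho,q)<r$. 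Since $G=KP$, the group $P$ acts transitively on $X$, so we may pick $p_i\in P$ with $p_io=h^{-1}q_i$; then $\hat g:=hp_i$ satisfies $\hat go=q_i$, $\hat g^+=\eta$, and for every $a\in\inte A^+$,
\[
d(\hat ga o,q)\ \le\ d(ho,q)+d(p_iao,o)\ <\ r+d(p_iao,o).
\]
Thus it suffices to produce, for all large $i$, some $a\in\inte A^+$ with $d(p_iao,o)=d(a^{-1}o,p_i^{-1}o)<\e$; i.e.\ to show $d(p_i^{-1}o,\inte A^+o)\to0$. Here $p_io=h^{-1}q_i=h^{-1}g_io\to h^{-1}\xi=h^{-1}h^-=e^-$ by the equivariance of convergence (Lemma \ref{st2}). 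Write the Iwasawa decomposition $p_i=m_ia'_in_i$ with $m_i\in M$, $a'_i\in A$, $n_i\in N$, so $p_i^{-1}o=n_i^{-1}(a'_i)^{-1}o$. The crucial point is the structural fact that, after passing to a subsequence, $p_io\to e^-$ forces $\log a'_i\to\infty$ regularly in $-\fa^+$ and $\{n_i\}$ bounded. Granting this, $(a'_i)^{-1}\in\inte A^+$ tends to $\infty$ regularly, $\alpha(\log a'_i)\to-\infty$ for every positive root $\alpha$, hence $a'_in_i(a'_i)^{-1}=\exp(\Ad_{a'_i}(\log n_i))\to e$, so that $d(p_i^{-1}o,(a'_i)^{-1}o)=d(o,a'_in_i(a'_i)^{-1}o)\to0$; since $(a'_i)^{-1}o\in\inte A^+o$ we conclude $d(p_i^{-1}o,\inte A^+o)\to0$, as needed.

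\textbf{The main obstacle.} Everything above is soft except the structural fact invoked in the second inclusion: that $p_i\in P$ with $p_io\to e^-$ has Iwasawa $A$-part going to infinity regularly into $-\fa^+$ and bounded Iwasawa $N$-part. I would prove this by using the identity $\beta_{e^+}(o,p_io)=\log a'_i$ to control $a'_i$ (convergence toward the $\fa^+$-opposite point $e^-$, which is transverse to $e^+$, pushes $\log a'_i$ into $-\fa^+$ regularly, by an argument of the type of Lemma \ref{gen}), and then excluding an unbounded $n_i$ via the fact that $n\to\infty$ in $N$ drives $n\cdot(\text{bounded set})$ toward $e^+\neq e^-$. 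It is exactly here that the regularity built into $q_i\to\xi$ and the transversality $(e^+,e^-)\in\F^{(2)}$ must be used in an essential way; the rest of the proof is bookkeeping with Lemmas \ref{com}, \ref{gen} and \ref{st2}.
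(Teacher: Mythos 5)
Your proof of the first inclusion $O_{r-\e}(q_i,q)\subset O_r(\xi,q)$ is correct and takes a genuinely different route from the paper: instead of reducing to $q=o$, $\xi=e^+$ and doing explicit Iwasawa manipulations, you argue by contradiction and compactness, extracting a limit $m_\infty$ of the witnesses $m_j=g_{i_j}k_ja_j$, using Lemma \ref{com} to get regular divergence of $a_j$, Lemma \ref{st2} to identify $m_\infty^-=\xi$, and the openness of $\F^{(2)}$ to build $h_j=m_jp_j$ with $h_j^-=\xi$ exactly. This is clean, and it automatically gives the uniformity in the point of the shadow that the statement requires.

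The second inclusion, however, has a genuine gap: the ``structural fact'' you isolate as the crucial point is false. Take $G=\SL_2(\br)$, identify $X$ with the upper half plane so that $o=\sqrt{-1}$, $e^+=\infty$, $e^-=0$ and $N=\left\{\left(\begin{smallmatrix}1&x\\0&1\end{smallmatrix}\right)\right\}$, and let $p_n=\left(\begin{smallmatrix}n^{-1}&1\\0&n\end{smallmatrix}\right)\in P$. Then $p_n o=\tfrac1n+\tfrac{\sqrt{-1}}{n^2}\to 0=e^-$ (regularly, in the sense of the paper), while $p_n=\operatorname{diag}(n^{-1},n)\left(\begin{smallmatrix}1&n\\0&1\end{smallmatrix}\right)$, so the Iwasawa $N$-part is unbounded. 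The heuristic you propose for excluding an unbounded $n_i$ (``$n\to\infty$ drives bounded sets toward $e^+$'') fails because of the order of composition: in $p_io=m_ia_i'(n_io)$ the drift of $n_io$ toward $e^+$ is applied \emph{before} the strongly contracting $a_i'$, which can pull the point back toward $e^-$, exactly as in the example. What your argument actually needs downstream is not boundedness of $n_i$ but the conjugated contraction $a_i'n_i(a_i')^{-1}\to e$ (equivalently $d(p_i^{-1}o,\inte A^+o)\to0$); in the example this does hold ($a_n'n_n(a_n')^{-1}=\left(\begin{smallmatrix}1&1/n\\0&1\end{smallmatrix}\right)\to e$), but it does not follow from the reasoning you give, and proving it in general is precisely the nontrivial content of this half of the lemma. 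This is where the paper's proof does its real work: after reducing to $q=o$, $\xi=e^+$, it controls the relevant $KAN$ and $KAN^+$ components explicitly and absorbs the remaining perturbation using the strong wavefront lemma of \cite{GO}. To complete your route you would have to prove the statement ``$p_i\in P$, $p_io\to e^-$ regularly $\Rightarrow a_i'n_i(a_i')^{-1}\to e$ and $-\log a_i'\to\infty$ regularly in $\fa^+$'' by a genuine argument (e.g.\ via the representations $\rho_\alpha$ of Lemma \ref{lem.Tits}, or shadow estimates), not by the boundedness claim. A smaller point: your threshold ``for all sufficiently large $i$'' depends on $\eta$ through $h$; since the witnesses $h$ range in the compact set $\{g: g^-=\xi,\ d(go,q)\le r\}$ this can be repaired, but it should be said.
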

\begin{proof} Since $O_r(\xi,g(o))= gO_r(g^{-1}\xi, o)=kgO_r(e^+, o)$ for $k\in K$ with $k^+=g^{-1}\xi$,
it suffices to consider the case when $q=o$ and $\xi=e^+$.
Since $q_i\to e^+$, we have $q_i=k_ia_i o$ for some $k_i\to e$ in $K$ and $a_i\to\infty$ regularly in $A^+$.
As $k_i\to e$, $O_{r-\e/2}(e^+, o) \subset k_i^{-1}O_r(e^+,o)\subset O_{r+\e/2}(e^+, o)$ for all sufficiently large $i$. Therefore
we may assume without loss of generality that $q_i=a_i$. Let $\xi\in O_r(e^+,o)$, that is, $\xi=h^+$ for some $h\in Pw_0\cap KA_r^+K$.
Note that the sequence $a_i^{-1}hw_0^{-1}a_i$ is bounded as $a_i\in A^+$ and $hw_0^{-1}\in P$.
If we write $a_i^{-1}h=\tilde k_i \tilde a_i \tilde n_i\in KAN$,
then the following gives  the $KAN^+$-decomposition of $a_i^{-1}hw_0^{-1} a_i$:
$$
a_i^{-1}hw_0^{-1} a_i=\tilde k_i \tilde a_i \tilde n_i w_0^{-1}a_i=(\tilde k_iw_0^{-1})(w_0 \tilde a_i w_0^{-1}a_i)(a_i^{-1}w_0\tilde n_i w_0^{-1}a_i)\in KAN^+\!\!.$$
As the product map $K\times A\times N^+\to G$ is a diffeomorphism, there exists $R>1$ such that
$\{w_0 \tilde a_i w_0^{-1}a_i\in A:i\in \N\}\subset  A_R$.
 Moreover as the sequence $a_i^{-1}w_0\tilde n_i w_0^{-1}a_i$ must be bounded while
  $a_i\to\infty$ regularly in $A^+$, it follows that $\tilde n_i\to e$ as $i\to\infty$.
We now claim that for all sufficiently large $i\gg 1$,
$$a_i\tilde k_i\op{int}A^+\cap KA_{r+\e}^+K\ne \emptyset$$ and hence $\xi=h^+= a_i\tilde k_i^+ \in O_{r+\e}(a_io,o)$.
Set $b_i=w_0a_i^{-1}w_0^{-1}$.
Then
$$
a_i\tilde k_ib_i=a_i\tilde k_i(w_0a_i^{-1}w_0^{-1})=h\tilde n_i^{-1}(\tilde a_i^{-1}w_0 a_i^{-1}w_0^{-1}).
$$

Since $\tilde n_i\to e$ as $i\to\infty$ and $\tilde a_i^{-1}w_0 a_i^{-1}w_0^{-1}\in A_R$,
 we can find $\tilde b_i\in b_iA_R$ such that $a_i\tilde k_i\tilde b_i\to h$ as $i\to\infty$.
 On the other hand, by the strong wavefront lemma \cite[Thm. 2.1]{GO}, there exists a neighborhood $\cal O$ of $e$ in $G$ such that
 $KA_r^+K \cal O\subset K A_{r+\e} K$.
Since $h\in KA_r^+K$ and $\tilde b_i\in\op{int}A^+$ for all large $i\gg 1$, 
we obtain that $a_i\tilde k_i\tilde b_i\in K A_{r+\e}K$ for all sufficiently large $i$, proving the inclusion on the right hand side.

Now, in order to show $O_{r-\e}(a_i,o)\subset O_r(e^+,o)$,
let $\eta_i\in O_{r-\e}(a_io,o)$ be arbitrary.
Since $O_{r-\e}(a_io,o)=a_iO_{r-\e}(o,a_i^{-1}o)$, there exists $\tilde k_i\in K$ and $b_i\in \inte A^+$ such that 
$\tilde k_i^+\in O_{r-\e}(o,a_i^{-1}o)$, $\eta_i=a_i\tilde k_i^+$ and $a_i\tilde k_ib_i\in KA_{r-\e}^+K$.
Since $a_i^{-1} o\to e^-$, it follows that $\tilde k_i^+\to e^-$ as $i\to\infty$. Hence $k_i^-$ is in general position with $e^-$ for all large $i$ 
 and hence $(a_i\tilde k_i b_i)^-=a_i\tilde k_i^-\to e^+$ as $i\to\infty$.
For all large $i$, we have  $n_i\in N$ such that $(a_i\tilde k_i b_in_i)^-=e^+$ and $d(n_io,o)<\e$.
Set $h_i:=a_i\tilde k_i b_in_i$.
It follows that $h_i^+=\eta_i$ and $h_i^-=e^+$.
Since
\begin{align*}
d(h_io,o)&=d(a_i\tilde k_i b_in_io,o)\leq d(a_i\tilde k_i b_in_io,a_i\tilde k_i b_io)+d(a_i\tilde k_i b_io,o)\\
&=d(n_io,o)+d(a_i\tilde k_i b_io,o)<\e+(r-\e)=r,
\end{align*}
we have $\eta_i=h_i^+\in O_r(e^+,o)$ as desired.
This finishes the proof.
\end{proof}

The following is an analogue of Sullivan's shadow lemma:
\begin{lemma}\label{lem.shadow1}
There exists $\kappa>0$ such that for any $p,q\in X$ and $r>0$, we have
$$
\sup_{\xi\in O_r(p,q)}\norm{\beta_\xi(p,q)-\underline a(p,q)}\leq \kappa r.
$$
\end{lemma}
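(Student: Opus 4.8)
The plan is to reduce to the case $p = o$ by equivariance, and then to unwind the definition of the shadow $O_r(p,q)$ via the representations $(\rho_\alpha, V_\alpha)$ of Lemma \ref{lem.Tits}. First I would note that $\beta_{g\xi}(gp,gq) = \beta_\xi(p,q)$ and $\underline a(gp,gq) = \underline a(p,q)$, so writing $p = g(o)$ we may replace $(p,q,\xi)$ by $(o, g^{-1}q, g^{-1}\xi)$; thus it suffices to bound $\norm{\beta_\xi(o,q) - \underline a(o,q)}$ for $\xi \in O_r(o,q)$. By definition, $\xi \in O_r(o,q)$ means $\xi = k^+$ for some $k \in K$ with $k(\inte A^+) o$ meeting $B(q,r)$; so there is $a \in \inte A^+$ with $d(ka\, o, q) \le r$, i.e. $q = ka\ell\, o$ for some $\ell \in G$ with $d(\ell o, o) \le r$. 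Then $\underline a(o,q) = \mu(ka\ell) $, which by Lemma \ref{com} lies within a bounded distance (depending only on $r$) of $\log a$; in fact one gets $\norm{\underline a(o,q) - \log a} \le \kappa_0 r$ for a suitable constant, using that $d(\ell o, o)\le r$ controls $\mu(\ell)$.

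Next I would compute $\beta_\xi(o,q)$. With $q = h(o)$ where $h = ka\ell$, we have $\beta_\xi(o,q) = \sigma(h^{-1}, \xi) - \sigma(e,\xi) = \sigma(h^{-1},\xi) = \sigma(\ell^{-1}a^{-1}k^{-1}, k^+)$. Using the cocycle relation $\sigma(\ell^{-1}a^{-1}k^{-1}, k^+) = \sigma(\ell^{-1}, a^{-1}k^{-1}k^+) + \sigma(a^{-1}, k^{-1}k^+) + \sigma(k^{-1}, k^+)$ and that $\sigma(k^{-1},k^+) = 0$ (since $k^{-1}k^+ = e^+$ and $k^{-1} \in K$), together with $\sigma(a^{-1}, e^+) = \log a^{-1} = -\log a$ — wait, more carefully: $k^{-1}\xi = e^+$, so $\sigma(a^{-1}, e^+) = -\log a$, and the remaining term $\sigma(\ell^{-1}, \cdot)$ is bounded by $O(r)$ because $\ell$ is within $r$ of the identity (using continuity/properness of $\sigma$ together with Lemma \ref{com}-type bounds, noting $\|\sigma(g,\xi)\| \le \|\mu(g)\|$ up to a constant). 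This would give $\beta_\xi(o,q) = -\log a + O(r)$. But $\underline a(o,q) = \mu(q^{-1}\cdot o \text{-rep})$... here I must be careful about which sign convention: $\underline a(o,q) = \mu(g^{-1}h)$ with $g = e$, i.e. $\mu(h) = \mu(ka\ell)$, close to $\log a$; so both $\beta_\xi(o,q)$ and $\underline a(o,q)$ are within $O(r)$ of $\pm\log a$, and the signs must be reconciled — I would check this against the rank-one visual-metric picture and against Lemma \ref{lem.SP}, expecting the conventions to match so that both equal $\log a + O(r)$. Combining, $\norm{\beta_\xi(o,q) - \underline a(o,q)} \le \kappa r$ with $\kappa$ depending only on $G$ (via the bi-$K$-invariant distance and the constants in Lemma \ref{com}).

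The main obstacle I anticipate is making the "$O(r)$'' estimates genuinely linear in $r$ with a \emph{uniform} constant $\kappa$, rather than merely bounded for each fixed $r$. The naive use of Lemma \ref{com} gives, for $\ell$ in the compact set $\{d(\ell o, o) \le r\}$, a bound on $\|\mu(\ell)\|$ — but one needs that this bound grows at most linearly in $r$, which is true since $\mu$ is $1$-Lipschitz for the appropriate metrics ($d(o, \ell o) = \|\mu(\ell)\|$ when the metric is the one induced by $\langle\cdot,\cdot\rangle$). Propagating this through the cocycle identity for $\sigma$ requires knowing $\|\sigma(\ell^{-1}, \eta)\| \le \|\mu(\ell)\| \le r$ uniformly in $\eta \in \cal F$, which follows from the Iwasawa decomposition estimate that the $A$-component of $\ell^{-1}k$ has log-norm at most $\|\mu(\ell^{-1})\|$. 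Once these Lipschitz-type bounds are in hand, the rest is bookkeeping with the cocycle relation and Lemma \ref{com}.
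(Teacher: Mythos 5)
Your plan is correct and follows essentially the same route as the paper: reduce to $p=o$, write $q=ka\ell o$ with $a\in\inte A^+$, $d(\ell o,o)\le r$, and compare both $\beta_\xi(o,q)$ and $\underline a(o,q)$ to $\log a$ via perturbation estimates for the Iwasawa and Cartan projections that are linear in $r$ — the two facts you invoke (the $A$-part of the Iwasawa decomposition is controlled by $\|\mu\|$, and $\mu$ changes by $O(r)$ under multiplication by elements of size $r$) are precisely Lemma \ref{lem.KR} of the paper, proved there via the Tits representations, so you may cite it instead of appealing to Kostant-type convexity or $1$-Lipschitzness of $\mu$. The only slip is the sign: $\beta_\xi(o,q)=\sigma(e,\xi)-\sigma(h^{-1},\xi)=-\sigma(h^{-1},\xi)$ for $q=h(o)$, which you flag yourself and which indeed resolves so that both quantities equal $\log a+O(r)$.
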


We will prove this lemma using the following: \begin{lemma}\label{lem.KR} \label{lem.KR2}
\begin{enumerate}
\item There exists $c_1>1$ such that for all $r\ge 0$,
$$KA_r^+K\subset  KA_{c_1 r}N .$$
\item There exists $c_2>1$ such that for all $g\in G$ and $r\ge 0$, we have 
$$\mu(gKA_r^+K)\subset \mu(g)+\log A_{c_2 r} .$$
\end{enumerate}
\end{lemma}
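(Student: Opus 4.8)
\medskip

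The plan is to derive both inclusions from the Tits representations $(\rho_\alpha,V_\alpha)$, $\alpha\in\Pi$, of Lemma \ref{lem.Tits}, together with the elementary fact that, $G$ being semisimple, the highest weights $\chi_\alpha$ --- being positive multiples of the fundamental weights $\varpi_\alpha$, $\alpha\in\Pi$ --- form a basis of $\fa^*$. Hence there is $C=C(G)>1$ with $\norm{H}\le C\max_{\alpha\in\Pi}\abs{\chi_\alpha(H)}$ for all $H\in\fa$, and, since every $\chi_\alpha$ and every $\chi_\alpha\circ\i$ is a dominant linear form, $0\le\chi_\alpha(v)\le C\norm{v}$ and $0\le(\chi_\alpha\circ\i)(v)\le C\norm{v}$ for all $v\in\fa^+$. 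Throughout I will use, as in the proof of Lemma \ref{lem.SP}, the identity $\chi_\alpha(\mu(g))=\log\norm{\rho_\alpha(g)}_\alpha$ with $\norm{\cdot}_\alpha$ the operator norm on $\op{GL}(V_\alpha)$, submultiplicativity of this norm, and $\mu(g^{-1})=\i\mu(g)$ from Definition \ref{op}.

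For (1): let $g\in KA_r^+K$, so $\mu(g)=\log a$ with $a\in A^+$ and $\norm{\log a}\le r$, and write the Iwasawa decomposition $g=\kappa\exp(H(g))\,n$ with $\kappa\in K$, $n\in N$. Let $v_\alpha^+\in V_\alpha^+$ be a unit highest weight vector. Since $\rho_\alpha(n)v_\alpha^+=v_\alpha^+$ ($N$ fixes the highest weight vector), $\rho_\alpha(\exp H(g))v_\alpha^+=e^{\chi_\alpha(H(g))}v_\alpha^+$, and $\rho_\alpha(\kappa)$ is orthogonal, we get $\norm{\rho_\alpha(g)v_\alpha^+}_\alpha=e^{\chi_\alpha(H(g))}$. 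Bounding the left side above by $\norm{\rho_\alpha(g)}_\alpha=e^{\chi_\alpha(\mu(g))}$ and below by $\norm{\rho_\alpha(g^{-1})}_\alpha^{-1}=e^{-\chi_\alpha(\mu(g^{-1}))}=e^{-(\chi_\alpha\circ\i)(\mu(g))}$ yields
$$-(\chi_\alpha\circ\i)(\mu(g))\;\le\;\chi_\alpha(H(g))\;\le\;\chi_\alpha(\mu(g)),$$
so $\abs{\chi_\alpha(H(g))}\le Cr$ for every $\alpha\in\Pi$, and therefore $\norm{H(g)}\le C^2r$. Thus $g\in KA_{c_1r}N$ with $c_1:=\max(C^2,2)$. (Alternatively, Kostant's convexity theorem gives $H(g)\in\op{conv}(\mathcal{W}\mu(g))$, hence $\norm{H(g)}\le\norm{\mu(g)}\le r$ directly; I prefer the self-contained argument above.)

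For (2): let $g\in G$ and $h\in KA_r^+K$, so $\mu(h)=\log a$ with $a\in A^+$ and $\norm{\log a}\le r$. Submultiplicativity of $\norm{\cdot}_\alpha$ applied to $gh$ and to $g=(gh)h^{-1}$ gives, for each $\alpha\in\Pi$,
$$-\chi_\alpha(\mu(h^{-1}))\;\le\;\chi_\alpha(\mu(gh))-\chi_\alpha(\mu(g))\;\le\;\chi_\alpha(\mu(h)).$$
Since $\chi_\alpha(\mu(h))\le Cr$ and $\chi_\alpha(\mu(h^{-1}))=(\chi_\alpha\circ\i)(\mu(h))\le Cr$, this gives $\abs{\chi_\alpha(\mu(gh))-\chi_\alpha(\mu(g))}\le Cr$ for all $\alpha$, hence $\norm{\mu(gh)-\mu(g)}\le C^2r$; that is, $\mu(gKA_r^+K)\subset\mu(g)+\log A_{c_2r}$ with $c_2:=\max(C^2,2)$.

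The ingredients are standard manipulations with the Cartan and Iwasawa decompositions that already appear in the paper; the only genuinely non-obvious point is the lower bound on $\chi_\alpha(H(g))$ in step (1), which --- in contrast to the strongly positive case of Lemma \ref{lem.SP} --- must be obtained by passing to $g^{-1}$ and invoking the opposition involution, together with the observation that controlling every $\chi_\alpha$ controls $\norm{\cdot}$ precisely because $\{\chi_\alpha\}_{\alpha\in\Pi}$ spans $\fa^*$.
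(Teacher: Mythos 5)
Your proof is correct and follows essentially the same route as the paper: both arguments run through the Tits representations $(\rho_\alpha,V_\alpha)$, the identity $\chi_\alpha(\mu(g))=\log\norm{\rho_\alpha(g)}_\alpha$, the fact that $N$ fixes the highest weight vector and $K$ preserves $\norm{\cdot}_\alpha$, and the comparability of $\norm{\cdot}$ with the norm built from the $\chi_\alpha$'s. The only cosmetic difference is that you make the lower bounds explicit by passing to $g^{-1}$ and the opposition involution, where the paper invokes the two-sided singular-value bound directly; the content is the same.
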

\begin{proof}
We use notations introduced in Lemma \ref{lem.Tits}.  Since $\chi_\alpha, \alpha\in\Pi$, form a dual basis of $\fa^*$, $\norm{\cdot}_*:=\sum_{\alpha\in\Pi}|\chi_\alpha(\cdot)|$ defines a norm on $\fa$.
Let $k\in K$ and $a\in A_r^+$ be arbitrary.
Write $ak=k'bn\in KAN$. Let $\alpha\in \Pi$.
For $v_\alpha\in V_\alpha^+\ba\{0\}$, we have
$$
\norm{\rho_\alpha(ak)v_\alpha}_\alpha=\norm{\rho_\alpha(k'bn)v_\alpha}_\alpha=\norm{\rho_\alpha(b)v_\alpha}_\alpha=e^{\chi_\alpha(\log b)}\norm{v_\alpha}_\alpha.
$$
On the other hand, we have
$$
e^{-\chi_\alpha(\log a)}\norm{v_\alpha}_\alpha\leq \norm{\rho_\alpha(ak)v_\alpha}_\alpha\leq e^{\chi_\alpha(\log a)}\norm{v_\alpha}_\alpha.
$$

Hence $ |\chi_\alpha(\log b)| \le  |\chi_\alpha(\log a)|$ for all $\alpha\in\Pi$; so
$  \|\log b\|_* \le  \|\log a\|_*.$
Since $\|\cdot\|_*$ and $\|\cdot\|$ are comparable, the first claim  follows.

Note that $e^{\chi_\alpha(\mu(g))}=\norm{\rho_\alpha(g)}_\alpha$ for $g\in G$.
For all $k\in K$ and $a\in A^+$,
$$
\norm{\rho_\alpha(gka)}_\alpha\leq \norm{\rho_\alpha(g)}_\alpha \norm{\rho_\alpha(k)}_\alpha \norm{\rho_\alpha(a)}_\alpha=e^{\chi_\alpha(\log a)}\norm{\rho_\alpha(g)}_\alpha,
$$
and similarly,
$$
\norm{\rho_\alpha(g)}_\alpha\leq \norm{\rho_\alpha(gka)}_\alpha \norm{\rho_\alpha(k^{-1})}_\alpha \norm{\rho_\alpha(a^{-1})}_\alpha=e^{\chi_\alpha(\log a)}\norm{\rho_\alpha(gka)}_\alpha.
$$
It follows that $-\chi_\alpha(\log a)\leq \chi_\alpha(\mu(gka)-\mu(g))\leq\chi_\alpha(\log a)$.
This means that $ \|\mu(gka)-\mu(g)\|_* \le \|\log a\|_*$, finishing the proof. \end{proof}

Since $\i (\mu(g))=\mu(g^{-1})$,  Lemma \ref{lem.KR2}(2) easily implies that
 for all $g\in G$ and $r\ge 0$, we have 
\be \mu(A_r^+K gKA_r^+)\subset \mu(g)+\log A_{2c_2 r} .\ee

\noindent{\bf Proof of Lemma \ref{lem.shadow1}.}
It suffices to prove the claim for $p=o$ and $q=a^{-1}o$ for $a\in A^+$.
Let $\xi=k^+\in O_r(o,a^{-1}o)$ for $k\in K$. Then there exists $b\in\op{int}A^+$ such that  $d(kbo, a^{-1}o)<r$. 
Hence $akb\in KA_r^+K$.
Now note that $ak\in Ke^{-\beta_\xi(o,a^{-1}o)}N$  by the definition of $\beta_\xi(o,a^{-1}o)$ and hence
$$
akb\in Kbe^{-\beta_\xi(o,a^{-1}o)}N\cap KA_r^+K.
$$
By Lemma \ref{lem.KR}(1), $be^{-\beta_\xi(o,a^{-1}o)}\in A_{c_1r}$.
On the other hand, $b\in Ka^{-1}KA_r^+K$, and hence $\log b\in \mu(a^{-1}KA_r^+K)$.
By Lemma \ref{lem.KR2}(2), $b\in a^{-1}A_{c_2r}$.
Since $\underline a(o,a^{-1}o)=\log(a^{-1})$, the lemma is now proved.
\bigskip

Lemma \ref{lem.shadow1} implies Theorem \ref{concave} for those $\xi \in O_r(\ga p,p)$.
In order to control the value of $\beta_\xi(\ga p,p)$ when $\xi\notin O_r(\ga p,p)$,
we use the Anosov property of $\Ga$.
Let us recall some basic terminologies for hyperbolic groups for which
we refer to \cite{BH} and \cite{KB}. 
 
\subsection*{Discrete geodesics}
Let $\Ga$ be a finitely generated word hyperbolic group.
We fix a finite symmetric generating subset $S$ of $\Ga$ once and for all.
 Let $|\cdot| : \Ga\to\bb N\cup \{0\}$ denote the word length associated to $S$. We denote by $d_{\mathsf w}$ the associated left-invariant word metric, that is,
 $d_{\mathsf w} (\ga_1, \ga_2):=|\ga_1^{-1}\ga_2|$ for $\ga_1, \ga_2\in \Ga$.

A finite sequence $(\ga_0,\cdots,\ga_n)$ of elements of $\Ga$ will be called a finite path if $\ga_i^{-1}\ga_{i+1}\in S$ for all $i$.
Such a path will be called a geodesic segment if $|\ga_0^{-1}\ga_n|=n$.
Infinite and bi-infinite paths can be defined analogously.
They will be called  geodesic rays and geodesic lines, respectively, if all of their finite subpaths are geodesic segments.

Let $\partial\Ga$ denote the Gromov boundary of $\Ga$, that is,
 $\partial\Ga$ is the set of equivalence classes of geodesic rays, where two rays are equivalent to each other if and only if their Hausdorff distance is finite.
For a geodesic ray $(\ga_0,\ga_1,\cdots)$, we use the notation
$[\ga_0,\ga_1,\cdots]$ for its equivalence class in $\partial\Ga$. 

Let $(\cdot|\cdot)$ denote the Gromov product in the hyperbolic space $\Ga$ based at $e\in\Ga$:
$(\ga_1| \ga_2):=\frac{1}{2} \left( d_{\mathsf w}(\ga_1, e)+ d_{\mathsf w}(\ga_2, e)- d_{\mathsf w}(\ga_1, \ga_2)\right)$. This extends to $\partial \Ga$: for $x,y\in \partial \Gamma$, $(x|y):=\sup \liminf_{i,j\to \infty}
(\gamma_i| \gamma_j')$ where the supremum is taken over all sequences $\ga_i$ and $\ga_j'$
such that $x=\lim \ga_i$ and $y=\lim \ga_j'$.
The union $\Ga\cup \partial \Ga$ is a compact space with the topology given as follows:
a sequence $\ga_i\in \Ga$ converges to $ x\in \partial\Ga$ if and only if
 $ \lim_{i\to \infty}(\ga_i | v_i)=\infty$ for any geodesic ray
 $(e, v_1, v_2, \cdots)$ representing $x$.

For any $x, y\in\Ga\cup\partial\Ga$, there exists a discrete geodesic starting from $x$ and ending at $y$, which may not be unique.
By $[x,y]$, we mean one of those geodesics and by $[x,y)$ we mean $[x,y]-\{y\}$.

A geodesic triangle is a union of three geodesics, pairwise sharing a common endpoint in $\Ga\cup\partial\Ga$.
Since $\Ga$ is hyperbolic, there exists $\delta=\delta(\Ga,S)>0$ such that for any geodesic triangle $\Delta$, we can find a point on each edge of $\Delta$ so that the set of these triples has diameter less than $\delta$.

\subsection*{Shadows in $\partial\Ga$}
For $R>0$ and $\ga_1,\ga_2 \in \Ga$,
the shadow of the ball $B_R(\ga_2)$ viewed from $\ga_1$
is given by
 $$O_R(\ga_1, \ga_2 )=
 \{x\in \partial\G: \text{
$[\ga_1,x]\cap B_R(\ga_2)\neq\emptyset$ for some geodesic ray $[\ga_1,x]$}\}.$$
Shadows satisfy the equivariance property: for any $\ga,\ga_1,\ga_2\in\Ga$ and $R>0$,
\begin{equation}\label{eq.EV}
\ga O_R(\ga_1,\ga_2)=O_R(\ga\ga_1,\ga\ga_2).
\end{equation}

\begin{lemma}\label{lem.R2}
There exist $R_0>1$ and $N_0>0$ such that the following holds: if $\ga_1,\ga_2\in\Ga$ with $|\ga_1|,|\ga_2|\ge N_0$ satisfies $|\ga_1\ga_2|=|\ga_1|+|\ga_2|$, then for all $R\ge R_0$,
$$
O_{R}(\ga_1\ga_2,e)\cap O_{R}(\ga_1\ga_2,\ga_1)\cap O_{R}(\ga_1,e)\neq\emptyset.
$$
\end{lemma}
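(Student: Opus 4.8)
The plan is to locate a single boundary point $x \in \partial\Gamma$ whose geodesic ray from $e$ passes close to all three of the relevant balls, namely $B_R(e)$, $B_R(\ga_1)$, and $B_R(\ga_1\ga_2)$, and then use the equivariance \eqref{eq.EV} together with the symmetry of the word metric to deduce that $x$ lies in all three shadows $O_R(\ga_1\ga_2,e)$, $O_R(\ga_1\ga_2,\ga_1)$, $O_R(\ga_1,e)$. The natural candidate is any $x$ in the forward limit of the geodesic segments $[e,\ga_1\ga_2]$; since $|\ga_1\ga_2| = |\ga_1| + |\ga_2|$, the concatenation of a geodesic segment $[e,\ga_1]$ with $\ga_1\cdot[e,\ga_2]$ is itself a geodesic segment $[e,\ga_1\ga_2]$, so a geodesic ray $[e,x]$ that fellow-travels this segment automatically passes through the $\delta$-neighborhood of $\ga_1$ and stays within bounded distance of $\ga_1\ga_2$ for the initial portion of its length.

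First I would make this precise: by the standard compactness argument in a proper hyperbolic space (or directly in the Cayley graph, once one invokes local finiteness of $\Gamma$ with respect to $S$), for any $\ga\in\Gamma$ there is $x=x(\ga)\in\partial\Gamma$ and a geodesic ray $[e,x]$ whose Hausdorff distance from the segment $[e,\ga]$ on the ball $B_{|\ga|}(e)$ is at most $\delta$ (thin triangles applied to the triangle with vertices $e$, $\ga$, $x$). Taking $\ga = \ga_1\ga_2$, the ray $[e,x]$ then meets $B_\delta(e)$ trivially at its starting point, meets $B_\delta(\ga_1)$ because $\ga_1$ lies on $[e,\ga_1\ga_2]$ (here we use $|\ga_1\ga_2| = |\ga_1|+|\ga_2|$), and meets $B_\delta(\ga_1\ga_2)$. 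Hence for any $R \ge \delta$ we get $x \in O_R(e,\ga_1\ga_2) \cap O_R(e,\ga_1) \cap O_R(e,\ga_1\ga_2)$ — but I need the shadows \emph{viewed from $\ga_1\ga_2$ and from $\ga_1$}, not from $e$, so an extra argument is required.

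To convert "shadow from $e$" into "shadow from $\ga_1\ga_2$", I would use the reversal symmetry: the reverse of a geodesic ray from $e$ towards $x$, read from a far-out point back towards $e$, fellow-travels a geodesic from $\ga_1\ga_2$ heading "back towards $e$ and then out to $x$", so a geodesic ray $[\ga_1\ga_2, x]$ passes within $O(\delta)$ of both $\ga_1$ (again using the additivity of lengths) and $e$. This is where the hypothesis $|\ga_1|,|\ga_2| \ge N_0$ enters: the fellow-traveling estimates between $[e,x]$, $[\ga_1\ga_2,x]$ and the finite segments are only uniform once the relevant segments are long enough that the contributions of $x$ at infinity dominate the $\delta$-scale bookkeeping; quantitatively one chooses $N_0 \gg \delta$ and $R_0$ a suitable multiple of $\delta$ (something like $R_0 = 100\delta$ suffices). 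Concretely: the point on $[\ga_1\ga_2, x]$ at distance $|\ga_1\ga_2|$ from $\ga_1\ga_2$ is within $O(\delta)$ of $e$, and the point at distance $|\ga_2|$ from $\ga_1\ga_2$ is within $O(\delta)$ of $\ga_1$; both claims follow by applying the thin-triangle condition to triangles among $e$, $\ga_1$, $\ga_1\ga_2$, $x$ and using that $\ga_1$ lies on a geodesic $[e,\ga_1\ga_2]$.

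The main obstacle I anticipate is bookkeeping the constants uniformly: one must check that the fellow-traveling constant between $[\ga_1\ga_2,x]$ and $[e,\ga_1\ga_2]$, and between $[\ga_1,x]$ and $[\ga_1,\ga_1\ga_2]$, can be taken independent of $\ga_1,\ga_2$ (depending only on $\delta$), which is why $N_0$ is needed to rule out degenerate short cases, and why $R_0$ must be chosen large enough to absorb a fixed multiple of $\delta$. Once the three fellow-traveling estimates are in hand, the conclusion is immediate: pick $R_0$ to be the resulting bound and, for $R \ge R_0$, the single point $x$ witnesses $O_R(\ga_1\ga_2,e) \cap O_R(\ga_1\ga_2,\ga_1) \cap O_R(\ga_1,e) \neq \emptyset$. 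I would also remark that the equivariance \eqref{eq.EV} can be used to normalize, e.g. to translate so that the "base" vertex in one of the three shadows is $e$, slightly reducing the number of cases to check.
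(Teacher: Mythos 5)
There is a genuine gap, and it is not just bookkeeping: your boundary point is chosen on the wrong side, and the ``reversal symmetry'' step you use to fix this is false. You take $x$ to be a forward limit of $[e,\ga_1\ga_2]$, i.e.\ a point at infinity lying beyond $\ga_1\ga_2$ as seen from $e$; this indeed puts $x$ in the shadows $O_R(e,\ga_1)$ and $O_R(e,\ga_1\ga_2)$ viewed from $e$. But the lemma asks for $O_R(\ga_1\ga_2,e)$, $O_R(\ga_1\ga_2,\ga_1)$ and $O_R(\ga_1,e)$, i.e.\ for a point $x$ such that geodesic rays \emph{from} $\ga_1\ga_2$ and \emph{from} $\ga_1$ toward $x$ pass near $\ga_1$ and $e$; such an $x$ must lie beyond $e$ in the direction $\ga_1\ga_2\to\ga_1\to e$, which is essentially the opposite direction from your choice. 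Shadows are not symmetric in viewpoint and target: for your $x$, thin triangles show that $[\ga_1\ga_2,x]$ fellow-travels the portion of $[e,x]$ beyond $\ga_1\ga_2$ and hence heads \emph{away} from $e$; in particular your concrete claim that the point of $[\ga_1\ga_2,x]$ at distance $|\ga_1\ga_2|$ from $\ga_1\ga_2$ is $O(\delta)$-close to $e$ is wrong (that point is at distance roughly $2|\ga_1\ga_2|$ from $e$), and no choice of $N_0$, $R_0$ depending only on $\delta$ can repair this. Reversing the parametrization of $[e,x]$ does not produce a ray from $\ga_1\ga_2$ to $x$; the relevant object would be a ray toward a different (roughly opposite) boundary point, which is exactly what your argument fails to produce.

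Even after reorienting, there is a second point your sketch glosses over: in a discrete hyperbolic group one cannot simply extend the geodesic segment $[\ga_1\ga_2,e]$ past $e$ to a geodesic ray. The paper's proof handles both issues at once: using $|\ga_1\ga_2|=|\ga_1|+|\ga_2|$ it takes a geodesic segment from $\ga_1\ga_2$ to $e$ through $\ga_1$, places it within uniform Hausdorff distance $C$ of a bi-infinite geodesic line, and then uses the hypothesis $|\ga_1|,|\ga_2|\ge N_0=4C$ to show that the nearest-point projections of $\ga_1\ga_2$, $\ga_1$, $e$ to that line occur in this order; the endpoint $x$ of the line beyond the projection of $e$ then lies in all three shadows with $R_0$ of the form $2C+\delta$. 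This ordering argument is precisely where $N_0$ enters, rather than the vague ``contributions at infinity dominate'' role you assign it. If you want to salvage your outline, you should choose $x$ beyond $e$ (not beyond $\ga_1\ga_2$) and supply the extension-to-a-line ingredient explicitly.
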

\begin{proof}
Since $|\ga_1\ga_2|=|\ga_1|+|\ga_2|$, there exists a geodesic segment $[\ga_1\ga_2,e]$ passing through $\ga_1$, say $\alpha=(\ga_1\ga_2,\cdots,\ga_1,\cdots, e)$.
Since $\Ga$ is word hyperbolic, 
there exists $C>0$ such that $\alpha$ lies in the $C$-neighborhood of some geodesic line, say $(\cdots,u_{-1},u_0,u_1,\cdots)$.
 Set $N_0:=4C$. 
Choose $u_m, u_n,$ and $u_\ell$ to be elements closest to $\ga_1\ga_2$, $\ga_1$, and $e$, respectively.
 
We claim that $|m-\ell |\geq\op{max}(|m-n|,|n-\ell|)$.
By the triangle inequality,
\begin{align*}
|n-\ell|&=d_{\mathsf w}(u_n,u_\ell)\leq d_{\mathsf w}(\ga_1 ,e)+2C=|\ga_1|+2C;\\
|m-n|&=d_{\mathsf w}(u_m,u_n)\leq d_{\mathsf w}(\ga_1\ga_2 ,\ga_1)+2C=| \ga_2|+2C.
\end{align*}
Since $|\ga_1\ga_2|=|\ga_1|+|\ga_2|$ and
$
|\ga_1\ga_2| \leq d_{\mathsf w}(u_m,u_\ell)+2C=|m-\ell|+2C,
$
it follows that 
\begin{align*}|\ga_2|-2C &\geq  \max(|\ga_1|,|\ga_2|)-2 C +N_0
\\ &=\max(|\ga_1|,|\ga_2|)+2 C\\& \geq \max(|n-\ell|, |m-n|).
\end{align*}
This proves the claim.

Now possibly after flipping the geodesic, we may assume that $m\le \ell$. Then the claim implies
that $\ell -m=|m-n|+ |n-\ell |$ and hence $m\le n\le \ell$.
Set $x:=[u_0,u_1,u_2,\cdots]\in\partial\Ga$. Choose geodesic rays $[\ga_1\ga_2,x)$ and $[\ga_1,x)$.
Since the Hausdorff distance between $[\ga_1\ga_2,x)$ and the ray $(u_m,u_{m+1}, \cdots)$ is at most $d_{\mathsf w}(\ga_1\ga_2,u_m)+\delta\leq C+\delta$, it follows that there exist $v_1,v_2\in\Ga$ lying on $[\ga_1\ga_2,x]$ such that $d_{\mathsf w}(u_n,v_1)<C+\delta$ and $d_{\mathsf w}(u_\ell,v_2)<C+\delta$.
Since the Hausdorff distance between $[\ga_1,x)$ and the ray $(u_n,u_{n+1},\cdots)$ is at most $d_{\mathsf w}(\ga_1,u_n)+\delta<C+\delta$, there exists $v_3\in\Ga$ lying on $[\ga_1,x)$ such that $d_{\mathsf w}(u_\ell,v_3)<C+\delta$.
These altogether imply that
$$
x\in O_{2C+\delta}(\ga_1\ga_2,e)\cap O_{2C+\delta}(\ga_1\ga_2,\ga_1)\cap O_{2C+\delta}(\ga_1,e).
$$
\end{proof}
In the rest of this section, we assume that $\Ga$ is an Anosov subgroup of $G$.
The following Morse property of Kapovich-Leeb-Porti \cite[Prop. 5.16]{KLP}  says that a discrete geodesic line (resp. ray) of $\Ga$ is contained in a uniform neighborhood of some $A$-orbit (resp. $A^+$-orbit) in $X$.
\begin{prop}[Morse property]\label{prop.KLP}\label{Morse}
For any Anosov subgroup $\Ga<G$,
there exists $R_1>0$ such that
\begin{enumerate}
\item
If $(\cdots,\ga_{-1},\ga_0,\ga_1,\cdots)$ is a geodesic line in $(\G, d_{\mathsf w})$, then
$$
\sup_{k\in\bb Z} d(\ga_k o,gAo)\leq R_1
$$
 for any $g\in G$ such that $g^+=\zeta([\ga_0,\ga_1,\cdots])$, $g^-=\zeta([\ga_0,\ga_{-1},\cdots])$.
\item
If $(\ga_0,\ga_1,\cdots)$ is a geodesic ray  in $(\G, d_{\mathsf w})$,  then 
$$
\sup_{k\in\bb N}d(\ga_k o,gA^+o)\leq R_1
$$
where $g\in \ga_0 K$ is the unique element satisfying $g^+=\zeta([\ga_0,\ga_1,\cdots])$.
\end{enumerate}
\end{prop}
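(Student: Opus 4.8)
The plan is to deduce the Morse property from the higher-rank Morse lemma for uniformly regular quasigeodesics in the symmetric space $X$ --- the substitute for the classical Morse lemma supplied by \cite{KLP}, and the substantive geometric input --- so that all that remains on our side is to check its two hypotheses for orbit images of discrete geodesics of $\Ga$: that the orbit map is a quasi-isometric embedding, and that these images are \emph{uniformly regular}.

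For the first point I would invoke \cite[Thm.~1.7]{GW}: the orbit map $\iota\colon(\Ga,d_{\mathsf w})\to X$, $\ga\mapsto\ga o$, is a quasi-isometric embedding, say with constants $\lambda\ge1$ and $\varepsilon\ge0$. Hence for any discrete geodesic $(\ga_k)$ in $(\Ga,d_{\mathsf w})$ the orbit $(\ga_k o)$ is a uniform $(\lambda,\varepsilon)$-quasigeodesic in $X$, and in particular $\|\mu(\ga_i^{-1}\ga_j)\|\ge\lambda^{-1}|i-j|-\varepsilon$ for all $i,j$. For uniform regularity I would show that there are $c>0$ and $c'\ge0$ with $\alpha(\mu(\ga_i^{-1}\ga_j))\ge c|i-j|-c'$ for every simple root $\alpha\in\Pi$, every discrete geodesic $(\ga_k)$, and all $i,j$. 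Since $(\ga_k)$ is a geodesic word, $\ga_i^{-1}\ga_j$ runs over all $\delta\in\Ga$ with $|\delta|=|i-j|$, so it suffices to bound $\alpha(\mu(\delta))$ from below linearly in $|\delta|$, uniformly in $\delta$. By Theorem~\ref{pop}(1) we have $\cal L_\Ga\subset\op{int}\fa^+\cup\{0\}$, and since $\cal L_\Ga$ is the asymptotic cone of $\mu(\Ga)$, every accumulation point of the unit vectors $\mu(\delta)/\|\mu(\delta)\|$ with $\|\mu(\delta)\|\to\infty$ lies in $\cal L_\Ga\subset\op{int}\fa^+$; hence these unit vectors lie in a fixed compact subcone $\cal C\subset\op{int}\fa^+$ for all $\delta$ with $|\delta|>N_1$, and with $c_0:=\min_{\alpha\in\Pi}\min_{v\in\cal C,\,\|v\|=1}\alpha(v)>0$ the two bounds combine to give the claim, absorbing the finitely many $\delta$ of length $\le N_1$ into $c'$. (Alternatively one may simply quote the known characterization of $P$-Anosov subgroups by exactly such a lower bound on $\alpha\circ\mu$; see \cite{GGKW}.)

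With these two facts in hand, $(\ga_k o)$ is a uniformly regular uniform quasigeodesic in $X$, and I would apply the Morse lemma of \cite{KLP} together with its local-to-global principle: such a quasigeodesic line lies within a uniform Hausdorff distance $R_1$ of a unique maximal flat $F\subset X$, and its forward and backward subrays $(\ga_k o)_{k\ge0}$, $(\ga_k o)_{k\le0}$ converge, in the sense of Section~\ref{ps}, to a pair of antipodal chambers $\xi^+,\xi^-\in\cal F$ which are the ideal endpoints of a geodesic in $F$. By the defining property of the limit map $\zeta$ (and Lemma~\ref{ssame}, which makes the basepoint $o$ irrelevant), $\xi^+=\zeta([\ga_0,\ga_1,\dots])$ and $\xi^-=\zeta([\ga_0,\ga_{-1},\dots])$; and $F=gAo$ for any $g\in G$ with $g^+=\xi^+$, $g^-=\xi^-$, this being well defined on the antipodal pair $(\xi^+,\xi^-)$ (antipodal by \eqref{anti}) because $MAo=Ao$. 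This proves part (1). For part (2), the same argument applied to a geodesic ray produces a Weyl cone within $R_1$ of $(\ga_k o)_{k\ge0}$ whose forward chamber is $\zeta([\ga_0,\ga_1,\dots])$; since $\ga_0 o$ then lies within $R_1$ of the underlying flat, a routine translation of the basepoint along $A$ --- which changes neither the Weyl cone nor its forward chamber --- places the tip at $\ga_0 o$, giving $g\in\ga_0 K$ at the cost of enlarging $R_1$.

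The main obstacle is packaged inside the Morse lemma for $X$ itself: unlike in rank one, a uniform quasigeodesic in $X$ need not fellow-travel \emph{any} flat --- it can drift inside a single higher-rank flat --- so the regularity hypothesis is essential, and the proof that uniformly regular quasigeodesics are Morse (tracking how the quasigeodesic enters and leaves Weyl cones, and upgrading local control to global control) is the genuinely difficult content, which I would take from \cite{KLP} as a black box. The remaining work, which is routine, is the quantitative bookkeeping above: turning regularity into a uniform linear estimate, and identifying the ideal chambers of the fellow-travelled flat with the $\zeta$-images of the endpoints of the discrete geodesic.
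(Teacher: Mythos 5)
The paper itself offers no proof of this proposition: it is quoted directly as \cite[Prop.~5.16]{KLP}, so there is no internal argument to compare yours against, and your reconstruction from KLP's higher-rank Morse lemma is a legitimate alternative route resting on the same ultimate black box. Your verification of the two hypotheses is sound: the quasi-isometric embedding from \cite[Thm.~1.7]{GW} together with the compact-subcone consequence of $\cal L_\Ga\subset\op{int}\fa^+\cup\{0\}$ (Theorem \ref{pop}(1); this is the same argument the paper runs in Lemma \ref{st}) does show that orbit images of discrete geodesics are uniformly regular uniform quasigeodesics, to which the Morse lemma applies. Three small points of care, none of which is a genuine gap: (i) for a line the conclusion is one-sided --- the orbit lies in an $R_1$-neighborhood of the flat $gAo$, not within finite Hausdorff distance of it (a quasigeodesic line cannot be Hausdorff-close to an $r$-flat for $r\ge 2$), but only the one-sided containment is asserted in the proposition; (ii) in part (2) you need the Weyl cone produced by the Morse lemma to have its tip at bounded distance from the initial orbit point $\ga_0 o$ (this is part of KLP's statement for rays), after which replacing the tip by $\ga_0 o$ while keeping the asymptotic chamber moves the cone by at most that distance, by convexity of the distance between asymptotic rays in a CAT(0) space --- your phrase ``translation of the basepoint along $A$'' is not quite the right mechanism, though the bounded-error conclusion is correct; (iii) the identification of the forward chamber with $\zeta([\ga_0,\ga_1,\cdots])$ is cleanest via the continuity of the extended orbit map (Theorem \ref{lem.ext}, also from \cite{KLP}) combined with flag convergence of regular sequences, rather than ``the defining property of $\zeta$''. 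With these adjustments your outline is correct; what it buys over the paper's bare citation is an explicit reduction of the statement to the general Morse lemma for uniformly regular quasigeodesics, at the cost of importing that deeper result wholesale.
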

Using this proposition, we will show that shadows in the Gromov boundary $\partial\Ga$ are mapped to shadows in the Furstenberg boundary $\cal F$ by the limit map $\zeta : \partial\Ga\to\La$ (Proposition \ref{prop.SS}).
We will need the following lemma:
\begin{lemma}\label{lem.WL}
There exists $C>0$ such that for all $\ga\in\Ga$, $\norm{\mu(\ga)}\leq C |\ga|$. In particular,
$d(o, \ga o)\le C d_{\mathsf w} (e, \ga)$.
\end{lemma}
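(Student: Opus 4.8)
The plan is to reduce the statement to the finiteness of the fixed generating set $S$ together with the subadditivity of the Cartan projection along word geodesics. First I would record the standard normalization fact that, for the chosen left $G$-invariant and right $K$-invariant metric on $G$ (the one inducing $\langle\cdot,\cdot\rangle$ on $\fa$ and the Riemannian distance $d$ on $X=G/K$), one has $d(o,go)=\norm{\mu(g)}$ for every $g\in G$; equivalently $d(p,q)=\norm{\underline a(p,q)}$ for all $p,q\in X$. This is immediate from the Cartan decomposition $G=K(\exp\fa^+)K$ and the fact that $t\mapsto\exp(tu)o$ is a unit-speed geodesic from $o$ for each unit vector $u\in\fa^+$.

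Next I would set $C_0:=\max_{s\in S}\norm{\mu(s)}=\max_{s\in S}d(o,so)$, which is finite since $S$ is finite and symmetric. Given $\gamma\in\Gamma$ with $|\gamma|=n$, write $\gamma=s_1s_2\cdots s_n$ with $s_i\in S$, and apply the triangle inequality and left $G$-invariance of $d$ along the orbit points $o,\ s_1o,\ s_1s_2o,\ \dots,\ s_1\cdots s_no=\gamma o$:
$$
d(o,\gamma o)\ \le\ \sum_{i=1}^{n} d(s_1\cdots s_{i-1}o,\ s_1\cdots s_i o)\ =\ \sum_{i=1}^{n} d(o,s_i o)\ \le\ C_0\, n\ =\ C_0\,|\gamma|.
$$
Combining with $d(o,\gamma o)=\norm{\mu(\gamma)}$ yields $\norm{\mu(\gamma)}\le C_0|\gamma|$, and since $d_{\mathsf w}(e,\gamma)=|\gamma|$ the ``in particular'' clause follows with the same constant $C=C_0$.

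There is essentially no obstacle here: the only point requiring a word of justification is the identity $d(o,go)=\norm{\mu(g)}$, which is built into the normalization of the metric. Alternatively one can bypass it entirely by invoking Lemma \ref{com} with $L=S\cup\{e\}$ to produce a compact $Q\subset\fa$ with $\mu(sg)\in\mu(g)+Q$ for all $s\in S$ and $g\in G$, and then induct on $|\gamma|$ to get $\norm{\mu(\gamma)}\le|\gamma|\sup_{q\in Q}\norm{q}$, followed by the crude bound $d(o,\gamma o)\le\sum_i d(o,s_io)$ for the second assertion. Either way the constant depends only on $G$, the chosen metric, and the fixed generating set $S$, as claimed.
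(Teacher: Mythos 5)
Your proof is correct, but it takes a different route from the paper. The paper's argument stays inside the representation-theoretic framework of Lemma \ref{lem.Tits}: writing $\ga=s_1\cdots s_\ell$, it uses the identity $\chi_\alpha(\mu(g))=\log\norm{\rho_\alpha(g)}_\alpha$ together with submultiplicativity of the operator norms, $\norm{\rho_\alpha(s_1\cdots s_\ell)}_\alpha\le\prod_i\norm{\rho_\alpha(s_i)}_\alpha$, to get $\chi_\alpha(\mu(\ga))\le\sum_i\chi_\alpha(\mu(s_i))$ for every $\alpha\in\Pi$, and then concludes via the auxiliary norm $\norm{\cdot}_*=\sum_{\alpha\in\Pi}|\chi_\alpha(\cdot)|$ on $\fa$, which is comparable to $\norm{\cdot}$; this is the same machinery used in Lemma \ref{lem.KR}, so the proof is uniform with its neighbors. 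You instead run the Riemannian triangle inequality along the word path $o, s_1o,\dots,\ga o$ and convert via the identity $d(o,go)=\norm{\mu(g)}$; that identity is legitimate under the paper's normalization (the flat $\exp(\fa)o$ is totally geodesic and minimizing), and indeed the paper itself uses $d(p,q)=\norm{\underline a(p,q)}$ freely, e.g.\ in the proofs of Lemma \ref{lem.OP} and Proposition \ref{prop.SS}, so nothing is circular. Your alternative via Lemma \ref{com} with $L=S\cup\{e\}$ and induction on $|\ga|$ is also fine and avoids even the metric identity for the bound on $\norm{\mu(\ga)}$. Conceptually the two arguments encode the same subadditivity: the paper expresses it through highest weights of the Tits representations, you express it through the distance function on $X$; both give a constant depending only on the finite generating set $S$ and the fixed metric.
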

\begin{proof}
We use notations from Lemma \ref{lem.Tits}. Consider the norm
$\norm{\cdot}_*:=\sum_{\alpha\in\Pi}|\chi_\alpha(\cdot)|$ on $\fa$. Let $\ga\in\Ga$ be arbitrary, and write $\ga=s_1\cdots s_\ell$ with $s_i\in S$ and $\ell=|\ga|$.
Since ${\chi_\alpha(\mu(g))}=
\log \norm{\rho_\alpha(g)}_\alpha$ for all $g\in G$ and $\norm{\rho_\alpha(s_1\cdots s_\ell)}_\alpha\leq\norm{\rho_\alpha(s_1)}_\alpha\cdots\norm{\rho_\alpha(s_\ell)}_\alpha$, it follows that 
for each $\alpha\in\Pi$,
$$
\chi_\alpha(\mu(s_1\cdots s_\ell))\leq \chi_\alpha(\mu(s_1))+\cdots+\chi_\alpha(\mu(s_\ell)).
$$

Noting that $\chi_\alpha$ is positive on $\fa^+$, we have
\begin{align*}
&\norm{\mu(\ga)}_*= \sum_{\alpha\in\Pi}|\chi_\alpha(\mu(\ga))|= \sum_{\alpha\in\Pi}\chi_\alpha(\mu(\ga))\\
&\leq  \sum_{\alpha\in\Pi}\big(\chi_\alpha  (\mu(s_1))+\cdots+ \chi_\alpha(\mu(s_\ell))\big)\le C |\ga|
\end{align*}
where $C:= \max\left\{\sum_{\alpha\in\Pi}\chi_\alpha(\mu(s)):s\in S\right\}$. Since
$\norm{\cdot}$ and $\norm{\cdot}_*$ are comparable, this finishes the proof.
\end{proof}

\begin{prop}[Shadows go to shadows]\label{prop.SS}
There exists $c>0$ such that for all $R>1$ and $\ga,\ga'\in\Ga$,
$$\zeta (O_R(\ga', \ga ))\subset O_{c R} (\ga' o, \ga o).$$
\end{prop}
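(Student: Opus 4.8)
The plan is to unwind the two definitions of shadow and connect them via the Morse property (Proposition \ref{prop.KLP}(2)). Suppose $x \in O_R(\ga', \ga)$, so some geodesic ray $[\ga', x)$ in $(\Ga, d_{\mathsf w})$ passes within word-distance $R$ of $\ga$; after left-translating by $(\ga')^{-1}$ (using the equivariance \eqref{eq.EV}) we may assume $\ga' = e$, so there is a geodesic ray $(e = u_0, u_1, u_2, \dots)$ representing $x$ with $d_{\mathsf w}(u_k, \ga) \le R$ for some index $k$. Let $g \in K$ be the unique element with $g^+ = \zeta(x) = \zeta([u_0, u_1, \dots])$ provided by Proposition \ref{prop.KLP}(2); then $\sup_{j} d(u_j o, gA^+ o) \le R_1$. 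In particular there is $a \in A^+$ with $d(u_k o, g a o) \le R_1$.

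Next I would estimate $d(ga o, \ga o)$. We have $d(ga o, \ga o) \le d(ga o, u_k o) + d(u_k o, \ga o) \le R_1 + d(u_k o, \ga o)$, and $d(u_k o, \ga o) \le C\, d_{\mathsf w}(u_k, \ga) \le C R$ by Lemma \ref{lem.WL}. So $\ga o$ lies within distance $R_1 + CR$ of $ga o \in gA^+ o$. The point of the shadow $O_r(\xi, q)$ in $X$ is that $\zeta(x) = g^+$ should be ``visible'' from the $A^+$-ray $gA^+ o$, and $\ga o$ is a bounded perturbation of a point on that ray. Concretely, I want to produce an element $h \in G$ with $h^+ = g^+$, $h^- = (\text{something forcing } h^+ \in O_{cR}(\ga o, \ga o))$ — but the cleaner route is to use the characterization directly: since $g a \in g A^+ \subset K A^+ K$ with Cartan part $a = \underline a(o, ga o)$-controlled, and since the ``shadow from a point'' $O_r(p, q)$ is defined via $gk\,\text{int}A^+ o$ meeting $B(q,r)$, one checks that $g^+ \in O_{R_1 + CR + \ve}(ga o, \ga o)$ directly from $d(ga o, \ga o) \le R_1 + CR$, because $g(\text{int}A^+) o$ accumulates on $g^+$ and passes through (a point near) $ga o$. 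Replacing the base $ga o$ by $\ga o$ costs only the triangle-inequality slack already bounded by $R_1 + CR$. Taking $c$ to be a constant absorbing $C$, $R_1$ (and noting $R > 1$ so additive constants are swallowed into a multiple of $R$) gives $\zeta(O_R(\ga', \ga)) \subset O_{cR}(\ga' o, \ga o)$ after translating back by $\ga'$.

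The main obstacle I anticipate is the last step: matching the \emph{combinatorial} shadow condition ``$[\,e, x)$ passes near $\ga$ in the word metric'' with the \emph{geometric} shadow condition ``$\ga o$ lies in the cone $g\,\text{int}A^+ o$ up to bounded error'', since the definition of $O_r(p,q)$ in $\cal F$ asks for $gk\,\text{int}A^+ o \cap B(q,r) \ne \emptyset$ with $g^+ = \zeta(x)$ fixed — one must check that the \emph{same} $g$ (coming from Proposition \ref{prop.KLP}(2), with $g^+ = \zeta(x)$) works, i.e. that the Morse-geodesic $g A^+ o$ shadowing the word-geodesic ray is exactly the ray defining the Furstenberg shadow. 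This is precisely what Proposition \ref{prop.KLP}(2) is designed to give: the \emph{unique} $g \in \ga_0 K$ with $g^+ = \zeta([\ga_0, \ga_1, \dots])$ satisfies $\sup_k d(\ga_k o, g A^+ o) \le R_1$, so no choice-compatibility issue arises. A minor point to be careful about is the $\ve$-slack in passing between ``$B(q,r)$'' and ``$B(q, r+\ve)$'' versions of the shadow, and the fact that $g\,\text{int}A^+ o$ (open chamber) rather than $g A^+ o$ appears in the definition of $O_r(p,q)$; both are handled by perturbing $a \in A^+$ slightly into $\text{int}A^+$, exactly as in the proof of Lemma \ref{lem.SC}, at the cost of an arbitrarily small additive constant which, since $R>1$, is absorbed into $cR$.
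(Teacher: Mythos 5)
Your argument is correct and is essentially the paper's own proof: reduce to $\ga'=e$ by equivariance, apply the Morse property (Proposition \ref{prop.KLP}(2)) to the word-geodesic ray representing $x$ to get $a\in A^+$ with $d(u_k o, g a o)\le R_1$, compare word and Riemannian distances via Lemma \ref{lem.WL} to get $d(\ga o, u_k o)\le CR$, and conclude by the triangle inequality that $\zeta(x)\in O_{CR+R_1}(o,\ga o)$, absorbing the additive constant into $cR$ since $R>1$. Only fix the notational slips where you write the shadow's viewpoint as $ga o$ or $\ga o$: since $g\in K$ the chamber ray $g\,\inte A^+ o$ is based at $o=g(o)$, so the conclusion you actually derive (and need) is $g^+\in O_{CR+R_1+\ve}(o,\ga o)$, and the ``replacing the base'' step is unnecessary.
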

\begin{proof} 
By \eqref{eq.EV}, it suffices to consider the case $\ga'=e$.
Let $x\in O_R(e,\ga)$.
By the definition of $O_R(e,\ga)$, there exists a geodesic ray $(\ga_0'=e,\ga_1',\ga_2',\cdots)$ representing $x$ such that $d_{\mathsf{w}}(\ga_m',\ga)<R$ for some $m\in\bb N$. 
Let $R_1>0$ be the constant from Proposition \ref{prop.KLP}, and $k\in K$ be an element such that $k^+=\zeta([e,\ga_1',\ga_2',\cdots])$.
Then by Proposition \ref{prop.KLP}(2), there exists $a\in A^+$ such that $d(\ga_m' o,kao)\leq R_1$.
By Lemma \ref{lem.WL}, we have
$$
d(\ga o, \ga_m'o)=\norm{\mu(\ga^{-1}\ga_m')}<Cd_{\mathsf w}(\ga,\ga_m')<CR.$$
Therefore
$$
d(\ga o,ka o)\leq d(\ga o,\ga_m' o)+d(\ga_m' o,kao)\leq CR+R_1.
$$
This implies that $\zeta(x)\in O_{CR+R_1}(o,\ga o)$.
Since $R>1$, the conclusion follows by setting $c:=C+R_1$.
\end{proof}

\begin{cor}\label{cor.R2}
There exists $R_2 >0$ such that for all $\ga_1,\ga_2\in\Ga$ with $|\ga_1\ga_2|=|\ga_1|+|\ga_2|$, we have
$$
\norm{\mu(\ga_1\ga_2)-\mu(\ga_1)-\mu(\ga_2)}\leq R_2 .
$$
\end{cor}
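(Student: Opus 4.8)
The plan is to deduce Corollary~\ref{cor.R2} by running a single boundary point through Lemma~\ref{lem.R2}, Proposition~\ref{prop.SS} and the shadow lemma (Lemma~\ref{lem.shadow1}); essentially all of the geometric substance already lies in those statements, so what remains is bookkeeping. The only point that genuinely needs care is matching the two notions of shadow and keeping track of the opposition involution $\op i$, which is forced to appear because the $\underline a$-distances that show up are Cartan projections of inverse elements.

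First I would dispose of the degenerate range. If $|\ga_1|<N_0$ or $|\ga_2|<N_0$, with $N_0$ as in Lemma~\ref{lem.R2}, then one of $\ga_1,\ga_2$ lies in the finite set $B:=\{\ga\in\Ga:|\ga|<N_0\}$; viewing $B$ as a compact subset of $G$, Lemma~\ref{com} provides a compact $Q=Q(B)\subset\fa$ with $\mu(\ga_1\ga_2)\in\mu(\ga_i)+Q$ for the appropriate index $i$, while $\|\mu(\ga_j)\|\le\max_{\ga\in B}\|\mu(\ga)\|$ for the other index. Hence $\|\mu(\ga_1\ga_2)-\mu(\ga_1)-\mu(\ga_2)\|$ is bounded by a constant depending only on $\Ga$ in this range (the identity $|\ga_1\ga_2|=|\ga_1|+|\ga_2|$ is not even used here). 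So it remains to treat $|\ga_1|,|\ga_2|\ge N_0$.

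In that case, applying Lemma~\ref{lem.R2} with $R:=R_0$ (note $R_0>1$) yields a point $x\in\partial\Ga$ lying in $O_R(\ga_1\ga_2,e)\cap O_R(\ga_1\ga_2,\ga_1)\cap O_R(\ga_1,e)$. Pushing each of these three shadows forward by the limit map via Proposition~\ref{prop.SS}, the point $\xi:=\zeta(x)\in\La$ satisfies simultaneously
$$\xi\in O_{cR}\big((\ga_1\ga_2)o,\,o\big),\qquad \xi\in O_{cR}\big((\ga_1\ga_2)o,\,\ga_1 o\big),\qquad \xi\in O_{cR}\big(\ga_1 o,\,o\big).$$

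Finally I would apply the shadow lemma to each of these three memberships. Using $\underline a(g(o),h(o))=\mu(g^{-1}h)$ and $\mu(g^{-1})=\op i\mu(g)$, the relevant $\fa^+$-distances are $\underline a((\ga_1\ga_2)o,o)=\op i\mu(\ga_1\ga_2)$, $\underline a((\ga_1\ga_2)o,\ga_1 o)=\mu(\ga_2^{-1})=\op i\mu(\ga_2)$, and $\underline a(\ga_1 o,o)=\op i\mu(\ga_1)$; hence each of the vectors $\beta_\xi((\ga_1\ga_2)o,o)-\op i\mu(\ga_1\ga_2)$, $\beta_\xi((\ga_1\ga_2)o,\ga_1 o)-\op i\mu(\ga_2)$ and $\beta_\xi(\ga_1 o,o)-\op i\mu(\ga_1)$ has norm at most $\kappa c R$. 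Since $\beta_\xi((\ga_1\ga_2)o,o)=\beta_\xi((\ga_1\ga_2)o,\ga_1 o)+\beta_\xi(\ga_1 o,o)$ by the cocycle identity in \eqref{eq.basic0}, subtracting gives $\|\op i\mu(\ga_1\ga_2)-\op i\mu(\ga_1)-\op i\mu(\ga_2)\|\le 3\kappa c R$, and applying the linear isometry $\op i$ yields $\|\mu(\ga_1\ga_2)-\mu(\ga_1)-\mu(\ga_2)\|\le 3\kappa c R_0$. Setting $R_2$ to be the maximum of this number and the constant from the degenerate range completes the proof. The genuine difficulty is all upstream — Lemma~\ref{lem.R2} is $\delta$-hyperbolic geometry and Proposition~\ref{prop.SS} is where the Morse property of Anosov subgroups enters — so here the only thing to watch is precisely the bookkeeping with $\op i$ described above, together with checking that the constants from Lemma~\ref{lem.R2}, Proposition~\ref{prop.SS} and Lemma~\ref{lem.shadow1} compose correctly.
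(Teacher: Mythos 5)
Your proposal is correct and follows essentially the same route as the paper: handle small $|\ga_i|$ via Lemma \ref{com}, then combine Lemma \ref{lem.R2}, Proposition \ref{prop.SS}, the shadow lemma \ref{lem.shadow1} and the cocycle identity, with the same constant $3\kappa c R_0$. The extra detail you supply on the degenerate case and the $\op i$-bookkeeping matches what the paper leaves implicit.
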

\begin{proof}
Let $N_0$ and $R_0$ be given by Lemma \ref{lem.R2}.
If one of $|\ga_1|,|\ga_2|$ is less than $N_0$, then the claim holds by Lemma \ref{com}.
Now assume that $|\ga_1|,|\ga_2|\ge N_0$.
Then by Lemma \ref{lem.R2} and Proposition \ref{prop.SS}, we can choose
$$
\xi\in O_{cR_0}(\ga_1\ga_2o,o)\cap O_{cR_0}(\ga_1\ga_2o,\ga_1o)\cap O_{cR_0}(\ga_1o,o)
$$
where $c$ is as in Proposition \ref{prop.SS}.
By Lemma \ref{lem.shadow1} and the cocycle identity
$$
\beta_{\xi}(\ga_1\ga_2 o,o)=\beta_{\xi}(\ga_1\ga_2 o,\ga_1 o)+\beta_{\xi}(\ga_1 o,o),
$$
we have
$$
\norm{\underline a(\ga_1\ga_2 o,o)-\underline a(\ga_1o,o)-\underline a(\ga_2o,o)}\leq 3\kappa c R_0.
$$
Since $\underline a(go,o)=\op i\mu(g)$ for all $g\in G$ and $\op i$ preserves $\norm{\cdot}$, 
$$
\norm{\mu(\ga_1\ga_2)-\mu(\ga_1)-\mu(\ga_2)}\leq 3\kappa c R_0.
$$
\end{proof}

\medskip
\noindent
\textbf{Proof of Proposition \ref{prop.AC}:}
We may assume that $p=o$ by Lemma \ref{com}.
Let $\ga\in\Ga$ and $\xi\in\La$ be arbitrary. 
If $\ga=\ga_1\ga_2$, we have  
\begin{align*}
\beta_\xi(\ga o,o)&=\beta_\xi(\ga o,\ga_1 o)-\beta_\xi(o,\ga_1 o).
\end{align*}
We claim that we can find $\ga_1,\ga_2\in\Ga$ so that $\ga=\ga_1\ga_2$, $|\ga|=|\ga_1|+|\ga_2|$, and 
\begin{equation}\label{eq.in}
\xi\in O_{c(\delta+1)}(\ga o,\ga_1o)\cap O_{c(\delta+1)}(o,\ga_1o)
\end{equation}
where $c>0$ is as in Proposition \ref{prop.SS}.

If $\xi\in O_{c(\delta+1)}(o,\ga o)$, then we may simply set $\ga_1=\ga$ and $\ga_2=e$.
In general, we find $\ga_1$ as follows. Consider a geodesic triangle $\Delta$ whose vertices are $e, \ga\in\Ga$, and $\zeta^{-1}(\xi)\ \in\partial\Ga$.
Since $\Ga$ is hyperbolic, we can find three points on $\Delta$, one on each edge, whose diameter is less than $\delta$.
Let $\ga_1\in\Ga$ be the point on the geodesic segment joining $e$ and $\ga$, and
set $\ga_2:=\ga_1^{-1}\ga$.
We then have $|\ga|=|\ga_1|+|\ga_2|$, and 
$\zeta^{-1}(\xi) \in O_{\delta}(\ga,\ga_1)\cap O_{\delta}(e,\ga_1)$. Now the claim follows from  Proposition \ref{prop.SS}.

Therefore, by  Lemma \ref{lem.shadow1},
$$\max (\|\beta_{\xi}(\ga o, \ga_1 o)-\mu(\ga_2^{-1})\| ,\|\beta_{\xi} (o, \ga_1 o)-\mu (\ga_1)\| )
 \le \kappa c(\delta+1)$$
and hence
$$\| \beta_{\xi}(\ga o, o) +\mu(\ga_1)-\mu(\ga_2^{-1})\| \le  2\kappa c(\delta+1).$$
Since $|\ga|=|\ga_1|+|\ga_2|$ and $S$ is symmetric, we have $|\ga^{-1}|=|\ga_1^{-1}|+|\ga_2^{-1}|$.
As $\underline a(\ga o,o)=\mu(\ga^{-1})$, we have, by Corollary \ref{cor.R2},
$$
\norm{\underline a(\ga o,o)-\mu(\ga_1^{-1})-\mu(\ga_2^{-1})}\leq R_2.
$$
Hence it suffices to set $C:=\max (2\kappa c(\delta+1), R_2)$. $\qed$ 

\section{Virtual visual metrics via $\psi$-Gromov product}\label{sec.vis}
In this section, we let $\Gamma<G$ be an Anosov subgroup, and fix $\psi\in \dg$.
The main aim here is to show that exponentiating
the following $\psi$-Gromov product defines a virtual visual metric on $\La$
up to a small power.
\begin{definition}\label{def.GP0}
\rm
The $\psi$-Gromov product based at $o$  is a function $\cal F^{(2)}\to \br$ defined as follows:
for any $(\xi_1, \xi_2)\in \cal F^{(2)}$,
$$
[\xi_1,\xi_2]_{\psi, o}:=\psi(\cal G(\xi_1,\xi_2))
$$
where $\cal G$ is the $\fa$-valued Gromov product defined in Definition \ref{def.GP}.
For $p=g(o)\in X$, we set $$
[\xi_1,\xi_2]_{\psi, p}:=[g^{-1} \xi_1,g^{-1} \xi_2]_{\psi, o}.
$$
\end{definition}
For simplicity, we set $[\xi_1,\xi_2]_{ p} :=[\xi_1,\xi_2]_{\psi, p}$.

Define $d_p=d_{\psi, p} : \cal F^{(2)}\to\bb R_{\ge 0}$ by 
\be d_p(\xi_1,\xi_2)= e^{- [\xi_1,\xi_2]_{p}}. \ee
 It follows from \eqref{eq.id1} that for all $g\in G$, $p\in X$, and $(\xi_1,\xi_2)\in \cal F^{(2)}$, we have
\begin{equation}\label{eq.id3}
d_{gp}(\xi_1,\xi_2)=e^{-\psi(\beta_{\xi_1}(gp, p)+\op i\beta_{\xi_2}(gp, p))}d_p(\xi_1,\xi_2)=d_p(g^{-1}\xi_1,g^{-1}\xi_2).
\end{equation}

We set $[\xi, \xi]_p=+\infty$ and $d_p(\xi,\xi)=0$ for all $\xi\in \cal F$.
By the antipodal property \eqref{anti}, $[\cdot, \cdot]_p$ and
$d_p$ are defined on all of $\La\times \La$. The following is the main theorem of this section:
\begin{theorem}\label{prop.met}
Fix $p\in X$.
For all sufficiently small $\e>0$, there exist a metric $d_\e=d_\e(p)
$ on $\La$  and a constant $C_\e=C_\e(p)>0$  such that
for all $\xi_1, \xi_2\in \La$,
$$
{C_\e}^{-1} d_{\psi,p}(\xi_1,\xi_2)^\e\leq  d_\e(\xi_1,\xi_2)\leq C_\e d_{\psi,p}(\xi_1,\xi_2)^\e.
$$
\end{theorem}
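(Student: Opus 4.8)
The plan is to show that $d_{\psi,p}$ is a \emph{quasi-metric}: for a suitable constant $K\ge 1$ (depending on $p$) it satisfies both the rough ultrametric inequality
$$d_{\psi,p}(\xi_1,\xi_3)\ \le\ K\,\max\big(d_{\psi,p}(\xi_1,\xi_2),\,d_{\psi,p}(\xi_2,\xi_3)\big)\qquad(\xi_1,\xi_2,\xi_3\in\La)$$
and the bounded quasi-symmetry $d_{\psi,p}(\xi,\eta)\le K\,d_{\psi,p}(\eta,\xi)$. Granting these, the symmetrized function $\max\{d_{\psi,p}(\xi,\eta),d_{\psi,p}(\eta,\xi)\}$ is a symmetric quasi-metric obeying the same two inequalities with the same $K$, so Frink's metrization lemma (the standard chain argument) produces, for every $\e>0$ with $K^\e\le 2$, a genuine metric $d_\e$ with $d_\e\asymp d_{\psi,p}^{\e}$ on $\La$; thus it is enough to establish the two displayed inequalities. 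First I would reduce to $p=o$: by \eqref{eq.id3} the ratio $d_{g(o)}(\xi_1,\xi_2)/d_o(\xi_1,\xi_2)=e^{-\psi(\beta_{\xi_1}(g(o),o)+\op{i}\beta_{\xi_2}(g(o),o))}$ is the value at $(\xi_1,\xi_2)$ of a function continuous on all of $\cal F\times\cal F$ (the $\fa$-valued Busemann function is continuous), hence bounded above and below on the compact set $\La\times\La$; so $d_{\psi,p}\asymp d_{\psi,o}$ on $\La\times\La$, and it suffices to treat $p=o$.

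\textbf{Transfer to the word hyperbolic group.} The two inequalities are ultimately facts about the coarse geometry of $\Ga$, brought over through the limit map $\zeta\colon\partial\Ga\to\La$. Equip $\Ga$ with the symmetrized $\psi$-length $\ell_\psi(\ga):=\tfrac12\big(\psi(\mu(\ga))+\psi(\mu(\ga^{-1}))\big)$: by Corollary \ref{cor.R2} and $\op{i}$-invariance of the norm it is coarsely additive along geodesics; by Theorem \ref{pop}(4), $\L_\Ga\subset\op{int}\fa^+$, and $\norm{\mu(\ga)}\asymp|\ga|$ (Lemma \ref{lem.WL} together with the quasi-isometric embedding property) it satisfies $\ell_\psi(\ga)\asymp|\ga|$ off a finite set; and it is inverse-invariant. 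Hence $(\ga_1,\ga_2)\mapsto\ell_\psi(\ga_1^{-1}\ga_2)$ is a left-invariant coarse metric quasi-isometric to $d_{\mathsf w}$, so it is Gromov hyperbolic, with Gromov product $(\cdot|\cdot)_{\psi}$ on $\partial\Ga$ satisfying $(x|z)_\psi\ge\min\big((x|y)_\psi,(y|z)_\psi\big)-\delta_\psi$ and symmetric up to $\delta_\psi$. The crux is the comparison (this is the content behind Theorem \ref{GG})
$$\big|\,[\zeta(x),\zeta(y)]_{\psi,o}\ -\ (x|y)_{\psi}\,\big|\ \le\ c_0\qquad\text{for all }x\ne y\in\partial\Ga.$$
To prove it I would combine the Morse property (Proposition \ref{Morse}), which makes a geodesic ray $[e,x)$ fellow-travel a flat $gA^+o$ with $g^+=\zeta(x)$; the shadow lemma \ref{lem.shadow1} and Proposition \ref{prop.SS} (shadows go to shadows), which identify $\psi(\beta_{\zeta(x)}(\ga o,o))$ with $\pm\psi(\mu(\ga^{\pm1}))$ up to $O(1)$ when $\ga$ lies on such a ray; and Theorem \ref{concave}/Lemma \ref{max00}, which traps $\psi(\beta_\xi(\ga o,o))$ between $-\psi(\mu(\ga))-C$ and $\psi(\mu(\ga^{-1}))+C$ in the remaining cases — in effect reading off $[\zeta(x),\zeta(y)]_{\psi,o}$ from the $\psi$-length of the point of $[x,y]$ nearest to $e$. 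Once the comparison holds, $d_{\psi,o}(\zeta(x),\zeta(y))=e^{-[\zeta(x),\zeta(y)]_{\psi,o}}\asymp e^{-(x|y)_\psi}$, and the hyperbolic inequality and symmetry of $(\cdot|\cdot)_\psi$ hand over precisely the rough ultrametric inequality and the bounded quasi-symmetry for $d_{\psi,o}$.

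\textbf{The main difficulty.} All of this is easy when $\psi$ is strongly positive, i.e.\ a non-negative combination of the fundamental weights $\varpi_\alpha$: by Lemma \ref{lem.def2} each coordinate $\chi_\alpha(\cal G(\xi_1,\xi_2))$ equals $-\log\frac{|\varphi(v)|}{\norm{\varphi}_\alpha\norm{v}_\alpha}$, a bona fide visual expression in $\mathbb P(V_\alpha)$ which satisfies an ultrametric-type inequality and has bounded asymmetry for elementary reasons, and any non-negative linear combination inherits both; moreover the clean bound of Lemma \ref{lem.SP} (valid on all of $\cal F$, with no additive error) replaces Theorem \ref{concave}, and the Morse property and Corollary \ref{cor.R2} are not needed. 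For a general $\psi\in\dg$ the coefficients of $\psi$ against the $\chi_\alpha$ need not all be non-negative (Lemma \ref{oo}), so these ``wrong-sign'' contributions must be controlled by other means; doing so uniformly in $\ga$ and in $\xi\in\La$ — which is exactly what forces the use of regularity, antipodality, and the Morse property through the comparison above — is the delicate part of the argument, and is where the full Anosov hypothesis is genuinely used.
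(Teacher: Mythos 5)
Your overall skeleton coincides with the paper's: reduce to a weak symmetry and a weak ultrametric inequality for $d_{\psi,p}$, reduce to $p=o$ using \eqref{eq.id3} and boundedness of Busemann functions, and then invoke the standard chain (Frink/Ghys--de la Harpe) construction to produce a metric comparable to a small power; this is exactly Proposition \ref{lem.WSU} plus the construction cited there. Your ``crux comparison'' is in substance the paper's Lemma \ref{lem.OP}, which is proved from the Morse property (Proposition \ref{Morse}), Proposition \ref{prop.SS} and Lemma \ref{lem.shadow1} (Theorem \ref{concave} is not actually needed for it), and from which weak symmetry is immediate. The problem lies in how you obtain the rough ultrametric inequality.

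You introduce $\ell_\psi(\ga)=\tfrac12\bigl(\psi(\mu(\ga))+\psi(\mu(\ga^{-1}))\bigr)$ and argue: it is comparable to $|\ga|$, coarsely additive along word geodesics, inverse-invariant, ``hence a left-invariant coarse metric quasi-isometric to $d_{\mathsf w}$, so Gromov hyperbolic,'' with a boundary Gromov product satisfying the $\delta$-inequality. That inference is not valid as stated. The four-point/Gromov-product inequality is an \emph{additive} statement, whereas a quasi-isometry with multiplicative constant $>1$ controls distances only multiplicatively; Gromov hyperbolicity is a quasi-isometry invariant only within the class of (roughly) geodesic spaces, and even then the transfer requires an argument, not just the QI. Moreover a coarse triangle inequality for $\ell_\psi$ is not among the facts you list, and for $\psi\in\dg$ that is not strongly positive it is not automatic: subadditivity of $\psi\circ\mu$ holds for nonnegative combinations of the $\chi_\alpha$, not for general $\psi$, and on $\Ga$ it only becomes available through a thin-triangle argument. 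To make $(\Ga,\ell_\psi)$ hyperbolic with additively controlled Gromov products one must feed in precisely the extra structure you have in hand — $\delta$-thin triangles in $(\Ga,d_{\mathsf w})$ combined with Corollary \ref{cor.R2}, Lemma \ref{com} and $\psi(\mu(\cdot))\ge 0$ — applied to triples of points and boundary points; carrying this out for three points of $\La$ is exactly the paper's direct proof of the weak ultrametric inequality in Proposition \ref{lem.WSU}. So the step where the real difficulty of the theorem is concentrated is asserted via an appeal to a false general principle, and filling the gap essentially reproduces the paper's argument. (A minor separate point: with your normalization, Lemma \ref{lem.OP} gives $[\zeta(x),\zeta(y)]_{\psi,o}\approx 2\ell_\psi(\ga)$ for $\ga\in\pi_{[x,y]}(e)$, while the $\ell_\psi$-Gromov product of $x,y$ at $e$ is $\approx\ell_\psi(\ga)$, so your displayed comparison is off by a factor $2$; harmless, but it should be fixed.)
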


This is an analogue of \cite[Part III, Prop. 3.21]{BH} for Gromov hyperbolic spaces.
 
\subsection*{Weak ultrametric inequality}
A well-known construction \cite[Section 7.3]{GdlH} shows the existence of a metric in Theorem \ref{prop.met}, provided there exists $C>0$ such that for all $\xi_1,\xi_2,\xi_3\in\La$,  we have
\begin{enumerate}
\item
(weak symmetry) $d_p(\xi_1,\xi_2)\leq e^{C} d_p(\xi_2,\xi_1)$;
\item
(weak ultrametric inequality) $d_p(\xi_1,\xi_3)\leq e^{C }\op{max}(d_p(\xi_1,\xi_2),d_p(\xi_2,\xi_3))$.
\end{enumerate}
Hence Theorem \ref{prop.met} follows from the following proposition:
\begin{prop}\label{lem.WSU}
There exists $C=C(p)>0$ such that for all $\xi_1,\xi_2,\xi_3\in\La$, we have
\begin{align*}
 [\xi_1,\xi_2]_p &\geq  [\xi_2,\xi_1]_p-C ;\\
 [\xi_1,\xi_3]_p &\geq\op{min}( [\xi_1,\xi_2]_p , [\xi_2,\xi_3]_p )-C.
\end{align*}
\end{prop}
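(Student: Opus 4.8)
The plan is to reduce both inequalities to statements about the $\fa$-valued Gromov product $\cal G$ and then to the linear representations $(\rho_\alpha, V_\alpha)$ of Lemma \ref{lem.Tits}, exploiting that the highest weights $\chi_\alpha$ evaluate $\cal G$ via explicit matrix norms (Lemma \ref{lem.def2}). First I would dispose of the weak symmetry inequality: since $[\xi_1,\xi_2]_p = \psi(\cal G(\xi_1,\xi_2))$ and $\cal G(\xi_2,\xi_1) = \op i\,\cal G(\xi_1,\xi_2)$, we get $[\xi_2,\xi_1]_p = \psi(\op i\,\cal G(\xi_1,\xi_2)) = (\psi\circ\op i)(\cal G(\xi_1,\xi_2))$. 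Now $\psi\in\dg$ implies $\psi\circ\op i\in\dg$ by \eqref{iii}, and Theorem \ref{concave} applied to both $\psi$ and $\psi\circ\op i$, combined with Proposition \ref{prop.AC} (which expresses $\beta_\xi(\ga p,p)$ in terms of Cartan projections up to bounded error), shows that $\psi(\cal G(\xi_1,\xi_2))$ and $(\psi\circ\op i)(\cal G(\xi_1,\xi_2))$ differ by a bounded amount. Actually a cleaner route: by Theorem \ref{concave} and the $\op i$-invariance of $\|\cdot\|$, both quantities are comparable to $\psi$ applied to the symmetric quantity $\underline a$, uniformly over $\La$; I would spell this out to extract the constant $C$. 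The point is that weak symmetry is essentially a consequence of Theorem \ref{concave} and the already-established comparison estimates.

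The substantive part is the weak ultrametric inequality $[\xi_1,\xi_3]_p \ge \min([\xi_1,\xi_2]_p, [\xi_2,\xi_3]_p) - C$. By translation (using \eqref{eq.id1} and the fact that $\Gamma$ acts, or rather by moving $p$), it suffices to prove it at $p=o$. I would test the inequality against each fundamental weight direction: since $\psi\in\dg$ is strongly positive only up to bounded modification (Lemma \ref{oo} shows $\psi$ need not be a non-negative combination of the $\varpi_\alpha$), I cannot reduce directly to $\psi = \chi_\alpha$. Instead, the strategy is to prove the ultrametric inequality for the $\fa$-valued Gromov product componentwise in a suitable sense, then apply $\psi$. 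Concretely, for $\psi = \chi_\alpha$ one has $\chi_\alpha(\cal G(\xi,\eta)) = -\log\frac{|\varphi(v)|}{\|\varphi\|_\alpha\|v\|_\alpha}$ where $v$ spans the line $\xi$ lifted to $\mathbb P(V_\alpha)$ and $\ker\varphi$ is the hyperplane corresponding to $\eta$; this is (minus the log of) a "distance" between a point and a hyperplane in projective space, and such quantities satisfy a weak ultrametric inequality by an elementary linear-algebra argument (three points $v_1,v_2,v_3$ and hyperplanes: if $v_1$ is far from the hyperplane of $v_3$ then it is far from the hyperplane of $v_2$ or $v_2$ is far from the hyperplane of $v_3$, up to a multiplicative constant depending on $\La$ sitting in general position, i.e. on the antipodality \eqref{anti} giving a lower bound on the relevant determinants over the compact set $\La$). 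Running this for every $\alpha\in\Pi$ yields the inequality for each $\chi_\alpha$ with a uniform constant; then writing a general $\psi\in\dg$ — up to the bounded error allowed by Theorem \ref{concave} — I would combine these, or alternatively directly prove the inequality for $\psi(\cal G(\cdot,\cdot))$ by noting that Theorem \ref{concave} lets me replace $\psi(\cal G(\xi_i,\xi_j))$ by $\psi$ of the sum of two Cartan-type terms, each of which is $\ge 0$, and bounding below.

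I expect the main obstacle to be handling a non-strongly-positive $\psi$: the clean projective-geometry argument works coordinate-by-coordinate for the $\chi_\alpha$, but a general $\psi\in\dg$ is only a positive linear functional on the limit cone $\cal L_\Gamma$, not on all of $\fa^+$. The way around this, which I would develop carefully, is to use Theorem \ref{concave}: it guarantees $\psi(\cal G(\xi,\eta))$ is within a bounded constant of $\psi(\underline a(\ldots)) \ge 0$-type expressions coming from actual group elements, and more usefully that the $\psi$-Gromov product is comparable to the genuine Gromov product $(\cdot|\cdot)$ on $\partial\Gamma$ (this comparison, promised in the remark after Theorem \ref{prop.met0}, is really what is being bootstrapped here). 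So the honest plan is: (i) prove weak symmetry from Theorem \ref{concave}; (ii) use Proposition \ref{prop.AC} to transport the hyperbolic-group Gromov product inequality — which holds with the standard $\delta$-hyperbolicity constant — over to the $\psi$-Gromov product, picking up bounded errors at each application of Lemma \ref{lem.shadow1}, Corollary \ref{cor.R2}, and Theorem \ref{concave}; the word-hyperbolicity of $\Gamma$ supplies the abstract ultrametric inequality, and the Morse property plus "shadows go to shadows" (Proposition \ref{prop.SS}) transfer it. This second route avoids the representation-theoretic detour entirely and is the one I would ultimately write up, with the $\chi_\alpha$ computation kept in reserve as a sanity check on constants.
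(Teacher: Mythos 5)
Your instinct that the inequalities should be transferred from the word-hyperbolic structure of $\Gamma$ via the Morse property is the same as the paper's, but as written the plan has a genuine gap: you never establish a bridge between $\psi(\cal G(\xi_1,\xi_2))$ and any quantity indexed by elements of $\Gamma$. Theorem \ref{concave} and Proposition \ref{prop.AC} control Busemann functions $\beta_\xi(\ga p,p)$ at orbit points $\ga p$ with $\ga\in\Ga$, $\xi\in\La$; the Gromov product $\cal G(\xi_1,\xi_2)=\beta_{g^+}(e,g)+\op i\,\beta_{g^-}(e,g)$ involves an arbitrary $g\in G$ with $g^\pm=\xi_1,\xi_2$ and is not of that form, so neither result applies to it directly. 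Moreover Theorem \ref{concave} is a two-sided range bound whose width, roughly $\psi(\underline a(\ga p,p))+\psi(\underline a(p,\ga p))$, is unbounded, so your claim that both $\psi(\cal G(\xi_1,\xi_2))$ and $(\psi\circ\op i)(\cal G(\xi_1,\xi_2))$ are ``comparable to $\psi$ of the symmetric quantity $\underline a$ uniformly over $\La$'' does not follow from it: weak symmetry is not a formal consequence of Theorem \ref{concave}. The missing ingredient, which the paper proves as Lemma \ref{lem.OP}, is that for $x\neq y$ in $\partial\Ga$ and $\ga\in\pi_{[x,y]}(e)$ one has $\norm{\cal G(\zeta(x),\zeta(y))-(\mu(\ga)+\op i\,\mu(\ga))}\le C_1$; its proof applies the Morse property (Proposition \ref{prop.KLP}) to the bi-infinite geodesic $[x,y]$ to produce $h\in G$ with $h^\pm=\zeta(x),\zeta(y)$ and $d(ho,\ga o)\le R_1$, and then shows $h^\pm\in O_R(o,ho)$ for a uniform $R$, so that Lemma \ref{lem.shadow1} gives $\beta_{h^\pm}(o,ho)\approx\mu(h)\approx\mu(\ga)$. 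Since $\mu(\ga)+\op i\,\mu(\ga)$ is $\op i$-invariant, almost-symmetry of $\cal G$ on $\La$ (your first inequality) drops out at once, and the ultrametric inequality then follows from the tripod structure of the triangle on $\zeta^{-1}(\xi_1),\zeta^{-1}(\xi_2),\zeta^{-1}(\xi_3)$ together with Corollary \ref{cor.R2} and the positivity $\psi(\mu(\cdot))\ge 0$ — which is exactly the transfer you describe, but it cannot be run without this lemma, and Proposition \ref{prop.AC} is not a substitute for it.

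Your fallback representation-theoretic route also fails as described. Antipodality and compactness of $\La$ give no uniform lower bound on the relevant determinants, because the constants degenerate precisely in the regime that matters, namely when two of the $\xi_i$ approach each other: the pair then leaves every compact subset of $\cal F^{(2)}$, so there is no uniform ``general position'' constant on $\La\times\La$ near the diagonal. The weak ultrametric inequality for $\chi_\alpha(\cal G(\cdot,\cdot))$ restricted to $\La$ is not an elementary projective-geometry fact; it genuinely uses the hyperbolicity of $\Ga$ (via the limit map), and, as you note yourself, even if it held for every $\chi_\alpha$ it would not yield the statement for a general $\psi\in\dg$, which need not be a nonnegative combination of fundamental weights.
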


In the case of $X=\bb H^2$, the classical Gromov product satisfies that there exists a uniform constant $C>0$ such that for any $x,y\in\partial\bb H^2$,
$$
| \cal G(x,y)-2 d(o,z)|\leq C
$$
where $z$ is the unique projection of $o$ to the geodesic connecting $x$ and $y$.
In the following lemma \ref{lem.OP}, we establish the analogous property for $\fa$-valued Gromov products 
on $\La\times \La$ using the Morse property of Anosov groups.

For $\ga\in\Ga$ and any geodesic segment $\alpha$ in $\Ga$, we define the set of projections of $\ga$ to $\alpha$ by
$$
\pi_\alpha(\ga):=\{\ga'\in\alpha : d_{\mathsf w}(\ga,\ga')=d_{\mathsf w}(\ga,\alpha) \}.
$$
Since $\Ga$ is hyperbolic, the diameter of $\pi_\alpha(\ga)$ is less than $4\delta$.

\begin{lemma}\label{lem.OP}
There exists  $C_1>0$  such that for any $x\neq y$ in $\partial\Ga$ and $\ga\in\pi_{[x,y]}(e)$, we have
$$
\norm{\cal G(\zeta(x),\zeta(y))- (\mu( \ga )+\op i\mu(\ga ))}\leq C_1.
$$
In particular, $\cal G$ is almost symmetric on $\La$: for any $\xi_1\ne \xi_2\in \La$,
$$
\norm{\cal G(\xi_1,\xi_2)-\cal G(\xi_2, \xi_1)}\leq 2C_1.
$$
\end{lemma}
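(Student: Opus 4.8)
The plan is to push the coarse geometry of the word-hyperbolic group $\Ga$ into the symmetric space $X$ via the Morse property (Proposition \ref{Morse}) and the ``shadows go to shadows'' principle (Proposition \ref{prop.SS}), and then to read off the estimate from the analogue of Sullivan's shadow lemma (Lemma \ref{lem.shadow1}). Fix a geodesic line $[x,y]$ in $(\Ga,d_{\mathsf w})$ and $\ga\in\pi_{[x,y]}(e)$. First I would record the elementary $\delta$-hyperbolicity fact that both rays $[e,x)$ and $[e,y)$ pass within $2\delta$ of $\ga$: applying the thin-triangle inequality to the triangle with vertices $e,\ga,x$ and sides $[e,\ga]$, $[\ga,x)\subset[x,y]$ and $[e,x)$, one obtains $q_1\in[e,\ga]$, $q_2\in[\ga,x)$, $q_3\in[e,x)$ of pairwise $d_{\mathsf w}$-distance $<\delta$; since $\ga$ realizes $d_{\mathsf w}(e,[x,y])$ and $q_2\in[x,y]$, the chain $d_{\mathsf w}(e,\ga)\le d_{\mathsf w}(e,q_2)\le d_{\mathsf w}(e,q_1)+\delta=d_{\mathsf w}(e,\ga)-d_{\mathsf w}(q_1,\ga)+\delta$ forces $d_{\mathsf w}(q_1,\ga)\le\delta$, hence $d_{\mathsf w}(q_3,\ga)<2\delta$. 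Thus $x,y\in O_{R'}(e,\ga)$ with $R':=2\delta+1>1$, so Proposition \ref{prop.SS} gives $\zeta(x),\zeta(y)\in O_{cR'}(o,\ga o)$, and Lemma \ref{lem.shadow1}, together with $\underline a(o,\ga o)=\mu(\ga)$, yields
$$
\norm{\beta_{\zeta(x)}(o,\ga o)-\mu(\ga)}\le \kappa cR'\qquad\text{and}\qquad \norm{\beta_{\zeta(y)}(o,\ga o)-\mu(\ga)}\le \kappa cR'.
$$

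Next I would invoke the Morse property to choose a convenient representative for the Gromov product. By Proposition \ref{Morse}(1), applied to the geodesic line $[x,y]$, there is $g\in G$ with $g^+=\zeta(x)$, $g^-=\zeta(y)$ and $\sup_k d(\ga_k o,gAo)\le R_1$; in particular $d(\ga o,gAo)\le R_1$ since $\ga$ lies on $[x,y]$. Choosing $a_0\in A$ with $d(\ga o,ga_0 o)\le R_1$ and setting $h:=ga_0$, we still have $h^{\pm}=\zeta(x),\zeta(y)$, so by the $AM$-invariance of $\cal G$ (the definition of $\cal G$ does not depend on the representative of $[g]\in G/AM$),
$$
\cal G(\zeta(x),\zeta(y))=\beta_{\zeta(x)}(o,ho)+\op i\,\beta_{\zeta(y)}(o,ho).
$$
Since $\ga^{-1}h$ and $h^{-1}\ga$ both lie in the fixed compact set $\{g\in G:d(go,o)\le R_1\}$, the cocycle identity $\beta_\xi(o,ho)=\beta_\xi(o,\ga o)+\beta_\xi(\ga o,ho)$ together with the equivariance $\beta_\xi(\ga o,ho)=\beta_{\ga^{-1}\xi}(o,\ga^{-1}ho)$ and the local boundedness of the Iwasawa cocycle $\sigma$ show that $\norm{\beta_{\zeta(x)}(o,ho)-\beta_{\zeta(x)}(o,\ga o)}$ and $\norm{\beta_{\zeta(y)}(o,ho)-\beta_{\zeta(y)}(o,\ga o)}$ are bounded by a constant depending only on $R_1$. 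Combining with the previous paragraph, and using that $\op i$ is an isometry of $\fa$, gives the first inequality with $C_1$ an explicit sum of $\kappa cR'$, the local bound for $\sigma$, and their $\op i$-images.

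For the almost symmetry statement I would use $\cal G(\xi_2,\xi_1)=\op i\,\cal G(\xi_1,\xi_2)$ (noted after Definition \ref{def.GP}). Writing $\xi_1=\zeta(x)\ne\xi_2=\zeta(y)$, $\ga\in\pi_{[x,y]}(e)$, and $v:=\cal G(\xi_1,\xi_2)-(\mu(\ga)+\op i\mu(\ga))$, the first part gives $\norm{v}\le C_1$; since $\mu(\ga)+\op i\mu(\ga)$ is fixed by the involution $\op i$, one computes $\cal G(\xi_1,\xi_2)-\cal G(\xi_2,\xi_1)=(\mathrm{Id}-\op i)v=v-\op i v$, whose norm is at most $2\norm{v}\le 2C_1$.

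The step I expect to be the main obstacle is the choice of the representative $h$ in the second paragraph: one needs a single element $h$ with $h^+=\zeta(x)$ \emph{and} $h^-=\zeta(y)$ whose orbit point $ho$ is $O(1)$-close to $\ga o$, which is precisely what the bi-infinite form of the Morse property supplies, and one must use that $\cal G$ descends to $G/AM$ so that passing from $g$ to $ga_0$ is harmless. The hyperbolic-geometry fact in the first paragraph, the local boundedness of $\sigma$, and the algebra in the third paragraph are all routine.
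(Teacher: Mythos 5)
Your proposal is correct, and it takes a route that is parallel to but organized differently from the paper's. Both arguments hinge on the same three inputs (hyperbolicity of $\Ga$, the Morse property, and the shadow estimate of Lemma \ref{lem.shadow1}), and both use Proposition \ref{prop.KLP}(1) to produce $h\in G$ with $h^{\pm}=\zeta(x),\zeta(y)$ and $d(ho,\ga o)\le R_1$; the difference is where the shadow lemma is applied. The paper applies it at $ho$: it shows directly that $h^+,h^-\in O_R(o,ho)$ for a uniform $R$, by combining the bounded diameter of $\pi_{\alpha'}(x)\cup\pi_{\alpha}(y)\cup\pi_{[x,y]}(e)$ with Proposition \ref{prop.KLP}(2) applied to rays from $e$ representing $x$ and $y$, deduces $\norm{\beta_{h^{\pm}}(o,ho)-\mu(h)}\le \kappa R$, and then replaces $\mu(h)$ by $\mu(\ga)$ via Lemma \ref{com}. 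You apply it at $\ga o$: your thin-triangle computation (which is correct, and covered by the paper's convention that geodesic triangles may have ideal vertices) gives $x,y\in O_{2\delta+1}(e,\ga)$, Proposition \ref{prop.SS} transports this to $\zeta(x),\zeta(y)\in O_{c(2\delta+1)}(o,\ga o)$, so both Busemann values at basepoint pair $(o,\ga o)$ are within $\kappa c(2\delta+1)$ of $\mu(\ga)$, and the passage from $\ga o$ to $ho$ costs only a uniform constant since $h^{-1}\ga$ ranges in a compact set (your appeal to local boundedness of $\sigma$, equivalently Lemma \ref{lem.FW}, which rests only on Lemma \ref{lem.SP}, so there is no circularity). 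What your version buys is that all the Morse-for-rays geometry is absorbed into the already-proved Proposition \ref{prop.SS} and the only bare hyperbolic-group fact needed is the elementary $2\delta$ estimate for projections; what the paper's version buys is that it never needs the basepoint transfer, comparing with $\mu(h)$ instead of $\mu(\ga)$. Your derivation of the almost-symmetry from $\cal G(\xi_2,\xi_1)=\op i\,\cal G(\xi_1,\xi_2)$ together with the $\op i$-invariance of $\mu(\ga)+\op i\mu(\ga)$ is also fine; the paper's implicit argument (apply the first bound to both orderings with the same $\ga$ and use the triangle inequality) gives the same constant.
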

\begin{proof}
Let $\alpha:=(u_0=e,u_1,u_2,\cdots)$ and $\alpha':=(v_0 =e,v_1 ,v_2 ,\cdots)$ be geodesic representatives of $x$ and $y$, respectively.
Let $\ga\in \pi_{[x,y]}(e)$ be arbitrary, and  $f,g,h\in G$ be elements satisfying the following:
\begin{itemize}
\item
$f(o)=o$\text{ and } $f^+=\zeta(x)$;
\item
$g(o)=o$\text{ and } $g^+=\zeta(y)$;
\item
$h^+=\zeta(x)$\text{ and } $h^-=\zeta(y)$.
\end{itemize}

Applying Proposition \ref{prop.KLP}(1) to the geodesic line $[x,y]$, we have 
$$d(ho, \ga o)<R_1$$  after replacing $h$ with some element of $hA$.
Hence by Lemma \ref{com}, there exists $C'=C'(R_1)>0$ such that 
\begin{equation}\label{eq.LgL}
\norm{\mu(h)-\mu(\ga)}\leq C'.
\end{equation}

Noting
$$
\cal G(\zeta(x),\zeta(y))=\cal G(h^+,h^-)= \beta_{h^+}(o,ho)+\op i\beta_{h^-}(o,ho) ,
$$
it is now sufficient to show that for some uniform constant $C_1>0$,
$$
\norm{\beta_{h^+}(o,ho)-\mu(h)}\leq C_1 \;\; \text{ and }\;\; \norm{\beta_{h^-}(o,ho)- \mu(h)}\leq C_1.
$$
By Lemma \ref{lem.shadow1}, this claim follows if we show
 \be\label{oho} h^+, h^-\in O_{R}(o,ho)\ee
 for some uniform constant $R>0$.

Since $\Ga$ is hyperbolic, the diameter of the set
$
\pi_{\alpha'}(x)\cup \pi_\alpha(y)\cup\pi_{[x,y]}(e)
$
is at most $C\delta$ for some uniform constant $C>1$.
In particular, we can find $k,\ell\in\bb N$ such that the set $\{u_k,v_\ell ,\ga\}$ has diameter less than $C\delta$.
Applying 
Proposition \ref{prop.KLP}(2) to the geodesic ray $\alpha$, we find $a_1\in A^+$ such that 
$$d(fa_1o,u_k o)<R_1.$$
Since $d_{\mathsf w}(u_k,\ga)=|u_k^{-1}\ga|\le  C\delta$, we have 
$$
d(u_k o,\ga o)=\|\mu(u_k^{-1} \ga ) \|\le 
\sup\{\norm{\mu(\ga')}:|\ga'|\leq C \delta\}.
$$
Therefore
\begin{align*}
d(fa_1o,ho)&\leq d(fa_1o,u_k o)+d(u_k o,\ga o)+d(\ga o,ho)\\
& \le 2R_1+\sup\{\norm{\mu(\ga')}:|\ga'|\leq C \delta\}.
\end{align*}
Setting $R:=2R_1+\sup\{\norm{\mu(\ga')}:|\ga'|\leq C \delta\}$, 
it follows that $h^+=f^+\in O_{R}(o,ho)$. 
Similar argument shows that $h^-=g^+\in O_{R}(o,ho)$. This proves \eqref{oho}.
\end{proof}

\begin{lemma}\label{lem.FW}
For any compact subset $C\subset X$, the set $\{ \beta_{\xi}(p,o) : \xi\in\cal F, p\in C\}$ is bounded.
\end{lemma}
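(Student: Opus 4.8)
The plan is to reduce the statement to the continuity of the Iwasawa cocycle together with compactness of $\cal F$ and of the fibre over a compact subset of $X$. First, fix $\xi\in\cal F$ and $p=g(o)\in X$. By the last identity in \eqref{eq.basic0}, one has $\beta_\xi(p,o)=\beta_\xi(g,e)=\sigma(g^{-1},\xi)$, so the set in question equals $\{\sigma(g^{-1},\xi):\xi\in\cal F,\ g(o)\in C\}$. Next, let $L:=\{g\in G:g(o)\in C\}$, which is the preimage of $C$ under the projection $G\to X=G/K$; since this projection is proper (its fibres are translates of the compact group $K$) and $C$ is compact, the set $L$ is a compact subset of $G$, and hence so is $L^{-1}$.

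Finally, the Iwasawa decomposition $K\times A\times N\to G$ is a diffeomorphism, so the Iwasawa cocycle $\sigma:G\times\cal F\to\fa$ is continuous. Since $L^{-1}\times\cal F$ is compact (recall $\cal F\cong K/M$ is compact), its image $\sigma(L^{-1}\times\cal F)$ is a compact, hence bounded, subset of $\fa$ containing $\{\beta_\xi(p,o):\xi\in\cal F,\ p\in C\}$. This proves the lemma. There is no real obstacle here: the only points to be careful about are citing the correct identity from \eqref{eq.basic0} and observing that $L$ is compact because $G\to G/K$ has compact fibres; everything else is the standard "continuous image of a compact set is bounded" argument.
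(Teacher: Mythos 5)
Your proof is correct, but it takes a different route from the paper. The paper disposes of this lemma in one line by invoking Lemma \ref{lem.SP} with the strongly positive form $\psi=\sum_{\alpha\in\Pi}\varpi_\alpha$ (and, implicitly, each $\varpi_\alpha$ separately): since the $\varpi_\alpha$ span $\fa^*$, the two-sided bound $-\varpi_\alpha(\underline a(o,p))\le \varpi_\alpha(\beta_\xi(p,o))\le \varpi_\alpha(\underline a(p,o))$ controls the full vector $\beta_\xi(p,o)$ by the $\fa^+$-valued distance, which is bounded for $p$ in the compact set $C$. You instead argue softly: writing $\beta_\xi(p,o)=\sigma(g^{-1},\xi)$ via \eqref{eq.basic0}, noting that $L=\{g\in G: g(o)\in C\}$ is compact because $G\to G/K$ is a proper fibration with fibre $K$, and concluding that the set is the continuous image of the compact set $L^{-1}\times\cal F$ under the Iwasawa cocycle. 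Both arguments are sound; yours is more elementary in that it uses nothing about representations or strongly positive forms, only the continuity of $\sigma$ (which requires the small observation, worth stating, that the $A$-component of $gk$ is independent of the representative $k$ of $\xi=kM$ and that continuity descends through the quotient $G\times K\to G\times K/M$), whereas the paper's route yields the stronger quantitative statement that $\|\beta_\xi(p,o)\|$ is bounded explicitly in terms of $\underline a(p,o)$, uniformly in $\xi$, which is in the same spirit as the estimates used elsewhere in Sections 5 and 6.
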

\begin{proof} This follows from Lemma \ref{lem.SP} by setting $\psi=\sum_{\alpha\in \Pi}\varpi_\alpha$.
\end{proof}

\medskip
\noindent
\textbf{Proof of Proposition \ref{lem.WSU}.}
Observe that the identity \eqref{eq.id3} gives that for any $\xi_1\ne\xi_2\in \La$,
$$
[\xi_1,\xi_2]_p-[\xi_1,\xi_2]_o=\psi(\beta_{\xi_1}(p, o)+\op i\beta_{\xi_2}(p, o)).
$$
Now Lemma \ref{lem.FW} shows the existence of $C=C(p,\psi)>0$ such that $|[\xi_1,\xi_2]_p-[\xi_1,\xi_2]_o|\leq C$. Therefore it suffices to show the claim for $p=o$.
 The first inequality is an immediate consequence of Lemma \ref{lem.OP}  with  $C>2C_1\norm{\psi}$.

To show the second inequality, let $C_1>0$ be a constant from Lemma \ref{lem.OP} so that we have
\begin{equation}\label{eq.CT}
[\xi_1,\xi_3]_o \geq  \psi(\mu( \ga_2)+\op i\mu( \ga_2 ))-C_1\norm{\psi}.
\end{equation}
Set $x_i:=\zeta^{-1}(\xi_i)\in\partial\Ga$ for $i=1,2,3$.
For each $i$, we fix a geodesic line $[x_i,x_{i+1}]$ joining $x_i$ and $x_{i+1}$, and choose $\ga_{i+2}\in \pi_{[x_i,x_{i+1}]}(e)$, where all the indices are to be interpreted mod 3.
By the hyperbolicity of $\Ga$, for some uniform constant $C>0$,
there exists $1\le i\le 3$ such that
$d_{\mathsf w}(\ga_i,\ga_{i+1})< C\delta$ and for some $\ga'\in [e, \ga_{i+2}]$,
the diameter of $\{\ga',\ga_i,\ga_{i+1}\}$ is at most $C\delta$.

We first consider the case when $i=1$.
 Since $$d(\ga_1o,\ga_2o)\leq d_{\mathsf w}(\ga_1,\ga_2)\op{max}_{s\in S} d(o,s)<C \delta \op{max}_{s\in S} d(o,s),$$ it follows from Lemma \ref{com} that for some uniform $C_2>0$,
 $$
 \norm{\mu(\ga_1)-\mu(\ga_2)}\leq C_2.
 $$
 
 In view of \eqref{eq.CT}, we now obtain
\begin{align*}
 [\xi_1,\xi_3]_o &\geq   \psi(\mu( \ga_1)+\op i\mu( \ga_1 ))-C_1\norm{\psi}-2C_2\\
&\geq   [\xi_2,\xi_3]_o -2C_1\norm{\psi}-2C_2\text{ by Lemma }\ref{lem.OP}\\
&\geq  \op{min}( [\xi_1,\xi_2]_o , [\xi_2,\xi_3]_o )-2C_1\norm{\psi}-2C_2.
\end{align*}
The case $i=2$ can be handled similarly by interchanging the roles of $\ga_2$ and $\ga_3$.
Finally in the case when $i=3$, let $R_2$ be as in Corollary \ref{cor.R2}.
Since $(e,\cdots,\ga',\cdots,\ga_2)$ is a geodesic, we have by Corollary \ref{cor.R2} that
$$
\norm{\mu(\ga_2)-\mu(\ga')-\mu(\ga'^{-1}\ga_2)}\leq R_2.
$$
By \eqref{eq.CT} and the fact $\psi(\mu( (\ga'^{-1}\ga_2)^{\pm1}))\geq 0$, we deduce
\begin{align*}
 [\xi_1,\xi_3]_o &\geq  \psi(\mu(\ga')+\op i\mu(\ga') )-C_1\norm{\psi}-2R_2\norm{\psi}\\
 &\geq \psi(\mu(\ga_1)+\op i\mu(\ga_1) )-(C_1+2C_2+2R_2)\norm{\psi},
\end{align*}
as the diameter of $\{\ga',\ga_1,\ga_3\}$ is less than $\delta$.
The rest of the proof is similar to the case $i=1$.
$\qed$

\subsection*{Covering lemma}

Using Theorem \ref{prop.met}, we obtain:
\begin{lemma}[Triangle inequality] \label{lem.triangle}
There exists $N=N(\psi, p)\ge 1$ such that for any  $\xi_1,\xi_2,\xi_3\in\La$,
$$
d_p(\xi_1,\xi_3)\leq N \big(d_p(\xi_1,\xi_2) +d_p(\xi_2,\xi_3)\big).$$
In particular, $d_p(\xi_1, \xi_2)\le N d_p(\xi_2, \xi_1)$.
Moreover, $N(\psi, p)$ can be taken uniformly for all $p$ in a fixed compact subset of $X$.
\end{lemma}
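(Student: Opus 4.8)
The plan is to deduce this directly from Theorem \ref{prop.met}. Fix $p \in X$ and choose $\e > 0$ small enough that Theorem \ref{prop.met} supplies a genuine metric $d_\e = d_\e(p)$ on $\La$ and a constant $C_\e = C_\e(p) \ge 1$ with
$$C_\e^{-1}\, d_{\psi,p}(\xi,\eta)^\e \le d_\e(\xi,\eta) \le C_\e\, d_{\psi,p}(\xi,\eta)^\e \qquad \text{for all } \xi,\eta \in \La$$
(the normalization $C_\e \ge 1$ is forced by comparing the two inequalities at $\xi \ne \eta$). Applying the ordinary triangle inequality for $d_\e$ together with these comparisons,
$$d_{\psi,p}(\xi_1,\xi_3)^\e \le C_\e\, d_\e(\xi_1,\xi_3) \le C_\e\big(d_\e(\xi_1,\xi_2)+d_\e(\xi_2,\xi_3)\big) \le C_\e^2\big(d_{\psi,p}(\xi_1,\xi_2)^\e+d_{\psi,p}(\xi_2,\xi_3)^\e\big).$$
Since $a^\e+b^\e \le 2(a+b)^\e$ for all $a,b \ge 0$ (each term being at most $(a+b)^\e$), taking $\e$-th roots gives
$$d_{\psi,p}(\xi_1,\xi_3) \le (2C_\e^2)^{1/\e}\big(d_{\psi,p}(\xi_1,\xi_2)+d_{\psi,p}(\xi_2,\xi_3)\big),$$
so we may take $N(\psi,p) = (2C_\e^2)^{1/\e} \ge 1$. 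For the ``in particular'' clause, the symmetry $d_\e(\xi_1,\xi_2)=d_\e(\xi_2,\xi_1)$ and the same comparisons give $d_{\psi,p}(\xi_1,\xi_2)^\e \le C_\e^2\, d_{\psi,p}(\xi_2,\xi_1)^\e$, hence $d_p(\xi_1,\xi_2) \le C_\e^{2/\e}\, d_p(\xi_2,\xi_1) \le N(\psi,p)\, d_p(\xi_2,\xi_1)$.

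For the uniformity over a fixed compact set $L \subset X$, I would avoid tracking how $C_\e(p)$ depends on $p$ through the construction behind Theorem \ref{prop.met}; instead I would first prove the inequality at the base point $o$ with constant $N_0 := N(\psi,o)$, and then transfer it. By the identity recorded in the proof of Proposition \ref{lem.WSU}, for $\xi_1 \ne \xi_2$ in $\La$ and $p=g(o)$ one has $[\xi_1,\xi_2]_p - [\xi_1,\xi_2]_o = \psi\big(\beta_{\xi_1}(p,o) + \op i\,\beta_{\xi_2}(p,o)\big)$, and Lemma \ref{lem.FW} bounds $\{\beta_\xi(p,o) : \xi \in \cal F,\ p \in L\}$; hence there is $C_0 = C_0(\psi,L) > 0$ with $|[\xi_1,\xi_2]_p - [\xi_1,\xi_2]_o| \le C_0$ for all $p \in L$ and all $\xi_1 \ne \xi_2$. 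Exponentiating, $e^{-C_0} d_o \le d_p \le e^{C_0} d_o$ on $\La \times \La$ uniformly for $p \in L$, and substituting into the base-point inequality yields $d_p(\xi_1,\xi_3) \le e^{2C_0} N_0\big(d_p(\xi_1,\xi_2)+d_p(\xi_2,\xi_3)\big)$; thus $N(\psi,p) := e^{2C_0}N_0$ works uniformly for $p \in L$, and the weak symmetry bound transfers in the same way.

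There is no real obstacle here: the substance is entirely contained in Theorem \ref{prop.met}, and this lemma is the routine passage from ``a power of $d_{\psi,p}$ is comparable to a metric'' to ``$d_{\psi,p}$ satisfies a quasi-triangle inequality.'' The only point that needs a little attention is the last sentence about uniformity in $p$; the comparison of $d_p$ with $d_o$ furnished by Lemma \ref{lem.FW} is what makes this clean, reducing matters to a single base point rather than requiring a re-examination of all the constants produced in the proof of Theorem \ref{prop.met}.
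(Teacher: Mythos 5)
Your argument is correct and follows essentially the same route as the paper: deduce the quasi-triangle inequality from Theorem \ref{prop.met} via the genuine metric $d_\e$, handle the exponent with the elementary bound $a^\e+b^\e\le 2(a+b)^\e$ before taking $1/\e$-th powers, and obtain the remaining claims (weak symmetry and uniformity over a compact set of base points) from the symmetry of $d_\e$ together with the comparison of $d_p$ and $d_o$ furnished by \eqref{eq.id3} and Lemma \ref{lem.FW}, exactly as the paper does.
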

\begin{proof}
Choose $\e>0$ sufficiently small so that Theorem \ref{prop.met} holds, and set $d:=d_\e$, $C:=C_\e$.
We then have
$$
d_p(\xi_1,\xi_3)^\e\leq C  d(\xi_1,\xi_3)\leq C (d(\xi_1,\xi_2)+d(\xi_2,\xi_3)) \leq C^2 (d_p(\xi_1,\xi_2)^{\e}+ d_p(\xi_2,\xi_3)^{\e}).
$$
Since $(a^\e+b^\e)^{1/\e}\leq \alpha (a+b)$ for all $a,b\geq 0$ for some uniform constant $\alpha=\alpha(\e)>0$, it suffices to take the $1/\e$ power in each side of the above.
Now the second part follows from \eqref{eq.id3} and Lemma \ref{lem.FW}. \end{proof}

For $\xi\in \La$ and $r>0$, set 
$$
\bb B_p(\xi,r):=\{\eta \in \La : d_{\psi,p}(\xi,\eta)<r\}.
$$

\begin{lemma}[Covering lemma] \label{inc} 
There exists $N_0(\psi, p)\ge 1$ satisfying the following:
for any finite collection  $\bb B_p (\xi_1, r_1), \cdots, \bb B_p (\xi_n, r_n)$ with $\xi_i\in \La$ and $r_i>0$, there exists a disjoint subcollection $\bb B_p (\xi_{i_1}, r_{i_1}),
\cdots , \bb B_p (\xi_{i_\ell}, r_{i_\ell})$ such that
$$\bb B_p (\xi_1, r_1)\cup \cdots \cup \bb B_p (\xi_n, r_n)\subset
\bb B_p (\xi_{i_1}, 3N_0 r_{i_1})\cup
\cdots \cup \bb B_p (\xi_{i_\ell}, 3N_0 r_{i_\ell}).$$
Moreover, $N_0(\psi, p)$ can be taken uniformly for all $p$ in a fixed compact subset of $X$.
\end{lemma}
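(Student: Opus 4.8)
The plan is to imitate the proof of the classical Vitali $3r$-covering lemma, substituting for the ordinary triangle inequality the quasi-triangle inequality and weak symmetry of $d_{\psi,p}$ that were just established in Lemma \ref{lem.triangle}. Write $N:=N(\psi,p)\ge 1$ for the constant of that lemma, so that
$$
d_p(\xi,\eta)\le N\bigl(d_p(\xi,\zeta)+d_p(\zeta,\eta)\bigr)\qquad\text{and}\qquad d_p(\xi,\eta)\le N\,d_p(\eta,\xi)
$$
for all $\xi,\eta,\zeta\in\La$; I will show that $N_0:=N^3$ works.

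First I would run the greedy selection: among $\bb B_p(\xi_1,r_1),\dots,\bb B_p(\xi_n,r_n)$ choose a ball of maximal radius, put it into the subcollection, delete from the list every ball of the list (including the chosen one) that meets it, and repeat with what remains until the list is empty. Since we start with finitely many balls this stops after finitely many rounds, and by construction the selected balls $\bb B_p(\xi_{i_1},r_{i_1}),\dots,\bb B_p(\xi_{i_\ell},r_{i_\ell})$ are pairwise disjoint.

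Next I would verify the covering estimate. Fix $j\in\{1,\dots,n\}$. Either $\bb B_p(\xi_j,r_j)$ was itself selected, or it was deleted at the round in which some selected ball $\bb B_p(\xi_{i_k},r_{i_k})$, being of maximal radius among those still present at that round, intersects it; in either case we may pick such an $i_k$ with $r_{i_k}\ge r_j$ and a point $\eta\in\bb B_p(\xi_{i_k},r_{i_k})\cap\bb B_p(\xi_j,r_j)$. Let $\zeta\in\bb B_p(\xi_j,r_j)$ be arbitrary. Applying the quasi-triangle inequality and weak symmetry,
$$
d_p(\eta,\zeta)\le N\bigl(d_p(\eta,\xi_j)+d_p(\xi_j,\zeta)\bigr)\le N\bigl(N\,d_p(\xi_j,\eta)+d_p(\xi_j,\zeta)\bigr)<N(N+1)\,r_{i_k},
$$
and then
$$
d_p(\xi_{i_k},\zeta)\le N\bigl(d_p(\xi_{i_k},\eta)+d_p(\eta,\zeta)\bigr)<N\bigl(1+N(N+1)\bigr)r_{i_k}=N(N^2+N+1)\,r_{i_k}\le 3N^3 r_{i_k},
$$
using $N\ge 1$ in the last step. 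Hence $\zeta\in\bb B_p(\xi_{i_k},3N_0 r_{i_k})$, so $\bb B_p(\xi_j,r_j)\subset\bb B_p(\xi_{i_1},3N_0 r_{i_1})\cup\cdots\cup\bb B_p(\xi_{i_\ell},3N_0 r_{i_\ell})$, and taking the union over $j$ gives the assertion. The uniformity of $N_0(\psi,p)=N(\psi,p)^3$ over $p$ in a fixed compact subset of $X$ is then immediate from the corresponding uniformity of $N(\psi,p)$ in Lemma \ref{lem.triangle}. No step here is genuinely hard; the only point to watch is that $d_{\psi,p}$ is merely a quasi-metric, which is why the dilation factor comes out as a power of $N$ rather than the usual $3$, and why the weak-symmetry bound must be invoked to pass from $d_p(\xi_j,\eta)$ to $d_p(\eta,\xi_j)$.
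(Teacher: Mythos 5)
Your proof is correct and follows essentially the same route as the paper: the same greedy selection of a maximal-radius disjoint subfamily, followed by the same two applications of the quasi-triangle inequality and weak symmetry from Lemma \ref{lem.triangle}, yielding the identical constant $N_0=N^3$. The only difference is cosmetic (iterative deletion versus selecting indices after sorting by radius), so nothing further is needed.
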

\begin{proof}  Let $N=N(\psi,p)$ be as given by Lemma \ref{lem.triangle}.
For simplicity, set $B_i:=\bb B_p(\xi_i,r_i) $.
We may asume $r_1\geq\cdots\geq r_n$ without loss of generality and define inductively
$$i_1=1,\,i_{j+1}=\min\{i>i_j: B_i \cap(B_{i_1}\cup\cdots\cup B_{i_j})=\emptyset\},$$
as long as possible, to obtain a maximal disjoint subcollection $\{B_{i_1},\cdots,B_{i_\ell}\}$.
Let $\xi\in B_{j}$ for some $1\le j\le n$.
Then there exists $1\le  k\le \ell$ such that $B_j\cap B_{i_k}\ne \emptyset$ and $r_{i_k}\ge r_j$.
Choose $\eta \in B_j\cap B_{i_k}$.
Then  by Lemma \ref{lem.triangle},
we have $d_p(\eta,\xi)\le N (d_p(\eta, \xi_j) + d_p(\xi_j, \xi)  )
< 2 N^2 r_j \le 2N^2 r_{i_k}$ and $d_p(\xi_{i_k},\eta)<r_{i_k}$.
Hence
\begin{align*}
d_p(\xi_{i_k},\xi )\leq N(d_p(\xi_{i_k},\eta )+d_p(\eta,  \xi))< 3 N^3r_{i_k}.
\end{align*}
Hence it suffices to set $N_0:=N^3$. \end{proof}

\subsection*{Comparing Gromov products}
Although we will not be using it in the rest of the paper,
we record the following theorem which is of independent interest:
\begin{thm}\label{GG}  For any $\psi\in \dg$, there exist $c_1=c_1(\psi)\ge 1, c_2=c_2(\psi)>0$ such that
for all $x\ne y\in \partial \Gamma$,
$$ c_1^{-1} (x|y) - c_2 \le  \psi (\mathcal G (\zeta(x), \zeta(y)))\le c_1 (x|y) +c_2.$$
\end{thm}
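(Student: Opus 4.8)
The plan is to squeeze both sides of the inequality between constant multiples of the single quantity $\norm{\mu(\ga)}$, where $\ga\in\Ga$ is a nearest-point projection of $e$ onto a word geodesic $[x,y]$ joining $x$ to $y$, i.e. $\ga\in\pi_{[x,y]}(e)$. The starting point is Lemma \ref{lem.OP}: it gives $\norm{\mathcal G(\zeta(x),\zeta(y))-(\mu(\ga)+\op i\mu(\ga))}\le C_1$, hence $\abs{\psi(\mathcal G(\zeta(x),\zeta(y)))-\psi(\mu(\ga)+\op i\mu(\ga))}\le C_1\norm{\psi}$. So it suffices to show that $\psi(\mu(\ga)+\op i\mu(\ga))$ is coarsely bi-Lipschitz comparable to $(x|y)$, with constants independent of $x,y$.

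First I would establish that $\psi\circ\mu$ is coarsely comparable to $\norm{\mu(\cdot)}$ on all of $\Ga$: there exist $c>0$ and $b>0$ with
$$c\,\norm{\mu(\ga)}-b\ \le\ \psi\bigl(\mu(\ga)+\op i\mu(\ga)\bigr)\ \le\ 2\norm{\psi}\,\norm{\mu(\ga)}\qquad\text{for all }\ga\in\Ga.$$
The upper bound is immediate. For the lower bound, write $\psi(\mu(\ga)+\op i\mu(\ga))=\psi(\mu(\ga))+\psi(\op i\mu(\ga))$; by Theorem \ref{pop}(4) the linear form $\psi$ is strictly positive on the compact set $\{v\in\mathcal L_\Ga:\norm v=1\}$, and by \eqref{iii} the opposition involution preserves $\mathcal L_\Ga$, so $v\mapsto\psi(\op i v)$ is strictly positive there as well. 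If no admissible $c$ existed, there would be $\ga_i\in\Ga$ with $\norm{\mu(\ga_i)}\to\infty$ and, along a subsequence, $\mu(\ga_i)/\norm{\mu(\ga_i)}\to u$ with $\norm u=1$; then $u\in\mathcal L_\Ga$ since the limit cone is the asymptotic cone of $\mu(\Ga)$ (Definition \ref{lc}), forcing $\psi(u)=0$ or $\psi(\op i u)=0$, a contradiction. Since $\mu$ is proper and $\Ga$ discrete, only finitely many $\ga$ have $\norm{\mu(\ga)}$ below any fixed bound, and these exceptions are absorbed into $b$.

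Next I would convert $\norm{\mu(\ga)}$ into word length: Lemma \ref{lem.WL} gives $\norm{\mu(\ga)}\le C\,|\ga|$, while the quasi-isometric embedding property of Anosov subgroups \cite[Thm. 1.7]{GW} gives $|\ga|\le C'\norm{\mu(\ga)}+C'$. Finally, since $\ga\in\pi_{[x,y]}(e)$ we have $|\ga|=d_{\mathsf w}(e,\ga)=d_{\mathsf w}(e,[x,y])$, and in the $\delta$-hyperbolic space $(\Ga,d_{\mathsf w})$ the distance from a basepoint to a bi-infinite geodesic between two ideal points differs from the boundary Gromov product by at most a constant depending only on $\delta$, i.e. $\abs{d_{\mathsf w}(e,[x,y])-(x|y)}\le c_\delta$ \cite[III.H]{BH}. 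Chaining the four coarse comparisons—Lemma \ref{lem.OP}; positivity of $\psi$ and $\psi\circ\op i$ on $\mathcal L_\Ga-\{0\}$; $\norm{\mu(\ga)}\asymp|\ga|$; $|\ga|\asymp(x|y)$—yields $c_1\ge1$ and $c_2>0$ with $c_1^{-1}(x|y)-c_2\le \psi(\mathcal G(\zeta(x),\zeta(y)))\le c_1(x|y)+c_2$.

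The only genuinely non-formal ingredient is the middle step, the uniform lower bound $\psi(\mu(\ga))\gtrsim\norm{\mu(\ga)}$; this is exactly where the Anosov hypothesis is used, through the strict positivity of every $\psi\in\dg$ on $\mathcal L_\Ga-\{0\}$ together with compactness of the limit directions. All remaining steps assemble facts already recorded in the paper or standard in the geometry of hyperbolic groups, so I anticipate no further obstacle.
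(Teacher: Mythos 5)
Your proposal is correct and follows essentially the same route as the paper: Lemma \ref{lem.OP} to replace $\cal G(\zeta(x),\zeta(y))$ by $\mu(\ga)+\op i\mu(\ga)$ for $\ga\in\pi_{[x,y]}(e)$, the standard hyperbolic-group comparison $|\,(x|y)-|\ga|\,|\le C$, and then the two-sided comparison $\psi(\mu(\ga))\asymp\norm{\mu(\ga)}\asymp|\ga|$ via positivity of $\psi$ (and $\psi\circ\op i$) on $\L_\Ga-\{0\}$, Lemma \ref{lem.WL}, and the quasi-isometric embedding property from \cite{GW}. The only cosmetic difference is that you justify the lower bound $\psi(\mu(\ga)+\op i\mu(\ga))\gtrsim\norm{\mu(\ga)}$ by an asymptotic-cone compactness argument with an additive constant, whereas the paper takes the minimum of $\psi$ over unit vectors of $\L_\Ga$ directly; both rest on the same positivity input from Theorem \ref{pop}(4).
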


Note that if  $\ga\in\pi_{[x,y]}(e)$ for $x\neq y$ in $\partial\Ga$, then
$ |  (x|y)-|\ga| | \le C$ for some uniform constant $C>0$ (cf. \cite{BH}).
Given this fact, Theorem \ref{GG} follows immediately from  Lemma \ref{lem.OP} and the following lemma:

\begin{lem} For any $\psi\in \dg$,  there exist  constants $C_\psi, c_\psi>0$ such that  for all $\ga\in\Ga$,
$$C_\psi^{-1}|\ga| -c_\psi \le \psi(\mu(\ga))\le C_\psi |\ga|.$$
\end{lem}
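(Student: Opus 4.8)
The plan is to treat the two inequalities separately. The upper bound $\psi(\mu(\ga))\le C_\psi|\ga|$ is immediate from Lemma~\ref{lem.WL}: that lemma provides $C'>0$ with $\norm{\mu(\ga)}\le C'|\ga|$, and then $\psi(\mu(\ga))\le \norm{\psi}\,\norm{\mu(\ga)}\le \norm{\psi}C'|\ga|$ by Cauchy--Schwarz.

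For the lower bound, the first step will be to show that $\psi\circ\mu$ is comparable to $\norm{\mu(\cdot)}$ on $\Ga$ outside a bounded set: there exist $\e_0>0$ and $R_0>0$ such that $\psi(\mu(\ga))\ge \e_0\norm{\mu(\ga)}$ whenever $\norm{\mu(\ga)}\ge R_0$. I would argue by contradiction: a failure produces a sequence $\ga_i\in\Ga$ with $\norm{\mu(\ga_i)}\to\infty$ and $\psi\big(\mu(\ga_i)/\norm{\mu(\ga_i)}\big)\to 0$; passing to a subsequence so that the unit vectors $\mu(\ga_i)/\norm{\mu(\ga_i)}$ converge to some unit vector $v\in\fa^+$, the description of $\L_\Ga$ as the asymptotic cone of $\mu(\Ga)$ (Definition~\ref{lc}) forces $v\in\L_\Ga-\{0\}$, whence $\psi(v)>0$ by Theorem~\ref{pop}(4); but continuity of $\psi$ gives $\psi(v)=0$, a contradiction.

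The second step will combine this with the fact that, $\Ga$ being Anosov, the orbit map $(\Ga,|\cdot|)\to X$ is a quasi-isometric embedding \cite[Thm.~1.7]{GW}, so that $\norm{\mu(\ga)}\ge\kappa^{-1}|\ga|-C$ for suitable $\kappa\ge 1$, $C\ge 0$ and all $\ga\in\Ga$. For $\ga$ with $\norm{\mu(\ga)}\ge R_0$ this yields $\psi(\mu(\ga))\ge \e_0\kappa^{-1}|\ga|-\e_0 C$. The remaining $\ga$, namely those with $\norm{\mu(\ga)}<R_0$, form a finite set since $\Ga$ is discrete and $\{g\in G:\norm{\mu(g)}\le R_0\}$ is compact; on this finite set $|\ga|$ is bounded and $\psi(\mu(\ga))$ is bounded below, so enlarging the additive constant $c_\psi$ absorbs them. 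Taking $C_\psi:=\max(\norm{\psi}C',\,\kappa/\e_0)$ and $c_\psi$ large enough then gives the stated double inequality.

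The only substantive point is the first step, and its crux is the strict positivity of $\psi$ on $\L_\Ga-\{0\}$ (Theorem~\ref{pop}(4)); this is precisely where the Anosov hypothesis is used beyond the quasi-isometric embedding. For a general Zariski dense $\Ga$ and $\psi\in\dg$ one only knows $\psi\ge\psi_\Ga\ge 0$ on $\L_\Ga$, which does not rule out $\psi$ vanishing on a ray in the boundary of $\L_\Ga$ and would break the comparison along such a direction; thus the finiteness input of the problem (an exceptional set absorbed into $c_\psi$) is unavoidable, but the exceptional set is genuinely bounded only under the Anosov assumption.
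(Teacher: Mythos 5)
Your argument is correct and takes essentially the same route as the paper: the upper bound via Lemma \ref{lem.WL}, and the lower bound by comparing $\psi(\mu(\ga))$ with $\norm{\mu(\ga)}$ through the positivity of $\psi$ on $\L_\Ga-\{0\}$ (Theorem \ref{pop}(4)) and then invoking the quasi-isometric embedding bound $\norm{\mu(\ga)}\ge C^{-1}|\ga|-C$ from \cite{GW}. Your compactness/contradiction step, with a finite exceptional set absorbed into $c_\psi$, is just a more careful rendering of the paper's direct comparison $d\norm{\mu(\ga)}\le\psi(\mu(\ga))$ (which tacitly uses that the directions of $\mu(\ga)$ are asymptotically in $\L_\Ga$), so the substance is identical.
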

\begin{proof}
Since $\psi>0$ on $\L_\Ga$, we have 
$$0<d:=\min_{\|u\|=1, u\in \L_\Ga} \psi(u)\le D:=\max_{\|u\|=1, u\in \L_\Ga} \psi(u)<\infty.$$
Hence $d\|\mu(\ga)\| \le \psi(\mu(\ga)) \le D \|\mu(\ga)\|$ for all $\ga \in \Ga$.
So the upper bound follows from Lemma \ref{lem.WL}, and the lower bound follows from
the well-known property of Anosov groups
that for some uniform $C>0$, $C^{-1}|\ga|-C\leq \norm{\mu(\ga)}$ for all $\ga\in \Ga$ \cite{GW}.
\end{proof}

\section{Conical points, divergence type and classification of $\PS$ measures}\label{class-s}
In this section, we show that for Anosov groups, the space of all PS-measures
on $\La$ is homeomorphic to $\dg$.

\subsection*{Conical limit points} 
 For a discrete subgroup $\Gamma<G$ and $x\in \Ga\ba G$, we mean by $\limsup xA^+M$ the set of all limit points $\lim\limits_{i\to\infty} xa_im_i$ where $a_i\to\infty$ in $A^+$ and $m_i\in M$.

\begin{Def}[Conical limit points] \label{def.cone}
\rm
We call $\xi\in\cal F$  a conical limit point of $\Gamma$ if $\limsup \Ga gA^+M\neq\emptyset$ for some $g\in G$ with $g^+=\xi$.
Equivalently, $\xi\in \cal F$ is conical if there exists $R>0$ such that
$\xi \in O_R(o, \ga_i o)$ for some sequence $\gamma_i\to \infty$ in $\Ga$.
We denote by $\La_c$ the set of all conical limit points of $\Gamma$.
\end{Def}

\begin{lem}\label{st}
 Let 
$\Ga$ be a Zariski dense discrete subgroup and
 $\mathfrak c\subset \inte\mathfrak a^+\cup\{0\}$ be a closed convex cone whose interior contains $\L_\Gamma-\{0\}$.
If $\gamma_i g_ia_i$ is a bounded  sequence
 where $g_i\in G$ is bounded, $\ga_i\in \Ga$ and  $a_i\to \infty$ in $A^+$,
then  
\be\label{ur} \log a_i\in \mathfrak c\quad \text{for all sufficiently large $i$}.\ee
In particular, for any $x\in \Gamma\ba G$,
$\limsup xA^+M$ coincides with the set
$$ \{\lim\limits_{i\to\infty}xa_im_i: m_i\in M, \;\log a_i\to \infty \text{ in } \mathfrak c \}.$$
\end{lem}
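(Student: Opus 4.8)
The plan is to reduce \eqref{ur} to a single Cartan-projection estimate and then feed it into a normalization argument based on the limit cone. Suppose $\gamma_i g_i a_i$ is bounded with $g_i$ bounded and $a_i\to\infty$ in $A^+$. I would first rewrite $\gamma_i^{-1}=g_i a_i(\gamma_i g_i a_i)^{-1}$, so that $\gamma_i^{-1}\in L\,a_i\,L$ for a fixed compact set $L\subset G$ containing all $g_i$ and all $(\gamma_i g_i a_i)^{-1}$. Since $a_i\in A^+$ we have $\mu(a_i)=\log a_i$, so Lemma \ref{com} furnishes a compact $Q\subset\mathfrak a$ with $\mu(\gamma_i^{-1})\in\log a_i+Q$; in particular $\|\mu(\gamma_i^{-1})\|\to\infty$, since $\|\log a_i\|\to\infty$.

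Next I would normalize. Put $t_i:=\|\log a_i\|^{-1}\to 0$ and pass to a subsequence so that $t_i\log a_i$ converges to a unit vector $v\in\mathfrak a^+$. Writing $\mu(\gamma_i^{-1})=\log a_i+q_i$ with $q_i\in Q$, we get $t_i\mu(\gamma_i^{-1})=t_i\log a_i+t_i q_i\to v$ while $t_i\|\mu(\gamma_i^{-1})\|\to 1$, so $v$ is an accumulation direction of $\{\|\mu(\gamma)\|^{-1}\mu(\gamma):\gamma\in\Gamma\}$; by the description of $\mathcal L_\Gamma$ as the asymptotic cone of $\mu(\Gamma)$ (Def. \ref{lc}) we conclude $v\in\mathcal L_\Gamma-\{0\}$, hence $v\in\inte\mathfrak c$ by hypothesis. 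Choosing $\varepsilon>0$ so that the ball of radius $\varepsilon$ about $v$ lies in $\inte\mathfrak c$ and using that $\mathfrak c$ is a cone, all positive dilates of that ball stay in $\mathfrak c$; since $t_i\log a_i$ lies in it for large $i$, we get $\log a_i=t_i^{-1}(t_i\log a_i)\in\mathfrak c$ for large $i$ along the subsequence. A routine contradiction argument — if $\log a_i\notin\mathfrak c$ for infinitely many $i$, run the above on that subsequence — upgrades this to ``$\log a_i\in\mathfrak c$ for all sufficiently large $i$'', which is \eqref{ur}.

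For the final assertion, the inclusion $\supseteq$ is immediate: $\mathfrak c\subset\mathfrak a^+$, so $\log a_i\to\infty$ in $\mathfrak c$ forces $a_i\to\infty$ in $A^+$ and hence $\lim xa_im_i\in\limsup xA^+M$. For $\subseteq$, I would fix $g\in G$ with $x=\Gamma g$ and let $y=\lim_i xa_im_i$ with $a_i\to\infty$ in $A^+$ and $m_i\in M$. Lifting this convergence in $\Gamma\backslash G$ gives $\gamma_i\in\Gamma$ with $\gamma_i g a_i m_i$ bounded in $G$; compactness of $M$ then makes $\gamma_i g a_i=(\gamma_i g a_i m_i)m_i^{-1}$ bounded, so \eqref{ur} applies with the constant sequence $g_i=g$ and gives $\log a_i\in\mathfrak c$ for all large $i$. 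Discarding finitely many terms, $\log a_i\to\infty$ in $\mathfrak c$ and $y=\lim_i xa_im_i$, so $y$ lies in the right-hand set, as required.

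The only steps needing genuine care will be the identification of the normalized limit $v$ with an element of $\mathcal L_\Gamma-\{0\}$ and the passage from a subsequence to all large $i$; everything else is bookkeeping with Lemma \ref{com} and the cone structure of $\mathfrak c$.
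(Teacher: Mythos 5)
Your proposal is correct and follows essentially the same route as the paper: both use Lemma \ref{com} (via writing $\gamma_i^{-1}$ as a product of $a_i$ with bounded factors) to show $\mu(\gamma_i^{-1})-\log a_i$ is bounded, then identify the normalized limit direction of $\log a_i$ with a point of the asymptotic cone $\mathcal L_\Gamma-\{0\}\subset\inte\mathfrak c$ and use the cone structure to conclude \eqref{ur}. Your extra details (the explicit subsequence upgrade and the verification of the ``in particular'' statement, which the paper leaves implicit) are fine and consistent with the paper's argument.
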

\begin{proof} 
 As $g_i$ and $\gamma_i g_ia_i$ are bounded sequences, the sequence
  $\mu(\gamma_i^{-1}) -\log a_i$ is also bounded by Lemma \ref{com}.
Hence $\lim\limits_{i\to\infty}\frac{\log a_i}{\norm{\log a_i}}$ belongs to the asymptotic cone of $\mu(\Ga)$, which is equal to $\cal L_\Ga$ \cite[Thm. 1.2]{Ben}.
Since $\cal L_\Ga-\{0\}\subset\op{int}\mathfrak c$,
it follows that $\log a_i\in \mathfrak c$ for all large $i$.
\end{proof}

We note that for $\mathfrak c\subset \inte\mathfrak a^+\cup\{0\}$ as above, there exists a constant $s=s(\mathfrak c)>0$ such that
for any sequence $v_i\to \infty$ in $\mathfrak c$,  
$$ \min_{\alpha\in \Pi} \liminf_{i\to \infty} \alpha( \tfrac{ v_i}{\|v_i\|}) \ge s.$$

We deduce from  Proposition \ref{prop.KLP}: recall $\cal E=\{[g]\in \Ga\ba G: g^+\in \La\}$.
\begin{Prop}\label{prop.con} For $\Ga$ Anosov, there exist a compact subset $\cal Q$ of $\cal E$
and a closed convex cone $\mathfrak c\subset \inte \mathfrak a^+\cup\{0\}$
such that for any $x\in \cal E$, there exists $\log a_i\to\infty$ in $\mathfrak c$ such that 
 $$xa_i\in \cal Q\quad \text{ for all $i\ge 1$}.$$
In particular, $\La=\La_c.$
\end{Prop}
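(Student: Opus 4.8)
The plan is to prove the first assertion by flowing a given point of $\cal E$ along a sequence in $A^+$ that shadows a Morse geodesic ray of $\Ga$ aimed at the relevant limit point, and then to read off $\La=\La_c$ from the construction. First I would fix, once and for all, a closed convex cone $\mathfrak c\subset\inte\fa^+\cup\{0\}$ with $\L_\Ga-\{0\}\subset\inte\mathfrak c$ (possible by Theorem \ref{pop}(1)), and let $R_1>0$ be the Morse constant of Proposition \ref{Morse}. Given $x=\Ga g\in\cal E$, set $\xi:=g^+\in\La$ and $x_\infty:=\zeta^{-1}(\xi)\in\partial\Ga$, and choose a geodesic ray $(\ga_0=e,\ga_1,\ga_2,\cdots)$ representing $x_\infty$. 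By Proposition \ref{Morse}(2) there is $h\in K$ with $h^+=\xi$ and $d(\ga_k o,hA^+o)\le R_1$ for all $k$, so one may write $\ga_k=ha_k c_k^{-1}$ with $a_k\in A^+$ and $c_k$ in the fixed compact set $\{c\in G:d(o,co)\le R_1\}$. Since $\Ga$ is Anosov, hence regular, $\mu(\ga_k)\to\infty$ regularly, and Lemma \ref{com} then gives $\|\mu(\ga_k)-\log a_k\|=O(1)$, so $a_k\to\infty$ regularly in $A^+$; moreover, applying Lemma \ref{st} to the bounded sequence $\ga_k^{-1}\cdot h\cdot a_k=c_k$ shows $\log a_k\in\mathfrak c$ for all large $k$.

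Next, the key algebraic step. Since $h^+=g^+$, we have $p:=h^{-1}g\in P=MAN^-$; decompose $p=ma_0n$ with $m\in M$, $a_0\in A$, $n\in N^-$. For $k$ large enough that $a^{(k)}:=a_0^{-1}a_k\in A^+$ (which holds because $\log a_k\to\infty$ regularly while $\log a_0$ is fixed), a short computation using $na_0^{-1}=a_0^{-1}n'$ with $n':=a_0na_0^{-1}\in N^-$ and that $m$ commutes with $A$ yields
\[
ga^{(k)}=h a_k\, m\,(a_k^{-1}n'a_k)=\ga_k\, c_k\, m\,(a_k^{-1}n'a_k).
\]
Since $a_k\to\infty$ regularly in $A^+$ and $n'\in N^-$, we have $a_k^{-1}n'a_k\to e$, so these factors eventually lie in a fixed compact neighbourhood of $e$. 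Hence $\Ga ga^{(k)}=\Ga c_k m(a_k^{-1}n'a_k)$ lies, for all large $k$, in the image $\cal Q_0\subset\Ga\ba G$ of the fixed compact set $\{c:d(o,co)\le R_1\}\cdot M\cdot\{v:d(o,vo)\le 1\}$, which depends only on $\Ga$ — here the compactness of $M$ absorbs the otherwise uncontrolled factor $m$. As $(ga^{(k)})^+=g^+=\xi\in\La$ we also have $\Ga ga^{(k)}\in\cal E$; so putting $\cal Q:=\cal Q_0\cap\cal E$ (compact, since $\cal E$ is closed, and contained in $\cal E$) and passing to the tail and re-indexing produces a sequence $a_i$ with $\log a_i\to\infty$ in $\mathfrak c$ (by Lemma \ref{st} again, applied to $\ga_{k(i)}^{-1}\cdot(ha_0)\cdot a_i=c_{k(i)}$) and $xa_i\in\cal Q$ for all $i\ge1$. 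This gives the first assertion, with $\mathfrak c$ and $\cal Q$ independent of $x$.

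Finally, for $\La=\La_c$: the inclusion $\La_c\subseteq\La$ is immediate, since $\xi\in O_R(o,\ga_i o)$ with $\ga_i\to\infty$ forces $\ga_i o\to\xi$, whence $\xi\in\La$ by Lemma \ref{lem.lim}. Conversely, given $\xi\in\La$, pick $g$ with $g^+=\xi$; the construction above produces $a^{(k)}\to\infty$ in $A^+$ with $\Ga ga^{(k)}$ confined to the compact set $\cal Q$, so some subsequence converges and $\limsup\Ga gA^+M\ne\emptyset$, i.e.\ $\xi\in\La_c$.

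The step I expect to be the main obstacle is controlling the error term $p=h^{-1}g$, which a priori has uncontrolled components along $M$, $A$ and $N^-$: one must choose the flow direction so that $\Ad(a_k^{-1})$ contracts the $N^-$-part while the $A$-part $a_0$ is absorbed by the shift $a_k\mapsto a_0^{-1}a_k$, leaving only the $M$-part, which is harmless precisely because $M$ is compact. The second delicate point, folded into the same argument, is to verify that the resulting compact set $\cal Q$ and cone $\mathfrak c$ depend only on $\Ga$ (through $R_1$ and $\L_\Ga$) and not on the point $x$; this is where the Morse property and $\L_\Ga\subset\inte\fa^+\cup\{0\}$ enter decisively.
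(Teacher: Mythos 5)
Your proof is correct and follows essentially the same route as the paper: fix a closed cone $\mathfrak c$ with $\L_\Ga-\{0\}\subset\inte\mathfrak c$, use the Morse property along a geodesic ray representing $\zeta^{-1}(g^+)$ to trap $\Ga g a_i$ in a fixed compact subset of $\cal E$, apply Lemma \ref{st} to place $\log a_i$ in $\mathfrak c$, and deduce $\La_c\subset\La$ from regularity together with Lemmas \ref{st2} and \ref{lem.lim}. The only difference is presentational: where the paper simply asserts that $g$ may be replaced by an element of $gP$ lying in $K$ (since points with the same forward endpoint have asymptotic $A^+$-orbits), you carry out that step explicitly via the Langlands decomposition $h^{-1}g=ma_0n$ and the contraction $a_k^{-1}n'a_k\to e$, at the harmless cost of a slightly larger compact set $\cal Q$.
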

\begin{proof} For $\Gamma$ Anosov, we have $\cal L_\Ga-\{0\}\subset\op{int}\mathfrak a^+ $ by Theorem \ref{pop}.
Hence we can find a closed convex cone $\mathfrak c\subset \inte\mathfrak \fa^+\cup\{0\}$ such that $\L_\Ga\subset \inte \mathfrak c \cup \{0\}$.
We first check that $\La_c\subset\La$.
Let $g^+\in\La_c$ for some $g\in G$.
Then there exists $\ga_i\in\Ga$ and $a_im_i\to\infty$ in $A^+M$ such that $\ga_i ga_im_i$ is bounded.
By Lemma \ref{st}, it follows that $\log a_i\to\infty$ in $\mathfrak c$. In particular, $a_i\to \infty$ regularly in $A^+$.
Hence by Lemma \ref{st2}, $ga_io\to g^+$ as $i\to\infty$.
Since $d(ga_io,\ga_i^{-1}o)$ is bounded, $\ga_i^{-1}o\to g^+$ as $i\to\infty$.
By Lemma \ref{lem.lim}, $g^+\in\La$.

Let $g^+=\xi\in\La$ and $z\in\partial\Ga$ be such that $\xi=\zeta(z)$.
Choose a geodesic ray $r=(\ga_0=e,\ga_1,\ga_2,\cdots)$ representing $z$.
Note that if $g^+=h^+$, then for any sequence $a_i\to\infty$ in $A^+$, there exists $b_i\in A^+$ such that $d(ga_io,hb_io)\le 1$ for all sufficiently large $i$.
Hence we may assume that $g\in K$ by replacing $g$ by an element of $gP$.
By Proposition \ref{prop.KLP}, $\ga_i o$ is contained in the $R_1$-neighborhood of $gA^+o$, with $R_1$ given therein.
Hence for some $a_i\to \infty$ in $A^+$, $\Gamma\ba \Gamma g a_i\in \cal Q$ where
 $\cal Q=\Gamma\ba \Gamma \{h\in G: d(o, ho)\le R_1\} \, \cap \cal E$. Hence $g^+\in \La_c$. Moreover, by Lemma \ref{st}, $\log a_i\in \mathfrak c$ for all sufficiently large $i$.
 This finishes the proof.
\end{proof}

\subsection*{Classification of $\PS$ measures on $\La$}
\begin{lemma}\label{lem.inj}
Let $\psi_i\in\fa^*$ and $\nu_{\psi_i}$ be a $(\Ga,\psi_i)$-$\PS$ measure for $i=1,2$.
If $\nu_{\psi_1}=\nu_{\psi_2}$, then $\psi_1=\psi_2$.
\end{lemma}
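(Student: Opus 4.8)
The plan is to use the conformality relation \eqref{gc0} to extract $\psi_1$ and $\psi_2$ from the measure $\nu:=\nu_{\psi_1}=\nu_{\psi_2}$ directly. Since $\Gamma$ is Zariski dense, by Lemma \ref{dense} the set of pairs $(y_\gamma^+,y_{\gamma^{-1}}^+)$ coming from loxodromic $\gamma\in\Gamma$ is dense in $\Lambda\times\Lambda$; in particular there are plenty of loxodromic elements, and their Jordan projections $\lambda(\Gamma)$ span $\fa$ (they generate a dense subgroup of $\fa$ by Benoist's theorem, cited in the introduction). So it suffices to show $\psi_1(\lambda(\gamma))=\psi_2(\lambda(\gamma))$ for every loxodromic $\gamma\in\Gamma$, since a linear form on $\fa$ is determined by its values on a spanning set.

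First I would fix a loxodromic $\gamma\in\Gamma$ with attracting fixed point $y_\gamma^+\in\Lambda$ and repelling fixed point $y_{\gamma^{-1}}^+\in\Lambda$. The key is to iterate the Radon–Nikodym cocycle: from \eqref{gc0} and the cocycle relation for $\beta$ in \eqref{eq.basic0}, for any $n\ge 1$ and $\xi\in\cal F$,
$$
\frac{d(\gamma^n)_*\nu}{d\nu}(\xi)=e^{\psi_i(\beta_\xi(e,\gamma^n))}=\exp\Big(\psi_i\big(\textstyle\sum_{k=0}^{n-1}\beta_{\gamma^{-k}\xi}(e,\gamma)\big)\Big)\quad(i=1,2).
$$
Now pick a small neighborhood $B$ of $y_\gamma^+$ in $\Lambda$ with $\nu(B)>0$, chosen so that its closure is disjoint from $y_{\gamma^{-1}}^+$; such $B$ exists and has positive measure because $\nu$ has full support $\Lambda$ (or at least $y_\gamma^+\in\supp\nu=\Lambda$ since $\Lambda$ is the unique minimal set). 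Since $y_\gamma^+$ is attracting and $y_{\gamma^{-1}}^+$ repelling, $\gamma^n B\subset B$ for all large $n$ and $\bigcap_n\gamma^n B=\{y_\gamma^+\}$; moreover, by Lemma \ref{lem.el}, $\beta_{y_\gamma^+}(e,\gamma)$ evaluated appropriately relates to $\lambda(\gamma)$ — precisely, $\beta_{y_\gamma^+}(p,\gamma p)=\lambda(\gamma)$. Because $\beta$ is continuous in all variables and $\gamma^{-k}\xi\to y_\gamma^+$ uniformly for $\xi$ in a small enough $B$, the Birkhoff-type sum $\frac1n\sum_{k=0}^{n-1}\beta_{\gamma^{-k}\xi}(e,\gamma)$ converges to $\beta_{y_\gamma^+}(e,\gamma)$, uniformly on $B$. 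Hence
$$
\frac1n\log\frac{d(\gamma^n)_*\nu}{d\nu}(\xi)\longrightarrow \psi_i\big(\beta_{y_\gamma^+}(e,\gamma)\big)
$$
uniformly for $\xi\in B$, for each $i=1,2$. Since the left-hand side does not depend on $i$ (it only involves $\nu$), we conclude $\psi_1(\beta_{y_\gamma^+}(e,\gamma))=\psi_2(\beta_{y_\gamma^+}(e,\gamma))$, and $\beta_{y_\gamma^+}(e,\gamma)$ is conjugate in $\fa$ to $\lambda(\gamma)$ up to the $\op i$-issue; tracking constants via Lemma \ref{lem.el} gives $(\psi_1-\psi_2)(\lambda(\gamma))=0$.

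To turn the pointwise/uniform Radon–Nikodym statement into the clean equality above I would integrate: for any Borel $B'\subset B$ with $\nu(B')>0$,
$$
\nu(\gamma^{-n}B')=\int_{B'}\frac{d(\gamma^n)_*\nu}{d\nu}\,d\nu,
$$
and sandwiching the integrand between $e^{n(\psi_i(\beta_{y_\gamma^+}(e,\gamma))\pm o(1))}$ gives $\frac1n\log\nu(\gamma^{-n}B')\to\psi_i(\beta_{y_\gamma^+}(e,\gamma))$; again the limit is independent of $i$. Running this over all loxodromic $\gamma$, and using that $\{\lambda(\gamma):\gamma\text{ loxodromic in }\Gamma\}$ spans $\fa$ (which holds since $\Gamma$ is Zariski dense, e.g. by \cite{Ben}), yields $\psi_1=\psi_2$ on $\fa$. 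The main obstacle is the uniform control of the Busemann cocycle sums near the attracting fixed point — i.e. verifying that $\gamma^{-k}\xi\to y_\gamma^+$ uniformly on a suitably chosen $B$ and that the telescoped sum genuinely behaves like $n\,\beta_{y_\gamma^+}(e,\gamma)+O(1)$; this is where one uses the dynamics of the loxodromic action on $\cal F$ (north–south type behavior on $\Lambda$, available since $\Lambda^{(2)}=\Lambda\times\Lambda-\text{diagonal}$ for Anosov $\Gamma$ by \eqref{anti}, though in fact for this lemma Zariski density alone suffices via the standard contraction properties of loxodromic elements).
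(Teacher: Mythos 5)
Your reduction to showing $(\psi_1-\psi_2)(\la(\ga))=0$ for loxodromic $\ga$ is fine, and the telescoping $\beta_\xi(e,\ga^n)=\sum_{k=0}^{n-1}\beta_{\ga^{-k}\xi}(e,\ga)$ is correct, but the dynamical core of your argument is wrong: the orbit appearing in that sum is the \emph{backward} orbit $\ga^{-k}\xi$, and for $\xi$ in a neighborhood $B$ of the attracting point $y_\ga^+$ these iterates do not converge to $y_\ga^+$. Indeed $y_\ga^+$ is the repelling fixed point of $\ga^{-1}$, so $\ga^{-k}\xi\to y_\ga^-$ for every $\xi\neq y_\ga^+$ in general position, and the convergence is never uniform on a neighborhood of $y_\ga^+$ (the fixed point itself does not move, and nearby points escape arbitrarily slowly). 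Consequently the claimed uniform convergence of $\frac1n\sum_{k}\beta_{\ga^{-k}\xi}(e,\ga)$ to $\beta_{y_\ga^+}(e,\ga)=\la(\ga)$ on $B$ fails, and with it the sandwich bound for $\nu(\ga^{-n}B')$; even pointwise (for $\xi\neq y_\ga^+$) the Ces\`aro limit is $\beta_{y_\ga^-}(e,\ga)=-\la(\ga^{-1})$, not $\la(\ga)$. The scheme can be repaired, e.g.\ by taking $B$ to be a small ball around $y_\ga^-$ (of positive measure since $\supp\nu=\La$ by minimality of $\La$), on which $\ga^{-k}\to y_\ga^-$ uniformly because $(y_\ga^-,y_\ga^+)\in\cal F^{(2)}$, yielding $(\psi_1-\psi_2)(\la(\ga^{-1}))=0$ and hence the conclusion as $\ga$ ranges over loxodromic elements; or by iterating $\ga^{-n}$ instead of $\ga^{n}$ on your $B$. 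But as written the key uniformity claim, and the identification of the limit, are both incorrect.

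Beyond this, the whole limiting construction is an unnecessary detour given the hypothesis. Since the two measures are \emph{equal} (not merely equivalent), \eqref{gc0} gives $e^{\psi_1(\beta_\xi(e,\ga))}=e^{\psi_2(\beta_\xi(e,\ga))}$ for $\nu$-a.e.\ $\xi$, hence for all $\xi\in\supp\nu=\La$ by continuity of the Busemann function; evaluating at the single point $\xi=y_\ga^+\in\La$ and invoking Lemma \ref{lem.el} gives $(\psi_1-\psi_2)(\la(\ga))=0$ immediately, with no iteration, no uniformity issues and no integration. This is the paper's proof, which then concludes because $\la(\Ga)$ spans $\fa$ (the limit cone $\L_\Ga$ has nonempty interior). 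Your iteration-and-integration argument is the sort of device one needs when only absolute continuity $\nu_{\psi_1}\ll\nu_{\psi_2}$ is assumed — the paper treats that situation separately in Lemma \ref{lem.sing}, via essential values — but for equal measures it only introduces the (currently faulty) dynamical step.
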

\begin{proof}
Suppose that $\nu_{\psi_1}=\nu_{\psi_2}$.
Then for all $\ga\in\Ga$ and $\xi\in\La$, we have
$$
{\psi_1(\beta_\xi(e,\ga))}={\psi_2(\beta_\xi(e,\ga))}.
$$ 
By setting $\xi=y_{\ga}$, we obtain $\la(\ga)\in\op{ker}(\psi_1-\psi_2)$ for all $\ga\in\Ga$, by Lemma \ref{lem.el}.
Hence $\cal L_\Ga\subset\op{ker}(\psi_1-\psi_2)$.
Since $\cal L_\Ga$ has nonempty interior \cite[Thm. 1.2]{Ben}, this implies that $\psi_1=\psi_2$. 
\end{proof}
\begin{remark}
\rm{When $\Ga$ is an Anosov subgroup, $\nu_{\psi_1}$ and $\nu_{\psi_2}$ are even mutually singular to each other whenever $\psi_1\neq\psi_2$ (See Theorem \ref{thm.MS} below).}
\end{remark}
We denote by $\cal S_\Ga$ the space of all $\PS$ measures on $\La$.
Recall that for $\psi\in\dg$, Quint constructed a $(\Ga,\psi)$-$\PS$ measure on $\La$ \cite{Quint2}.
In the Anosov case, such a measure is unique, which we denote
%{\color{blue}
%. We denote it by
%}
by $\nu_\psi$.
By Lemma \ref{lem.inj}, the map $\psi\mapsto\nu_\psi$ from $\dg$ to $\cal S_\Ga$ is injective.

\begin{theorem}\label{thm.bij}
For $\Gamma<G$ Anosov,
the map $\psi\mapsto \nu_\psi$ is a homeomorphism between $\dg$ and $\cal S_\Ga$.
\end{theorem}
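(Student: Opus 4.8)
The map $\psi\mapsto\nu_\psi$ is already known to be injective by Lemma \ref{lem.inj}, and it is continuous and surjective onto its image by Quint's construction combined with Proposition \ref{homeo}. What remains is essentially (i) that every $\PS$ measure on $\La$ arises as $\nu_\psi$ for some $\psi\in\dg$ — i.e.\ surjectivity onto all of $\cal S_\Ga$ — and (ii) that the inverse map $\nu_\psi\mapsto\psi$ is continuous, so that the bijection of (i) upgrades to a homeomorphism. First I would treat (i): given an arbitrary $(\Ga,\psi)$-$\PS$ measure $\nu$ on $\La$ for some linear form $\psi\in\fa^*$, I must show $\psi\in\dg$ and $\nu=\nu_\psi$. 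Since $\nu$ is a $(\Ga,\psi)$-conformal measure supported on $\La$, the conformality relation \eqref{gc0} evaluated at attracting fixed points $y_\ga^+$ together with Lemma \ref{lem.el} forces $\psi(\la(\ga))$ to govern a cocycle; a standard Poincaré-series / shadow-lemma argument (using the shadow lemma Lemma \ref{lem.shadow1} and the fact from Proposition \ref{prop.con} that $\La=\La_c$ consists entirely of conical points) shows that the existence of a finite conformal measure of exponent $\psi$ forces $\psi\ge\psi_\Ga$ on $\L_\Ga$, hence $\psi\in D_\Ga$, and that $\psi$ must be tangent to $\psi_\Ga$ at the growth direction determined by $\nu$ — i.e.\ $\psi\in\dg$ by Theorem \ref{pop}(3). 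Once $\psi\in\dg$ is established, uniqueness of the $(\Ga,\psi)$-$\PS$ measure (Theorem \ref{pop}(5)) gives $\nu=\nu_\psi$.

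For (ii), continuity of the inverse, I would argue as follows. Suppose $\nu_{\psi_n}\to\nu$ weakly in the space of probability measures on $\La$ with $\psi_n\in\dg$. First, $\nu$ is itself a $\PS$ measure: passing to the limit in the conformality relation \eqref{gc0} (the Busemann cocycle $\beta_\xi(e,\ga)$ is continuous in $\xi$, so $e^{\psi_n(\beta_\xi(e,\ga))}d\nu_{\psi_n}\to$ limit) shows that if $\psi_n\to\psi$ in $\fa^*$ then $\nu$ is $(\Ga,\psi)$-conformal. So it suffices to show the $\psi_n$ cannot escape to infinity and have a unique limit point. The $\psi_n$ are constrained to $\dg$; by Proposition \ref{homeo} the parametrization $\dg\simeq\inte(\bb P\L_\Ga)$ is a homeomorphism onto a space that is not compact, so I need to rule out $\psi_n$ drifting to the boundary of $\dg$. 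Here I would use that $\nu_{\psi_n}$ has total mass $1$ while, for a sequence $\psi_n$ tending to the boundary, the conical-point / shadow-lemma estimates (Proposition \ref{prop.con}, Lemma \ref{lem.shadow1}) force a Dirac-type concentration or mass escape of $\nu_{\psi_n}$, contradicting $\nu_{\psi_n}\to\nu$ with $\nu$ a probability measure supported on all of $\La$ (recall $\nu$ is $\Ga$-ergodic, hence fully supported on $\La$, by minimality of $\La$). Thus $(\psi_n)$ is precompact in $\dg$; any limit point $\psi$ satisfies $\nu=\nu_\psi$ by (i), and injectivity (Lemma \ref{lem.inj}) pins down $\psi$ uniquely, so $\psi_n\to\psi$.

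\textbf{Main obstacle.} The subtle point is step (i): showing that an \emph{arbitrary} $\PS$ measure — a priori only assumed $(\Ga,\psi)$-conformal and supported on $\La$ for some unspecified $\psi\in\fa^*$ — must have $\psi\in\dg$. The classical rank-one argument (Patterson–Sullivan, Hopf–Tsuji–Sullivan) shows a conformal density of exponent $\psi$ forces $\psi$ to equal the critical exponent via a shadow-lemma / Borel–Cantelli dichotomy along conical points; in higher rank one must run this argument direction-by-direction in $\fa^+$, controlling the Busemann cocycle by $\underline a$ via Lemma \ref{lem.shadow1} and Proposition \ref{prop.con}, and then invoke concavity of $\psi_\Ga$ (Theorem \ref{growth}) and the tangency characterization of $\dg$ (Theorem \ref{pop}(3)) to conclude $\psi\in\dg$. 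Getting the bookkeeping right — in particular that $\nu$ "sees" only the growth direction $v\in\inte\L_\Ga$ at which $\psi$ is tangent, and that finiteness of $\nu$ really does exclude $\psi$ lying strictly above $\psi_\Ga$ everywhere on $\L_\Ga$ — is where the real work lies; the compactness/continuity part (ii) is then comparatively routine given the machinery already assembled.
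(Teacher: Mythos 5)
Your overall route is the same as the paper's: injectivity from Lemma \ref{lem.inj}; surjectivity by showing an arbitrary $(\Ga,\psi)$-conformal measure on $\La$ forces $\psi\ge\psi_\Ga$ and then tangency to $\psi_\Ga$ via a Poincar\'e-series dichotomy along conical points (the paper quotes Quint for $\psi\ge\psi_\Ga$, and gets tangency by playing Lemma \ref{lem.dom} against the divergence statement Lemma \ref{lem.DT}, which rests on $\La=\La_c$ and the shadow lemma), followed by uniqueness from Theorem \ref{pop}(5); continuity of $\psi\mapsto\nu_\psi$ by uniqueness of weak limits; and continuity of the inverse by precompactness of the forms plus injectivity. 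So in outline your proposal is correct and faithful to the paper.

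The one step you should firm up is the precompactness of $(\psi_n)$ in part (ii). As stated, the heuristic ``the shadow estimates force a Dirac-type concentration or mass escape, contradicting that the limit is a probability measure supported on all of $\La$'' does not work: $\La$ is compact, so no mass can escape, and a weak limit of fully supported probability measures need not fail to be fully supported, so support considerations alone give no contradiction. What actually rules out $\norm{\psi_n}\to\infty$ is quantitative: the paper picks finitely many $\ga_1,\dots,\ga_k\in\Ga$ whose Cartan projections $\mu(\ga_\ell)$ form a basis of $\fa$, and tests the measures against the shadows $O_r(o,\ga_\ell o)$. By the shadow lemma (Lemma \ref{lem.SH2}), $\nu_{\psi_n}(O_r(o,\ga_\ell o))$ is comparable to $e^{-\psi_n(\mu(\ga_\ell))}$, so the convergence $\nu_{\psi_n}(O_r(o,\ga_\ell o))\to\nu_\psi(O_r(o,\ga_\ell o))>0$ forces $\{(\psi_n-\psi)(\mu(\ga_\ell))\}_n$ to be bounded for each $\ell$, hence $(\psi_n)$ is relatively compact in $\fa^*$. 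After that your endgame coincides with the paper's: any limit $\phi$ of $(\psi_n)$ makes $\nu_\psi$ a $(\Ga,\phi)$-PS measure, so $\phi\in\dg$ and $\nu_\phi=\nu_\psi$ by surjectivity/uniqueness, and Lemma \ref{lem.inj} gives $\phi=\psi$, whence $\psi_n\to\psi$. With this replacement for the concentration argument, your proof matches the paper's.
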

In the rank one case, there exists a unique
Patterson-Sullivan measure on $\La$ and its dimension is given by
the critical exponent of $\Ga$. The above theorem generalizes such phenomenon.

To prove that the map $\psi\mapsto\nu_{\psi}$ is surjective, 
we need the following shadow lemma.
 It was first presented in \cite[Thm. 3.3]{Alb} and then in \cite[Thm. 8.2]{Quint2} in slightly different forms.
\begin{lemma}[Size of shadow]  \label{lem.SH2}
Let $\Ga<G$ be a Zariski dense discrete subgroup. For $\psi\in\fa^*$, let $\nu_\psi$ be a 
 $(\Ga,\psi)$-conformal measure on $\cal F$. Then
 \begin{enumerate}
 \item for some $R=R(\nu_\psi)>0$,  we have $ c:=\inf_{\ga\in\Ga}\nu_\psi (O_R(\ga o,o))>0$;
 \item  for all $r\ge R$ and for all $\ga\in\Ga$,
$$
c \cdot e^{-\|\psi\| \kappa r}  e^{-\psi(\mu(\ga))}\leq \nu_\psi(O_r(o,\ga o))\leq  e^{\|\psi\| \kappa r}  e^{-\psi(\mu(\ga))}
$$
where $\kappa>0$ is a constant given in Lemma \ref{lem.shadow1}.
In particular, if $\La=\La_c$ in addition, then $\nu_\psi$ is atom-free on $\La$.
\end{enumerate}
\end{lemma}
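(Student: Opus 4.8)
The plan is to prove the three assertions in the stated order. Part~(1) is the base case of Sullivan's shadow lemma (as remarked above, essentially \cite{Alb}, \cite{Quint2}, but I would give the self-contained argument below); part~(2) will follow from~(1) by combining it with the comparison Lemma~\ref{lem.shadow1} and the conformality relation \eqref{gc0}; and the atom-freeness is then a short consequence of~(2) together with the definition of a conical point.

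For~(1), I would argue by contradiction. If the conclusion failed for every $R$, one could choose, for each $n$, some $\ga_n\in\Ga$ with $\nu_\psi(O_n(\ga_n o,o))<1/n$. Because $O_n(\ga o,o)=\cal F$ whenever $n>d(o,\ga o)$ (every ray defining this shadow starts at $\ga o\in B(o,n)$), the sequence $\ga_n$ must leave every finite subset of $\Ga$; passing to a subsequence, $\ga_n o\to\xi$ with $\xi\in\La$ by Lemma~\ref{lem.lim}. Lemma~\ref{lem.SC} then gives $O_R(\xi,o)\subset O_n(\ga_n o,o)$ for all large $n$ (with $R$ fixed), so $\nu_\psi(O_R(\xi,o))=0$ for every $R>0$; letting $R\to\infty$, and using that $\bigcup_{R>0}O_R(\xi,o)=\{\eta\in\cal F:(\eta,\xi)\in\cal F^{(2)}\}$ (a single $G$-orbit identity), I get that $\nu_\psi$ is carried by the closed set $\{\eta:(\eta,\xi)\notin\cal F^{(2)}\}$. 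On the other hand $\supp\nu_\psi$ is closed and $\Ga$-invariant, hence contains the unique minimal set $\La$, so $(\eta,\xi)\notin\cal F^{(2)}$ for every $\eta\in\La$. Since $\xi\in\La$ and $\La$ is minimal, the set of all $\zeta\in\cal F$ not in general position with any point of $\La$ is nonempty, closed and $\Ga$-invariant (using the $G$-equivariance of $\cal F^{(2)}$), hence contains $\La$; thus $\La\times\La$ misses $\cal F^{(2)}$ entirely, contradicting the Zariski density of $\La$ in $\cal F$, since the complement of $\cal F^{(2)}$ is a proper subvariety of the irreducible variety $\cal F\times\cal F$.

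For~(2), I would use the $G$-equivariance $O_r(o,\ga o)=\ga\,O_r(\ga^{-1}o,o)$ of shadows together with the change of variables furnished by \eqref{gc0}, which gives
\[\nu_\psi(O_r(o,\ga o))=\int_{O_r(\ga^{-1}o,o)}e^{\psi(\beta_\eta(e,\ga^{-1}))}\,d\nu_\psi(\eta).\]
On this shadow Lemma~\ref{lem.shadow1} pins $\beta_\eta(\ga^{-1}o,o)$ to within $\kappa r$ of $\underline a(\ga^{-1}o,o)=\mu(\ga)$, and since $\beta_\eta(e,\ga^{-1})=-\beta_\eta(\ga^{-1}o,o)$, the integrand lies between $e^{-\psi(\mu(\ga))-\|\psi\|\kappa r}$ and $e^{-\psi(\mu(\ga))+\|\psi\|\kappa r}$ on the domain of integration. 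Bounding $\nu_\psi(O_r(\ga^{-1}o,o))\le 1$ yields the upper estimate; for the lower one, monotonicity of shadows in the radius and part~(1) applied to $\ga^{-1}\in\Ga$ give $\nu_\psi(O_r(\ga^{-1}o,o))\ge c$ once $r\ge R$. For the atom-freeness, if $\La=\La_c$ and some $\xi\in\La$ carried mass $m>0$, I would pick $R_\xi\ge R$ and $\ga_i\to\infty$ with $\xi\in O_{R_\xi}(o,\ga_i o)$ by conicality, so that~(2) gives $m\le e^{\|\psi\|\kappa R_\xi}e^{-\psi(\mu(\ga_i))}$; as $\mu(\ga_i)/\|\mu(\ga_i)\|$ accumulates on the unit vectors of $\L_\Ga$, on which $\psi$ is positive in the cases relevant here (e.g.\ $\psi\in\dg$ by Theorem~\ref{pop}(4)), one has $\psi(\mu(\ga_i))\to+\infty$, a contradiction.

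I expect the crux to be part~(1): converting the local vanishing ``$\nu_\psi(O_R(\xi,o))=0$ for all $R$'' into a global contradiction. The mechanism is the interplay between minimality of $\La$ (which upgrades ``$\xi$ is in general position with no point of $\La$'' to ``no two points of $\La$ are in general position'') and the Zariski density of $\La$ in $\cal F$ (which is incompatible with $\La\times\La$ avoiding the open orbit $\cal F^{(2)}$). Parts~(2) and the atom-freeness are then bookkeeping with Lemma~\ref{lem.shadow1}, the cocycle identities for $\beta$, and \eqref{gc0}.
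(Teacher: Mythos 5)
Your parts (2) and the atom-freeness run along the same lines as the paper's own proof: the change of variables $\nu_\psi(O_r(o,\ga o))=\int_{O_r(\ga^{-1}o,o)}e^{-\psi(\beta_\xi(\ga^{-1}o,o))}\,d\nu_\psi(\xi)$, Lemma \ref{lem.shadow1}, and part (1) applied to $\ga^{-1}$ give exactly the stated bounds, and conicality then rules out atoms (note that you hedge the divergence $\psi(\mu(\ga_i))\to+\infty$ to the case $\psi\in\dg$, while the statement concerns an arbitrary conformal measure; this is a minor point, as the paper's own proof asserts the same divergence).

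The genuine gap is in part (1), at the stated level of generality. Your argument begins with ``passing to a subsequence, $\ga_n o\to\xi\in\La$ by Lemma \ref{lem.lim}.'' In this paper, convergence of $\ga_n o$ to a point of $\cal F$ means by definition that $\ga_n\to\infty$ \emph{regularly} (every simple root of $\mu(\ga_n)$ tends to infinity) together with convergence of $\kappa_1(\ga_n)^+$, and both Lemma \ref{lem.lim} and Lemma \ref{lem.SC} are only available under that hypothesis. The lemma, however, is stated (and later used, e.g.\ in Lemma \ref{lem.DT}) for an arbitrary Zariski dense discrete subgroup, and such a subgroup need not admit any regularly divergent subsequence of a given sequence going to infinity: take for instance $\Ga_1\times\Ga_2<G_1\times G_2$ and the sequence $(\ga_n,e)$. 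So the step producing the boundary point $\xi$, on which your whole contradiction rests, fails in general; it is fine for Anosov groups, which are regular by Theorem \ref{pop}(1), but that is not the stated hypothesis. The paper's proof is built precisely to avoid this: writing $\ga_i=k_ia_i\ell_i\in KA^+K$ with $\ell_i\to\ell_0$, it shows by a direct Iwasawa computation --- using only that $a_iha_i^{-1}$ stays bounded for $h\in N^+$ and $a_i\in A^+$, and that the radii $R_i$ tend to infinity --- that $\limsup_i O_{R_i}(a_i^{-1}o,o)\supset N^+e^+$, whence $\nu_\psi(\ell_0^{-1}N^+e^+)=0$, contradicting the Zariski density of $\La\subset\supp\nu_\psi$ because $N^+e^+$ is Zariski open. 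Your endgame (identifying $\bigcup_{R>0}O_R(\xi,o)$ with the set of points in general position with $\xi$, then using minimality of $\La$ and Zariski density) is correct once a boundary limit $\xi$ exists, but to prove the lemma as stated you need the paper's regularity-free argument or a substitute for it.
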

\begin{proof}
Suppose that there exist sequences $R_i\to\infty$ and $\ga_i\in\Ga$ such that for all $i\ge 1$, $\nu_\psi(O_{R_i}(\ga_i^{-1}o,o))<1/i$.
Write $\ga_i=k_ia_i\ell_i\in KA^+K$ with $k_i, \ell_i\in K$
and $a_i\in A^+$.
Passing to a subsequence, we may assume that $\ell_i\to\ell_0$ as $i\to\infty$.

We claim that 
\be\label{limsup} \limsup O_{R_i}(a_i^{-1}o,o)\supset N^+e^+.\ee
Fix an arbitrary $h\in N^+$ and $a_ih=k_ib_in_i\in KAN$ be the Iwasawa decomposition of $a_ih$. Then
the Iwasawa decomposition of $a_iha_i^{-1}$ is given by
$$
a_iha_i^{-1}=k_i(b_ia_i^{-1})(a_in_ia_i^{-1})\in KAN.
$$
Since $a_iha_i^{-1}$ is uniformly bounded, both $b_ia_i^{-1}$ and $a_in_ia_i^{-1}$ are also uniformly bounded for all $i$.
It follows that the sequence $n_i\in N$ is uniformly bounded as well.
To prove the claim, we observe that for all $i\ge 1$,
\begin{align*}
& h^+\in O_{R_i}(a_i^{-1}o,o)\\
\Leftrightarrow &\,a_ih^+\in O_{R_i}(o,a_io)\\
\Leftrightarrow &\,k_i^+\in O_{R_i}(o,a_io)\\
\Leftrightarrow &\,k_iA^+o \cap B(a_io,R_i)\neq\emptyset\\
\Leftrightarrow &\,a_i^{-1}k_iA^+ o\cap  B(o,R_i)\neq\emptyset\\
\Leftrightarrow &\,hn_i^{-1}b_i^{-1}A^+o\cap B(o,R_i)\neq\emptyset.
\end{align*}
On the other hand, 
by the uniform boundedness of $n_i$ and $b_i^{-1}a_i$ and since $R_i\to \infty$, we have $hn_i^{-1}(b_i^{-1}a_i)o\in B(o,R_i)$ for all sufficiently large $i$.
Hence the claim \eqref{limsup} follows.

Since $O_{R_i}(\ga_i^{-1}o,o)=\ell_i^{-1}O_{R_i}(a_i^{-1}o,o)$, the hypothesis 
$\nu_\psi(O_{R_i}(\ga_i^{-1}o,o))<1/i$ now implies that $\nu_\psi( \ell_0^{-1}N^+e^+)=0$ by Claim \eqref{limsup}.
Since $N^+e^+$ is Zariski open in $\cal F$, this contradicts the fact that $\La\subset \op{supp}\nu_\psi$ is Zariski dense in $\cal F$.
This proves the claim (1).

Now let $\ga\in\Ga$ and $r>R$ be arbitrary.
By Lemma \ref{lem.shadow1}, for all $\xi\in O_r(\ga^{-1} o,o)$, we have
$$
\norm{\beta_\xi(\ga^{-1}o, o)-\mu(\ga)}\leq\kappa r.
$$
Since
\begin{align*}
\nu_\psi(O_r(o,\ga o))=\int_{O_r(\ga^{-1}o,o)}e^{-\psi(\beta_\xi(\ga^{-1} o, o))}\,d\nu_\psi (\xi),
\end{align*}
(2) then follows from (1).

Suppose that $\La=\La_c$.
Then for any $\xi\in \La$, there exist $r>0$ and a sequence $\ga_i\to \infty$ in $\Gamma$
such that $\xi\in \bigcap_{i} O_r(o, \ga_i o)$. 
Since  $\nu_\psi(\xi)\le \nu_\psi(O_r(o,\ga_i o))
\le C e^{-\psi (\mu(\ga_i))}$ and $\psi(\mu(\ga_i))\to +\infty$ as $i\to \infty$,
$\nu_{\psi}(\xi)=0$. Hence the second claim follows. \end{proof}

\begin{lemma}\label{lem.dom}\cite[Lem. III.1.3]{Quint1}
Let $\theta : \fa\to\bb R$ be a continuous function satisfying $\theta(tu)=t\theta(u)$ for all $t\geq 0$ and $u\in\fa$.
If $\theta(u)>\psi_\Ga(u)$ for all $u\in\fa-\{0\}$, then
$$
\sum_{\ga\in\Ga}e^{-\theta(\mu(\ga))}<\infty.
$$

If there exists $u\in \fa$ such that $\theta(u) <\psi_\Ga(u)$, then $
\sum_{\ga\in\Ga}e^{-\theta(\mu(\ga))}=\infty.
$
\end{lemma}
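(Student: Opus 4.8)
The plan is to reduce both statements to the definition of the growth indicator function $\psi_\Gamma$ via its Dirichlet-series characterization, and to exploit the homogeneity of $\theta$. First I would establish the convergence half. Since $\theta$ is continuous and $1$-homogeneous, and since $\theta(u)>\psi_\Gamma(u)$ holds for every unit vector $u\in\fa^+$ (outside $\fa^+$ we have $\psi_\Gamma=-\infty$, so the hypothesis there is vacuous but $\mu(\Gamma)\subset\fa^+$ anyway), a compactness argument on the unit sphere $S=\{u\in\fa^+:\|u\|=1\}$ gives a continuous choice of open cones $\mathcal C_u\ni u$ and constants $t_u$ with $t_u<\theta(v)/\|v\|$ for all $v\in\mathcal C_u$ and $t_u>\tau_{\mathcal C_u}$, where $\tau_{\mathcal C_u}$ is the abscissa of convergence of $\sum_{\ga:\mu(\ga)\in\mathcal C_u}e^{-t\|\mu(\ga)\|}$; this last inequality is possible precisely because $\psi_\Gamma(u)=\inf_{\mathcal C\ni u}\tau_{\mathcal C}<\theta(u)$. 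Extract a finite subcover $\mathcal C_{u_1},\dots,\mathcal C_{u_k}$ of $S$, whose positive cones cover $\fa^+-\{0\}\supset\mu(\Gamma)-\{0\}$. On each piece, $e^{-\theta(\mu(\ga))}=e^{-\|\mu(\ga)\|\,\theta(\mu(\ga)/\|\mu(\ga)\|)}\le e^{-t_{u_j}\|\mu(\ga)\|}$, and the latter series converges by the choice $t_{u_j}>\tau_{\mathcal C_{u_j}}$. Summing the finitely many pieces (plus the bounded contribution of $\ga$ with $\mu(\ga)$ near $0$) gives $\sum_{\ga\in\Gamma}e^{-\theta(\mu(\ga))}<\infty$.

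For the divergence half, suppose $\theta(u)<\psi_\Gamma(u)$ for some $u\in\fa$; necessarily $u\in\L_\Gamma-\{0\}$ since $\psi_\Gamma(u)>-\infty$. By upper semicontinuity of $\psi_\Gamma$ and continuity of $\theta$ together with $1$-homogeneity, I can find an open cone $\mathcal C\ni u$ and $\epsilon>0$ such that $\theta(v)\le(1-\epsilon)\psi_\Gamma(v)$, equivalently $\theta(v)/\|v\|\le t$ for some $t$ strictly below $\psi_\Gamma(u)/\|u\|$, for all $v\in\mathcal C$. Shrinking $\mathcal C$ further and using the definition $\psi_\Gamma(u)=\inf_{\mathcal C'\ni u}\tau_{\mathcal C'}$, I get that $\tau_{\mathcal C}$ is still strictly larger than this $t$ (here one uses that one may take $\mathcal C$ in the infimum arbitrarily small while $\tau_{\mathcal C}$ stays $\ge\psi_\Gamma(u)$ up to $\epsilon$). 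Then $\sum_{\ga:\mu(\ga)\in\mathcal C}e^{-\theta(\mu(\ga))}\ge\sum_{\ga:\mu(\ga)\in\mathcal C}e^{-t\|\mu(\ga)\|}=\infty$ because $t<\tau_{\mathcal C}$, and a fortiori the full sum over $\Gamma$ diverges.

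The main obstacle is bookkeeping the passage between the homogeneous linear-form/cone estimates on $\theta$ and the abscissas of convergence $\tau_{\mathcal C}$: one must be careful that the open cones chosen small enough to control $\theta$ are also small enough (resp. not too small) to read off $\psi_\Gamma$ as the relevant infimum, and that the comparison of $\theta(v)$ with $t\|v\|$ on a cone is uniform. This is exactly the content of Quint's proof in \cite{Quint1}, so rather than reproduce it I would simply cite \cite[Lem. III.1.3]{Quint1}; the sketch above indicates why the statement holds and is self-contained modulo the definition of $\psi_\Gamma$ recalled in Definition \ref{def.GI}.
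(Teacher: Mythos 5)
Your proposal is fine: the paper itself gives no proof of this lemma and simply cites \cite[Lem.\ III.1.3]{Quint1}, which is exactly what you do, and your sketch of Quint's cone-covering argument (compactness of the unit sphere of $\fa^+$ for convergence, a single small cone around $u$ with $\tau_{\cal C}\ge\psi_\Ga(u)>t>\theta|_{\cal C}/\|\cdot\|$ for divergence) is correct. The only cosmetic point is in the divergence half: you do not need ``$\tau_{\cal C}\ge\psi_\Ga(u)$ up to $\epsilon$''; every open cone containing $u$ satisfies $\tau_{\cal C}\ge\psi_\Ga(u)$ exactly, by the definition of $\psi_\Ga$ as an infimum.
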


\begin{lemma} \label{lem.DT} Suppose that $\Gamma$ is 
Zariski dense. Let $\psi\in \fa^*$.
 If there exists a $(\Gamma, \psi)$-conformal measure $\nu_\psi$ such that $\nu_\psi(\Lambda_c)>0$, then 
$$\sum_{\ga\in\Ga}e^{-\psi(\mu(\ga))}=\infty.$$
Moreover, $\psi(v)=\psi_\Ga(v)$ for some non-zero $v\in \L_\Ga$.
\end{lemma}

\begin{proof} Note that $\La_c$ is an increasing union $\bigcup_{N=1}^\infty \La_N$, where
$$
\La_N:=
\{ \xi\in\La :\text{ there exists }\ga_i\to\infty\text{ in }\Ga\text{ such that }\xi\in O_N(o,\ga_i o)\}.
$$
Hence $\nu_\psi(\La_{N_0})>0$ for some $N_0\ge 1$. Fix $N \ge \max \{R(\nu), N_0\}$,
 and set $C':=  e^{\|\psi\| \kappa N} $ where $R(\nu)$ is as in Lemma \ref{lem.SH2}.
 Observe that for any $m\ge 1$,
$$
\La_N\subset\bigcup_{\ga\in\Ga, d(o,\ga o)>m} O_N(o,\ga o).
$$
Hence
\begin{align*}
0<\nu_\psi(\La_N)\leq \sum_{d(o,\ga o)>m}\nu_\psi(O_N(o,\ga o))\leq C'\sum_{d(o,\ga o)>m}e^{-\psi(\mu(\ga))}.
\end{align*}
Since $m>1$ is arbitrary, the first claim follows.

We note that $\psi\geq\psi_\Ga$ by \cite[Thm. 8.1]{Quint2}. 
%We need to show $\psi(u)=\psi_\Ga(u)$ for some $u\in\op{int}\cal L_\Ga$.
%By Theorem \ref{pop}, it suffices to show that $\psi(u)=\psi_\Ga(u)$ for some $u\in\cal L_\Ga-\{0\}$.
%Suppose not.
If $\psi(u)>\psi_\Ga(u)$ for all $u\in\L_\Ga-\{0\}$, and hence for all $u\in \fa-\{0\}$, 
then Lemma \ref{lem.dom} implies $
\sum_{\ga\in\Ga}e^{-\psi(\mu(\ga))}<\infty.
$
This is a contradiction by the first claim.
\end{proof}

When $\Ga$ is Anosov, $\La=\La_c$ and hence by Theorem \ref{pop}(5),
\begin{cor} 
If $\Ga$ is Anosov, then  $\sum_{\ga\in\Ga}e^{-\psi(\mu(\ga))}=\infty$
for any $\psi\in\dg$.
\end{cor}

\noindent{\bf Proof of Theorem \ref{thm.bij}:} In order to prove surjectivity,
suppose that there exists a $(\Ga,\psi)$-$\PS$ measure, say $\nu_\psi$, for $\psi\in\fa^*$.
By Lemma \ref{lem.DT},
$\psi(v)=\psi_\Ga(v)$ for some non-zero $v\in \L_\Ga$. By Theorem \ref{pop}(1),
it follows that $\psi\in D_\Ga^\star$, proving surjectivity.

If $\psi_i\to \psi$ in $\dg$, then any weak-limit of $\nu_{\psi_i}$ is a $(\Gamma, \psi)$-PS measure.
By the uniqueness of $(\Ga,\psi)$-conformal measure, $\nu_{\psi_i}$ converges to $\nu_\psi$ as $i\to \infty$. Hence the map $\psi\mapsto \nu_\psi$ is continuous.
Now suppose $\nu_{\psi_i}\to \nu_{\psi}$ where $\psi_i, \psi\in \dg$.
Since the closed cone generated by $\mu(\Ga)$ is equal to $\L_\Ga$ that has non-empty interior, we can find $\ga_1, \cdots,
\ga_k\in \Ga$ such that $\mu(\ga_i)$'s form a basis of $\fa$. For each $\ga_\ell$ and $r>0$,
we have $\nu_{\psi_i}(O_r(o, \ga_\ell o)) \to \nu_{\psi}(O_r(o, \ga_\ell o))$. Hence
$\{ (\psi_i-\psi)(\mu(\ga_\ell)):i=1,2,\cdots \} $ is bounded by Lemma \ref{lem.SH2}.
It follows that $\{\psi_i:i=1,2, \cdots\}$ is a relatively compact subset of $\fa^*$.
Suppose that $\phi\in \fa^*$ is a limit of $\{\psi_i\}$. By passing to a subsequence, assume that $\psi_i\to \phi\in \fa^*$. 
Since $\nu_{\psi_i}\to \nu_{\psi}$, it follows that $\nu_\psi$ is a $(\Gamma, \phi)$-PS measure.
Since $\psi\mapsto \nu_\psi$ is a bijection between $\dg$ and $\cal S_\Ga$, 
we have  $\phi\in \dg$ and $\nu_{\phi}=\nu_{\psi}$.
 By Lemma \ref{lem.inj}, we have $\phi=\psi$. Since every limit of the sequence $\psi_i$ is
$\psi$, it follows that $\psi_i$ converges to $\psi$ as $i\to \infty$. This finishes the proof.

\medskip

\noindent{\bf Critical exponents.}
Recall the definition of $\mathsf O_\Ga$ from \eqref{ooo}. For each unit vector $w\in \mathsf O_\Ga$,
consider the Poincare series
 $$\mathcal P_w(s, p):=\sum_{\ga\in \Ga} 
 e^{-s \langle w, \underline{a}(p, \ga p) \rangle} $$
Define the critical exponent $\delta_w$ to be the abscissa of convergence of $\mathcal  P_w(s, p)$, which is independent of $p\in G/K$:
 \be\label{critical} \delta_w:=\inf\{s\in \br :  \mathcal P_w(s, p) <\infty\}.\ee

\begin{cor} Let $w\in \mathsf O_\Ga$ be a unit vector.
\begin{enumerate}
\item   For $w=\tfrac{\nabla \psi_\Gamma (u)}{\| \nabla \psi_\Gamma (u)\|} \in \mathsf O_\Ga$ for
$u\in \inte\L_\G$ with $\|u\|=1$, we have
$$\delta_w=\|\nabla \psi_\Gamma (u)\| .$$
 In particular, $w\mapsto \delta_w$ is analytic on $\{w\in \mathsf O_\Ga: \|w\|=1\}$.

\item For any $p\in G/K$, $\mathcal P_w(\delta_w, p)=\infty$.

\end{enumerate}
\end{cor}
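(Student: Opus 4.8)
The plan is to reduce everything to the homeomorphism of Proposition~\ref{homeo} together with the shadow lemma (Lemma~\ref{lem.SH2}) and the divergence-type statement (Lemma~\ref{lem.DT}). For part~(1), fix a unit vector $u\in\inte\L_\Ga$ and set $w=\nabla\psi_\Ga(u)/\|\nabla\psi_\Ga(u)\|$, so that $\langle w,\cdot\rangle = \|\nabla\psi_\Ga(u)\|^{-1}D_u\psi_\Ga$. Writing $\psi_u:=D_u\psi_\Ga$, Proposition~\ref{homeo} tells us $\psi_u\in\dg$, and by definition $\langle w, v\rangle = \|\nabla\psi_\Ga(u)\|^{-1}\psi_u(v)$ for all $v\in\fa$. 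Since $\underline a(p,\ga p)$ and $\mu$ differ by a bounded amount (Lemma~\ref{com}), the series $\mathcal P_w(s,p)$ has the same abscissa of convergence as $\sum_{\ga\in\Ga}e^{-s\|\nabla\psi_\Ga(u)\|^{-1}\psi_u(\mu(\ga))}$. Now apply Lemma~\ref{lem.dom} with $\theta = \tfrac{s}{\|\nabla\psi_\Ga(u)\|}\psi_u$: this homogeneous function dominates $\psi_\Ga$ strictly off $\L_\Ga$ automatically since $\psi_u\ge\psi_\Ga$ and $\psi_u>0$ on $\L_\Ga-\{0\}$ (Theorem~\ref{pop}(4)), while on $\L_\Ga-\{0\}$ we have $\tfrac{s}{\|\nabla\psi_\Ga(u)\|}\psi_u > \psi_\Ga$ precisely when $s>\|\nabla\psi_\Ga(u)\|$, and the reverse strict inequality at $u$ itself when $s<\|\nabla\psi_\Ga(u)\|$ (using $\psi_u(u)=\psi_\Ga(u)>0$ by Proposition~\ref{homeo} and Theorem~\ref{growth}). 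Hence $\sum e^{-\theta(\mu(\ga))}<\infty$ for $s>\|\nabla\psi_\Ga(u)\|$ and $=\infty$ for $s<\|\nabla\psi_\Ga(u)\|$, giving $\delta_w=\|\nabla\psi_\Ga(u)\|$. The analyticity of $w\mapsto\delta_w$ on the unit sphere of $\mathsf O_\Ga$ then follows because, by Proposition~\ref{homeo} and Lemma~\ref{oo}, the assignment $u\mapsto w(u):=\nabla\psi_\Ga(u)/\|\nabla\psi_\Ga(u)\|$ is a real-analytic diffeomorphism of $\{u\in\inte\L_\Ga:\|u\|=1\}$ onto $\{w\in\mathsf O_\Ga:\|w\|=1\}$ (its inverse being analytic since $\psi_\Ga$ is analytic on $\inte\L_\Ga$ by Theorem~\ref{pop}(2)), and $\delta_{w(u)}=\|\nabla\psi_\Ga(u)\|$ is an analytic function of $u$.

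For part~(2), fix a unit vector $w\in\mathsf O_\Ga$ and let $u\in\inte\L_\Ga$ be the unit vector with $w=\nabla\psi_\Ga(u)/\|\nabla\psi_\Ga(u)\|$, so $\langle w,\cdot\rangle = \delta_w^{-1}\psi_u$ with $\psi_u\in\dg$ by part~(1). Then $\mathcal P_w(\delta_w,p)=\sum_{\ga\in\Ga}e^{-\psi_u(\underline a(p,\ga p))}$. Since $\Ga$ is Anosov we have $\La=\La_c$ (Proposition~\ref{prop.con}), and $\nu_{\psi_u}$ is a $(\Ga,\psi_u)$-$\PS$ measure on $\La$ (Theorem~\ref{pop}(5)); Lemma~\ref{lem.DT} then gives $\sum_{\ga\in\Ga}e^{-\psi_u(\mu(\ga))}=\infty$. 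Finally, replacing $\mu(\ga)$ by $\underline a(p,\ga p)=\mu(g^{-1}\ga g)$ (where $p=g(o)$) changes each exponent by at most a constant depending only on $p$ and $\psi_u$, by Lemma~\ref{com}, so $\mathcal P_w(\delta_w,p)=\infty$ as well, and this holds for every $p\in G/K$.

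I expect the main technical point to be the careful bookkeeping in part~(1): verifying that the homogeneous function $\theta=\tfrac{s}{\|\nabla\psi_\Ga(u)\|}\psi_u$ satisfies the two hypotheses of Lemma~\ref{lem.dom} in the two regimes $s>\delta_w$ and $s<\delta_w$. The delicate case is $s<\delta_w$: one must exhibit a direction $v\in\fa-\{0\}$ with $\theta(v)<\psi_\Ga(v)$, and the natural choice $v=u$ works because $\theta(u)=\tfrac{s}{\delta_w}\psi_\Ga(u)<\psi_\Ga(u)$, using strict positivity of $\psi_\Ga(u)$ on $\inte\L_\Ga$ (Theorem~\ref{growth}). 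Everything else is a routine application of the already-established dictionary between $\dg$, $\mathsf O_\Ga$, gradients of $\psi_\Ga$, and Poincaré series.
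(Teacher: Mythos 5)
Your argument is correct and follows essentially the same route as the paper: both parts reduce to Quint's convergence/divergence criterion (Lemma \ref{lem.dom}), the divergence statement of Lemma \ref{lem.DT}, Theorem \ref{pop}, and the gradient parametrization of Proposition \ref{homeo}, with Lemma \ref{com} handling the change of basepoint, and your part (2) is exactly the paper's application of Lemma \ref{lem.DT}. The only cosmetic differences are that you obtain the lower bound $\delta_w\ge\|\nabla\psi_\Ga(u)\|$ from the divergence half of Lemma \ref{lem.dom} at $s<\|\nabla\psi_\Ga(u)\|$ rather than from Lemma \ref{lem.DT} at the critical value itself, and that your justification of the analyticity of $w\mapsto\delta_w$ (analytic invertibility of $u\mapsto\nabla\psi_\Ga(u)/\|\nabla\psi_\Ga(u)\|$, which strictly speaking needs non-degeneracy of the differential and not merely analyticity of $\psi_\Ga$) is stated at the same level of detail as the paper's own two-line proof.
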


\begin{proof} (1) follows from Lemmas \ref{lem.dom} and \ref{lem.DT}  together with  Theorem \ref{pop}.
(2) is a direct consequence of  Lemma \ref{lem.DT}.
\end{proof}

\section{Myrberg limit points of Anosov groups}
In this section, we discuss the notion of Myrberg limit points.
We show that for Anosov groups, the set of Myrberg limit points has full measure for any $\PS$ measure on $\La$. In the rank one case, this was proved by Tukia \cite[Thm. 4A]{Tuk}.
Let $\Ga<G$ be a Zariski dense discrete subgroup.
\begin{definition}[Myrberg points]\label{def.MYR} \rm Let $p\in X$. We call a point $\xi_0\in \La$ a Myrberg limit point for $\Gamma$ if, for any  $\xi\ne \eta$ in $\La$, there exists a sequence $ \gamma_i \in \Gamma$ such that
$\gamma_i p\to \xi$ and $\gamma_i \xi_0\to \eta$ as $i\to\infty.$
\end{definition}
Note that this definition is independent of the choice of $p\in X$ by Lemma \ref{ssame}.
We denote by $\La_M\subset \La$ the set of all Myrberg limit points for $\Gamma$.

When $G$ is of rank one, a Myrberg limit point $\xi\in\La$ is characterized by the property that any geodesic ray toward $\xi$ is dense in the space of all geodesics connecting limit points.
The following proposition generalizes this to a general Anosov subgroup.

\begin{proposition}\label{prop.Myr1}
Let $\Ga$ be Anosov.
We have $\xi_0\in \La_M$ if and only if for any $g\in G$ with $g^+=\xi_0$,
$$
\limsup \Gamma\ba\Ga g A^+M =\Om.
$$
\end{proposition}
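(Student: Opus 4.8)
The plan is to prove the equivalence by translating both conditions to statements about the $\Gamma$-action on the Furstenberg boundary, using the geometry established in Propositions \ref{prop.con}, \ref{prop.KLP} and \ref{prop.SS}. First I would fix $g\in G$ with $g^+=\xi_0$. Writing $x=\Gamma\ba\Gamma g\in\mathcal E$, a point $y$ lies in $\limsup xA^+M$ precisely when there are $\gamma_i\in\Gamma$ and $a_im_i\to\infty$ in $A^+M$ with $\gamma_i g a_i m_i$ convergent; by Lemma \ref{st} we may assume $\log a_i\to\infty$ in a closed cone $\mathfrak c\subset\operatorname{int}\mathfrak a^+\cup\{0\}$, so in particular $a_i\to\infty$ regularly. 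Decoding the limit via the Hopf-type coordinates, such a $y$ has $y^+=\lim \gamma_i g a_i^{-1} o$-type data and $y^-=\lim\gamma_i g(o)$; more precisely, writing the accumulation in $\Gamma\ba G$ one checks that the closure of $xA^+M$ in $\Omega$ is described by pairs $(\eta,\xi)\in\Lambda^{(2)}$ together with a fiber coordinate, where $\eta=\lim\gamma_i(\xi_0)$ (the "forward" endpoint, using $a_i\to\infty$ regularly and Lemma \ref{gr}/\ref{st2}) and $\xi=\lim\gamma_i g(o)$ (the basepoint orbit, using Lemma \ref{ssame}). So $\limsup xA^+M=\Omega$ holds iff for every $(\xi,\eta)\in\Lambda^{(2)}$ there is a sequence $\gamma_i\in\Gamma$ with $\gamma_i g(o)\to\xi$ and $\gamma_i(\xi_0)\to\eta$ — and since $\Lambda^{(2)}$ is dense in $\Lambda\times\Lambda$ by antipodality \eqref{anti} and $\Lambda\times\Lambda$ is the closure, one can drop the general-position requirement. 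This is exactly the defining condition of $\xi_0\in\Lambda_M$, modulo the fiber ($\mathfrak a$) coordinate.

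The one genuine point to address, and what I expect to be the main obstacle, is the fiber coordinate: showing that once the pair of endpoints $(\xi,\eta)$ is hit, the full $A$-fiber over $(\xi,\eta)$ in $\Omega$ is also accumulated, so that $\limsup xA^+M$ is genuinely all of $\Omega$ and not merely surjects onto $\Lambda^{(2)}$ under the projection $\Omega\to\Gamma\ba\Lambda^{(2)}$. Here I would use the extra freedom in the $a_i$: given $\gamma_i$ realizing $\gamma_i g(o)\to\xi$, $\gamma_i(\xi_0)\to\eta$, one is free to post-compose with $a_i\in A^+$, $\log a_i\to\infty$ in $\mathfrak c$, and this shifts the fiber coordinate by $\sigma(\gamma_i, \xi_0)$-type terms which, combined with the translation by $\log a_i$ over the cone $\mathfrak c$ (whose $\br_{\ge 0}$-span is all of $\mathfrak a^+$, and together with $w_0$-conjugation all of $\mathfrak a$), lets one reach any prescribed fiber value. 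Concretely: for a fixed target $[h]\in\Omega$ with $h^+=\eta$, $h^-=\xi$, choose $\gamma_i$ as above, then choose $a_i\in A^+$ so that $\gamma_i g a_i$ approximates $h$ in $\Gamma\ba G$; the regularity of $a_i$ is automatic from Lemma \ref{st} because $\gamma_i g a_i$ stays bounded, and boundedness can be arranged because the endpoints already match. Flipping via $w_0$ (as in the proof of Lemma \ref{st2}) handles the sign of the fiber.

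So the key steps in order are: (i) reduce to $x=\Gamma\ba\Gamma g$ and recall from Lemma \ref{st} that accumulation of $xA^+M$ may be taken along $\log a_i\to\infty$ in $\mathfrak c\subset\operatorname{int}\mathfrak a^+\cup\{0\}$, hence regularly; (ii) identify, using Lemmas \ref{ssame}, \ref{gr}, \ref{st2} and the Hopf parametrization, the endpoints of a limit point of $xA^+M$ as $\bigl(\lim\gamma_i(\xi_0),\ \lim\gamma_i g(o)\bigr)\in\Lambda\times\Lambda$; (iii) observe that by Proposition \ref{prop.con} every $x\in\mathcal E$ has $xa_i$ returning to a fixed compact set along $\log a_i\to\infty$ in $\mathfrak c$, which is what makes the relevant sequences bounded and limits genuinely lie in $\Omega$; (iv) deduce that $\limsup xA^+M=\Omega$ iff for all $(\xi,\eta)\in\Lambda\times\Lambda$ (equivalently, using \eqref{anti}, all $(\xi,\eta)\in\Lambda^{(2)}$) there exist $\gamma_i$ with $\gamma_i g(o)\to\xi$, $\gamma_i(\xi_0)\to\eta$, which is literally Definition \ref{def.MYR}; (v) upgrade the endpoint statement to the full-fiber statement by exploiting the freedom to post-compose with $A^+$-elements over the cone $\mathfrak c$ and with $w_0$. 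Step (v) is the only part requiring care; steps (i)–(iv) are bookkeeping on the already-developed boundary dictionary.
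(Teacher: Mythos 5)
Your reverse direction (from $\limsup \Ga\backslash\Ga gA^+M=\Om$ to the Myrberg property) is essentially the paper's argument and is fine. The genuine gap is in the forward direction, precisely at the step you defer to (v), and the difficulty there is not the one you identify. If $\ga_i g a_im_i\to h$ with $a_im_i\in A^+M$, then since right translation by $AM$ does not change the backward flag, necessarily $\ga_i g^-=(\ga_i g a_im_i)^-\to h^-$ in $\F$. So to realize a given $h$ with $h^\pm\in\La$ you must upgrade the Myrberg data ($\ga_i\xi_0\to h^+$ and $\ga_i g(o)\to h^-$) to convergence of the \emph{flag} $\ga_i g^-\to h^-$; orbit convergence $\ga_i g(o)\to h^-$ does not give this by itself. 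This is exactly where the paper first reduces, via Lemma \ref{lem.GH}, to a representative $g$ with $g^-\in\La$ (note your statement quantifies over all $g$ with $g^+=\xi_0$), and then invokes the word-hyperbolic convergence machinery (Theorem \ref{lem.ext}, Lemma \ref{lem.C}/Corollary \ref{lem.pm}) to get $\ga_i g^-\to h^-$ along a subsequence. Your phrase ``boundedness can be arranged because the endpoints already match'' silently assumes this; the pair you match, $(\lim\ga_i\xi_0,\lim\ga_i g(o))$, is not the Hopf pair $((\ga_i g)^+,(\ga_i g)^-)$ whose convergence is what actually produces approximating elements of $h$ modulo $AM$.

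Second, even once $\ga_i gAM\to hAM$ in $G/AM\simeq\F^{(2)}$, there is no separate ``fiber'' problem: convergence in $G/AM$ already yields $b_im_i\in AM$ with $\ga_i gb_im_i\to h$. What remains is to show the $A$-parts $b_i$ can be taken in $A^+$ rather than escaping in some other Weyl chamber $wA^+w^{-1}$, in which case $h$ would only lie in $\limsup \Ga g\,wA^+w^{-1}M$. The paper forces $w=e$ by Lemma \ref{lem.target}: if $b_i$ ran off in the $w$-chamber, then $\ga_i g(o)\to hm_0w^-$, contradicting $\ga_i g(o)\to h^-$ unless $w=e$. Your proposed remedy for this step --- post-composing with $A^+$-elements in $\mathfrak c$ and ``flipping via $w_0$'' to reach all of $\fa$ --- is not available: the statement concerns $A^+M$-orbits only, so you may not conjugate by $w_0$, and in any case the $\br_{\ge0}$-span of the cone $\mathfrak c$ is $\mathfrak c$ itself, a proper subcone of $\fa^+$ (it avoids the walls), not all of $\fa^+$. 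So steps (i)--(iv), which you call bookkeeping, hide the two pieces of real content (backward-flag convergence via the Anosov/hyperbolic-group structure, and the chamber-forcing argument), and step (v) as written would not close either of them.
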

Let $\Ga<G$ be an Anosov subgroup for the rest of this section.
\begin{lem}\label{lem.target}  
Let $b_i\in A$ be a sequence tending to $\infty$ such that $w^{-1}b_i^{-1}w\in A^+$ for some $w\in \cal W$.
If $\gamma_i gb_i \to h$  for some $h,g\in G$ and $\ga_i\in\Ga$, then $\lim_{i\to \infty}
\gamma_i go= hw^+\in\La$.
In particular, if $b_i\in A^+$, then $\lim_{i\to \infty} \gamma_igo= h^-$.
\end{lem}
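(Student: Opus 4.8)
The plan is to reduce the assertion to Lemma~\ref{st2}. The key identity is the factorisation $\gamma_i g o = (\gamma_i g b_i)(b_i^{-1}o)$; writing $h_i:=\gamma_i g b_i$ (so $h_i\to h$) and taking $w\in K$ to represent the given Weyl element, the hypothesis says $a_i:=w^{-1}b_i^{-1}w\in A^+$, and since $wo=o$ we obtain
$$\gamma_i g o = h_i b_i^{-1} o = h_i(wa_iw^{-1})o = (h_iw)\,a_i\,o.$$
Since $h_iw\to hw$, Lemma~\ref{st2} will give $\gamma_i g o\to (hw)^+ = h\cdot w^+$ as soon as $a_i\to\infty$ \emph{regularly} in $A^+$. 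Producing this regularity is the only non-formal step, and I expect it to be the main obstacle: a priori $b_i$ may tend to infinity non-regularly, and even in the ``in particular'' case $b_i\in A^+$ does not by itself force regularity --- it is the Anosov hypothesis on $\Gamma$ that saves the day.

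To establish the regularity I would argue as follows. First, $\gamma_i\to\infty$ in $G$, since otherwise some subsequence of $\gamma_i$ is bounded and then $b_i=(\gamma_i g)^{-1}h_i$ is bounded along it, contradicting $b_i\to\infty$. As $\Gamma$ is Anosov it is regular (Theorem~\ref{pop}(1) and the remark after it), so $\mu(\gamma_i^{-1})=\op i(\mu(\gamma_i))\to\infty$ regularly. Now $\gamma_i^{-1}=g\,b_i\,h_i^{-1}$ with $g$ fixed and $\{h_i^{-1}\}$ relatively compact, so Lemma~\ref{com} gives a compact $Q\subset\fa$ with $\mu(\gamma_i^{-1})\in\mu(b_i)+Q$ for all $i$; hence $\mu(b_i)\to\infty$ regularly as well. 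Finally $\mu(b_i^{-1})=\mu(wa_iw^{-1})=\log a_i$, so $\log a_i=\op i(\mu(b_i))$, and since the opposition involution permutes the simple roots, regularity of $\mu(b_i)$ transfers to $\log a_i$; thus $a_i\to\infty$ regularly in $A^+$.

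With this in hand, Lemma~\ref{st2} (applied with $g_i:=h_iw\to hw$ and the regular sequence $a_i$) yields $\gamma_i g o=(h_iw)\,a_i\, o\to (hw)^+=hw^+$. Since $\gamma_i g o=\gamma_i\cdot(go)$ with $go\in X$ and the limit lies in $\cal F$, Lemma~\ref{lem.lim} shows $hw^+\in\La$. For the last assertion, if $b_i\in A^+$, write $b_i=\exp v_i$ with $v_i\in\fa^+$; then $w_0^{-1}b_i^{-1}w_0=\exp(\op i v_i)\in A^+$, so one may take $w=w_0$, and the limit becomes $hw_0^+=hw_0P=h^-$ by the definition of $h^-$.
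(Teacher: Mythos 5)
Your proof is correct, and its skeleton is the same as the paper's: you rewrite $\gamma_i go=(h_iw)a_i o$ with $a_i=w^{-1}b_i^{-1}w\in A^+$ and finish with Lemma \ref{st2}, exactly as the paper does (there with $c_i=h^{-1}\gamma_i gb_i\to e$ and $\gamma_i gw=hc_iwa_i$). The only divergence is how the regularity of $a_i$ is produced, which, as you rightly identify, is the one non-formal step. The paper gets it in one stroke by applying Lemma \ref{st} to the identity $gw=\gamma_i^{-1}(hc_iw)a_i$, whose left side is constant and whose middle factor is bounded; you instead argue that $\gamma_i\to\infty$, invoke the regularity of Anosov groups (Theorem \ref{pop}(1)), transfer regularity from $\mu(\gamma_i^{-1})$ to $\mu(b_i)$ via Lemma \ref{com} applied to $\gamma_i^{-1}=gb_ih_i^{-1}$, and then pass to $\log a_i=\op{i}(\mu(b_i))$ using that the opposition involution preserves $\fa^+$ and permutes $\Pi$. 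Both routes rest on the same underlying fact $\L_\Ga\subset\inte\fa^+\cup\{0\}$ (indeed Lemma \ref{st} is itself proved with Lemma \ref{com} and the limit cone), so yours is a valid, slightly more hands-on substitute for the citation of Lemma \ref{st}; it costs a couple of extra reductions but avoids introducing the auxiliary cone $\mathfrak c$. The remaining steps (membership in $\La$ via Lemma \ref{lem.lim}, and the case $w=w_0$ using $\op{i}(\fa^+)=\fa^+$ and $hw_0P=h^-$) match the paper.
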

\begin{proof} 
Let $c_i:=h^{-1}\ga_i g b_i$ and  $a_i:=w^{-1}b_i^{-1}w\in A^+$.
Then $gw=\gamma_i^{-1} hc_i wa_i$. Hence
by Lemma \ref{st}, $a_i\to \infty$ regularly in $A^+$.
 Lemma \ref{st2} implies that $hc_iwa_i (o)\to hw^+$. 
Since $\gamma_i gw=hc_i wa_i$,  we have $\gamma_i gw(o)=\gamma_i go \to hw^+$.
This proves the first claim by Lemma \ref{lem.lim}.
 If $b_i\in A^+$, then $w_0^{-1} b_i^{-1} w_0\in A^+$. 
Since $w_0^+=e^-$, the last claim follows.
\end{proof}

The following is proved in \cite[Coro. 5.8]{KLP}:
\begin{thm}[The limit map as a continuous extension of the orbit map] \label{lem.ext}
For any $p\in X$, the map $\Gamma\cup \partial \Gamma \to X\cup \cal F$
given by  $\ga\mapsto\ga p$ for $\gamma\in \G$ and $x\mapsto \zeta(x)$  for $x\in \partial \Ga$
is continuous.
\end{thm}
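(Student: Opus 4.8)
The plan is to reduce the statement to a single case and then run a Morse‑property argument. Since $\Gamma$ is discrete, hence open in $\Gamma\cup\partial\Gamma$, and since the limit map $\zeta\colon\partial\Gamma\to\La$ is continuous (indeed H\"older), the map in question is automatically continuous at every point of $\Gamma$ and, along sequences contained in $\partial\Gamma$, at every point of $\partial\Gamma$. Hence the only case to treat is that of a sequence $\gamma_i\in\Gamma$ with $\gamma_i\to x\in\partial\Gamma$: I must show $\gamma_i o\to\zeta(x)$ in $X\cup\cal F$ (a mixed convergent sequence splits into these two cases). As a preliminary reduction, $\gamma_i\to x$ forces $|\gamma_i|\to\infty$, so by the quasi-isometric embedding property of Anosov groups \cite{GW} (or Lemma \ref{lem.WL} together with the lower bound $C^{-1}|\gamma|-C\le\|\mu(\gamma)\|$) we get $\|\mu(\gamma_i)\|\to\infty$, and by regularity of Anosov groups (Theorem \ref{pop}(1), Def. \ref{Gr}) in fact $\mu(\gamma_i)\to\infty$ regularly in $\fa^+$. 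Consequently every accumulation point of $(\gamma_i o)$ in the compact space $X\cup\cal F$ lies in $\cal F$, in fact in $\La$ by Lemma \ref{lem.lim}, and convergence $\gamma_i o\to\xi\in\cal F$ is equivalent to $\kappa_1(\gamma_i)^+\to\xi$. So it suffices to show that every subsequential limit of $\kappa_1(\gamma_i)^+$ equals $\zeta(x)$.

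The heart of the argument is the Morse property (Proposition \ref{prop.KLP}). Fix a geodesic ray $\rho=(e=v_0,v_1,v_2,\dots)$ in $(\Gamma,d_{\mathsf w})$ representing $x$, and let $g\in K$ with $g^+=\zeta(x)$ be the element provided by Proposition \ref{prop.KLP}(2), so that $d(v_k o,gA^+o)\le R_1$ for all $k$; in particular $d(v_N o,ga_N o)\le R_1$ for some $a_N\to\infty$ regularly in $A^+$. For each $i$, extend the discrete geodesic segment $[e,\gamma_i]$ to a geodesic ray $[e,z_i)$ in $\Gamma$ and apply Proposition \ref{prop.KLP}(2) to it: there is $g_i\in K$ with $d(\gamma^{(i)}_k o,g_iA^+o)\le R_1$ for every vertex $\gamma^{(i)}_k$ of $[e,z_i)$, and since $\gamma_i$ is such a vertex, $d(\gamma_i o,g_i a_i o)\le R_1$ for some $a_i\to\infty$ regularly. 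Writing $\gamma_i=g_i a_i c_i$ with $c_i$ bounded and $g_i\in K$, the convergence lemmas of Section \ref{ps} (Lemma \ref{st2} applied to the constant sequence $e$, together with Lemma \ref{com}) give $\kappa_1(a_i c_i)^+\to e^+$, so $\kappa_1(\gamma_i)^+=g_i\cdot\kappa_1(a_i c_i)^+$ and $g_i^+=g_i\cdot e^+$ have the same accumulation points in $\cal F$. Thus it is enough to prove $g_i^+\to\zeta(x)$.

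To this end I would exploit $\gamma_i\to x$ quantitatively. Since $(\gamma_i\mid x)\to\infty$, for each fixed $N$ and all large $i$ the initial length-$N$ portions of $[e,\gamma_i]$ and of $\rho$ are within $C\delta$ of one another in $(\Gamma,d_{\mathsf w})$; in particular $[e,\gamma_i]$ has a vertex $u^{(i)}$ with $d_{\mathsf w}(u^{(i)},v_N)\le C\delta$, so by Lemma \ref{lem.WL} $d(u^{(i)}o,v_N o)\le C'$ with $C'$ uniform. As $u^{(i)}$ also lies on the ray $[e,z_i)$, we have $d(u^{(i)}o,g_i b_i o)\le R_1$ for some $b_i\in A^+$, which is regular and of norm $\asymp N$. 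Combining, $d(g_i b_i o,\,ga_N o)\le C'+2R_1$ with $b_i,a_N$ regular of norm $\asymp N$ and uniformly bounded away from the walls of $\fa^+$ (e.g.\ by Lemma \ref{st}). Letting $i\to\infty$ along a subsequence with $g_i^+\to\xi$ and comparing the Cartan decompositions of $g_i b_i$ and $ga_N$ — two uniformly regular orbit points a bounded distance apart — forces $\mathrm{dist}_{\cal F}(\xi,\zeta(x))\le\varepsilon_N$, where $\varepsilon_N\to0$ because the regularity parameter of $a_N$ (of size $\asymp N$) tends to infinity. As $N$ was arbitrary, $\xi=\zeta(x)$, which completes the proof.

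I expect the main obstacle to be this last comparison of Cartan decompositions: making precise the assertion that if two points $ka o$ and $k'a'o$ with $a,a'\in A^+$ increasingly regular stay within a fixed distance of each other, then $k^+$ and $k'^+$ become arbitrarily close in $\cal F$ (a compactness statement about the $K$‑components of the Cartan decomposition, in the spirit of Lemma \ref{gen}). A secondary, more routine difficulty is the passage between the discrete hyperbolic group $(\Gamma,d_{\mathsf w})$ and the symmetric space $X$ — extending discrete geodesic segments to rays, translating word‑metric estimates into $X$‑estimates, and discarding bounded perturbations — all of which I would handle with Lemma \ref{com}, Lemma \ref{lem.WL}, and the convergence lemmas of Section \ref{ps}.
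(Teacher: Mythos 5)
Your overall strategy is sound, and it is worth noting that the paper itself does not prove Theorem \ref{lem.ext} at all: it simply imports it from \cite[Coro.\ 5.8]{KLP}. So your proposal is not a variant of the paper's argument but a self-contained reconstruction, and it is essentially the right one: reduce to a sequence $\ga_i\to x\in\partial\Ga$, use regularity of Anosov groups to know that all subsequential limits of $\ga_i o$ are flag limits governed by $\kappa_1(\ga_i)$, use the Morse property (Proposition \ref{prop.KLP}) to replace $\ga_i o$ by a point on $g_iA^+o$ with $g_i\in K$, and then force $g_i^+\to\zeta(x)$ by comparing, at word-scale $N$, the Morse flat for $[e,\ga_i]$ with the Morse flat for a fixed ray representing $x$. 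The "main obstacle" you single out — that two uniformly regular points $kao$, $k'a'o$ at bounded distance have $k^+,k'^+$ close once the regularity parameter is large — is indeed true and is exactly a compactness statement provable from the Section 2 lemmas (Lemmas \ref{com}, \ref{gen}, \ref{st2}, \ref{ssame}); no new idea is needed there. Your reduction $\kappa_1(\ga_i)^+=g_i\kappa_1(a_ic_i)^+$ with $\kappa_1(a_ic_i)^+\to e^+$ also works, modulo the routine strengthening of Lemmas \ref{st2}/\ref{ssame} to base points varying in a compact set, and the passage from $o$ to a general $p$ is Lemma \ref{ssame}.

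Two points need repair. First, the step "extend the discrete geodesic segment $[e,\ga_i]$ to a geodesic ray $[e,z_i)$" is not always possible literally: Cayley graphs of hyperbolic groups can have dead ends, so a geodesic segment need not extend. This is not fatal, but it is not covered by the tools you list (Lemma \ref{com}, Lemma \ref{lem.WL}, the convergence lemmas); what you actually need is the standard hyperbolic-group fact that there is a uniform $C$ such that for every $\ga_i$ one can choose $z_i\in\partial\Ga$ with Gromov product $(e|z_i)_{\ga_i}\le C$, so that some geodesic ray from $e$ to $z_i$ passes within a uniform distance of $\ga_i$ and fellow-travels $[e,\ga_i]$ along its initial segment; all your subsequent estimates only use bounded-distance comparisons, so they survive with enlarged constants (this is the same kind of auxiliary fact the paper uses in the proof of Lemma \ref{comp3}). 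Second, the uniform regularity of $\log b_i$ and $\log a_N$ "bounded away from the walls" is not an application of Lemma \ref{st} (which concerns sequences $\ga_ig_ia_i$ staying bounded); the correct justification is that $\log b_i$ lies within a bounded distance of $\mu(u^{(i)})$ by Lemma \ref{com}, that $\|\mu(u^{(i)})\|\gtrsim N$ by the quasi-isometry lower bound of \cite{GW}, and that all but finitely many Cartan projections $\mu(\ga)$, $\ga\in\Ga$, lie in a fixed closed cone $\mathfrak c$ with $\mathcal L_\Gamma-\{0\}\subset\operatorname{int}\mathfrak c\subset\operatorname{int}\fa^+$ (the asymptotic-cone description of $\mathcal L_\Gamma$ together with Theorem \ref{pop}(1), plus the remark following Lemma \ref{st}), which gives $\min_{\alpha\in\Pi}\alpha(\log b_i)\gtrsim N$ uniformly in large $i$, and likewise for $a_N$. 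With these two adjustments your argument closes correctly.
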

We need the following basic fact about word hyperbolic groups.
\begin{lemma}\label{lem.C}
Let $x\neq y$ in $\partial\Ga$.
If $\ga_i\in \Ga$ is an infinite sequence such that $(\ga_i x, \ga_i y)\to (x',y')\in \partial \Ga \times \partial \Ga$, then $\ga_i$ converges to either $x'$ or $y'$.
\end{lemma}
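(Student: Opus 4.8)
The plan is to deduce the statement from the classical fact that a word hyperbolic group $\Gamma$ acts on its Gromov boundary $\partial\Gamma$ as a convergence group (see \cite{KB} and the references there). We may assume the $\gamma_i$ are pairwise distinct, so $\gamma_i\to\infty$ in $\Gamma$. The input I would use is: for every sequence $\gamma_i\to\infty$ in $\Gamma$, after passing to a subsequence there are points $a,b\in\partial\Gamma$ (not necessarily distinct) such that $\gamma_i\to a$ and $\gamma_i^{-1}\to b$ in $\Gamma\cup\partial\Gamma$, and $\gamma_i\xi\to a$ for all $\xi\in\partial\Gamma\setminus\{b\}$, uniformly on compact subsets of $\partial\Gamma\setminus\{b\}$.

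The first step is to show that every subsequential limit of $(\gamma_i)$ in $\Gamma\cup\partial\Gamma$ lies in $\{x',y'\}$. Indeed, any subsequence of $(\gamma_i)$ has, by compactness of $\Gamma\cup\partial\Gamma$ and discreteness of $\Gamma$, a further subsequence converging to some $\omega\in\partial\Gamma$; refining once more, take it of the convergence-group type above, with attracting point $a=\omega$ and repelling point $b$. Since $x\ne y$, at least one of them differs from $b$; interchanging the labels $(x,x')\leftrightarrow(y,y')$ if needed, assume $x\ne b$. Then $\gamma_{i_k}x\to\omega$, while the hypothesis forces $\gamma_{i_k}x\to x'$, so $\omega=x'$. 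In particular the (nonempty) set $L$ of subsequential limits of $(\gamma_i)$ is contained in $\{x',y'\}$.

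If $x'=y'$, then $L=\{x'\}$, so $\gamma_i\to x'$ and we are done. If $x'\ne y'$, I would then rule out the possibility $L=\{x',y'\}$: choosing convergence-subsequences $\gamma_{i_k}\to x'$, with pair $(x',b_1)$, and $\gamma_{j_l}\to y'$, with pair $(y',b_2)$, the dichotomy of the first step forces $b_1=y$ (because $\gamma_{i_k}y\to y'\ne x'$) and likewise $b_2=x$; one then plays the two subsequences against each other — for instance $\gamma_{j_k}^{-1}\gamma_{i_k}$ is again a convergence sequence, with pair $(x,y)$ — to contradict $x'\ne y'$. Hence $L$ is a singleton and $\gamma_i$ converges to a point of $\{x',y'\}$.

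I expect the last step — promoting ``every subsequential limit lies in $\{x',y'\}$'' to ``the full sequence converges'' — to be the main obstacle; the first step is routine, but uniqueness of the limit is exactly where one must use that $(\gamma_i)$ is the given sequence rather than, say, $\delta^{(-1)^i i}$ for a nontorsion $\delta\in\Gamma$ with $x=\delta^{+}$, $y=\delta^{-}$ (whose even and odd parts converge to different endpoints). To close this cleanly I would either use the equivariance estimate for the Gromov product based at $e$, namely $(\gamma_i x\mid\gamma_i y)_e=(x\mid y)_e-|\gamma_i|+(\gamma_i^{-1}\mid x)_e+(\gamma_i^{-1}\mid y)_e+O(\delta)$ together with the fact that $(\gamma_i^{-1}\mid x)_e$ and $(\gamma_i^{-1}\mid y)_e$ cannot both be large, or appeal to the context of the application, where the relevant sequence is already known to converge.
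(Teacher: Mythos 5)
Your first step is correct and complete: extracting from an arbitrary subsequence a further subsequence with north--south dynamics for an attracting/repelling pair $(a,b)$, and noting that at least one of $x,y$ avoids $b$, does show that every accumulation point of $(\gamma_i)$ in $\Gamma\cup\partial\Gamma$ lies in $\{x',y'\}$ (granting, as you do, that ``infinite sequence'' means $\gamma_i\to\infty$). This is a genuinely different route from the paper's: there one takes a geodesic line $(\dots,u_1,u_0=v_0,v_1,\dots)$ from $x$ to $y$, supposes a subsequence of $\gamma_iu_0$ converges to some $z'\notin\{x',y'\}$, and gets a contradiction from thin triangles and the boundedness of the Gromov products $(\gamma_k u_0|\gamma_k u_{n_k})$ and $(\gamma_k u_0|\gamma_k v_{n_k})$. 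Both arguments establish exactly the same accumulation statement; the convergence-group input you invoke is standard and the trade is essentially ``quote north--south dynamics'' versus ``reprove the needed piece of it by hand''.

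The gap is your third step, and it cannot be closed: when $x'\ne y'$ the full-sequence convergence asserted in the statement is false, and the example you yourself write down is a counterexample --- for $\delta\in\Gamma$ of infinite order with $x=\delta^+$, $y=\delta^-$ and $\gamma_i=\delta^{(-1)^i i}$ one has $(\gamma_i x,\gamma_i y)=(x,y)$ for all $i$, while the even and odd subsequences converge to the two distinct points $x'$ and $y'$. So playing subsequences against each other cannot produce a contradiction (in this example $\gamma_{j_k}^{-1}\gamma_{i_k}$ is just a large power of $\delta$, perfectly consistent with $x'\ne y'$), and no Gromov-product identity can help; incidentally the identity you quote has the wrong signs, the correct expansion being $(\gamma x|\gamma y)_e=(x|y)_e+|\gamma|-(\gamma^{-1}|x)_e-(\gamma^{-1}|y)_e+O(\delta)$, and even in corrected form it only locates the accumulation points of $\gamma_i^{-1}$, which in the example alternate between $x$ and $y$. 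Note that the paper's own proof also only proves the accumulation statement: its ``suppose not'' passes to a subsequence of $\gamma_iu_0$ converging to a point outside $\{x',y'\}$, which is the negation of the weak statement, not of convergence of the whole sequence. The weak statement is all that is used afterwards (in Corollary \ref{lem.pm} and the lemma following it one has $x'=y'$ or passes to a subsequence; in Proposition \ref{prop.Myr1} a subsequence is extracted anyway; in Theorem \ref{fullm} the shadow condition pins the limit down), so the right resolution is to state and prove the lemma as ``every limit point of $\gamma_i$ lies in $\{x',y'\}$'' --- which is precisely your first step --- and to discard the attempted upgrade.
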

 
\begin{proof}
Choose a geodesic line $[x,y]$, and  its representative
 $(\cdots,u_2,u_1,u_0=v_0,v_1,v_2,\cdots)$.
Note that $x=[u_0,u_1,u_2,\cdots]$ and $y=[v_0,v_1,v_2,\cdots]$.
It suffices to show that $\ga_i u_0$ converges to either $x'$ or $y'$.
Suppose not.
Then by passing to a subsequence we have $\ga_iu_0\to z'$ where $z'\not\in\{x',y'\}$.
Since $(z'|x'), (z'|y')<\infty$,  there exists a subsequence $n_k$ such that $\sup_{k}(\ga_k u_{0}|\ga_k u_{n_k})+(\ga_k u_{0}|\ga_k v_{n_k})<\infty$.
Let $\cal L_k^-:=[\ga_ku_0,\ga_ku_{n_k}]$ and $\cal L_k^+:=[\ga_k u_0,\ga_k v_{n_k}]$,
so that $\sup_{k}d_{\mathsf w}(e,\cal L_k^\pm)< \infty$.
The thin triangle property of the hyperbolic group $\Ga$ implies that if the projection of $e$ to the geodesic segment $\cal L_k^-\cup\cal L_k^+$ lies in $\cal L_k^\pm$, then $d_{\mathsf w}(e,\ga_ku_0)$ is equal to $d_{\mathsf w}(e,\cal L_k^\mp)$ up to a uniform additive constant.
And hence $d_{\mathsf w}(e,\ga_ku_0)$ is uniformly bounded, which is a contradiction as $\ga_k\to\infty$ as $k\to\infty$. \end{proof}

The following is immediate from  Theorem \ref{lem.ext} and Lemma \ref{lem.C}:
\begin{cor}\label{lem.pm}
Let $\gamma_i\in \Gamma$ be an infinite sequence such that $(\ga_i\xi,\ga_i \eta )\to(\xi',\eta')$ in $\Lambda^{(2)}$ as $i\to\infty$.
Then for any $p\in X$,  $\gamma_i p$ converges  to either $\xi'$ or $\eta'$.
\end{cor}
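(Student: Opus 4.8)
The plan is to pull the whole configuration back to the Gromov boundary $\partial\Ga$ through the limit map $\zeta$, apply Lemma \ref{lem.C} there, and then push the resulting convergence forward to $X\cup\cal F$ via the continuity statement of Theorem \ref{lem.ext}. No new ideas beyond the two cited results are needed; the work is just checking that the hypotheses transfer correctly under $\zeta$.

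First I would set $x:=\zeta^{-1}(\xi)$ and $y:=\zeta^{-1}(\eta)$ in $\partial\Ga$. Since $(\xi,\eta)\in\La^{(2)}$, the antipodal property \eqref{anti} forces $\xi\ne\eta$, and hence $x\ne y$ because $\zeta$ is injective. As $\zeta$ is $\Ga$-equivariant, $\ga_i\xi=\zeta(\ga_i x)$ and $\ga_i\eta=\zeta(\ga_i y)$. Now $\La^{(2)}\subset\La\times\La$, so $\xi',\eta'\in\La$; writing $x':=\zeta^{-1}(\xi')$ and $y':=\zeta^{-1}(\eta')$ and using that $\zeta^{-1}\colon\La\to\partial\Ga$ is continuous, the hypothesis $(\ga_i\xi,\ga_i\eta)\to(\xi',\eta')$ yields $(\ga_i x,\ga_i y)\to(x',y')$ in $\partial\Ga\times\partial\Ga$, with $x'\ne y'$.

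Next, since $\{\ga_i\}$ is an infinite sequence in $\Ga$, Lemma \ref{lem.C} applies to the pair $x\ne y$ and gives that $\ga_i$ converges, inside the compactification $\Ga\cup\partial\Ga$, either to $x'$ or to $y'$. Finally I would apply Theorem \ref{lem.ext}: the map $\Ga\cup\partial\Ga\to X\cup\cal F$ sending $\ga\mapsto\ga p$ for $\ga\in\Ga$ and $z\mapsto\zeta(z)$ for $z\in\partial\Ga$ is continuous, so $\ga_i p\to\zeta(x')=\xi'$ in the first case and $\ga_i p\to\zeta(y')=\eta'$ in the second. The only step deserving a written line of justification is the transfer of the convergence hypothesis through $\zeta^{-1}$ in the second paragraph; everything else is a direct concatenation of the two quoted results, so I do not anticipate a genuine obstacle here.
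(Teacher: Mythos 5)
Your argument is correct and is exactly the route the paper takes: the corollary is stated there as an immediate consequence of Theorem \ref{lem.ext} and Lemma \ref{lem.C}, and your write-up simply makes explicit the transfer through the equivariant homeomorphism $\zeta$ that the paper leaves implicit.
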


\begin{lemma}
Let $g\in G$ be such that $g^\pm\in\La$.
If $\lim_{i\to \infty} \ga_ig^\pm=\xi$ for some infinite sequence $\ga_i\in\Ga$, then $\lim_{i\to \infty}\ga_ig o=\xi$.
\end{lemma}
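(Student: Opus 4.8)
The plan is to transfer the hypothesis from the Furstenberg boundary $\cal F$ to the Gromov boundary $\partial\Ga$, apply Lemma \ref{lem.C} there (which, unlike Corollary \ref{lem.pm}, imposes no restriction when the two translated limit points coincide), and then push the conclusion back to $X\cup\cal F$ using the continuity of the extended orbit map in Theorem \ref{lem.ext}.

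First I would fix notation. Since $g\in G$, the pair $(g^+,g^-)$ lies in $\cal F^{(2)}$, so in particular $g^+\ne g^-$; combined with the hypothesis $g^\pm\in\La$ and the fact that $\La$ is closed and $\Ga$-invariant, this also forces $\xi\in\La$. As $\zeta\colon\partial\Ga\to\La$ is a $\Ga$-equivariant homeomorphism, I may set $x:=\zeta^{-1}(g^+)$, $y:=\zeta^{-1}(g^-)$ and $x_0:=\zeta^{-1}(\xi)$ in $\partial\Ga$, with $x\ne y$. I would also observe that $\ga_i\to\infty$ in $\Ga$: otherwise a subsequence is equal to a constant $\ga\in\Ga$, which would force $\ga g^+=\xi=\ga g^-$ and hence the contradiction $g^+=g^-$.

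Next, by the equivariance and continuity of $\zeta^{-1}$, the hypothesis $\ga_ig^\pm\to\xi$ translates into $\ga_i x=\zeta^{-1}(\ga_ig^+)\to x_0$ and $\ga_i y=\zeta^{-1}(\ga_ig^-)\to x_0$ in $\partial\Ga$, so that $(\ga_i x,\ga_i y)\to(x_0,x_0)$ in $\partial\Ga\times\partial\Ga$. Applying Lemma \ref{lem.C} to the distinct points $x\ne y$, we conclude that $\ga_i$ converges, in $\Ga\cup\partial\Ga$, to one of the two components of the limit, both of which equal $x_0$; hence $\ga_i\to x_0$. Finally, putting $p:=g(o)\in X$ and invoking Theorem \ref{lem.ext} — the continuity of $\Ga\cup\partial\Ga\to X\cup\cal F$, $\ga\mapsto\ga p$ and $z\mapsto\zeta(z)$ — we obtain $\ga_ig(o)=\ga_ip\to\zeta(x_0)=\xi$, as required.

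The substance of the argument is exactly this passage to $\partial\Ga$: one cannot argue directly in $\cal F$ via Corollary \ref{lem.pm}, since the translated limit $(\xi,\xi)$ lies on the diagonal and hence outside $\La^{(2)}$, whereas Lemma \ref{lem.C} permits $x'=y'$. Everything else is bookkeeping with the equivariance of $\zeta$ and with Theorem \ref{lem.ext}, so I do not anticipate a genuine obstacle beyond being careful that the hypothesis means $\ga_ig^+$ and $\ga_ig^-$ converge to the \emph{same} point $\xi$.
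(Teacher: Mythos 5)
Your proof is correct and follows essentially the same route as the paper: transfer the hypothesis to $\partial\Ga$ via the equivariant homeomorphism $\zeta$, apply Lemma \ref{lem.C} (which allows the two limits to coincide) to get $\ga_i\to\zeta^{-1}(\xi)$, and conclude by the continuity of the extended orbit map in Theorem \ref{lem.ext}. The only cosmetic difference is that you invoke Theorem \ref{lem.ext} directly at the basepoint $p=g(o)$, whereas the paper first gets $\ga_i o\to\xi$ and then applies Lemma \ref{ssame}; both are fine.
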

\begin{proof}
Set $x^{\pm}:=\zeta^{-1}(g^\pm)$ and $y=\zeta^{-1}(\xi)$.
Since $\zeta : \partial\Ga\to\La$ is a homeomorphism, we have $\ga_i x^\pm\to y$ as $i\to\infty$.
By Lemma \ref{lem.C}, we have $\ga_i\to y$ as $i\to\infty$. By Theorem \ref{lem.ext},
we get $\lim_{i\to \infty}\ga_io=\xi$.
By Lemma \ref{ssame}, $\lim_{i\to \infty} \ga_i go= \xi$ as desired. \end{proof}

Since the fibers of the visual map $g\mapsto g^+$ are $P$-orbits, the following lemma
is an easy consequence of  the regularity lemma \ref{st}.
\begin{lemma}\label{lem.GH}
If $g, h\in G$ satisfy $g^+=h^+$, then 
$$\limsup \Ga gA^+M=\limsup\Ga hA^+M.$$
\end{lemma}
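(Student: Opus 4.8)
The plan is to use the regularity Lemma~\ref{st} to reduce to sequences tending to infinity \emph{regularly} in $A^+$, after which the split and unipotent factors of an element of $P$ can be absorbed. Since $g^+=h^+$ means precisely $gP=hP$, we may write $h=gp$ with $p\in P$; as this relation is symmetric in $g$ and $h$, it suffices to prove $\limsup(\Ga h)A^+M\subseteq\limsup(\Ga g)A^+M$. Using the Langlands decomposition $P=MAN$ (recall $N=N^-$ here), write $p=m_0a_0n_0$ with $m_0\in M$, $a_0\in A$, $n_0\in N$.

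Fix $y\in\limsup(\Ga h)A^+M$. Since $\Ga$ is Anosov, $\L_\Ga-\{0\}\subset\inte\fa^+$ by Theorem~\ref{pop}, so there is a closed convex cone $\mathfrak c\subset\inte\fa^+\cup\{0\}$ with $\L_\Ga-\{0\}\subset\inte\mathfrak c$ (cf.\ Proposition~\ref{prop.con}); by Lemma~\ref{st} we may then choose $a_i\in A^+$ with $\log a_i\in\mathfrak c$ for all large $i$, and $m_i\in M$, such that $\Ga\, h a_i m_i\to y$ in $\Ga\ba G$. Membership in $\mathfrak c$ forces $a_i\to\infty$ regularly in $A^+$, since each $\alpha\in\Pi$ is bounded below by $s\,\|\cdot\|$ on $\mathfrak c$ for some $s=s(\mathfrak c)>0$. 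Set $b_i:=a_0a_i$, $n_i:=a_i^{-1}n_0a_i\in N$ and $n_i':=m_0n_im_0^{-1}\in N$. Using that $m_0$ centralizes $A$, one computes
$$
h a_i m_i=g\,m_0a_0n_0a_i\,m_i=g\,b_i\,n_i'\,m_0m_i .
$$
Since $\alpha(\log b_i)=\alpha(\log a_i)+\alpha(\log a_0)\to+\infty$ for every $\alpha\in\Pi$, we have $b_i\in\inte A^+$ for all large $i$ and $b_i\to\infty$; and since $a_i\to\infty$ regularly and $n_0\in N$, horospherical contraction gives $n_i\to e$, hence $n_i'\to e$.

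It remains to discard the factor $n_i'$ without changing the limit. Put
$$
w_i:=(gb_im_0m_i)^{-1}(gb_in_i'm_0m_i)=(m_0m_i)^{-1}\,n_i'\,(m_0m_i).
$$
Since $n_i'\to e$ and $M$ is compact, $w_i\to e$; hence, by continuity of the right $G$-action on $\Ga\ba G$,
$$
\Ga\,(gb_im_0m_i)=\bigl(\Ga\, h a_i m_i\bigr)\,w_i^{-1}\longrightarrow y .
$$
As $b_i\in A^+$, $b_i\to\infty$ and $m_0m_i\in M$, this exhibits $y\in\limsup(\Ga g)A^+M$; interchanging the roles of $g$ and $h$ gives the reverse inclusion, proving the lemma. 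The only step that genuinely requires care is the reduction to regular $a_i$ via Lemma~\ref{st}: without it, $a_i^{-1}n_0a_i$ need only stay bounded (so $n_i'\not\to e$) and $a_0a_i$ need not lie in $A^+$, so the absorption computation breaks down. Everything else is routine horospherical contraction together with continuity of the action.
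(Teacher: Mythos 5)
Your proof is correct and is exactly the argument the paper intends: it observes that the fibers of $g\mapsto g^+$ are $P$-orbits and leaves the rest as an easy consequence of the regularity Lemma \ref{st}, which is precisely your reduction to sequences with $\log a_i\to\infty$ in a cone $\mathfrak c\subset\inte\fa^+\cup\{0\}$, followed by absorbing the $MA$-part and contracting the $N$-part. You have simply supplied the details the paper omits, and they check out.
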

\noindent
\textbf{Proof of Proposition \ref{prop.Myr1}.} 
Set $\tilde \Om:=\{g\in G: g^{\pm}\in \La\}$.
Suppose $\xi_0\in \La_M$ and $g^+=\xi_0$. We claim that $\Gamma gA^+M=\tilde \Om$.
By Lemma \ref{lem.GH}, we may assume that $g^-\in\La$. Let $h\in \tilde \Om$.
 As $\xi_0\in \La_M$,
there exists $\gamma_i\in \Gamma$ such that $\gamma_i g^+ \to h^+$ and $\gamma_i g o \to h^-$.
By Lemma \ref{lem.C}, by passing to a subsequence, $\gamma_i g^-$ converges to $h^-$.
Therefore $\gamma_i gAM\to hAM$ in  $G/AM$; there exists $b_i m_i \in AM$ such that $\gamma_i g b_i m_i \to h$. We claim that $b_i\in A^+$ for all large $i$. If not,
by passing to a subsequence, we have  $m_i^{-1}$ converges to some $m_0\in M$ and there exists $w\in \cal W-\{e\}$ such that
 $a_i:=w^{-1} b_iw\in A^+$.  Then $\gamma_i g w a_i \to h m_0 w$ . By Lemma \ref{lem.target},
$\gamma_i g o\to hm_0w^-$, and hence $hm_0w^-=h^-$. It follows that
 $w=e$, yielding a contradiction.  Therefore  $h\in \limsup \Gamma gA^+M$, proving the claim.
 
 Now suppose that  $\limsup \Gamma g A^+M =\tilde\Omega$. We claim that $g^+\in \La_M$.
Let $\xi\ne \xi'$ in $\La$, and let $h\in G$ be such that $h^+=\xi$ and $h^-=\xi'$.
By the hypothesis and Lemma \ref{st}, there exist $\gamma_i\in \G$, $m_i\in M$
 and $a_i\to \infty$ regularly in $A^+$ such that $\gamma_i g a_im_i\to h$ 
in $G$. Then $\gamma_i g^+\to h^+=\xi$.
By Lemma \ref{lem.target}, $\gamma_i go\to h^-=\xi'$.
Hence $g^+\in \La_M$. 
This finishes the proof of Proposition \ref{prop.Myr1}.
$\qed$

\begin{lem}\label{lem.ZZ}
Let $\psi\in \dg$, and $(\xi, \xi'), (\eta_1,\eta_2)\in \La^{(2)}$.
If $\ga_i\in\Ga$ and $t_i\to+\infty$ are such that 
$$
\lim_{i\to \infty} (\ga_i\xi,\ga_i\xi',t_i+\psi(\beta_{\ga_i\xi}(o,\ga_io)))= (\eta_1, \eta_2, 0),$$
then $\lim_{i\to \infty}  \ga_i(o)=\eta_2$.
\end{lem}
\begin{proof}
Write $x=\zeta^{-1}(\xi)$, $x'=\zeta^{-1}(\xi')$, $y_1=\zeta^{-1}(\eta_1)$, $y_2=\zeta^{-1}(\eta_2)$, and choose $u\in[x ,x']$.
Since the triangle $[\ga_ix,\ga_ix']\cup[\ga_iu,\ga_ix]\cup [\ga_iu,\ga_ix']$ is $\delta$-thin, it follows that for all $i$, either $\ga_ix\in O_\delta(u,\ga_i u)$ or $\ga_ix'\in O_\delta(u,\ga_i u)$.
We claim the latter holds for all large $i$.

Suppose not.
Then by passing to a subsequence, we may assume that $\ga_ix\in O_\delta(u,\ga_iu)$ for all $i$.
Then  by Proposition \ref{prop.SS} and Lemma \ref{lem.shadow1},
there exists a uniform constant $c>0$ such that $\ga_i\xi\in O_{c(\delta+1)}(uo,\ga_iuo)$ and 
$$
|\psi(\beta_{\ga_i\xi}(uo,\ga_iuo)))-\psi(\mu(\ga_i))|<\norm{\psi}\kappa c(\delta+1).
$$
Since $\psi(\mu(\ga_i))\to+\infty$ as $i\to\infty$ by Lemma \ref{concave0}, and  $\psi(\beta_{\ga_i\xi}(uo,\ga_i uo)))$ and $\psi(\beta_{\ga_i\xi}( o,\ga_i o)))$ are uniformly close to each other, $\psi(\beta_{\ga_i\xi}( o,\ga_i o)))\to+\infty$.
This contradicts the hypothesis that the sequence $t_i+\psi(\beta_{\ga_i\xi}( o,\ga_i o))$ converges to a finite number as $i\to\infty$.
It follows that for all sufficiently large $i$, 
\begin{equation}\label{eq.BB}
\ga_ix'\in O_\delta(u,\ga_iu).
\end{equation}
On the other hand, $\ga_iu\to y_\ell$ for some $\ell\in\{1,2\}$ by Lemma \ref{lem.C}.
Since $\ga_ix'\to y_2$ and $O_\delta(u,\ga_i u)$ converges to $y_\ell$, \eqref{eq.BB} implies that $\ga_iu\to y_2$.
Therefore $\ga_io\to\eta_2$ by Lemma \ref{lem.ext}.
\end{proof}

\begin{theorem}\label{fullm} For any $\PS$-measure $\nu$ on $\Lambda$, $\nu(\Lambda_M )=1$.
\end{theorem}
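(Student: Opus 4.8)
The plan is to first reduce to a $0$--$1$ statement and then establish positivity via double ergodicity. For $(\eta_+,\eta_-)\in\La^{(2)}$ write $\xi_0\in E(\eta_+,\eta_-)$ if there is a sequence $\ga_i\to\infty$ in $\Ga$ with $\ga_i o\to\eta_+$ and $\ga_i\xi_0\to\eta_-$ (convergence in $X\cup\cal F$, in the sense of the Definition following Lemma \ref{gr}). Each $E(\eta_+,\eta_-)$ is a Borel subset of $\La$ (it is $G_\delta$: $E(\eta_+,\eta_-)=\bigcap_n\bigcup_{|\ga|\ge n,\ \ga o\in U_n(\eta_+)}\{\xi_0:\ga\xi_0\in U_n(\eta_-)\}$ over $1/n$-neighbourhoods), and it is $\Ga$-invariant: if $\xi_0\in E(\eta_+,\eta_-)$ via $(\ga_i)$ and $\ga'\in\Ga$, then $\ga_i\ga'^{-1}\to\infty$, $\ga_i\ga'^{-1}(\ga'\xi_0)=\ga_i\xi_0\to\eta_-$, and $\ga_i\ga'^{-1}o=\ga_i(\ga'^{-1}o)\to\eta_+$ by Lemma \ref{ssame}. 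A diagonal argument over a countable dense $\mathcal D\subset\La^{(2)}$ gives $\La_M=\bigcap_{(\eta_+,\eta_-)\in\mathcal D}E(\eta_+,\eta_-)$; in particular $\La_M$ is $\Ga$-invariant and Borel. Since $\nu_\psi$ is $\Ga$-ergodic (Theorem \ref{pop}(5)), each $E(\eta_+,\eta_-)$ is $\nu_\psi$-null or $\nu_\psi$-conull, so it suffices to prove that each $E(\eta_+,\eta_-)$ is non-null, and for this it is enough to produce, for some fixed auxiliary $\zeta\in\La$, a set of $\xi_0$ of full $\nu_\psi$-measure realizing $(\eta_+,\eta_-)$.

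For positivity, observe that $\psi\circ\op i\in\dg$ and that $\nu_{\psi\circ\op i}\otimes\nu_\psi$, restricted to $\La^{(2)}$, is exactly the endpoint measure of $m^{\BMS}_{\psi\circ\op i}$; hence $\Ga$ acts ergodically on $(\La^{(2)},\nu_{\psi\circ\op i}\otimes\nu_\psi)$ by Corollary \ref{AMerg}. As $\La$ is compact metrizable and both densities are fully supported on $\La$, $\nu_{\psi\circ\op i}\otimes\nu_\psi$-a.e.\ $\Ga$-orbit is dense in $\La\times\La$. By Fubini there is a $\nu_{\psi\circ\op i}$-conull set of $\zeta$ for which $\Ga(\zeta,\xi_0)$ is dense for $\nu_\psi$-a.e.\ $\xi_0$; fix such a $\zeta$. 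Then for $\nu_\psi$-a.e.\ $\xi_0$ and every $(\eta_+,\eta_-)\in\La^{(2)}$ there are $\ga_i\to\infty$ in $\Ga$ with $\ga_i\zeta\to\eta_+$ and $\ga_i\xi_0\to\eta_-$. Now convert $\ga_i\zeta\to\eta_+$ into $\ga_i o\to\eta_+$: since $\Ga$ is Anosov it is regular (Theorem \ref{pop}(1)), so writing $\ga_i=k_i\exp(v_i)\ell_i^{-1}\in KA^+K$ we have $v_i\to\infty$ regularly; passing to a subsequence, $k_i^+\to\xi^\ast$, $\ell_i\to\ell_0$, so $\ga_i$ converges to $\xi^\ast$ and $\ga_i o\to\xi^\ast$ (Lemma \ref{ssame}). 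If $\zeta$ lies in general position with $\ell_0^-$ (equivalently $\ell_i^{-1}\zeta$ is eventually in general position with $e^-$), Lemma \ref{gen} forces $\ga_i\zeta\to\xi^\ast$, so $\xi^\ast=\eta_+$ and $\ga_i o\to\eta_+$. Since also $\ga_i\xi_0\to\eta_-$, this yields $\xi_0\in E(\eta_+,\eta_-)$ for all $(\eta_+,\eta_-)$, hence $\xi_0\in\La_M$; thus $\nu_\psi(\La_M)>0$, and the $0$--$1$ law upgrades this to $\nu_\psi(\La_M)=1$.

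The main obstacle is the general-position point in the conversion step: no $\zeta\in\La$ is in general position with \emph{every} limit point (it fails against itself), so one must arrange the approximating sequence $\ga_i$ so that its repelling directions $\ell_i^-$ stay off the proper subvariety $\{\xi:(\zeta,\xi)\notin\F^{(2)}\}$, which requires using the antipodality of $\La$ and the density of loxodromic fixed points (Lemma \ref{dense}) rather than soft measure theory. An alternative route to the same positivity statement, bypassing this issue, is a Borel--Cantelli argument in the spirit of Tukia: for each $(\eta_+,\eta_-)$ one builds target events $A_\ga\subset\La$ (virtual balls in the metric of Theorem \ref{prop.met} that $\ga$ carries near $\eta_-$ while $\ga o$ goes near $\eta_+$), estimates $\nu_\psi(A_\ga)$ from below by the shadow lemma (Lemma \ref{lem.SH2}), uses divergence type $\sum_\ga e^{-\psi(\mu(\ga))}=\infty$ (Lemma \ref{lem.DT}) and the covering lemma (Lemma \ref{inc}) to get enough quasi-independence, and concludes that $\nu_\psi$-a.e.\ $\xi_0$ lies in infinitely many $A_\ga$; this is where higher rank (virtual rather than genuine balls, the Morse property) makes the argument substantially more delicate than in the rank one case.
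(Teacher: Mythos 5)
Your reduction is fine: each $E(\eta_+,\eta_-)$ is Borel, $\Ga$-invariant (via Lemma \ref{ssame}), $\La_M=\bigcap_{\mathcal D}E(\eta_+,\eta_-)$ over a countable dense $\mathcal D\subset\La^{(2)}$, and by $\Ga$-ergodicity of $\nu_\psi$ it suffices to show each $E(\eta_+,\eta_-)$ has positive measure. Likewise the double-ergodicity input is correct: $\Ga$ acts ergodically on $(\La^{(2)},\nu_{\psi\circ\op i}\otimes\nu_\psi)$, so for $\nu_{\psi\circ\op i}$-a.e.\ $\zeta$ and $\nu_\psi$-a.e.\ $\xi_0$ the orbit $\Ga(\zeta,\xi_0)$ is dense in $\La\times\La$. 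The gap is in the conversion step, and it is not the technicality you flag (keeping $\ell_i^-$ off a subvariety); it is a genuine two-endpoint ambiguity that ergodicity alone cannot resolve. From $(\ga_i\zeta,\ga_i\xi_0)\to(\eta_+,\eta_-)$ all you can conclude (this is exactly Corollary \ref{lem.pm}, or your own Cartan-decomposition argument plus antipodality) is that $\ga_i o$ converges to $\eta_+$ \emph{or} to $\eta_-$; the bad alternative is precisely the case $\ga_i^{-1}o\to\zeta$, in which $\ga_i o\to\eta_-$ and $\ga_i\xi_0\to\eta_-$ as well, and the sequence tells you nothing about $\xi_0\in E(\eta_+,\eta_-)$. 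Nothing in the mere density of the $\Ga$-orbit of $(\zeta,\xi_0)$ rules this out or lets you choose a better approximating sequence: the statement "$\ga_i o$ tracks the first coordinate rather than the second" is a directional statement about the Weyl-chamber flow, not a consequence of ergodicity of the boundary action. Invoking Lemma \ref{dense} does not repair this, and your alternative Borel--Cantelli route is only a one-sentence sketch, so the positivity step is not established.

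This missing directional information is exactly what the paper's proof supplies, and is why it runs through the reparametrized translation flow rather than the boundary action. There one takes $(\xi,\xi',0)$ with dense \emph{forward} $\{\tau_s\}_{s\ge0}$-orbit in $\Ga\ba\La^{(2)}\times\bb R$ (Birkhoff for the finite measure of maximal entropy from Theorem \ref{thm.reparam}), so that $\ga_i(\xi,\xi',t_i)\to(\eta_1,\eta_2,0)$ with $t_i\to+\infty$; the convergence of the third coordinate forces $\psi(\beta_{\ga_i\xi}(o,\ga_i o))\to-\infty$, and the thin-triangle/shadow argument (Proposition \ref{prop.SS}, Lemma \ref{lem.shadow1}, Lemma \ref{concave0}) shows that the alternative "$\ga_i o$ tracks $\eta_1$" would make $\psi(\beta_{\ga_i\xi}(o,\ga_i o))\approx\psi(\mu(\ga_i))\to+\infty$, a contradiction; hence $\ga_i o\to\eta_2$ via Lemma \ref{lem.C}. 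If you want to keep your $0$--$1$ framework, you must import this flow-with-time argument (or genuinely carry out the Tukia-style shadow/Borel--Cantelli scheme you allude to); as written, the proof does not go through.
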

\begin{proof}
By Theorem \ref{thm.bij}, $\nu=\nu_\psi$ for some $\psi\in\dg$.
Let $\mathsf m_\psi$ be the $\br:=\{\tau_s:s\in \br\}$-ergodic finite measure on $\Ga\ba\La^{(2)}\times\bb R$ in Theorem \ref{thm.reparam}.
Let $Z_\psi\subset\Ga\ba\La^{(2)}\times\bb R$ denote the set of elements with dense $\br_+$-orbits, and $\tilde Z_\psi$ be its lift in $\La^{(2)}\times\bb R$.
By the Birkhoff ergodic theorem, $Z_\psi$ has full $\mathsf m_\psi$-measure, and hence
$\nu(\pi(\tilde Z_\psi))=\nu(\La)$ where $\pi :\La^{(2)}\times\bb R\to\La$ denotes the projection map $\pi(\xi,\eta,t)=\xi$. 
It is now sufficient to prove that $\pi(\tilde Z_\psi)\subset\La_{\op M}$.

Let $\xi\in \pi(\tilde Z_\psi)$ and $(\eta_1,\eta_2)\in\La^{(2)}$ be arbitrary.
We need to show that there exists $\ga_i\in\Ga$ such that $\ga_i\xi\to\eta_1$ and $\ga_io\to\eta_2$ as $i\to\infty$.
Choose $(\xi,\xi',0)\in \tilde Z_\psi$.
By definition, we can find $\ga_i\in\Ga$ and $t_i\to+\infty$ such that the sequence
$$
\ga_i (\tau_{t_i}.(\xi,\xi',0))=\ga_i(\xi,\xi',t_i)=(\ga_i\xi,\ga_i\xi',t_i+\psi(\beta_{\ga_i\xi}(o,\ga_io)))
$$
converges to $(\eta_1, \eta_2, 0)$. 
Since $\ga_io\to\eta_2$ by Lemma \ref{lem.ZZ}, this finishes the proof.

\end{proof}

In the rank one case, the $\BMS$ measure is finite, and
$A=\{a_t\}$ is the union of $A^+=\{a_t:t\geq 0\}$ and $A^-=\{a_t:t\leq 0\}$. The $AM$-ergodicity of the $\BMS$ measure implies that for almost all $x\in\Ga\ba G$, $xA^\pm M$ is dense in $\Omega=\{x\in \Ga\ba G :x^\pm\in\La\}$.
In general, $A=\cup_{w\in\cal W} wA^+w^{-1}$, and we have the following corollary of Theorem \ref{fullm}:
\begin{cor}\label{cor.AM}
Let $\psi\in\dg$.
For $m_\psi^{\BMS}$-almost all $x\in\Omega$, 
each $ xA^+ M$ and $xw_0A^+M$  is dense in $\Om$.
\end{cor}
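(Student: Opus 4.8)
The plan is to derive the corollary from Theorem \ref{fullm} (full measure of Myrberg points), the Hopf-coordinate formula \eqref{bmss} for $m_\psi^{\BMS}$, and the dynamical characterization of Myrberg points in Proposition \ref{prop.Myr1}.

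First I would note that $\psi\circ\op i\in\dg$ by \eqref{iii}, so $\nu_{\psi\circ\op i}$ is itself a $\PS$ measure on $\La$; hence Theorem \ref{fullm} applies to both $\nu_\psi$ and $\nu_{\psi\circ\op i}$, giving $\nu_\psi(\La_M)=\nu_{\psi\circ\op i}(\La_M)=1$. By \eqref{bmss}, under the Hopf parametrization the lift $\tilde m_\psi^{\BMS}$ on $G/M\simeq\cal F^{(2)}\times\fa$, which is supported on $\La^{(2)}\times\fa$ since $\nu_\psi$ and $\nu_{\psi\circ\op i}$ are carried by $\La$, equals the strictly positive continuous density $e^{\psi(\cal G(g^+,g^-))}$ times the product $d\nu_\psi(g^+)\,d\nu_{\psi\circ\op i}(g^-)\,db$; in particular $\tilde m_\psi^{\BMS}$ and $\nu_\psi\otimes\nu_{\psi\circ\op i}\otimes db$ are Radon measures with the same null sets. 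Since $\La\setminus\La_M$ is $\nu_\psi$-null and also $\nu_{\psi\circ\op i}$-null, Fubini shows that the $\Gamma$- and $M$-invariant set $\{g:g^+\notin\La_M\text{ or }g^-\notin\La_M\}$ is $\tilde m_\psi^{\BMS}$-null, hence descends to an $m_\psi^{\BMS}$-null subset of $\Om$. Therefore for $m_\psi^{\BMS}$-almost every $x=[g]\in\Om$ one has both $g^+\in\La_M$ and $g^-\in\La_M$.

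For such $x=[g]$, Proposition \ref{prop.Myr1} applied to $g$ gives $\limsup\Ga\ba\Ga gA^+M=\Om$; since $\Om$ is closed and $A^+M$-invariant and $xA^+M\subset\Om$, this forces $\overline{xA^+M}=\Om$, i.e.\ $xA^+M$ is dense in $\Om$. For the second statement, observe that $(gw_0)^+=gw_0P=g^-\in\La_M$ and $(gw_0)^\pm=g^\mp\in\La$ (so $xw_0\in\Om$ too); applying Proposition \ref{prop.Myr1} to $gw_0$ then yields $\limsup\Ga\ba\Ga gw_0A^+M=\Om$, i.e.\ $xw_0A^+M$ is dense in $\Om$. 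This proves the corollary.

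I do not expect a genuine obstacle here: the substance lies entirely in Theorems \ref{fullm} and \ref{prop.Myr1} together with the product structure of the $\BMS$ measure. The only points needing a little care are recognizing that the $g^-$-coordinate of $m_\psi^{\BMS}$ is governed by $\nu_{\psi\circ\op i}$, which is again a $\PS$ measure thanks to the $\op i$-invariance of $\dg$, so that Theorem \ref{fullm} may be invoked a second time, and then matching the two quantifiers correctly against Proposition \ref{prop.Myr1} ($g^+$ for the $A^+M$-orbit, $g^-=(gw_0)^+$ for the $w_0A^+M$-orbit).
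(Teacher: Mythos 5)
Your proof is correct and follows essentially the same route as the paper: the paper also deduces the corollary by combining Proposition \ref{prop.Myr1} (density of $xwA^+M$ in $\Om$ is equivalent to $gw^+\in\La_M$) with Theorem \ref{fullm} applied to $\nu_\psi$ for $w=e$ and to $\nu_{\psi\circ\op i}$ for $w=w_0$. The Fubini/product-structure step you spell out via \eqref{bmss} is exactly what the paper leaves implicit, so there is nothing to add.
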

\begin{proof} 
Note that for $x=\Ga g \in \Om$, $xwA^+M $ is dense in $\Om$ if and only if $gw^+\in \La_M$ by Proposition \ref{prop.Myr1}.
For $w=e$ (resp. $w=w_0$),  the claim follows as $\nu_\psi(\La_M)=1$ (resp. $\nu_{\psi\circ \op i} (\La_M)=1$)
by Theorem \ref{fullm}.
\end{proof}

We also observe:
\begin{lem} For any  $x\in \cal E$ and $w\in \cal W-\{e, w_0\}$,
the map $A^+M\to xwA^+M$ is proper.
\end{lem}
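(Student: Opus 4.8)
The claim is that for a fixed $x\in\cal E$ and $w\in\cal W\setminus\{e,w_0\}$, the orbit map $A^+M\to xwA^+M$, $am\mapsto xwam$, is proper. Properness means: if $a_im_i\to\infty$ in $A^+M$, then $xwa_im_i\to\infty$ in $\Ga\ba G$. Since $M$ is compact, this reduces to showing that if $a_i\to\infty$ in $A^+$, then $\Ga g w a_i m_i$ leaves every compact subset of $\Ga\ba G$, where $x=\Ga g$. Suppose not; then after passing to a subsequence there is a sequence $\ga_i\in\Ga$ with $\ga_i g w a_i m_i$ bounded, and we may assume $\ga_i g w a_i m_i\to h$ for some $h\in G$ and $m_i\to m_0\in M$. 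The plan is to derive a contradiction with $w\notin\{e,w_0\}$ by analyzing the forward and backward limit points of the sequence $\ga_i g$ in $\cal F$.

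First I would apply the regularity Lemma \ref{st}: since $g w m_0$ is bounded, $\ga_i\in\Ga$, and $\ga_i (gwm_0) (m_0^{-1}a_i m_0)$ — note $m_0^{-1}a_im_0$ need not lie in $A^+$, so instead I'd write things more carefully. Better: $\ga_i (gw) a_i$ with the extra bounded factor $m_i$ absorbed, i.e. $\ga_i(gw)(a_i m_i)$ is bounded; since $a_i\to\infty$ in $A^+$, Lemma \ref{st} (applied with the bounded sequence $g w$ and $g_i := g w m_i'$ suitably, using that $m_i$ is bounded) shows $a_i\to\infty$ regularly in $A^+$. Then Lemma \ref{st2} applies: $\ga_i (gw) a_i(o)\to (\ga_i\text{-free limit})$. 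Precisely, writing $h_i := \ga_i g w m_i$ so that $h_i a_i'\to h$ for a bounded perturbation, Lemma \ref{st2} gives $\ga_i (gw)(o)\to h^+$ and $\ga_i(gw)w_0(o)\to h^-$, hence $\ga_i(gww_0)(o)\to h^-$. On the other hand, applying Lemma \ref{lem.target} with $b_i := a_i m_i$ — or directly Lemma \ref{st2} to the sequence — yields that $\ga_i g(o)$ itself converges to $h(w^{-1})^- = h(w^{-1}w_0)^+$, using that $g\mapsto g^+$ has $P$-orbit fibers. The point is to extract from the single boundedness hypothesis two different statements about $\lim \ga_i g(o)$ or about $\ga_i g^{\pm}$, one forcing $w$ to act trivially and one forcing it to act by $w_0$.

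More concretely, I expect the cleanest route is: from $\ga_i g w a_i m_i\to h$, Lemma \ref{st2} gives $\ga_i g w \cdot (o) \to h^+$ (forward) because $a_i\to\infty$ regularly; but $\ga_i g w(o)$ and $\ga_i g(w^+)$ differ only by the irrelevant $M$ on the right, so $\ga_i (g w^+)\to h^+$ where $gw^+ := g w P \in\cal F$. Separately, since $\ga_i g$ is \emph{not} assumed bounded, we examine $\ga_i g(o)$: by Lemma \ref{ssame} and regularity of $\mu(\ga_i g)$ (which follows from $\ga_i\in\Ga$ Anosov, hence regular), $\ga_i g(o)\to\xi_\infty\in\La$ along a subsequence, and $\ga_i g^+ = \ga_i g(e^+) \to \xi_\infty$ as well. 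Now $gw^+$ and $g^+$ are related: $gw^+ = g(w e^+)$, and for $w\neq e$, $we^+$ is \emph{not} in general position with $e^-$, i.e. $gw^+$ and $g^-$ fail to be antipodal in a $w$-controlled way. Comparing $\ga_i g^+\to\xi_\infty$ with $\ga_i gw^+\to h^+$ and using Lemma \ref{lem.C} / Corollary \ref{lem.pm} (the dynamics of $\Ga$ on pairs of boundary points), I would force either $\xi_\infty = h^+$ (which pushes toward $w=e$) or a relation that, combined with the backward limit $\ga_i gw^+w_0(\cdot)\to h^-$, pins $w = w_0$. Either way contradicts $w\in\cal W\setminus\{e,w_0\}$.

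\textbf{Main obstacle.} The delicate point is bookkeeping the Weyl group twist: the sequence $\ga_i g w a_i m_i$ bounded does not directly say $\ga_i g (\text{something regular in }A^+)$ is bounded, because $m_i$ conjugates $a_i$ off $A^+$ and $w$ is on the wrong side to be absorbed into $K$. I expect the right fix is to write $gwa_im_i = g(wm_i)(m_i^{-1}a_im_i)$ and use that $\{m_i^{-1}a_im_i : i\}$ generates, via $\cal W$, a regular sequence in some chamber $w'A^+$; then the combined Weyl element is $w'' = w$ up to the (trivial, since $m_i\in M$) action, so regularity in the $A^+$-direction is genuinely what Lemma \ref{st} delivers. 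Once regularity is secured, Lemmas \ref{st2} and \ref{lem.target} do the rest, and the contradiction is extracted from the incompatibility of the two boundary limits with the antipodality built into the definition of $\cal F^{(2)}$ and the Bruhat cell containing $we^+$.
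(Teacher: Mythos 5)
Your reduction to a returning sequence $\ga_i g w a_i m_i\to h$ and the use of Lemma \ref{st} to upgrade $a_i\to\infty$ in $A^+$ to regular convergence are fine, but your stated "main obstacle" is illusory: $M$ centralizes $A$, so $m_i^{-1}a_im_i=a_i$, and since $m_i$ is bounded it can simply be absorbed into $h$. The real problem is that the core of your argument is missing and partly wrong. The facts you extract from boundedness do not lead anywhere: $\ga_i\cdot gw^+\to h^+$ is true but vacuous (it holds simply because $a_im_i\in AM\subset P$, and says nothing about $gw^+$ itself); the claim that Lemma \ref{st2} gives $\ga_i(gw)(o)\to h^+$ has the wrong sign, since by Lemma \ref{lem.target} (with $b_i=a_i\in A^+$) one gets $\ga_i gw(o)\to h^-$; and in any case $\ga_i gw(o)=\ga_i g(o)$ because $w$ is represented in $K$, so comparing "two" orbit limits in $X$ can never detect $w$. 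Worse, the step "$\ga_i g(o)\to\xi_\infty$ hence $\ga_i g^+\to\xi_\infty$" is unjustified — convergence of $g_i\xi$ for a boundary point $\xi$ needs a general-position hypothesis as in Lemma \ref{gen} — and Lemma \ref{lem.C}/Corollary \ref{lem.pm} require both boundary points to lie in $\La$ (resp. $\partial\Ga$), which you do not know for $gw^+$. So no contradiction is actually produced; the final paragraph only gestures at one.

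The missing idea is that recurrence forces $gw^+\in\La$, and only then does antipodality bite. Since $a_i\to\infty$ regularly, $gwa_i(o)\to gw^+$ (Lemma \ref{st2} applied to the constant sequence $gw$); on the other hand, writing $h'=hm_0^{-1}$, left-invariance gives $d(gwa_i(o),\ga_i^{-1}h'(o))=d(\ga_i gwa_i(o),h'(o))\to 0$, so $\ga_i^{-1}h'(o)\to gw^+$, and Lemma \ref{lem.lim} (applied to the $\Ga$-orbit of the fixed point $h'(o)$) yields $gw^+\in\La$. Since $x\in\cal E$ gives $g^+\in\La$, antipodality of $\La$ leaves two options: $gw^+=g^+$, i.e. $wP=P$ and $w=e$; or $(g^+,gw^+)\in\cal F^{(2)}$, which by the Bruhat decomposition forces $w=w_0$ (one needs $we^+\in N^-e^-$, i.e. $w\in N^-w_0P$, hence $w=w_0$). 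This contradicts $w\in\cal W-\{e,w_0\}$, and is exactly the paper's short proof. As it stands, your sketch does not contain the step $gw^+\in\La$ and therefore would not close.
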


\begin{proof}
Note that if $(g^+,gw^+)\in\cal F^{(2)}$ for $g\in G$ and $w\in\cal W$, then $w=w_0$.
Choose $g\in G$ so that $\Ga g=x\in \cal E$.
Since $g^+\in\La$ and $\La\times \La -\{(\xi, \xi)\}\subset \cal F^{(2)}$ by the antipodality, $gw^+\in\La$ can happen only for $w\in\{ e,w_0\}$.
Suppose for some $\ga_i\in\Ga$ and $a_i\to\infty$ in $A^+$, $\ga_i gwa_i$ converges to some $h\in G$ as $i\to\infty$.
This means that $d(gwa_i,\ga_i^{-1}h)\to 0$ as $i\to\infty$, and hence $gw^+\in\La$.
Hence, for each $w\in \cal W-\{ e,w_0\}$,
$\limsup xwA^+M=\emptyset$, proving the claim.
\end{proof}

\section{Criterion for ergodicity via essential values}\label{sec.gen}
In this section, let $\Gamma<G$ be a Zariski dense discrete subgroup, and let
$\nu_\psi$ be a $(\Gamma, \psi)$-conformal measure on $\cal F$ for $\psi\in \fa^*$.
Consider the action of  $G$ on $\F\times \fa$ by 
$$g (\xi, v)= (g\xi, v+\beta_\xi( g^{-1},e)).$$
Then the map $g\mapsto (g^+, b:= \beta_{g^+} (e,g))$ induces a $G$-equivariant homeomorphism $G/NM\simeq \cal F\times \mathfrak a$. Using this homeomorphism, we  define a $\Ga$-invariant Radon measure $\widehat\nu_\psi$ on $G/NM\simeq \cal F\times\mathfrak a$ by
$$
d\widehat\nu_\psi (gNM)=d\nu_\psi(g^+)e^{ \psi(b)}\,db.
$$
Since $dm^{\BR}_\psi =d\widehat\nu_\psi\,dm\,dn$, the $NM$-ergodicity of $m_\psi^{\BR}$ is equivalent to the $\Ga$-ergodicity of $\widehat\nu_\psi$.
For simplicity, we set $\nu:=\nu_\psi$ and $\widehat \nu:=\widehat\nu_\psi$ for the rest of the section.
Schmidt gave a characterization of $\Ga$-ergodicity of $\widehat \nu$ using the notion of $\nu$-essential values in the rank one case (\cite{Sch2}, see also \cite[Prop. 2.1]{Ro}).
\begin{definition}\rm
An element $v\in\mathfrak a$ is called a $(\nu,\Ga)$-\textit{essential value}, if for any Borel set $B\subset \cal F$ with $\nu(B)>0$ and any $\e>0$, there exists $\ga\in\Ga$ such that 
$$
\nu\left(B\cap\ga^{-1} B\cap\{\xi\in \cal F : \norm{\beta_\xi(\ga^{-1} o,o)-v}<\e\}\right)>0.
$$
\end{definition}

Let ${\mathsf E}_{\nu}={\mathsf E}_{\nu}(\Gamma)$ denote the set of all $(\nu,\Ga)$-essential values in $\mathfrak a$. It is easy to see that
$\mathsf E_{\nu}$ is a closed subgroup of $\fa$.
The main goal of this section is to prove the following criterion of $\Ga$-ergodicity of $\widehat\nu$, which can be considered as a higher rank version of \cite[Prop. 2.1]{Ro} .

\begin{proposition}\label{prop.erg1}
$(G/NM,\Ga,\widehat\nu)$ is ergodic if and only if $(G/P,\Ga,\nu)$ is ergodic and ${\mathsf E}_{\nu}(\Ga)=\mathfrak a$.
\end{proposition}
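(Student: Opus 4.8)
The plan is to recognize $(G/NM,\Ga,\widehat\nu)$ as a skew-product extension of the nonsingular system $(G/P,\Ga,\nu)$ and then carry out the cocycle-reduction argument of Schmidt, whose rank-one incarnation is Roblin's Proposition 2.1. Under the identification $G/NM\simeq\F\times\fa$ the $\Ga$-action reads $\ga\cdot(\xi,v)=(\ga\xi,\,v+c(\ga,\xi))$ with $c(\ga,\xi)=\beta_\xi(\ga^{-1}o,o)=\sigma(\ga,\xi)$, which is a genuine $\fa$-valued cocycle by the cocycle relation for $\sigma$. Since $d\widehat\nu(\xi,v)=e^{\psi(v)}\,d\nu(\xi)\,dv$ is mutually absolutely continuous with $d\nu(\xi)\,dv$, the two measures have the same null sets, and hence $\Ga$-ergodicity of $\widehat\nu$ is equivalent to $\Ga$-ergodicity of $\nu\otimes dv$ — which is precisely the skew product of $(\F,\nu)$ by $c$ over the translation-invariant Haar measure $dv$ on $\fa$. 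Since $\F$ is compact metrizable and $\widehat\nu$ is Radon, Fubini, disintegration over $\F$, and Lebesgue-density statements on $\F\times\fa$ are all available.

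Two easy reductions come first. If $\widehat\nu$ is $\Ga$-ergodic then so is $\nu$, since a $\Ga$-invariant Borel set $B\subset\F$ with $0<\nu(B)<1$ yields the $\Ga$-invariant set $B\times\fa$ of intermediate $\widehat\nu$-measure; and if $\nu$ is not $\Ga$-ergodic, then neither side of the asserted equivalence holds. So we may assume that $(\F,\Ga,\nu)$ is ergodic and must show that $\widehat\nu$ is $\Ga$-ergodic iff ${\mathsf E}_\nu(\Ga)=\fa$. A direct unwinding of the definition shows that ${\mathsf E}_\nu(\Ga)$ is a closed subgroup of $\fa$: it is closed and contains $0$ on inspection, and if $v,w\in{\mathsf E}_\nu(\Ga)$ then, given $B$ with $\nu(B)>0$ and $\e>0$, one picks $\ga_1$ realizing $v$ on $B$ up to $\e/2$, applies the definition of $w$ to the resulting positive-measure subset to get $\ga_2$, and invokes the cocycle identity to see that $\ga_2\ga_1$ realizes $v+w$ up to $\e$; the same bookkeeping produces $-v$.

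The crux is the \emph{period characterization} of essential values, transcribed from Schmidt: for $v\in\fa$, one has $v\in{\mathsf E}_\nu(\Ga)$ if and only if every bounded $\Ga$-invariant measurable function $\Phi$ on $(\F\times\fa,\widehat\nu)$ satisfies $\Phi(\xi,t+v)=\Phi(\xi,t)$ for $\widehat\nu$-a.e. $(\xi,t)$. Granting this, both directions follow at once. If ${\mathsf E}_\nu(\Ga)=\fa$, then every $\Ga$-invariant $\Phi$ is invariant under all $\fa$-translations, hence $\widehat\nu$-a.e. equal to a function $\Phi_0$ pulled back from $\F$; the fiber shift being irrelevant for $\Phi_0$, $\Ga$-invariance of $\Phi$ forces $\Phi_0$ to be $\Ga$-invariant on $(\F,\nu)$, hence a.e. constant by ergodicity, so $\widehat\nu$ is ergodic. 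If instead ${\mathsf E}_\nu(\Ga)\ne\fa$, choose $v\notin{\mathsf E}_\nu(\Ga)$; the characterization yields a $\Ga$-invariant $\Phi$ that is not a.e. invariant under translation by $v$, hence nonconstant, so $\widehat\nu$ is not ergodic.

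The main obstacle, as expected, is the hard half of the period characterization: that an essential value $v$ is a period of every bounded $\Ga$-invariant $\Phi$. One argues by contradiction from $\int\norm{\Phi(\xi,t+v)-\Phi(\xi,t)}\,d\widehat\nu>0$, combining, exactly as in Roblin's rank-one proof with $|\cdot|$ on $\br$ replaced by $\norm{\cdot}$ on $\fa$: (a) a Lebesgue-density approximation of a positive-measure set in $\F\times\fa$ by rectangles $B\times Q$ with $Q$ a small ball in $\fa$; (b) the defining property of an essential value, which furnishes $\ga\in\Ga$ moving a positive-measure portion of $B$ back into $B$ while shifting the fiber coordinate by approximately $v$, uniformly, using continuity of $\sigma$; and (c) the $\Ga$-invariance of $\Phi$ together with the cocycle identity, which transports the density estimate under $\ga$ and forces the defect to vanish. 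The reverse half — that a common period of all invariant functions is an essential value — is the easier contrapositive: from $v\notin{\mathsf E}_\nu(\Ga)$ one builds directly, using the obstruction supplied by the definition, a $\Ga$-invariant subset of $\F\times\fa$ differing from its $v$-translate on a positive-measure set. Since everything here merely follows the rank-one argument with $\br\rightsquigarrow\fa$, no genuinely new idea is needed beyond checking that the standard measure-theoretic inputs on $\F\times\fa$ remain in force.
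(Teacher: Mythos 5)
Your argument is correct and rests on the same Schmidt--Roblin essential-value criterion that the paper adapts, but you decompose the hard (backward) implication differently. Your pivot is the full period characterization: every $v\in{\mathsf E}_{\nu}(\Ga)$ is an a.e.\ period of every bounded $\Ga$-invariant function on $(\F\times\fa,\widehat\nu)$, proved directly for rough invariant sets by approximating a positive-measure set by rectangles $B\times Q$ (Fubini plus Lebesgue density in the $\fa$-fiber only, so nothing is lost in higher rank), producing $\ga$ from the essential-value property, and transporting the fiberwise density estimate via invariance and the cocycle identity; with ${\mathsf E}_\nu(\Ga)=\fa$ you then descend to $\F$ using a countable dense subgroup of translations (which acts ergodically on $(\fa,\op{Leb})$ --- this is the one step worth making explicit, since ``a.e.\ invariant for each fixed $v$'' cannot be quantified over uncountably many $v$ at once) and conclude by base ergodicity. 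The paper instead proves its key Lemma \ref{contt} only for invariant functions uniformly Lipschitz in the $\fa$-variable, and reaches a general bounded invariant $h$ afterwards by box-averaging $h_\tau$, upgrading to $A$-invariance via a countable dense subset of $A$ and fiber-continuity, showing $c(\tau)=\kappa\tau_1\cdots\tau_r$ by additivity, and recovering $h$ through Lebesgue differentiation along the fibers. So you trade the paper's smoothing, multilinearity and differentiation bookkeeping for a density argument on arbitrary invariant sets, which is closer to Schmidt's original ``essential values are the periods of the invariant $\sigma$-algebra'' formulation; the forward implication is the same computation in both treatments, yours phrased as a contrapositive (the saturation $\Ga(B\times B_{\fa}(0,\e/2))$ misses $B\times B_{\fa}(v,\e/2)$ when $v\notin{\mathsf E}_\nu(\Ga)$), the paper's phrased directly with the thickened sets $\cal B_{w,\e}$.
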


Fixing $\nu$, we set ${\mathsf E}:={\mathsf E}_\nu(\Ga)$ in the rest of this section.
Our proof of Proposition \ref{prop.erg1} is an easy adaptation of the proof of \cite[Prop. 2.1]{Ro}
to a higher rank case.  We begin with the following lemma .
\begin{lem}\label{contt}
Let $h:G/NM=\cal F\times \fa \to [0,1]$ be a  $\Gamma$-invariant Borel function such that for each $\xi\in \cal F$,
$h(\xi, \cdot)$ is a $C$-Lipschitz function on $\fa$ for some $C>0$ independent of $\xi$.
 Then for each $\log a\in {\mathsf E}$, $h(x a)=h(x)$ for $\widehat\nu$-a.e. $x\in G/NM$.
\end{lem}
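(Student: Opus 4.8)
The plan is to work in the coordinates $G/NM\simeq\cal F\times\fa$ given by $gNM\mapsto(g^+,\beta_{g^+}(e,g))$, in which $\widehat\nu$ is the product measure $\nu\otimes(e^{\psi(w)}\,dw)$, the $\Gamma$-action is $\gamma\cdot(\xi,w)=(\gamma\xi,w+c(\gamma,\xi))$ with $c(\gamma,\xi)=\beta_\xi(\gamma^{-1}o,o)$, and (a direct check using $a\in P$ and the cocycle relation) right translation by $a=\exp v$ becomes the $\fa$-translation $T_v\colon(\xi,w)\mapsto(\xi,w+v)$. Thus the assertion is exactly that $\Delta:=h\circ T_v-h$ vanishes $\widehat\nu$-almost everywhere whenever $v=\log a\in\mathsf E$. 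First I would record the elementary facts: $T_v$ commutes with the $\Gamma$-action, so $\Delta$ is again $\Gamma$-invariant; $|\Delta|\le 1$ since $h$ takes values in $[0,1]$; $\Delta(\xi,\cdot)$ is $2C$-Lipschitz on $\fa$; and, by telescoping, $\sum_{j=0}^{n-1}\Delta(\xi,w+jv)=h(\xi,w+nv)-h(\xi,w)\in[-1,1]$ for every $n\ge 1$. I would also use that $\nu$ is $\Gamma$-quasi-invariant, being $(\Gamma,\psi)$-conformal.

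Then I would argue by contradiction. If $\widehat\nu(\{\Delta\ne 0\})>0$ then, after possibly passing to the mirror situation (inequalities reversed), there is $\delta>0$ with $\widehat\nu(\{\Delta\ge\delta\})>0$; since $\widehat\nu=\nu\otimes(e^{\psi(w)}\,dw)$, Fubini produces a single $w_0\in\fa$ with $\nu(B_0)>0$, where $B_0:=\{\xi\in\cal F:\Delta(\xi,w_0)\ge\delta\}$. Fix $n_0$ with $n_0\delta/2>1$ and then $\epsilon>0$ with $2Cn_0\epsilon\le\delta/2$. I would then build, by induction on $k=0,1,\dots,n_0$, Borel sets $B_k\subset\cal F$ with $\nu(B_k)>0$ such that $\Delta(\eta,w_0+jv)\ge\delta-2Ck\epsilon$ for all $\eta\in B_k$ and all $0\le j\le k$. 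For the inductive step, apply the defining property of the essential value $v$ to the set $B_k$ with this same $\epsilon$: there is $\gamma_{k+1}\in\Gamma$ with $\nu(B_{k+1}')>0$, where $B_{k+1}':=\{\xi\in B_k:\gamma_{k+1}\xi\in B_k,\ \|c(\gamma_{k+1},\xi)-v\|<\epsilon\}$; set $B_{k+1}:=\gamma_{k+1}B_{k+1}'$, which has positive $\nu$-measure by quasi-invariance. For $\eta=\gamma_{k+1}\xi\in B_{k+1}$ with $\xi\in B_{k+1}'$: the bounds $\Delta(\eta,w_0+jv)\ge\delta-2Ck\epsilon$ for $0\le j\le k$ hold because $\eta=\gamma_{k+1}\xi\in B_k$; and for $j=k+1$ one uses $\Gamma$-invariance of $\Delta$, namely $\Delta(\gamma_{k+1}\xi,\,w_0+kv+c(\gamma_{k+1},\xi))=\Delta(\xi,w_0+kv)\ge\delta-2Ck\epsilon$, together with the $2C$-Lipschitz bound to replace $c(\gamma_{k+1},\xi)$ by $v$ at the cost of $2C\epsilon$, giving $\Delta(\eta,w_0+(k+1)v)\ge\delta-2C(k+1)\epsilon$. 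At $k=n_0$, any $\eta\in B_{n_0}$ (and $B_{n_0}\ne\emptyset$) satisfies $h(\eta,w_0+n_0v)-h(\eta,w_0)=\sum_{j=0}^{n_0-1}\Delta(\eta,w_0+jv)\ge n_0(\delta/2)>1$, contradicting $h\in[0,1]$; the mirror case ($\widehat\nu(\{\Delta\le-\delta\})>0$) is handled identically.

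The only delicate point is the order of quantifiers in the induction: $\epsilon$ must be chosen once and for all, small in terms of the eventual chain length $n_0$, so that the Lipschitz errors $2Cn_0\epsilon$ accumulated over $n_0$ steps stay below $\delta/2$ and the lower bound $\delta-2Ck\epsilon$ never degenerates — hence $n_0$ is fixed before $\epsilon$. Everything else is routine bookkeeping: the coordinate identification and the cocycle formula, the commutation giving $\Gamma$-invariance of $\Delta$, quasi-invariance of $\nu$, and the Fubini step producing $w_0$. This is the higher-rank transcription of the argument in \cite[Prop. 2.1]{Ro}.
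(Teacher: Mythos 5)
Your proof is correct, but it follows a genuinely different route from the paper's. The paper also argues by contradiction, but it makes a \emph{single} application of the essential-value property --- applied to $-\log a$, which lies in ${\mathsf E}$ because ${\mathsf E}$ is a subgroup of $\fa$ --- and contradicts the $\Gamma$-invariance of $h$ directly: from a positive-measure set where $h(x)<r-C\e<r+C\e<h(xa)$ it extracts, via the Lipschitz hypothesis, a product-type set $F_a\times\cal O$ (with $\cal O\subset\fa$ a ball of radius $\e/2$) on which $h<r<h(\cdot\,a)$, and then produces one $\gamma\in\Gamma$ and a set $\cal A^*$ of positive $\widehat\nu$-measure on which $h\circ\gamma^{-1}>r>h$. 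In particular the paper never uses that $h$ is bounded, nor the quasi-invariance of $\nu$. You instead fix a single fiber $w_0$ by Fubini, iterate the essential-value property for $v=\log a$ itself $n_0$ times (correctly choosing $n_0$ before $\e$), propagate the lower bound $\Delta\ge\delta/2$ along the arithmetic progression $w_0+jv$ using the $\Gamma$-invariance of $\Delta$, the $2C$-Lipschitz bound, and the quasi-invariance of the conformal measure $\nu$ (needed so that $\nu(\gamma_{k+1}B'_{k+1})>0$), and then contradict $0\le h\le 1$ by telescoping. What your route buys: only $v\in{\mathsf E}$ is used, so you avoid the group property of ${\mathsf E}$, and the contradiction is a clean ``accumulated increment exceeds the oscillation'' argument; what it costs: it uses the boundedness of $h$ essentially (the paper's one-step argument works for any real-valued Borel $h$ with the fiberwise Lipschitz property) and invokes quasi-invariance at each of the $n_0$ steps. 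Your preparatory identifications (the coordinates on $G/NM$, the cocycle $c(\gamma,\xi)=\beta_\xi(\gamma^{-1}o,o)$ matching the one in the definition of essential values, right translation by $a$ becoming the $\fa$-translation $T_{\log a}$, and the product form of $\widehat\nu$) all agree with the paper's setup, and the bookkeeping in your induction is sound.
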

\begin{proof} 
Suppose that $\widehat \nu\{x \in G/NM: h(x)\neq h(xa)\}>0$ for some $\log a\in \mathsf E$.
We will then find a subset $\cal A^*=\cal A^*(a) \subset G/NM$ with $\widehat \nu(\cal A^*)>0$ and 
$\gamma\in \Gamma$ such that $h (\gamma^{-1}x) \ne h(x) $ for all $x\in \cal A^*$;
this contradicts the $\Gamma$-invariance of $h$.

By replacing $h$ with $-h$ if necessary, we may assume that $\widehat \nu
\{x  \in G/NM: h(x)< h(xa)\}>0$.
Hence there exist $r,\e>0$ such that
$$
Q_a :=\{x\in G/NM: h(x)<r-C \e<r+C \e<h(xa)\}
$$
has a positive $\widehat\nu$-measure.
Now we can choose a ball $\cal O=B_{\mathfrak a}(v_0,\e/2)\subset \fa$  such that
$$
\widehat\nu((\cal F\times \cal O)\cap Q_a )>0.
$$
Set $
F_a:=\{\xi\in \cal F : (\{\xi\}\times \cal O)\cap Q_a\neq\emptyset\}.
$
We claim that
\begin{equation}\label{eq.cl1}
\text{if }(\xi,w)\in F_a\times \cal O,\text{ then }h(\xi,w+\log a)>r>h(\xi,w).
\end{equation} 
Note that there exists $v\in\mathfrak a$ with $\norm{v}<\e$ such that $(\xi,w+v)\in Q_a$ and hence
\begin{align*}
|h(\xi,w)|&\leq |h(\xi,w)-h(\xi,w+v)|+|h(\xi,w+v)|<C \norm{v}+(r-C \e)\leq r.
\end{align*}
Similarly,
\begin{align*}
|h(\xi,w+\log a)| &\geq |h(\xi,w+v+\log a)|
-|h(\xi,w+\log a)-h(\xi,w+v+\log a)|
\\
&> (r+C \e)-C\norm{v}>r,
\end{align*}
which verifies the claim \eqref{eq.cl1}.

Since $-\log a\in {\mathsf E}$ and $\nu(F_a)>0$, there exists $\ga\in\Ga$ such that
$$
\cal A:=F_a\cap \ga F_a\cap\{\xi\in G/P : \norm{\beta_\xi(o,\ga o )+\log a}<\e/2\}
$$
has a positive $\nu$-measure.
For $\xi\in\cal A$, set
$$
\cal O_\xi:=\{w\in \cal O: w-(\beta_\xi(o,\ga o)+\log a)  \in \cal O\}.
$$
Since $\norm{\beta_\xi(o,\ga o)+\log a}<\e/2$, and $\cal O$ is a Euclidean ball of diameter $\e$, there is a uniform positive lower bound for 
the volume of $\cal O_\xi$.
It follows that
$$
\cal A^*:=\bigcup_{\xi\in\cal A}\{\xi\}\times \cal O_\xi
$$
has positive $\widehat \nu$-measure.
We now claim that $h\circ\ga^{-1}>h$ on $\cal A^*$. 

Let $(\xi, w)\in \cal A^*$.
Since
$(\xi,w)\in F_a\times \cal O$,
\eqref{eq.cl1} implies that $h(\xi, w)<r$.

Write 
$\ga^{-1}(\xi,w)=(\ga^{-1}\xi,w-(\beta_\xi(o,\ga o)+\log a)+\log a).$ Since $(\ga^{-1}\xi,w-(\beta_\xi(o,\ga o)+\log a))\in F_a\times \cal O$,
 \eqref{eq.cl1} says that $$
h(\ga^{-1}(\xi,w))>r ;$$
this proves the claim. 
\end{proof}

\noindent
\textbf{Proof of Proposition \ref{prop.erg1}.}
 Assume that $(G/NM,\Ga,\widehat\nu)$ is ergodic.
Let $\pi : G/NM\to G/P$ denote the projection map.
Since $\pi_*\widehat\nu$ is absolutely continuous with respect to $\nu$, it follows that $(G/P,\Ga,\nu)$ is ergodic.

To show ${\mathsf E}=\mathfrak a$, fix an arbitrary Borel set $B\subset G/P$ of positive $\nu$-measure.
For any $w\in\mathfrak a$ and $\e>0$, we define
$$
\cal B_{w,\e}:=\{(\xi,v)\in G/P\times\mathfrak a : \xi\in B,\, \norm{v-w}<\e\}\subset G/NM.
$$
Observe that 
\begin{align*}
\widehat\nu(\cal B_{0,\e})&=\int_{G/P}\int_{\mathfrak a} \mathbf{1}_{\cal B_{0,\e}}(\xi,b)e^{ \psi(b)}\,db\,d\nu(\xi)\geq \op{Vol}(B_{\mathfrak a}(0,\e))\,e^{-\norm{\psi}\e}\nu(B)>0.
\end{align*}
Hence it follows from the ergodicity of $(G/NM,\Ga,\widehat\nu)$ that $\widehat\nu(G/NM-\Ga\cal B_{0,\e})=0$.
In particular, there exists $\ga\in\Ga$ such that $\widehat\nu(\cal B_{w,\e}\cap \ga\cal B_{0,\e})>0$.
Finally, note that if $(\xi,v)\in \cal B_{w,\e}\cap \ga\cal B_{0,\e}$, then $\xi\in B\cap\ga B$, and 
$$
\norm{\beta_\xi(e,\ga)-w}\leq\norm{\beta_\xi(e,\ga)-v}+\norm{v-w}\leq\e+\e=2\e.
$$
This, together with the fact $\pi_*\widehat\nu\ll\nu$, implies that
$$
\nu(B\cap\ga B\cap\{\xi\in G/P : \norm{\beta_\xi(e,\ga)-w}\leq2\e\})>0,
$$
which finishes the proof of $(\Rightarrow)$.

We now assume that $(G/P,\Ga,\nu)$ is ergodic and ${\mathsf E}=\mathfrak a$.
Let $h : G/NM\to [0,1]$ be a $\Ga$-invariant Borel function. We need to show that $h$ is constant $\widehat\nu$-a.e.
Identifying $\mathfrak a\simeq\bb R^r$ with $r=\text{rank }G$,  for each $\tau=(\tau_1,\cdots,\tau_r)\in\mathfrak a$,
we define a $\Gamma$-invariant Borel
 function $h_\tau: G/NM\to\bb R$ as follows:
$$
h_{\tau}(x)=\int_0^{\tau_1}\cdots\int_0^{\tau_r}h(x\op{exp}(t_1,\cdots,t_r))\,dt_r\,\cdots dt_1.
$$

Note that $h_\tau$  satisfies the hypothesis of Lemma \ref{contt}. Hence by
 the hypothesis ${\mathsf E}_\nu=\fa$,
 for each $a\in A$, $h_{\tau}(x)=h_{\tau}(xa)$ for $\widehat\nu$-a.e. $x\in G/NM$.

Let $\{a_n:n\in\bb N\}$ be a countable dense subset of $A$.
Then there exists $\Omega_n$ of full $\widehat\nu$-measure such that for all $x\in\Omega_n$, $h_\tau(x)=h_\tau(xa_n)$.
Set $\Omega:=\cap_{n=1}^\infty \Omega_n$.
Then for all $x\in\Omega$, we have $h_\tau(x)=h_\tau(xa)$ for all $a\in A$, as $h_\tau(\xi,\cdot)$ is continuous on $\mathfrak a$. 
Now $h_\tau$ is a $\Ga$-invariant function on $G/NM$, which is also $A$-invariant $\widehat\nu$-a.e.

Since $(G/P,\Ga,\nu)$ is ergodic, there exists $c(\tau)\in\bb R$ such that $h_\tau=c(\tau)$ $\widehat\nu$-a.e. on $G/NM$.

Next, fix $1\leq i\leq r$ and $\tau_1,\cdots,\tau_{i-1},\tau_{i+1},\cdots,\tau_r \geq 0$, and define
$$f(t):=(\tau_1,\cdots,\tau_{i-1},t,\tau_{i+1},\cdots,\tau_r)\in\mathfrak a.$$
Then $t\mapsto c(f(t))$ is linear; indeed, by definition, we have 
$$h_{f(t+s)}=h_{f(t)}+h_{f(s)}\circ\exp(te_i)$$ for all $t,s\geq 0$ and hence $c(f(t+s))=c(f(t))+c(f(s))$.
We conclude $c(\tau)=\kappa\tau_1\cdots\tau_r$, for some $\kappa\in\bb R$.

Hence for each $\tau\in\mathfrak a$, $h_\tau=\kappa\tau_1\cdots\tau_r$ $\widehat\nu$-a.e.
Since $|h_{\tau+\sigma}-h_\tau|\leq2^r\norm{\sigma}\norm{\tau}^{r-1}$ 
and hence $\tau\to h_\tau$ is continuous,  using a countable dense subset of $\fa$, 
we conclude there exists a subset $\Omega$ of full $\widehat\nu$-measure such that 
$$h_\tau(x)=\kappa\tau_1\cdots\tau_r\quad\text{ for all $x\in\Omega$ and $\tau\in\mathfrak a$.}$$
By restricting $h_\tau$ to each fiber of $\pi : G/NM\to G/P$, and applying the Lebesque differentiation theorem, we conclude that $\frac{1}{\tau_1\cdots\tau_r}h_\tau(x)\to h(x)$ as $\tau\to0$ for $\widehat\nu$-a.e. $x$.
Consequently, $h=\kappa$ $\widehat\nu$-a.e., finishing the proof.

\section{Ergodicity of $m^{\BR}_\psi$ and classification}
Let $\Gamma<G$ be an Anosov subgroup. 
Recall the $NM$-invariant $\BR$ measure $m_\psi^{\BR}$ defined in \eqref{def.BR}.
We prove the following theorem in this section:

\begin{thm}\label{thm.BR}
For each $\psi\in\dg$, $m_\psi^{\BR}$ is $NM$-ergodic.
\end{thm}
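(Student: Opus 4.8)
By Proposition \ref{prop.erg1}, the $NM$-ergodicity of $m_\psi^{\BR}$ is equivalent to the conjunction of two statements: that $(G/P, \Gamma, \nu_\psi)$ is ergodic, and that $\mathsf E_{\nu_\psi}(\Gamma) = \mathfrak a$. The first of these is immediate: by Theorem \ref{pop}(5), $\nu_\psi$ is the unique $(\Gamma,\psi)$-PS measure and hence is $\Gamma$-ergodic. So the entire content of the theorem is the computation of the group of essential values, and the plan is to prove $\mathsf E_{\nu_\psi}(\Gamma) = \mathfrak a$ via Proposition \ref{mt}, i.e. by showing that $\lambda(\Gamma) - F_\psi \subset \mathsf E_{\nu_\psi}(\Gamma)$ for some finite set $F_\psi$; since $\mathsf E_{\nu_\psi}(\Gamma)$ is a closed subgroup of $\mathfrak a$ and $\lambda(\Gamma)$ generates a dense subgroup of $\mathfrak a$ (Benoist), removing a finite set still leaves a dense subgroup, forcing $\mathsf E_{\nu_\psi}(\Gamma) = \mathfrak a$.

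To establish $\lambda(\gamma_0) \in \mathsf E_{\nu_\psi}(\Gamma)$ for a fixed loxodromic $\gamma_0 \in \Gamma$ with attracting fixed point $\xi_0$, I would follow the strategy outlined in the introduction: given $\varepsilon > 0$ and a Borel set $B \subset \mathcal F$ with $\nu_\psi(B) > 0$, produce $\gamma \in \Gamma$ with $\nu_\psi\big(B \cap \gamma\gamma_0\gamma^{-1} B \cap \{\xi : \|\beta_\xi(o, \gamma\gamma_0\gamma^{-1} o) - \lambda(\gamma_0)\| < \varepsilon\}\big) > 0$, which is exactly \eqref{ob}. The mechanism is an approximation argument: one uses the virtual balls $D(\gamma\xi_0, r)$ and the Vitali-type covering lemma (Lemma \ref{inc}, a consequence of Theorem \ref{prop.met}) together with the Lebesgue density statement (Proposition \ref{lem.sh}) to approximate $B$ by a single $D(\gamma\xi_0, r)$; then one checks that for such approximating balls, the conjugate $\gamma\gamma_0\gamma^{-1}$ roughly fixes the ball and moves points with Busemann displacement close to $\lambda(\gamma_0)$ — this last point is where the key estimate Lemma \ref{max00} (Theorem \ref{concave}) controlling $\psi(\beta_\xi(\gamma p, p))$ by $\psi(\underline a(\gamma p, p))$ up to a uniform constant is needed. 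That a density point exists at which this approximation is possible for \emph{all} pairs $(\eta_1,\eta_2)$ simultaneously is guaranteed by Theorem \ref{fullm}: $\nu_\psi$-almost every point is a Myrberg limit point, so one can choose $\gamma$ so that $\gamma\xi_0$ points at the density direction of $B$ while $\gamma o$ heads toward another prescribed target.

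The main obstacle, as the authors themselves flag, is handling a \emph{general} $\psi \in D_\Gamma^\star$ rather than a strongly positive one. For strongly positive $\psi$, Lemma \ref{lem.SP} gives the clean two-sided bound $-\psi(\underline a(q,p)) \le \psi(\beta_\xi(p,q)) \le \psi(\underline a(p,q))$ directly, and the covering/approximation geometry is much more transparent. In general one only has the weaker Lemma \ref{max00}, whose proof in turn rests on the full Anosov machinery — the Morse property (Proposition \ref{Morse}), the fact that shadows in $\Gamma$ map to shadows in $\mathcal F$ (Proposition \ref{prop.SS}), and the additivity-up-to-constant of the Cartan projection along geodesics (Corollary \ref{cor.R2}). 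Assembling these ingredients so that the errors introduced by the multiplicative constants in the covering lemma, the non-symmetry of $d_{\psi,p}$, and the constant $C$ in Lemma \ref{max00} all stay controlled uniformly in $\gamma$ and can be absorbed into the finite set $F_\psi$ is the delicate part. Once Proposition \ref{mt} is in hand, the deduction of Theorem \ref{thm.BR} is the short formal argument described in the first paragraph.
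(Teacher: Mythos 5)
Your proposal takes essentially the same route as the paper: reduce via Proposition \ref{prop.erg1} to the $\Gamma$-ergodicity of $\nu_\psi$ (Theorem \ref{pop}(5)) together with ${\mathsf E}_{\nu_\psi}(\Gamma)=\mathfrak a$, and obtain the latter by showing that all Jordan projections outside a finite set are essential values, using the virtual balls $D(\gamma\xi_0,r)$, the covering lemma \ref{inc}, the density statement of Proposition \ref{lem.sh}, the Myrberg full-measure theorem \ref{fullm}, and the Busemann--Cartan comparison of Theorem \ref{concave}. This is exactly how the paper argues (via Propositions \ref{dense1} and \ref{ess}), so your outline is correct and not materially different from the paper's proof.
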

Recall the definition of $\widehat\nu_\psi$ and $\nu_\psi$ from section \ref{sec.gen}.
Since $(\cal F , \Gamma, \nu_{\psi})$ is ergodic by Theorem \ref{pop}, the following proposition
 implies that $( G/NM, \Ga, \widehat\nu_{\psi})$, and hence
$(\Ga\ba G, NM, m_\psi^{\BR})$,  is ergodic by Proposition \ref{prop.erg1}. 

\begin{proposition}\label{prop.EV}\label{posm} Let $\Ga_0$ be a Zariski dense normal subgroup of $\Ga$.
For any $\psi\in\dg$, we have ${\mathsf E}_{\nu_\psi}(\Ga_0)=\fa$.  In particular,
${\mathsf E}_{\nu_\psi}(\Ga)=\fa$.
\end{proposition}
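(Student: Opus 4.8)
The plan is to reduce everything to establishing the key statement of Proposition \ref{mt}: that there is a finite set $F_\psi\subset\la(\Ga_0)$ with $\la(\Ga_0)-F_\psi\subset\mathsf E_{\nu_\psi}(\Ga_0)$. Since $\Ga_0$ is Zariski dense in $G$, Benoist's theorem gives that $\la(\Ga_0)$ generates a dense subgroup of $\fa$, so once the key statement holds, $\mathsf E_{\nu_\psi}(\Ga_0)$ contains a dense subgroup of $\fa$; being a closed subgroup of $\fa$, it must equal $\fa$. Taking $\Ga_0=\Ga$ gives the final sentence. (Note $\nu_\psi$ restricted to a $\Ga_0$-invariant situation: by Zariski density of $\Ga_0$ its limit set in $\cal F$ is still $\La$, and one checks $\nu_\psi$ is the unique $(\Ga_0,\psi)$-conformal measure as well, so the essential-value machinery of section \ref{sec.gen} applies verbatim to $\Ga_0$.)

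To prove $\la(\g_0)\in\mathsf E_{\nu_\psi}(\Ga_0)$ for all but finitely many loxodromic $\g_0\in\Ga_0$, I fix such a $\g_0$ with attracting fixed point $\xi_0$, and aim to verify \eqref{ob}: for every Borel $B\subset\cal F$ with $\nu_\psi(B)>0$ and every $\e>0$, find $\g\in\Ga_0$ with
$$\nu_\psi\bigl(B\cap\g\g_0\g^{-1}B\cap\{\xi:\norm{\beta_\xi(o,\g\g_0\g^{-1}o)-\la(\g_0)}<\e\}\bigr)>0.$$
The mechanism is conjugation-and-contraction: by Lemma \ref{lem.el}, $\beta_{y_{\g_0}}(o,\g_0 o)=\la(\g_0)$ exactly, and $y_{\g\g_0\g^{-1}}=\g\xi_0$; so as $\g\xi_0$ ranges over a dense subset of $\La$ (Lemma \ref{dense}) and the "size" of the relevant shadow shrinks, the conjugates $\g\g_0\g^{-1}$ move $\nu_\psi$-mass near $\g\xi_0$ while keeping the Busemann value close to $\la(\g_0)$. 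Concretely I would use the family of virtual balls $D(\g\xi_0,r)$ defined in the introduction, show via Lemma \ref{max00} (i.e. Theorem \ref{concave}) and Lemma \ref{lem.shadow1} that a suitable such ball both contains $\g\g_0\g^{-1}B$-mass and lies in the $\e$-set, and then use the covering lemma \ref{inc} (a consequence of Theorem \ref{prop.met}) together with the Lebesgue-density statement for the family $\{D(\g\xi_0,r)\}$ at $\nu_\psi$-a.e.\ Myrberg point to approximate a general $B$ by one such $D(\g\xi_0,r)$. Theorem \ref{fullm} guarantees $\nu_\psi$-a.e.\ point is Myrberg, so the approximation is available $\nu_\psi$-a.e., and $\nu_\psi$ is atom-free (Lemma \ref{lem.SH2}) so the masses involved are genuinely positive.

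The finite exceptional set $F_\psi$ arises because Lemma \ref{max00} and the shadow estimates carry additive constants $C=C(\psi,p)$: the argument controlling $\beta_\xi(o,\g\g_0\g^{-1}o)$ near $\la(\g_0)$ only works once $\|\la(\g_0)\|$ (equivalently $|\g_0|$, by the quasi-isometric embedding property and Lemma \ref{concave0}) is large enough to dominate these constants; the finitely many short $\g_0$ are thrown into $F_\psi$. The main obstacle, and the technical heart, is precisely the proof of Lemma \ref{max00} for a general (not strongly positive) $\psi\in\dg$: one must pass from shadows in the word-hyperbolic group $\Ga$ to shadows in $G/K$ (Proposition \ref{prop.SS}) using the Morse property, control $\mu(\g_1\g_2)$ versus $\mu(\g_1)+\mu(\g_2)$ up to a bounded error on geodesic products (Corollary \ref{cor.R2}), and combine these with the decomposition of Proposition \ref{prop.AC} — all ingredients already assembled in sections \ref{sec.Anosov}–\ref{sec.vis}. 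Assembling them into the density/covering argument that yields \eqref{ob} for arbitrary Borel $B$ is the step I expect to require the most care, since it is where the non-symmetry of $d_{\psi,p}$ and the need to handle all $\psi\in\dg$ simultaneously genuinely bite.
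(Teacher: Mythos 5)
Your plan is essentially the paper's: reduce to showing that every Jordan projection $\la(\ga_0)$ with $\psi(\la(\ga_0))$ above a fixed threshold is an essential value, and prove this by the conjugation--contraction mechanism using the virtual balls $D(\ga\xi_0,r)$, Theorem \ref{concave}, the shadow lemma \ref{lem.shadow1}, the covering lemma \ref{inc}, and Lebesgue density along these balls at Myrberg points (Theorem \ref{fullm}), exactly as in Lemma \ref{spn}, Proposition \ref{lem.sh} and Proposition \ref{ess}. Two clusters of issues should be repaired, though neither changes the architecture. First, normality of $\Ga_0$ enters at a specific point that your write-up obscures: you ask for the conjugator $\ga$ to lie in $\Ga_0$, but the only density statement available (Theorem \ref{fullm}) concerns Myrberg points of $\Ga$, so the approximating sequence of conjugators lies in $\Ga$, not in $\Ga_0$. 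The correct bookkeeping is to let $\ga$ range over $\Ga$ and use normality to conclude $\ga\ga_0\ga^{-1}\in\Ga_0$, which is what the definition of a $(\Ga_0,\nu_\psi)$-essential value needs; as written, your restriction to $\ga\in\Ga_0$ is not supported by your own tools. Relatedly, the parenthetical claim that $\nu_\psi$ is the unique $(\Ga_0,\psi)$-conformal measure is unjustified ($\Ga_0$ need not be Anosov, and neither uniqueness nor $\Ga_0$-ergodicity is known or used here); fortunately it is also unnecessary, since ${\mathsf E}_{\nu_\psi}(\Ga_0)$ is defined for any quasi-invariant measure and is a closed subgroup of $\fa$, which is all the reduction requires.

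Second, the opening deduction -- ``$\la(\Ga_0)$ generates a dense subgroup of $\fa$, hence ${\mathsf E}_{\nu_\psi}(\Ga_0)\supset\la(\Ga_0)-F_\psi$ contains a dense subgroup'' -- is a non sequitur as stated: removing a finite set could a priori shrink the generated subgroup. The paper devotes Lemma \ref{fii} and Proposition \ref{dense1} to exactly this point; alternatively, since every nontrivial $\ga\in\Ga_0$ is loxodromic with $\psi(\la(\ga))>0$, both $\la(\ga^{n})=n\la(\ga)$ and $\la(\ga^{n+1})$ exceed any threshold for large $n$, so $\la(\ga)=\la(\ga^{n+1})-\la(\ga^{n})$ already lies in the group generated by the large-$\psi$ Jordan projections, and density follows from Benoist. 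Finally, your assertion that smallness of $\|\la(\ga_0)\|$ is ``equivalently'' smallness of $|\ga_0|$ via the quasi-isometric embedding is false: $\la$ is a conjugacy invariant while word length is not, so infinitely many $\ga_0$ can share a small Jordan projection. What matters is finiteness of the set of \emph{values} $\{\la(\ga):\psi(\la(\ga))<C\}$, which the paper obtains from the finite exponential growth rate of $\psi\circ\sigma$ in \eqref{fin} -- although, as the power trick above shows, for this proposition that finiteness can be bypassed altogether.
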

Most of the section is devoted to the proof of Proposition \ref{prop.EV}.
We fix a Zariski dense normal subgroup $\Ga_0$ of $\Ga$.

\begin{lem}\label{fii}
For any finite subset $S_0\subset \la(\Gamma_0)$, the subgroup generated by $\la(\Gamma_0)-S_0$ is dense in $\fa$.
\end{lem}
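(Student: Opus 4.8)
The plan is to exploit the normality of $\Gamma_0$ in $\Gamma$ together with the density of $\lambda(\Gamma)$ in $\fa$ (which follows from Benoist's theorem, as the Jordan projection $\lambda(\Gamma)$ generates a dense subgroup of $\fa$) and the Zariski density of $\Gamma_0$. The first observation is that for a loxodromic element $\gamma\in\Gamma$ and any $g\in\Gamma$, the conjugate $g\gamma g^{-1}$ is again loxodromic with $\lambda(g\gamma g^{-1})=\lambda(\gamma)$, but more importantly $\gamma^n\in\Gamma_0$ for a suitable $n$ is false in general — instead, since $\Gamma_0$ is normal and Zariski dense, a theorem of Benoist/Prasad-type guarantees that the Zariski closure considerations force $\lambda(\Gamma_0)$ to already span a dense subgroup. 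So the real content is: removing a \emph{finite} set cannot destroy this density.

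The key step is the following pigeonhole/absorption argument. Let $H$ be the closure of the group generated by $\lambda(\Gamma_0)-S_0$; it is a closed subgroup of $\fa$, hence of the form $V\oplus L$ for a subspace $V$ and a lattice $L$ in a complementary subspace. Suppose $H\neq\fa$. Since $\lambda(\Gamma_0)$ generates a dense subgroup of $\fa$ — here I would first establish that $\overline{\langle\lambda(\Gamma_0)\rangle}=\fa$ using that $\Gamma_0$ is Zariski dense and normal, so its limit cone equals $\L_\Gamma$ which has nonempty interior, and then invoke Benoist's density theorem applied to $\Gamma_0$ itself — there must exist $v_0\in S_0$ with $v_0\notin H$. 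Now pick any $w\in\lambda(\Gamma_0)-S_0$ that is not annihilated by the relevant projection, and consider sums: for infinitely many loxodromic $\gamma\in\Gamma_0$ one has, by a Morse/additivity estimate (compare Corollary \ref{cor.R2} and Lemma \ref{lem.el}), that $\lambda(\gamma\delta)$ for a fixed loxodromic $\delta\in\Gamma_0$ with $|\gamma\delta|=|\gamma|+|\delta|$ stays within bounded distance of $\lambda(\gamma)+\lambda(\delta)$. Since $S_0$ is finite, all but finitely many such products lie outside $S_0$, so their Jordan projections lie in $H$; subtracting two such shows $\lambda(\gamma)+\lambda(\delta)-\lambda(\gamma')-\lambda(\delta')\in H + (\text{bounded set})$, and letting these range over a dense-generating family for $\fa$ forces $H$ to be cocompact, i.e. $V=\fa$, hence $H=\fa$, a contradiction.

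The step I expect to be the main obstacle is making the "all but finitely many products avoid $S_0$'' reasoning actually produce enough independent directions: one needs that the set $\{\lambda(\gamma):\gamma\in\Gamma_0 \text{ loxodromic}, \gamma\notin T\}$ still generates a dense subgroup of $\fa$ for every finite set $T\subset\Gamma_0$, and then translate removal of finitely many \emph{elements} of $\Gamma_0$ into removal of finitely many \emph{values} in $\lambda(\Gamma_0)$. The cleanest route is probably to bypass the geometric estimates entirely: use that $\Gamma_0$ is infinite and Zariski dense, so it contains a free Zariski-dense subgroup $F=\langle a,b\rangle$ (a ping-pong / Tits-alternative argument, available since every nontrivial element of $\Gamma\supset\Gamma_0$ is loxodromic by Theorem \ref{pop}); then $\{\lambda(wa w^{-1}\cdot(\text{long words in }a,b))\}$ produces infinitely many distinct Jordan projections in each of a spanning set of directions, only finitely many of which can meet the fixed finite set $S_0$. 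I would then conclude by the same closed-subgroup dichotomy: a closed subgroup of $\fa$ containing a dense-in-$\fa$ set minus finitely many points is all of $\fa$, because density is preserved under deleting a nowhere-dense (in particular finite) subset. This last elementary point — a dense subset of $\fa$ stays dense after removing finitely many points, and the subgroup it generates is then still dense — is what ultimately drives the lemma, with all the group-theoretic work serving only to reduce to it.
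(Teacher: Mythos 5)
Your argument has a genuine gap, and it sits exactly at the point you call ``the elementary point that ultimately drives the lemma.'' You claim that $\la(\Gamma_0)$ is dense in $\fa$ and that density survives deleting finitely many points. But the Jordan projection is never dense in $\fa$: $\la(\Gamma_0)$ lies in the closed cone $\fa^+$ (indeed in the limit cone $\L_\Gamma$). Benoist's theorem only says that the \emph{subgroup generated by} $\la(\Gamma_0)$ is dense in $\fa$, and that property is not automatically inherited by $\la(\Gamma_0)-S_0$: a set can generate a dense subgroup while a cofinite subset of it does not (e.g. $\{1,\sqrt 2\}\subset\br$ versus $\{1\}$). So your ``cleanest route'' reduces the lemma to a false statement, and the preceding absorption argument does not repair it: the estimate you invoke (Corollary \ref{cor.R2}) controls the Cartan projection $\mu$, not $\la$, the conclusion ``$\in H+(\text{bounded set})$'' gives no membership in the closed subgroup $H$, and ``cocompact'' does not imply $V=\fa$ (a closed subgroup $V\oplus L$ with $V+\mathrm{span}(L)=\fa$ is cocompact yet proper). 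The free-subgroup construction likewise only produces many Jordan projections outside $S_0$; it does not by itself show that they generate a dense subgroup.

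What is actually needed, and what the paper's proof supplies, is a mechanism forcing the deleted values $S_0$ back into the closure $F$ of the group generated by $\la(\Gamma_0)-S_0$ (up to bounded denominators). The trick is taking powers: if $s=\la(\ga)\in S_0$ with $\ga\in\Gamma_0$ nontrivial, then $\ga$ is loxodromic (Theorem \ref{pop}(1)), so $\la(\ga^n)=n\la(\ga)$ are infinitely many distinct elements of $\la(\Gamma_0)$, hence some $n_s s\in\la(\Gamma_0)-S_0\subset F$. Writing $F=\sum_{i\le k}\br v_i+\sum_{i\le m}\z w_i$ with $k<\dim\fa$ and setting $N=\prod_s n_s$, one gets $S_0\subset\sum\br v_i+N^{-1}\sum\z w_i$, so the closed subgroup generated by all of $\la(\Gamma_0)$ is contained in this proper closed subgroup, contradicting Benoist's theorem applied to the Zariski dense group $\Gamma_0$ (this is where Zariski density of $\Gamma_0$ enters; normality is not needed in this step). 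Your write-up correctly identifies the structure theory of closed subgroups of $\fa$ and the need to apply Benoist to $\Gamma_0$ itself, but without the power trick (or some substitute producing concrete elements of $F$ that recover $S_0$), the proof does not close.
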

\begin{proof}
Let $F$ denote the closure of the subgroup generated by $\lambda(\Gamma_0)-S_0$.
Suppose that $F\ne \fa$. Identifying $\fa=\br^r$,
since $F$ is infinite, there exist $1\le k<r$ and $0\le m \le r$ such that
$F=\sum_{i=1}^k \br v_i +\sum_{i=1}^m \z w_i$
where $v_i, w_i$ are linearly independent vectors.
For each $s=\la(\ga)\in S_0$,  $\la(\ga^n)=n \la (\ga)\to \infty$ as $\ga$ is loxodromic. Hence
there exists $n_s\in \N$ so that $n_s\la (\ga)\in F$. Setting $N:=\prod_{s\in S_0} n_s$, we have
$S_0\subset \sum_{i=1}^k \br v_i +N^{-1} \sum_{i=1}^m \z w_i$.

Therefore, the closure of the subgroup generated by $F\cup S_0$ is 
contained in $\sum_{i=1}^k \br v_i + N^{-1}
\sum_{i=1}^m \z w_i$. Since $\la (\Gamma_0)\subset  \sum_{i=1}^k \br v_i + N^{-1}
\sum_{i=1}^m \z w_i$ and $\la(\Gamma_0)$ generates a dense subgroup of $\fa$ \cite{Ben2}, it follows that $k=\text{dim }\fa$, yielding a contradiction.
\end{proof}

\begin{prop}\label{dense1} 
For any $\psi\in \dg$ and $C>0$,
the set $\{\lambda(\ga)\in \fa^+:\ga\in\Ga_0, \, \psi(\lambda(\ga))\geq C\}$ generates a dense subgroup of $\fa$.
\end{prop}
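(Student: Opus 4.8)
The plan is to combine Lemma \ref{fii} with a pigeonhole-type argument that extracts, from any finite collection of elements realizing the ``missing'' small values of $\psi\circ\lambda$, a replacement collection whose $\psi$-values are large. First I would fix $\psi\in\dg$ and $C>0$, and set $S_0:=\{\lambda(\ga):\ga\in\Ga_0,\ \psi(\lambda(\ga))<C\}$, which is the complement inside $\lambda(\Ga_0)$ of the set in question. If $S_0$ is finite, then Lemma \ref{fii} immediately gives that $\lambda(\Ga_0)-S_0$ generates a dense subgroup of $\fa$, and we are done. So the real content is to handle the case where $S_0$ is infinite, and the key observation is that $S_0$ being infinite does not prevent $\lambda(\Ga_0)-S_0$ from being dense — we just need a version of Lemma \ref{fii} that tolerates removing an infinite but ``$\psi$-bounded'' subset.

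The step I expect to carry the weight is the following replacement trick. Since $\Ga$ is Anosov, every nontrivial element of $\Ga_0$ is loxodromic (Theorem \ref{pop}(1)), so for $\ga\in\Ga_0$ we have $\lambda(\ga^n)=n\lambda(\ga)$ and $\psi(\lambda(\ga^n))=n\psi(\lambda(\ga))\to+\infty$ by Theorem \ref{pop}(4) applied to $\lambda(\ga)\in\L_\Ga-\{0\}$ (note $\psi>0$ there). Hence for each $\ga\in\Ga_0$ with $\lambda(\ga)\in S_0$ there is $n_\ga\in\N$ with $\psi(\lambda(\ga^{n_\ga}))\geq C$, i.e. $n_\ga\lambda(\ga)$ lies in our target set. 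Thus the subgroup $F$ generated by $\{\lambda(\ga):\ga\in\Ga_0,\ \psi(\lambda(\ga))\geq C\}$ contains $n_\ga\lambda(\ga)$ for every $\ga$ with $\lambda(\ga)\in S_0$, and it also contains all of $\lambda(\Ga_0)-S_0$. I would then argue, exactly as in the proof of Lemma \ref{fii}, that if $\overline{F}\neq\fa$ then $\overline{F}$ is contained in a proper closed subgroup of the form $\sum_{i=1}^k\br v_i+\sum_{i=1}^m\z w_i$ with $k<r=\dim\fa$; since this subgroup is divisible-by-$N$ up to the lattice part, the relation $n_\ga\lambda(\ga)\in\overline{F}$ forces $\lambda(\ga)\in\sum_{i=1}^k\br v_i+\tfrac1{N'}\sum_{i=1}^m\z w_i$ for a suitable common denominator — but here the common denominator cannot be controlled uniformly over the infinitely many $\ga\in S_0$, which is the obstacle.

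To circumvent that, I would instead run the argument one element at a time against the density of $\lambda(\Ga_0)$: suppose $\overline F=\sum_{i=1}^k\br v_i+\sum_{i=1}^m\z w_i$ with $k<r$. Pick any $\ga_*\in\Ga_0$ with $\lambda(\ga_*)\notin\sum_{i=1}^k\br v_i$ — such $\ga_*$ exists because $\lambda(\Ga_0)$ generates a dense, in particular not contained in a proper subspace, subgroup of $\fa$ by \cite{Ben2}. If $\psi(\lambda(\ga_*))\geq C$ then $\lambda(\ga_*)\in F\subset\overline F$, contradicting $k<r$ together with $\lambda(\ga_*)\notin\sum\br v_i$ once we note $\overline F$ spans only a $k$-dimensional space. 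If instead $\psi(\lambda(\ga_*))<C$, replace $\ga_*$ by a power: $\psi(\lambda(\ga_*^n))\to\infty$, so some $\ga_*^{n}$ has $\psi(\lambda(\ga_*^n))\geq C$, hence $n\lambda(\ga_*)\in\overline F$; but $n\lambda(\ga_*)\notin\sum_{i=1}^k\br v_i$ since $\lambda(\ga_*)\notin\sum_{i=1}^k\br v_i$ and this set is a linear subspace, again contradicting $\mathrm{span}\,\overline F=\sum\br v_i$. Therefore $\overline F=\fa$. I should double-check the one subtle point — that $\mathrm{span}_\br\overline F=\sum_{i=1}^k\br v_i$ has dimension $k<r$, so any element outside it witnesses $\overline F\neq\fa$ — which is exactly the structure theorem for closed subgroups of $\br^r$ used already in Lemma \ref{fii}; modulo that, the argument is complete.
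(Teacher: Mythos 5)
Your reduction to the structure theorem for closed subgroups of $\fa\simeq\br^r$ breaks down at exactly the point you flag for double-checking, and it breaks in an unfixable way as stated: a proper closed subgroup $\overline F=\sum_{i=1}^k\br v_i+\sum_{i=1}^m\z w_i$ does \emph{not} span only the $k$-dimensional subspace $\sum_{i=1}^k\br v_i$; its linear span is $\sum_i\br v_i+\sum_j\br w_j$, which can be all of $\fa$ (think of $\z^r$, or $\br^{r-1}\times\z$). Consequently, producing a single $\ga_*\in\Ga_0$ with $\lambda(\ga_*)\notin\sum_{i=1}^k\br v_i$ and a power with $n\lambda(\ga_*)\in\overline F$ yields no contradiction: $\overline F$ already contains many elements outside $\sum_i\br v_i$, namely the lattice part $\sum_j\z w_j$. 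This is precisely why the proof of Lemma \ref{fii} insists on a \emph{finite} exceptional set: finiteness supplies a common denominator $N$ so that all of $\lambda(\Ga_0)$ is trapped inside the proper closed subgroup $\sum_i\br v_i+N^{-1}\sum_j\z w_j$, contradicting density. Your one-element-at-a-time substitute does not recover this, so the case ``$S_0$ infinite'' is not handled, and that is the whole content of the proposition beyond Lemma \ref{fii}.

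The paper closes this gap by proving that $S_0$ is in fact finite: since the cocycle $\psi\circ\sigma$ has finite exponential growth rate (Sambarino's theorem extended to Anosov subgroups, see \cite{Samb3} and \cite[Thm. A.2-(2)]{CAR}), there are only finitely many conjugacy classes $[\ga]$ in $\Ga$ with $\psi(\lambda(\ga))<C$, hence only finitely many values $\lambda(\ga)$ with $\psi(\lambda(\ga))<C$, and Lemma \ref{fii} applies verbatim. Your proposal is missing this input. Incidentally, your power trick can be upgraded to bypass the finiteness issue altogether: for nontrivial $\ga\in\Ga_0$ one has $\psi(\lambda(\ga))>0$ by Theorem \ref{pop}, so for $n$ large \emph{both} $\lambda(\ga^{n})=n\lambda(\ga)$ and $\lambda(\ga^{n+1})=(n+1)\lambda(\ga)$ lie in the target set, whence their difference $\lambda(\ga)$ lies in the subgroup it generates; that subgroup therefore contains all of $\lambda(\Ga_0)$ and is dense by \cite{Ben2}. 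But as written, with only one multiple per element and the erroneous span claim, the argument does not go through.
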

\begin{proof}
 Theorem 3.2 in \cite{Samb3} extends to general Anosov subgroups (see also \cite[Thm. A.2-(2)]{CAR}), and hence the cocycle $c=\psi\circ\sigma$ has a finite exponential growth rate.
In particular, \be\label{fin}
\# \{\lambda(\ga):\ga\in\Ga, \, \psi(\lambda(\ga))< C\}\le
\#\{[\ga]\in[\Ga]: \psi(\lambda(\ga))<C\}<\infty
\ee
where $[\Ga]$ denotes the set of conjugacy classes in $\Ga$.
Hence $\# \{\lambda(\ga):\ga\in\Ga_0, \, \psi(\lambda(\ga))< C\}<\infty$ and the claim follows from Lemma \ref{fii}.
\end{proof}

\begin{lem}\label{comp3} There exists a compact subset $\cal C\subset G$ such that
for any $\xi\in \La$, there exists $g\in \cal C$ such that $g^+=\xi$ and $g^-\in \La$.
\end{lem}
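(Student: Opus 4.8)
The plan is to deduce this from two soft facts: a compactness property of the limit set, and the Hopf parametrization. First I would produce a compact set $\mathcal D\subset\Lambda^{(2)}$ whose image under the first projection $\pi_1\colon\mathcal F\times\mathcal F\to\mathcal F$ is all of $\Lambda$; then I would lift $\mathcal D$ to the required compact subset of $G$ by pinning down the $\mathfrak a$-coordinate in the Hopf parametrization of $G/M$.

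For the first step, recall that $\Lambda$ is a compact (indeed Zariski dense, hence infinite) subset of the Furstenberg boundary $\mathcal F$, and that by the antipodal property \eqref{anti} one has $\Lambda^{(2)}=(\Lambda\times\Lambda)-\{(\xi,\xi):\xi\in\Lambda\}$. So for each $\xi\in\Lambda$ I can choose $\eta_\xi\in\Lambda$ with $\eta_\xi\ne\xi$ and, since $\Lambda$ is compact Hausdorff, an open neighborhood $U_\xi$ of $\xi$ in $\Lambda$ with $\eta_\xi\notin\overline{U_\xi}$. By compactness there is a finite subcover $\Lambda=U_{\xi_1}\cup\dots\cup U_{\xi_n}$, and I set $\mathcal D:=\bigcup_{j=1}^n\overline{U_{\xi_j}}\times\{\eta_{\xi_j}\}$. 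This $\mathcal D$ is compact, it lies in $\Lambda^{(2)}$ because $\eta_{\xi_j}\notin\overline{U_{\xi_j}}$ for every $j$, and $\pi_1(\mathcal D)\supseteq\bigcup_j U_{\xi_j}=\Lambda$.

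For the second step, I would use the Hopf parametrization, i.e.\ the homeomorphism $G/M\cong\mathcal F^{(2)}\times\mathfrak a$, $gM\mapsto(g^+,g^-,\beta_{g^+}(e,g))$. Since $\mathcal D\times\{0\}$ is compact in $\mathcal F^{(2)}\times\mathfrak a$, its preimage $\mathcal D'\subset G/M$ is compact; and since $M$ is compact the quotient map $G\to G/M$ is proper, so the preimage $\mathcal C\subset G$ of $\mathcal D'$ is compact. Given any $\xi\in\Lambda$, pick $j$ with $(\xi,\eta_{\xi_j})\in\mathcal D$; the point of $G/M$ corresponding to $(\xi,\eta_{\xi_j},0)$ lies in $\mathcal D'$, and any lift $g\in\mathcal C$ of it satisfies $g^+=\xi$ and $g^-=\eta_{\xi_j}\in\Lambda$, which is exactly the assertion.

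I do not expect a serious obstacle here; the statement is essentially a compactness argument. The one point that needs care, and the reason the proof goes through $G/M$ rather than directly through $\mathcal F^{(2)}$, is that the fibers of $g\mapsto(g^+,g^-)$ are $MA$-cosets and $A$ is non-compact, so a compact subset of $\mathcal F^{(2)}$ does not lift to a compact subset of $G$; fixing the Busemann ($\mathfrak a$-valued) coordinate in the Hopf parametrization is precisely what removes this non-compactness.
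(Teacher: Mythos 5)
Your proof is correct, but it follows a genuinely different route from the paper. The paper's argument is group-theoretic: it picks a finite set $F\subset\Ga$ such that every $x\in\partial\Ga$ admits some $y\in\partial\Ga$ with a geodesic $[x,y]$ meeting $F$, and then invokes the Morse property (Proposition \ref{Morse}) to find, for $\xi=\zeta(x)$, an element $g$ with $g^+=\xi$, $g^-=\zeta(y)\in\La$ and $go$ in the $R_1$-neighborhood of $F(o)$; the compact set $\cal C$ is then $\{h\in G: d(ho,F(o))\le R_1\}$. You instead use only three soft facts: compactness of $\La$ (it is closed in the compact space $\cal F$ and infinite, so the choice of $\eta_\xi\ne\xi$ and of neighborhoods $U_\xi$ with $\eta_\xi\notin\overline{U_\xi}$ is legitimate), the antipodal property \eqref{anti} (which guarantees $\overline{U_{\xi_j}}\times\{\eta_{\xi_j}\}\subset\La^{(2)}$), and the Hopf parametrization $G/M\simeq\cal F^{(2)}\times\fa$ together with compactness of $M$ to lift the compact set $\mathcal D\times\{0\}$ to a compact subset of $G$. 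What your approach buys is generality and economy: it makes no use of word-hyperbolicity or the Morse property, so it works verbatim for any Zariski dense discrete subgroup whose limit set is antipodal, and it correctly identifies (and resolves, by fixing the $\fa$-coordinate) the only genuine non-compactness issue, namely that the fibers of $g\mapsto(g^+,g^-)$ are $MA$-cosets. What the paper's approach buys is compatibility with the surrounding machinery: the set $\cal C$ is produced directly from the same Morse constant $R_1$ already in play, and the companion point $g^-$ is realized explicitly as $\zeta(y)$ for a boundary point $y$ tied to the geodesic structure of $\Ga$, though none of this extra information is needed for the statement or its later use.
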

\begin{proof} 
In the Gromov hyperbolic space $\Ga$,
there exists a finite subset $F \subset\Ga$ such that 
for any $x\in \partial \Ga$, there exists $y\in \partial \Ga$ such that $[x,y]\cap F\ne \emptyset$.
It suffices to choose a compact subset
 $\cal C\subset G$ such that
 $\cal C (o)$ contains the $R_1$-neighborhood of $F(o)$ with
 $R_1$ given in Proposition \ref{Morse}.
\end{proof}

We set $$N_0:=\max_{p\in \cal C (o)} N_0(\psi, p)<\infty$$ with $N_0(\psi, p)$ and $\cal C$
given by Lemmas \ref{inc} and \ref{comp3} respectively.

In view of Proposition \ref{dense1},   Proposition \ref{prop.EV} is an immediate consequence of the following:
\begin{prop} \label{ess} 
For any $\ga_0\in \G_0$ with $\psi(\la (\ga_0))\ge  1+ \log 3N_0$,
$$\la(\ga_0)\in {\mathsf E}_{\nu_\psi}(\Ga_0) .$$
\end{prop}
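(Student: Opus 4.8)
\textbf{Proof strategy for Proposition \ref{ess}.}
The plan is to fix $\ga_0\in\Ga_0$ with $\psi(\la(\ga_0))\ge 1+\log 3N_0$, let $\xi_0=y_{\ga_0}^+$ be its attracting fixed point, and verify the defining condition of an essential value for the conjugates $\ga\ga_0\ga^{-1}$, i.e. establish \eqref{ob}. Since conjugating $\ga_0$ moves its attracting fixed point to $\ga\xi_0$ and its Jordan projection is unchanged, it suffices to produce, for any Borel $B\subset\cal F$ with $\nu_\psi(B)>0$ and any $\e>0$, some $\ga\in\Ga_0$ (note $\Ga_0$ is normal, so $\ga\ga_0\ga^{-1}\in\Ga_0$) so that
$$\nu_\psi\big(B\cap\ga\ga_0\ga^{-1}B\cap\{\xi:\|\beta_\xi(o,\ga\ga_0\ga^{-1}o)-\la(\ga_0)\|<\e\}\big)>0.$$
The natural test point in $B$ will be a Myrberg limit point (Theorem \ref{fullm} guarantees $\nu_\psi$-a.e.\ point is one), so that one can find $\ga_i\in\Ga$ with $\ga_i o\to\xi_0$ (the attracting direction) and $\ga_i^{-1}\cdot(\text{density point of }B)$ hitting the repelling side; the Myrberg property is precisely what lets one steer a conjugate of $\ga_0$ so that it both (a) maps a virtual ball $D(\ga\xi_0,r)$ into a neighborhood of itself and (b) has Busemann cocycle close to $\la(\ga_0)$ along that ball.

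The key steps, in order, are as follows. First, use Lemma \ref{lem.el}: $\beta_{y_{\ga_0}^+}(p,\ga_0 p)=\la(\ga_0)$ exactly, and by continuity of $\beta$ this persists to within $\e$ for $\xi$ in a small neighborhood $V$ of $\xi_0$ and $p$ in a compact set. Conjugating by $\ga$, the set $\ga V$ is a neighborhood of $\ga\xi_0$ on which $\beta_\xi(\ga p_0,\ga\ga_0\ga^{-1}\ga p_0)$ is within $\e$ of $\la(\ga_0)$ — here one must track base points carefully, using $\beta_{g\xi}(gh,gq)=\beta_\xi(h,q)$. Second, show $\ga_0$ contracts a fixed neighborhood into itself: since $\ga_0$ is loxodromic with attracting point $\xi_0\in\inte\cal F$-direction and $\Ga$ is regular, $\ga_0^n\to$ "rank-one-like" dynamics on $\cal F$, so there is $n$ and a neighborhood $W\ni\xi_0$ with $\ga_0^n W\Subset W$; replacing $\ga_0$ by $\ga_0^n$ (still in $\Ga_0$, still with $\psi(\la)\ge 1+\log 3N_0$ after possibly enlarging $n$) we may assume $\ga_0$ itself does this. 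Third, introduce the virtual balls $D(\ga\xi_0,r)=\bb B_p(\ga\xi_0,\tfrac13 e^{-\psi(\underline a(\ga^{-1}p,p)+\i\,\underline a(\ga^{-1}p,p))}r)$ and invoke Proposition \ref{lem.sh}/the Lebesgue-density argument built on the covering Lemma \ref{inc} and Theorem \ref{prop.met}, which says $\nu_\psi$-a.e.\ point (in particular $\nu_\psi$-a.e.\ point of $B$, and in particular a Myrberg point of $B$) is a density point for the family $\{D(\ga\xi_0,r):\ga\in\Ga,r>0\}$; thus for suitable $\ga,r$ we get $\nu_\psi(B\cap D(\ga\xi_0,r))>(1-\eta)\nu_\psi(D(\ga\xi_0,r))$. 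Fourth, the crucial size estimate: using Lemma \ref{max00} (equivalently Theorem \ref{concave}) together with the condition $\psi(\la(\ga_0))\ge 1+\log 3N_0$ and the conformality $\frac{d(\ga\ga_0\ga^{-1})_*\nu_\psi}{d\nu_\psi}(\xi)=e^{\psi(\beta_\xi(o,(\ga\ga_0\ga^{-1})^{-1}o))}$, show that $\ga\ga_0\ga^{-1}$ maps $D(\ga\xi_0,r)$ inside $D(\ga\xi_0,3N_0 r)$ while expanding $\nu_\psi$-mass by a definite factor $\ge e^{\psi(\la(\ga_0))}/(\text{bounded})\ge 3N_0 e/(\text{bounded})$; combined with the triangle inequality (Lemma \ref{lem.triangle}) and the covering constant $N_0$, this forces $D(\ga\xi_0,3N_0r)$ to have $\nu_\psi$-mass comparable to that of $D(\ga\xi_0,r)$, so the image $\ga\ga_0\ga^{-1}(B\cap D(\ga\xi_0,r))$ must overlap $B$ in positive measure. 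Finally, intersect with the neighborhood $\ga V$ from step one (arranging $D(\ga\xi_0,3N_0r)\subset\ga V$ by taking $r$ small, using that the radii of $D(\ga\xi_0,r)$ shrink with $r$) to conclude \eqref{ob}.

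The main obstacle is the fourth step — the quantitative matching of the $\nu_\psi$-mass expansion of a conjugate of $\ga_0$ against the multiplicative distortion in the covering lemma. In rank one this is transparent because $\beta_\xi(\ga p,p)$ is pinned between $\pm d(p,\ga p)$; in higher rank one only has the one-sided-by-a-constant bound of Lemma \ref{max00}, which is exactly why the threshold $1+\log 3N_0$ (rather than just $>0$) is needed and why the argument must work for arbitrary $\psi\in\dg$ rather than strongly positive ones. Getting the base points and the conjugating element $\ga$ to cooperate — so that the Myrberg-density point of $B$, the virtual ball $D(\ga\xi_0,r)$, the contracting neighborhood of $\ga\xi_0$, and the Busemann-value window all align simultaneously — is the delicate bookkeeping that the proof will have to carry out, and it is where the full strength of the Anosov hypotheses (antipodality, regularity, the Morse property feeding into Lemma \ref{max00} and Theorem \ref{prop.met}) gets used.
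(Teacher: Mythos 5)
Your outline follows the paper's own route: fix $\xi_0=y_{\ga_0}^+$, use Theorem \ref{fullm} to know that Myrberg points are $\nu_\psi$-generic, run a Lebesgue-density argument (Proposition \ref{lem.sh}, built on the covering Lemma \ref{inc} and Theorem \ref{prop.met}) for the family of virtual balls $D(\ga\xi_0,r)$ centered on the $\Ga$-orbit of $\xi_0$, and then use conformality together with Theorem \ref{concave} and the threshold $\psi(\la(\ga_0))\ge 1+\log 3N_0$ to force $B\cap D$ and $\ga\ga_0\ga^{-1}(B\cap D)$ to overlap inside $D$. Two points in your write-up, however, would not survive as stated. The first is your second step: replacing $\ga_0$ by a power $\ga_0^n$ is not a legitimate reduction for this proposition, since it would only show $\la(\ga_0^n)=n\la(\ga_0)\in{\mathsf E}_{\nu_\psi}(\Ga_0)$, and ${\mathsf E}_{\nu_\psi}(\Ga_0)$ is merely a closed subgroup, so this does not give $\la(\ga_0)\in{\mathsf E}_{\nu_\psi}(\Ga_0)$; moreover the downstream density argument (Proposition \ref{dense1} via Lemma \ref{fii}) is stated for the Jordan projections themselves, not for unbounded integer multiples of them, so the weakened conclusion would not feed into Proposition \ref{prop.EV}. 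The step is also unnecessary: the paper never uses a topologically contracting neighborhood of $\xi_0$. Instead, the containment $\ga\ga_0\ga^{-1}D\subset D$ is obtained quantitatively from the conformal identity \eqref{eq.id3} together with the defining condition \eqref{eq.nbd1} of the family $\cal B_R(\ga_0,\e)$ (Busemann values within $\e$ of $\pm\la(\ga_0)$ on the $3N_0$-enlarged ball) and the positivity $\psi(\la(\ga_0)+\op{i}\,\la(\ga_0))>\log 3N_0+1>\norm{\psi}\e$ from \eqref{ggg00}.

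The second point is the distortion direction in your fourth step. The conjugate $\ga\ga_0\ga^{-1}$ is contracting at its attracting point $\ga\xi_0$, so pushing a subset of $D$ forward multiplies its mass by roughly $e^{-\psi(\la(\ga_0))}$, not $e^{+\psi(\la(\ga_0))}$; in fact the paper proves the stronger inclusion $3N_0D\subset\ga\ga_0^{-1}\ga^{-1}D$, i.e.\ the forward map sends the enlarged ball into $D$. The factor $e^{+\psi(\la(\ga_0))}$ (and hence the threshold $1+\log 3N_0$ against the covering constant $3N_0$) enters exactly there, yielding the doubling-type bound $\nu_p(3N_0D)\le e^{\psi(\la(\ga_0))+\norm{\psi}\e}\nu_p(D)$ inside the proof of Proposition \ref{lem.sh} (here $\nu_p$ is the base-point-$p$ normalization of $\nu_\psi$, whose essential values coincide with those of $\nu_\psi$). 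The final overlap is then obtained not by comparing $D(r)$ with $D(3N_0r)$ but by staying inside $D$ itself: the density statement gives $\nu_p(D\cap B)>(1+e^{-\psi(\la(\ga_0))-\norm{\psi}\e})^{-1}\nu_p(D)$, conformality gives $\nu_p(\ga\ga_0\ga^{-1}(D\cap B))\ge e^{-\psi(\la(\ga_0))-\norm{\psi}\e}\nu_p(D\cap B)$, and since both sets lie in $D$ their measures sum to more than $\nu_p(D)$, so they intersect in positive measure within the prescribed Busemann window. With the power-replacement removed and the distortion bookkeeping set in this direction, your outline coincides with the paper's proof.
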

\subsection*{Essential values of $\nu_\psi$} Most of this section is devoted to the proof of this proposition.
We fix  $\ga_0\in\Ga_0$  with
$$\psi(\lambda(\ga_0))\ge \log 3N_0+1.$$

Since $\psi>0$ on $\la(\Gamma)-\{0\}$ by Theorem \ref{pop}(4),
we have
\be\label{ggg00}
\psi (\op{i} \la(\ga_0)) +  \psi(\lambda(\ga_0))>\log 3N_0+1.\ee
\subsection*{Definition of $\cal B_R(\gamma_0,\e)$}
 Let $0<\e<\|\psi\|^{-1}$ be an arbitrary number. We fix $g\in \cal C$  such that $g^+=y_{\ga_0}$ and $g^-\in \La$, given by Lemma \ref{comp3}. 
 Set $p:=go\in \cal C (o) $, $\xi_0:=y_{\ga_0}$ and $\eta:=g^-$.

For $\xi\in \La$ and $r>0$, set
$$
\bb B_p(\xi,r):=\{\eta \in \La : d_{\psi,p}(\xi,\eta)<r\}
$$
where $d_{\psi, p}$  is the virtual visual metric defined in section \ref{sec.vis}.

For each $\ga\in \Ga$, define
$r_p(\ga)>0$ to be the supremum $r\ge 0$ such that
\begin{equation}\label{eq.nbd1}
\max_{\xi\in \bb B_p(\ga \xi_0,3N_0r) } \norm{\beta_\xi(p,\ga\ga_0^{\pm 1} \ga^{-1}p) \mp \la(\gamma_0)  }<\e.
\end{equation}

For each $R>0$,
 we define the family of virtual-balls  as follows:
$$
\cal B_R(\gamma_0, \e) =\{\bb B_p(\ga \xi_0,r) : \ga\in\Ga,  0<r<\min (R, r_p(\ga))\}.
$$
Equivalently, $\bb B_p(\ga\xi_0,r)\in\cal B_R(\ga_0,\e)$ if and only if $r< R$ and one has $\norm{\beta_\eta(p,\ga\ga_0\ga^{-1}p)-\la(\ga_0)}\leq\e$ for all $\eta\in\bb B_p(\ga\xi_0,r)$.

Let $C=C( \psi, p)>0$ be as in Theorem \ref{concave}. Since $\xi_0\in  O_{\e/(8\kappa)}  (\eta,p)$
where $\kappa>0$ is as in Lemma \ref{lem.shadow1}, we can choose $0<s=s(\gamma_0)< R$
small enough such that 
 \be\label{cho}
\bb B_p( \xi_0,e^{\psi(\la(\ga_0)+\op i\,\la(\ga_0))+\frac{1}{2}\norm{\psi}\e+2C}s)\subset 
 O_{\e/(8\kappa)} (\eta,p);
\ee
\be\label{eq.cond2}
\sup_{x\in\bb B_p(\xi_0,e^{2C}s)}  \norm{\beta_x(p,\ga_0^{\pm 1} p)\mp\la(\ga_0)}<\e/4.\ee

For each $\ga\in \Ga$ and $r>0$, set
\begin{align*}
D(\ga \xi_0, r) &:=\bb B_p(\ga \xi_0,\frac{1}{3N_0} e^{-\psi(\underline a(\ga^{-1}p,p)+\op i\underline a(\ga^{-1}p,p))}r).
\end{align*}

\begin{lemma}\label{lem.WD1}\label{spn} Fix $R>0$.
If $\xi\in \La$ and $\ga_i\in\Ga$ is a sequence such that $\gamma_i^{-1} p\to \eta$ and $\gamma_i^{-1}\xi\to \xi_0$ as $i\to\infty$,
then for any $0<r\le s(\gamma_0)$, there exists $i_0=i_0(r)>0$ such  that for all $i\ge i_0$,
 $$D(\ga_i\xi_0,r)\in \cal B_R(\ga_0, \e)\quad \text{and}\quad
\xi \in D({\gamma_i}\xi_0,r).$$ 

In particular, for any $R>0$,
$$\La_M\subset \bigcup_{D\in \cal B_R(\ga_0,\e)} D.$$
\end{lemma}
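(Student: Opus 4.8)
The plan is to verify the two membership statements directly from the definitions, and then deduce the final covering statement using the characterization of Myrberg limit points. First I would handle $\xi \in D(\gamma_i\xi_0, r)$. By definition $D(\gamma_i\xi_0, r) = \bb B_p(\gamma_i\xi_0, \tfrac{1}{3N_0} e^{-\psi(\underline a(\gamma_i^{-1}p,p) + \op i\,\underline a(\gamma_i^{-1}p,p))} r)$, so I must show $d_{\psi,p}(\gamma_i\xi_0, \xi) < \tfrac{1}{3N_0}e^{-\psi(\cdots)} r$ for all large $i$. Using the transformation rule \eqref{eq.id3}, $d_{\psi,p}(\gamma_i\xi_0,\xi) = d_{\psi,\gamma_i^{-1}p}(\xi_0, \gamma_i^{-1}\xi) \cdot e^{-\psi(\beta_{\gamma_i\xi_0}(\gamma_i^{-1}p \cdot \text{correction}))}$ — more precisely I would track the cocycle factor $e^{-\psi(\beta_{\xi}(\gamma_i p, p) + \op i \beta_{\xi}(\gamma_i p, p))}$ relating $d_{\psi,p}$ and $d_{\psi,\gamma_i p}$, which by the $\fa$-valued Busemann identity equals $e^{-\psi(\underline a(\gamma_i^{-1}p,p)+\op i\,\underline a(\gamma_i^{-1}p,p))}$ up to the discrepancy between $\beta_\xi(\gamma_i p, p)$ and $\underline a$. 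Since $\gamma_i^{-1}p \to \eta$ and $\gamma_i^{-1}\xi \to \xi_0$, the factor $d_{\psi,\gamma_i^{-1}p}(\xi_0,\gamma_i^{-1}\xi) \to d_{\psi,\eta}(\xi_0,\xi_0)$-type behavior — no, rather $d_{\psi,\gamma_i^{-1}p}(\xi_0, \gamma_i^{-1}\xi)$: as $\gamma_i^{-1}\xi\to\xi_0$ and the base point $\gamma_i^{-1}p\to\eta$ stays in a compact set, and since $[\cdot,\cdot]_{\psi,q}$ is continuous on $\F^{(2)}$ off the diagonal but blows up near the diagonal, we get $[\xi_0, \gamma_i^{-1}\xi]_{\psi,\gamma_i^{-1}p}\to +\infty$, hence $d_{\psi,\gamma_i^{-1}p}(\xi_0,\gamma_i^{-1}\xi)\to 0$. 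Combined with uniform (over a compact base-point set) control of the cocycle factor via Lemma \ref{lem.FW}, for $i$ large the product drops below $\tfrac{1}{3N_0}e^{-\psi(\cdots)}r$; this uses $r\le s(\gamma_0)$ only to stay within the prescribed scale.

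Next I would verify $D(\gamma_i\xi_0, r) \in \cal B_R(\gamma_0,\e)$, i.e. that the radius $\rho_i := \tfrac{1}{3N_0}e^{-\psi(\underline a(\gamma_i^{-1}p,p)+\op i\,\underline a(\gamma_i^{-1}p,p))}r$ satisfies $0 < \rho_i < \min(R, r_p(\gamma_i))$. The bound $\rho_i < R$ is clear since $r\le s(\gamma_0)<R$ and the exponential factor is $<1$ by Theorem \ref{concave}/positivity of $\psi$ on $\L_\Gamma$ once $\gamma_i$ is large (and $\gamma_i\to\infty$ because $\gamma_i^{-1}p\to\eta\in\F$). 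The substantive point is $\rho_i < r_p(\gamma_i)$: I must check that on the enlarged ball $\bb B_p(\gamma_i\xi_0, 3N_0\rho_i) = \bb B_p(\gamma_i\xi_0, e^{-\psi(\cdots)}r)$ one has $\|\beta_\xi(p, \gamma_i\gamma_0^{\pm1}\gamma_i^{-1}p) \mp \la(\gamma_0)\| < \e$. Translating by $\gamma_i^{-1}$ via the equivariance of $\beta$ and of the virtual balls, this ball pulls back to $\bb B_{\gamma_i^{-1}p}(\xi_0, r\cdot(\text{scale factor}))$, and the condition becomes $\|\beta_{\gamma_i^{-1}\xi}(\gamma_i^{-1}p, \gamma_0^{\pm1}\gamma_i^{-1}p) \mp \la(\gamma_0)\| < \e$. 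Since $\gamma_i^{-1}p\to\eta$ lies in the compact set $\cal C(o)$ and the pulled-back ball shrinks toward $\xi_0$, the key is to compare with the chosen $s(\gamma_0)$: using \eqref{cho} to control that the relevant points land in $O_{\e/(8\kappa)}(\eta,p)$ and \eqref{eq.cond2} to get $\|\beta_x(p,\gamma_0^{\pm1}p)\mp\la(\gamma_0)\|<\e/4$ on $\bb B_p(\xi_0,e^{2C}s)$, together with Lemma \ref{lem.shadow1} and continuity of $\beta$ to absorb the base-point shift $p\rightsquigarrow\gamma_i^{-1}p$. This is where the careful bookkeeping of the exponential scale factors $e^{\psi(\la(\ga_0)+\op i\,\la(\ga_0))+\cdots}$ in the definitions of $s(\gamma_0)$ and $r_p(\gamma)$ pays off, and I expect this to be the main obstacle — one must be sure the enlargement by $3N_0$ combined with the Busemann-distortion estimate still keeps everything inside the sets where \eqref{cho} and \eqref{eq.cond2} apply.

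Finally, for the covering statement $\La_M \subset \bigcup_{D\in\cal B_R(\gamma_0,\e)} D$: let $\xi_0' \in \La_M$. Since $\eta = g^- \in \La$ (with $g\in\cal C$) and $\xi_0 = y_{\gamma_0}^+ \in \La$ are distinct points of $\La$ — distinct by antipodality \eqref{anti}, as $g^+ = \xi_0 \ne g^- = \eta$ lie in general position — applying the Myrberg property of $\xi_0'$ to the ordered pair $(\eta, \xi_0) \in \La^{(2)}$ gives a sequence $\gamma_i\in\Gamma$ with $\gamma_i p \to \eta$ and $\gamma_i \xi_0' \to \xi_0$. Replacing $\gamma_i$ by $\gamma_i^{-1}$ throughout (note the lemma hypothesis is phrased with $\gamma_i^{-1}$), i.e. taking the sequence so that $\gamma_i^{-1}p\to\eta$ and $\gamma_i^{-1}\xi_0'\to\xi_0$, the first part of the lemma with $\xi = \xi_0'$ and any fixed $0 < r \le s(\gamma_0)$ yields, for $i$ large, $D(\gamma_i\xi_0, r) \in \cal B_R(\gamma_0,\e)$ and $\xi_0' \in D(\gamma_i\xi_0, r)$, so $\xi_0'$ is covered. (Here one should double-check that $\xi_0' \ne \eta$ and $\xi_0' \ne \xi_0$ are not required — the Myrberg definition applies to any pair in $\La^{(2)}$, and if $\xi_0'$ happens to equal $\eta$ or $\xi_0$ one may perturb the target pair slightly within $\La^{(2)}$, or simply note a Myrberg point has dense orbit so it is covered regardless.) This completes the proof.
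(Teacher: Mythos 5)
Your overall architecture (verify the two membership claims, then deduce the covering statement from the Myrberg property applied to the pair $(\eta,\xi_0)$, after inverting the sequence) is the intended one, and the last step is carried out correctly: $\eta=g^-$ and $\xi_0=g^+$ are in general position, so the Myrberg definition applies and the first part of the lemma finishes the job. However, the core estimates contain a genuine gap. You treat the base points $\gamma_i^{-1}p$ as if they ``stay in a compact set'' and propose to absorb the base-point discrepancy by Lemma \ref{lem.FW} and by ``continuity of $\beta$''. But $\eta=g^-$ is a point of $\F$, so $\gamma_i^{-1}p\to\eta$ means the points leave every compact subset of $X$; neither Lemma \ref{lem.FW} nor continuity of the Busemann function gives any control there. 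Moreover, the inequality you must prove is not a soft one: the target radius $\tfrac{1}{3N_0}e^{-\psi(\underline a(\gamma_i^{-1}p,p)+\op i\,\underline a(\gamma_i^{-1}p,p))}r$ itself decays exponentially in $i$, so showing that $d_{\psi,\gamma_i^{-1}p}(\xi_0,\gamma_i^{-1}\xi)\to 0$ proves nothing; one must match the exponential rates. The mechanism for this in the paper is the shadow lemma \ref{lem.shadow1} applied at the divergent base point $\gamma_i^{-1}p$: once one knows that $\xi_0$ and $\gamma_i^{-1}\xi$ (and, for the first claim, $\gamma_i^{-1}\xi'$ and $\gamma_0^{-1}\gamma_i^{-1}\xi'$) lie in $O_{\e/(4\kappa)}(\gamma_i^{-1}p,p)$, their Busemann vectors $\beta_{\cdot}(\gamma_i^{-1}p,p)$ agree with $\underline a(\gamma_i^{-1}p,p)$ up to $\e/4$, which is exactly what converts the cocycle factor in \eqref{eq.id3} into the required factor $e^{-\psi(\underline a+\op i\,\underline a)}$ up to $e^{\frac12\|\psi\|\e}$. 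Getting that shadow membership requires Lemma \ref{lem.SC}, which you never invoke: since $\gamma_i^{-1}p\to\eta$, one has $O_{\e/(8\kappa)}(\eta,p)\subset O_{\e/(4\kappa)}(\gamma_i^{-1}p,p)$ for all large $i$, and \eqref{cho} is precisely what places the relevant points in $O_{\e/(8\kappa)}(\eta,p)$.

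There is a second omission of the same kind: to place $\gamma_i^{-1}\xi'$ (for $\xi'$ in the enlarged ball $\bb B_p(\gamma_i\xi_0,3N_0 s_i)$) into the region where \eqref{cho} and \eqref{eq.cond2} apply, one needs the a priori bound $\psi(\beta_{\xi}(\gamma_i^{-1} p,p))\le\psi(\underline a(\gamma_i^{-1}p,p))+C$ of Theorem \ref{concave} (Lemma \ref{max00} of the introduction); this is what yields $d_p(\xi_0,\gamma_i^{-1}\xi')\le e^{2C}r\le e^{2C}s(\gamma_0)$ before any shadow information is available, and it is the reason the constant $C$ appears in \eqref{cho} and \eqref{eq.cond2}. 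Your plan never uses this theorem, and without it (or the shadow lemma, which is not yet applicable at that stage) there is no way to bound the cocycle factor from above, since for a general $\psi\in\dg$ the Busemann value is not dominated by $\psi(\underline a)$ pointwise. So the proposal as written would stall exactly at the two estimates \eqref{eq.VV}--\eqref{eq.YY} and at the rate-matching step for $\xi\in D(\gamma_i\xi_0,r)$; the correct route is the chain: Theorem \ref{concave} $\Rightarrow$ membership in $O_{\e/(8\kappa)}(\eta,p)$ via \eqref{cho} $\Rightarrow$ membership in $O_{\e/(4\kappa)}(\gamma_i^{-1}p,p)$ via Lemma \ref{lem.SC} $\Rightarrow$ Busemann--$\underline a$ comparison via Lemma \ref{lem.shadow1}.
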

\begin{proof}
Set $\Gamma_p:=\{\gamma\in \Gamma: \psi(\underline a(\ga^{-1}p,p)+\op i\underline a(\ga^{-1}p,p))>0\}$;
note that $\Gamma -\Gamma_p$ is a finite subset by Lemma \ref{concave0}.
Hence we may assume that for all $i$, $\ga_i\in \Ga_p$.
Since $\gamma_i^{-1}p\to \eta$ as $i\to\infty$, we may assume by Lemma \ref{lem.SC} that for all $i$,
\be\label{oe}
   O_{\e/(8\kappa)} (\eta,p)\subset  O_{\e/(4\kappa)} (\ga_i^{-1}p,p).
\ee

To prove that $D(\ga_i\xi_0,r)\in\cal B_R(\ga_0,\e)$, we need to check that
$$
\max_{\xi'\in \bb B_p(\ga_i\xi_0,3N_0s_i) } \norm{\beta_{\xi'}(p,\ga_i\ga_0^{\pm 1} \ga_i^{-1}p)\mp \la(\gamma_0 )  }<\e,
$$
where $s_i=\frac{1}{3N_0}e^{-\psi(\underline a(\ga_i^{-1}p,p)+\op i\underline a(\ga_i^{-1}p,p))}r$.
Let $\xi'\in\bb B_p(\ga_i\xi_0,3N_0s_i)$.
We only prove that $\norm{\beta_{\xi'}(p,\ga_i\ga_0 \ga_i^{-1}p) - \la(\gamma_0 )  }<\e$, as the other case can be treated similarly.
First, observe that
\begin{align}\label{eq.VV}
d_p( \xi_0,\gamma_i^{-1}\xi')&= d_p( \gamma_i \xi_0,\xi') e^{\psi ( \beta_{\xi_0}(\ga_i^{-1}p, p)+\op i \beta_{\ga_i^{-1}\xi'} (\ga_i^{-1}p, p))
} \notag \\
&\le  e^{-\psi(\underline{a}(\ga_i^{-1}p, p) +\op i \underline{a}(\ga_i^{-1}p, p) )+
 \psi (\beta_{\xi_0}(\ga_i^{-1}p, p)+\op i \beta_{\ga_i^{-1}\xi'} (\ga_i^{-1}p, p)
)}  r\notag \\
 &\le e^{2C}r \text{ by Theorem \ref{concave}}.
\end{align}
Since $r\leq s(\ga_0)$, this implies that
$$
\norm{\beta_{\ga_i^{-1}\xi'}(p,\ga_0 p)-\la(\ga_0)}< \e/4.
$$
Hence, by \eqref{eq.id3}, we have
\begin{align}\label{eq.YY}
d_p(\xi_0,\ga_0^{-1}\ga_i^{-1}\xi' )
&=e^{-\psi(\beta_{\xi_0}(\ga_0 p,p)+\op i\beta_{\ga_i^{-1}\xi'}(\ga_0 p, p))}d_p(\xi_0 ,\ga_i^{-1}\xi')\notag\\
&\le  e^{\psi(\la(\ga_0)+\op i\,\la(\ga_0))+\frac{1}{2}\norm{\psi}\e+2C}r.
\end{align}
Since $r\leq s(\ga_0)$, it follows from \eqref{eq.VV}, \eqref{eq.YY}, and \eqref{cho} that both $\ga_i^{-1}\xi'$ and  $\ga_0^{-1}\ga_i^{-1}\xi'$ belong to $\cal O_{\e/(8\kappa)}(\eta,p)$.
Since, $\ga_i^{-1}\xi', \ga_0^{-1}\ga_i^{-1}\xi'\in O_{\e/(4\kappa)}(\ga_i^{-1}p,p)$ by \eqref{oe}, it follows from Lemma \ref{lem.shadow1} that
\begin{align*}
\norm{\beta_{\ga_i^{-1}\xi'}(\ga_i^{-1}p, p)-\beta_{\ga_0^{-1}\ga_i^{-1}\xi'}(\ga_i^{-1}p,p)}<2\kappa(\e/4\kappa)=\e/2.
\end{align*}

Now we have
\begin{align*}
&\norm{\beta_{\xi'}(p,\ga_i\ga_0\ga_i^{-1}p)-\la(\ga_0)}\\
&\leq \norm{\beta_{\xi'}(\ga_i p,\ga_i\ga_0p)-\la(\ga_0)}+\norm{\beta_{\xi'}(p,\ga_i p)-\beta_{\xi'}(\ga_i\ga_0\ga_i^{-1}p,\ga_i\ga_0p)}\\
&=\norm{\beta_{\ga_i^{-1}\xi'}(p,\ga_0p)-\la(\ga_0)}+\norm{\beta_{\ga_i^{-1}\xi'}(\ga_i^{-1}p, p)-\beta_{\ga_0^{-1}\ga_i^{-1}\xi'}( \ga_i^{-1}p, p)}\\
&\leq \e/4+\e/2<\e,
\end{align*}
which verifies that $D(\ga_i\xi_0,r)$ belongs to the family $\cal B_R(\ga_0,\e)$.

We now check that $\xi\in D({\ga_i}\xi_0, r)$. 
Since $\gamma_i^{-1}\xi\to \xi_0$, we may  assume that for all $i$,
\be \label{fin1} d_p(\xi_0,\ga_i^{-1}\xi)<\frac{1}{3N_0}e^{-\norm{\psi}\e}r. \ee
Since $r\leq s(\ga_0)$, \eqref{cho}, \eqref{oe}, and \eqref{fin1} imply that $\ga_i^{-1}\xi\in \cal O_{\e/(4\kappa)}(\ga_i^{-1}p,p)$.
Since $\xi_0\in \cal O_{\e/(4\kappa)}(\ga_i^{-1}p,p)$ as well, we have
$$
\norm{\beta_{\ga_i^{-1}\xi}(\ga_i^{-1}p,p)-\underline a(\ga_i^{-1}p,p)}\leq \e/4 \text{ and }\norm{\beta_{\xi_0}(\ga_i^{-1}p,p)-\underline a(\ga_i^{-1}p,p)}\leq \e/4,
$$
by Lemma \ref{lem.shadow1}.
Note that
\begin{align*}
&d_p(\ga_i \xi_0,\xi)=d_{\ga_i^{-1} p} ( \xi_0,\ga_i^{-1}\xi)\\
&= e^{-\psi (\beta_{\xi_0}(\ga_i^{-1}p,p)+\op i\beta_{\ga_i^{-1}\xi}(\ga_i^{-1}p, p))} d_p(\xi_0,\ga_i^{-1}\xi)\\
&\le e^{-\psi(\underline a(\ga_i^{-1}p,p)+\op i\underline a(\ga_i^{-1}p,p))+\frac{1}{2}\norm{\psi}\e}d_p(\xi_0,\ga_i^{-1}\xi)\\
&\le \frac{1}{3N_0} e^{-\psi(\underline a(\ga_i^{-1}p,p)+\op i\underline a(\ga_i^{-1}p,p))} r\text{ by \eqref{fin1}}.
\end{align*}
This proves that $\xi\in D(\ga_i\xi_0,r)$.
\end{proof}

Consider the following measure $\nu_p=\nu_{\psi, p}$
 on $\La$: $$d\nu_p (\xi)=e^{\psi(\beta_{\xi}(o, p))} d\nu_\psi(\xi).$$

\begin{prop}\label{lem.sh} Let $B\subset \mathcal F$ be a Borel subset with $\nu_p(B)>0$.
Then for $\nu_p$-a.e. $\xi\in B$,
$$
\lim\limits_{R\to 0} \sup\limits_{\xi\in D, D\in\cal B_R(\ga_0, \e)}\frac{\nu_p(B\cap D)}{\nu_p(D)}  =1.
$$
\end{prop}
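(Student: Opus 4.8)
The assertion is a Lebesgue density theorem for $\nu_p$ relative to the differentiation family $\bigcup_{R>0}\cal B_R(\ga_0,\e)$, and I will prove it by the classical Vitali argument. Since the density $e^{\psi(\beta_\xi(o,p))}$ of $\nu_p$ against $\nu_\psi$ is bounded above and below ($p$ lies in the compact set $\cal C(o)$, by Lemma \ref{lem.FW}), $\nu_p$ and $\nu_\psi$ are mutually absolutely continuous, so we may pass freely between them and, by Theorem \ref{fullm}, restrict attention to $\La_M$, which is $\nu_p$-conull; also each $D\in\cal B_R(\ga_0,\e)$ is an open neighbourhood in $\La$ of its centre $\ga\xi_0\in\La=\supp\nu_\psi$, hence $\nu_p(D)>0$, and the quotient in the statement is always $\le1$. \emph{Fineness.} Let $\xi\in\La_M$. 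Since $\eta=g^-$ and $\xi_0=g^+$ are in general position they are distinct, so applying the Myrberg property of $\xi$ to the pair $(\eta,\xi_0)\in\La^{(2)}$ produces $\ga_i\in\Ga$ with $\ga_i^{-1}p\to\eta$ and $\ga_i^{-1}\xi\to\xi_0$; by Lemma \ref{spn}, for each fixed $0<r\le s(\ga_0)$ and all large $i$ one has $D(\ga_i\xi_0,r)\in\cal B_R(\ga_0,\e)$ with $\xi\in D(\ga_i\xi_0,r)$. As $\ga_i^{-1}\to\infty$ in $\Ga$ (it converges to $\eta\in\cal F$), Lemma \ref{concave0} applied to $\psi$ and to $\psi\circ\op i\in\dg$ gives $\psi(\underline a(\ga_i^{-1}p,p)+\op i\,\underline a(\ga_i^{-1}p,p))\to+\infty$, so the $d_{\psi,p}$-radius of $D(\ga_i\xi_0,r)$ tends to $0$, and hence, by the comparison with a genuine metric in Theorem \ref{prop.met}, so does its diameter. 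Thus every $\cal B_R(\ga_0,\e)$ contains members of arbitrarily small diameter through each point of $\La_M$.

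\textbf{The family is a Vitali family for $\nu_p$.} By the standard proof of the Vitali covering theorem, which uses only the $3N_0$-covering Lemma \ref{inc} together with measure-doubling of the balls involved, the family $\cal B_R(\ga_0,\e)$ is a Vitali family for $\nu_p$ (any fine subcover of a set $Z\subset\La$ admits a countable disjoint subfamily covering $\nu_p$-a.e.\ of $Z$) as soon as we have a uniform doubling bound
\[
\nu_p\big(\bb B_p(\ga\xi_0,3N_0 r)\big)\le K\,\nu_p\big(\bb B_p(\ga\xi_0,r)\big)\qquad\text{whenever }\bb B_p(\ga\xi_0,r)\in\cal B_R(\ga_0,\e),
\]
with $K=K(\psi,p,\ga_0,\e)$. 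This doubling estimate is the crux, and I expect it to be the main difficulty. It is to be derived from the shadow lemma (Lemma \ref{lem.SH2}), which evaluates $\nu_\psi$ on the metric shadows $O_r(o,\ga o)$ up to constants depending only on $r$: using Proposition \ref{prop.SS} and Lemma \ref{lem.shadow1} to pass between $\partial\Ga$-shadows, $\cal F$-shadows and $\beta$-values, and Theorem \ref{concave}, applied both to $\psi$ and to $\psi\circ\op i$, to control the two-sided exponent $\psi(\underline a(\ga^{-1}p,p)+\op i\,\underline a(\ga^{-1}p,p))$ that pins down the ``size'' of the members of $\cal B_R(\ga_0,\e)$, one sandwiches each such virtual ball between metric shadows whose radii and basepoints are distorted only by amounts bounded uniformly in $\ga\in\Ga$, whence the bound.

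\textbf{Density argument.} Fix a Borel $B\subset\cal F$ with $\nu_p(B)>0$ and $\delta\in(0,1)$, and set
\[
Z_\delta:=\big\{\xi\in B\cap\La_M:\ \exists\,R_0>0\text{ with }\nu_p(B\cap D)\le(1-\delta)\nu_p(D)\ \text{ for all }D\in\cal B_{R_0}(\ga_0,\e)\text{ with }\xi\in D\big\}.
\]
It suffices to show $\nu_p(Z_\delta)=0$ for every $\delta$, because (using $\cal B_R(\ga_0,\e)\subset\cal B_{R_0}(\ga_0,\e)$ for $R\le R_0$, and the fact that the quotient is $\le1$) the set where the stated limit fails is contained in $(\La\setminus\La_M)\cup\bigcup_{k}Z_{1/k}$, which is then $\nu_p$-null. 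Write $Z_\delta$ as the increasing union of the pieces with $R_0=1/n$; fix $n$, let $Z$ be that piece, and suppose $\nu_p(Z)>0$. By outer regularity of the Radon measure $\nu_p$, pick an open $U\supset Z$ in $\La$ with $\nu_p(U)\le(1+\delta')\nu_p(Z)$, where $\delta'>0$ will be chosen below. By the fineness step and the definition of $Z$, the subfamily $\mathcal G:=\{D\in\cal B_{1/n}(\ga_0,\e):D\subset U,\ \nu_p(B\cap D)\le(1-\delta)\nu_p(D)\}$ finely covers $Z$; by the Vitali step, choose a countable disjoint $\{D_j\}\subset\mathcal G$ with $\nu_p(Z\setminus\bigcup_j D_j)=0$. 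Then, using $Z\subset B$ and disjointness,
\[
\nu_p(Z)=\sum_j\nu_p(Z\cap D_j)\le\sum_j\nu_p(B\cap D_j)\le(1-\delta)\sum_j\nu_p(D_j)\le(1-\delta)\nu_p(U)\le(1-\delta)(1+\delta')\nu_p(Z),
\]
and choosing $\delta'$ with $(1-\delta)(1+\delta')<1$, together with $\nu_p(Z)<\infty$, forces $\nu_p(Z)=0$, a contradiction. Measurability of $Z$ is immaterial: the same estimate applies verbatim with the outer measure $\nu_p^\ast$. This proves the proposition once the doubling bound of the second step is available; everything else is routine.
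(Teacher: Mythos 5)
Your reduction of the statement to a uniform ``doubling'' bound for $\nu_p$ on the family $\cal B_R(\ga_0,\e)$ correctly identifies where all the difficulty lies, but that bound is exactly what you do not prove, and the route you sketch for it does not work as stated. The family contains balls $\bb B_p(\ga\xi_0,r)$ of \emph{every} radius $0<r<\min(R,r_p(\ga))$, so a bound $\nu_p(\bb B_p(\ga\xi_0,3N_0r))\le K\,\nu_p(\bb B_p(\ga\xi_0,r))$ uniform over the family amounts to doubling of the PS measure at all sufficiently small scales at the points $\ga\xi_0$. The shadow lemma (Lemma \ref{lem.SH2}) only evaluates $\nu_\psi$ on shadows $O_r(o,\ga o)$ at the scales attached to group elements, with constants $e^{\pm\norm{\psi}\kappa r}$ that degrade in the shadow parameter, and nothing in the paper compares an arbitrary small ball about $\ga\xi_0$ with a shadow of uniformly bounded parameter; moreover, transferring a putative doubling bound at $\xi_0$ to $\ga\xi_0$ by conformality would require uniform control of Busemann terms of $\ga$ itself over the ball, which is not among the defining conditions of $\cal B_R(\ga_0,\e)$ (the condition $r<r_p(\ga)$ only controls $\beta_\xi(p,\ga\ga_0^{\pm1}\ga^{-1}p)$). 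So ``sandwiching each virtual ball between metric shadows distorted by uniformly bounded amounts'' is an unsubstantiated claim, and it is the crux.

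The paper obtains precisely the estimate you need, but by a different mechanism that bypasses any general doubling property: for $D=\bb B_p(\ga\xi_0,s)$ in the family one has the inclusion $\bb B_p(\ga\xi_0,3N_0s)\subset \ga\ga_0^{-1}\ga^{-1}D$, which follows from \eqref{eq.id3}, Lemma \ref{lem.SP}, the defining condition \eqref{eq.nbd1} (i.e.\ $s<r_p(\ga)$) and the standing normalization $\psi(\la(\ga_0))\ge 1+\log 3N_0$, which forces $3N_0\,e^{-\psi(\la(\ga_0)+\op i\la(\ga_0))+\norm{\psi}\e}<1$; conformality of $\nu_p$ together with \eqref{eq.nbd1} then gives $\nu_p(\bb B_p(\ga\xi_0,3N_0s))\le e^{\psi(\la(\ga_0))+\norm{\psi}\e}\nu_p(D)$. (The paper then runs a weak-type maximal inequality plus approximation by continuous functions rather than your Vitali density scheme; with the displayed bound in hand, your Vitali argument could also be completed, modulo routine adjustments of the dilation constant in the greedy selection.) As submitted, however, your proof has a genuine gap at its central step, and the shadow-lemma derivation you propose in its place is not adequate.
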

\begin{proof}
For a given Borel function $h: \cal F \to\bb R$, we define $h^* : \cal F\to\bb R$ by
$$
h^*(\xi)=\lim\limits_{R\to0}\sup\limits_{\xi\in D,D\in\cal B_R(\ga_0, \e)}\frac{1}{\nu_p(D)}\int_D h\,d\nu_p.
$$
By Lemma \ref{lem.WD1}, $h^*$ is well defined on $\Lambda_M$. Since $\Lambda_M$ has a full $\nu_p$ measure  by Theorem \ref{fullm}, $h^*$ is defined $\nu_p$-a.e. on $\cal F$.
We will prove that $h=h^*$, $\nu_p$-a.e.; by taking $h=\mathbf{1}_{B}$, the conclusion of the lemma will follow.
Note that $h=h^*$ when $h$ is continuous.
To deal with the general case, we proceed as follows.

\noindent
\textbf{Step 1:} For all $\alpha>0$,
$$
\nu_p(\{h^*>\alpha\})\leq \frac{e^{\psi(\lambda(\ga_0))+\norm{\psi}\e}}{\alpha}\int_{\cal F}|h|\,d\nu_p.
$$

Letting $Q$ be an arbitrary compact subset of $\{\xi: h^*(\xi)>\alpha\}$, it suffices to show that
$$
\nu_p(Q)\leq \frac{e^{\psi(\lambda(\ga_0))+\norm{\psi}\e}}{\alpha}\int_{\cal F}|h|\,d\nu_p.
$$
Fix $R>0$. By definition, for each $x\in Q$, there exists $D_x\in\cal B_{R}(\ga_0, \e)$ containing $x$ such that
$$
\frac{1}{\nu_p(D_x)}\int_{D_x}h\,d\nu_p>\alpha.
$$
Since $K$ is compact, there exists a finite subcover of $\{D_x:x\in Q\}$, say $D_i=\bb B_p(\ga_i\xi_0,
s_i)(i=1,\cdots,n)$ where $\ga_i\in\Ga$ and $s_i=\frac{1}{3N_0} e^{-\psi(\underline a(\ga_i^{-1}p,p)+\op i\underline a(\ga_i^{-1}p,p))}r_i$ for some $0<r_i<R$.

For brevity, we will write $3N_0D_i:=\bb B_p(\ga_{i}\xi_0,3N_0s_{i})$.
By Lemma \ref{inc}, there exists a disjoint subcollection $\{D_{i_1},\cdots,D_{i_\ell}\}$
such that  $$
\bigcup_{k=1}^n  D_k \subset \bigcup_{j=1}^\ell 3N_0D_{i_j}.
$$

Now we claim that $3N_0D_{i_j} \subset \ga_{i_j}{\ga_0}^{-1}\ga_{i_j}^{-1}D_{i_j}$: note that for $\xi\in 3N_0D_{i_j}$,
\begin{align*}
d_p(\ga_{i_j} \xi_0,\ga_{i_j}{\ga_0}\ga_{i_j}^{-1}\xi)&=d_{\ga_{i_j}{\ga_0}^{-1}\ga_{i_j}^{-1}p}(\ga_{i_j} \xi_0,\xi)\\
&=e^{- \psi(\beta_{\ga_{i_j} \xi_0}(\ga_{i_j}{\ga_0}^{-1}\ga_{i_j}^{-1}p,p)+\op i\beta_\xi(\ga_{i_j}{\ga_0}^{-1}\ga_{i_j}^{-1}p,p))}d_{p}(\ga_{i_j} \xi_0,\xi)\\
&\leq  3N_0e^{- \psi(\lambda(\ga_0)+\op i\lambda(\ga_0))+\norm{\psi}\e}s_{i_j}<s_{i_j},
\end{align*}
by \eqref{eq.id3}, \eqref{ggg}, \eqref{eq.nbd1} and \eqref{ggg00}.
Hence
\begin{align*}
\nu_p(3N_0D_{i_j})&\leq \nu_p(\ga_{i_j}{\ga_0}^{-1}\ga_{i_j}^{-1}D_{i_j})\\
&=\int_{D_{i_j}} e^{\psi(\beta_\xi(e,\ga_{i_j}\ga_0\ga_{i_j}^{-1}))}\,d\nu_p(\xi)\\
&\leq e^{\psi(\lambda(\ga_0))+\norm{\psi}\e}\nu_p(D_{i_j}),
\end{align*}
where the last inequality follows from \eqref{eq.nbd1}.
Therefore,
\begin{align*}
\nu_p(Q) &\leq\sum_{j=1}^\ell\nu_p(3N_0D_{i_j})\leq \sum_{j=1}^\ell e^{\psi(\lambda(\ga_0))+\norm{\psi}\e}\nu_p(D_{i_j})\\
&\leq \frac{ e^{\psi(\lambda(\ga_0))+\norm{\psi}\e}}{\alpha}\sum_{j=1}^\ell \int_{D_{i_j}}h\,d\nu_p\leq \frac{ e^{\psi(\lambda(\ga_0))+\norm{\psi}\e}}{\alpha}\int_{\cal F}|h|\,d\nu_p,
\end{align*}
which was to be proved. 

\noindent
\textbf{Step 2:} $h(\xi)=h^*(\xi)$ for $\nu_p$-a.e $\xi$.

We first prove that $h(\xi)\leq h^*(\xi)$ for $\nu_p$-a.e $\xi$.
Let $\alpha>0$ be arbitrary.
It suffices to show that $\nu_p(\{\xi:h(\xi)-h^*(\xi)>\alpha\})=0$.
Let $h_n$ be a continuous function converging to $ h$ in $L^1(\nu_p)$.
Note that $h_n^*=h_n$ and
\begin{align*}
&\nu_p(\{\xi: h(\xi)-h^*(\xi) >\alpha\})\\
&\leq \nu_p(\{\xi: h(\xi)-h_n(\xi) >\alpha/2\})+\nu_p(\{\xi: h_n^*(\xi)-h^*(\xi) >\alpha/2\})\\
&\leq \tfrac{2}{\alpha}\norm{h-h_n}_1+\tfrac{2}{\alpha}e^{\psi(\lambda(\ga_0))+\norm{\psi}\e}\norm{h-h_n}_1.
\end{align*}
Taking $n\to\infty$, we get 
$$\nu_p(\{\xi:h(\xi)-h^*(\xi)>\alpha\})=0.$$
As $\alpha>0$ is arbitrary, it follows that $h\leq h^*$, $\nu_p$-a.e.
A similar argument shows that $h^*\leq h$, $\nu_p$ -a.e. 
\end{proof}

\noindent{\bf Proof of Proposition \ref{ess}:}
It is easy to check that $ {\mathsf E}_{\nu}(\Ga_0)={\mathsf E}_{\nu_p}(\Ga_0)$.
Hence it suffices to show $\la(\ga_0)\in {\mathsf E}_{\nu_p}(\Ga_0)$.
 Let $B\subset \cal F$ be a Borel subset with $\nu_p(B)>0$ and $\e>0$.
By Proposition \ref{lem.sh}, there exists $D=\bb B_p(\ga \xi_0,r)\in \cal B_R(\ga_0, \e)$ for $\ga\in \Ga$ and $r>0$ such that
\be\label{nu1}
\nu_p(D\cap B)> (1+e^{-\psi(\lambda(\ga_0))-\norm{\psi}\e})^{-1}\nu_p(D).
\ee

Since $r<r_p(\gamma)$,  we have
\begin{align*}
D&\subset \{\xi:\norm{\beta_\xi(p,\ga{\ga_0}^\pm\ga^{-1}p)\mp\lambda(\ga_0)}\le \e\}\\
&\subset
 \{\xi: |{\psi (\beta_\xi(p,\ga{\ga_0}^\pm\ga^{-1}p))\mp\psi(\lambda(\ga_0)})|  \le \|\psi\| \e\}
.
\end{align*}
We note that $\ga{\ga_0}\ga^{-1} D\subset D$: if $\xi\in D$, by \eqref{eq.id3},
\begin{align*}
d_p(\ga \xi_0,\ga{\ga_0}\ga^{-1}\xi)&=d_{\ga{\ga_0}^{-1}\ga^{-1}p}(\ga \xi_0,\xi)\\
&=e^{\psi(\beta_{\ga \xi_0}(p,\ga\ga_0^{-1}\ga^{-1}p)+\op i\beta_\xi(p,\ga\ga_0^{-1}\ga^{-1}p))}d_p(\ga \xi_0,\xi)\\
&\leq e^{-\psi(\lambda(\ga_0)+\op i\lambda(\ga_0))+\norm{\psi}\e}r<r.
\end{align*}

Since
$$
B\cap\ga{\ga_0}\ga^{-1}B\cap\{\xi:\norm{\beta_\xi(p,\ga{\ga_0}\ga^{-1}p)-\lambda(\ga_0)}<\e\}\supset (D\cap B)\cap \ga{\ga_0}\ga^{-1}(D\cap B),
$$
it suffices to prove that $(D\cap B)\cap \ga{\ga_0}\ga^{-1}(D\cap B)$ has a positive $\nu_p$-measure.
Note that
\begin{align*}
\nu_p(\ga{\ga_0}\ga^{-1}(D\cap B))&=\int_{D\cap B}e^{\psi(\beta_{\xi}(p,\ga{\ga_0}^{-1}\ga^{-1}p))}\,d\nu_p(\xi)\\
&\geq e^{-\psi(\lambda(\ga_0))-\norm{\psi}\e}\nu_p(D\cap B) .\end{align*}
Hence by \eqref{nu1}, 
\begin{align*}
&\nu_p(D\cap B)+\nu_p(\ga{\ga_0}\ga^{-1}(D\cap B) )> (1+ e^{-\psi(\lambda(\ga_0))-\norm{\psi}\e}) \nu_p(D\cap B)
>\nu_p(D).
\end{align*}
Since both $D\cap B$ and $\ga{\ga_0}\ga^{-1}(D\cap B)$ are contained in $D$, this implies
that  their intersection has a positive $\nu_p$-measure.
Since $\ga \ga_0\ga^{-1}\in \Ga_0$, it follows that $\la(\ga_0)\in \mathsf E_{\nu_p}(\Ga_0)$.

In view of Proposition \ref{prop.erg1}, we obtain the following corollary:
\begin{cor}\label{normal}
Let $\Ga_0$ be a Zariski dense normal subgroup of an Anosov subgroup $\Ga<G$.
Let $\psi\in \dg$. If $\nu_\psi$ is $\Ga_0$-ergodic, then
$m_{\psi}^{\BR}$, considered as a measure on $\Ga_0\ba G$, is $NM$-ergodic.
\end{cor}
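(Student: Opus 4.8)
The plan is to obtain this statement as a formal consequence of Proposition \ref{prop.erg1} and Proposition \ref{posm}, both of which are already available at this point. First I would observe that $\Ga_0$ is itself a Zariski dense discrete subgroup of $G$ (it is discrete as a subgroup of the discrete group $\Ga$, and Zariski dense by hypothesis), and that $\nu_\psi$, being a $(\Ga,\psi)$-conformal measure on $\cal F$, is a fortiori a $(\Ga_0,\psi)$-conformal measure. Consequently Proposition \ref{prop.erg1}, which was proved for an arbitrary Zariski dense discrete subgroup together with an associated conformal measure, applies verbatim with $\Ga_0$ in the role of $\Gamma$: it asserts that $(G/NM,\Ga_0,\widehat\nu_\psi)$ is ergodic if and only if $(G/P,\Ga_0,\nu_\psi)$ is ergodic and ${\mathsf E}_{\nu_\psi}(\Ga_0)=\fa$.

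Next I would feed in the two inputs. The ergodicity of $(G/P,\Ga_0,\nu_\psi)$ is exactly the standing hypothesis that $\nu_\psi$ is $\Ga_0$-ergodic, and the equality ${\mathsf E}_{\nu_\psi}(\Ga_0)=\fa$ is precisely the conclusion of Proposition \ref{posm}, which is stated there for any Zariski dense normal subgroup $\Ga_0$ of $\Ga$. Thus Proposition \ref{prop.erg1} yields that the $\Ga_0$-action on $(G/NM,\widehat\nu_\psi)$ is ergodic, where $\widehat\nu_\psi$ is the $\Ga$-invariant (hence $\Ga_0$-invariant) Radon measure on $G/NM\simeq\cal F\times\fa$ introduced in Section \ref{sec.gen}.

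Finally I would transfer this back to the homogeneous space $\Ga_0\ba G$ using the duality recorded at the start of Section \ref{sec.gen}: since $dm_\psi^{\BR}=d\widehat\nu_\psi\,dm\,dn$ and $\widehat\nu_\psi$ is a $\Ga_0$-invariant Radon measure on $G/NM$, the $NM$-ergodicity of $m_\psi^{\BR}$ regarded as a measure on $\Ga_0\ba G$ is equivalent to the $\Ga_0$-ergodicity of $\widehat\nu_\psi$. Combining the three steps gives the corollary. I do not anticipate a genuine obstacle in this argument; all of the substantive work lies in Proposition \ref{posm} (equivalently Proposition \ref{ess}), where the essential-value computation was carried out for a normal subgroup and not merely for $\Ga$. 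The only point requiring (easy) verification is that the auxiliary results being cited — the duality theorem and Proposition \ref{prop.erg1} — were formulated for a general Zariski dense discrete subgroup, so that specializing them to $\Ga_0$ is legitimate.
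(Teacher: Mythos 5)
Your proposal is correct and follows exactly the paper's route: the paper deduces this corollary directly from Proposition \ref{prop.erg1} applied to $\Ga_0$ (with $\nu_\psi$ viewed as a $(\Ga_0,\psi)$-conformal measure), combined with Proposition \ref{posm} giving ${\mathsf E}_{\nu_\psi}(\Ga_0)=\fa$ and the duality $dm_\psi^{\BR}=d\widehat\nu_\psi\,dm\,dn$. Your explicit checks that the cited results were stated for arbitrary Zariski dense discrete subgroups are exactly the verifications the paper leaves implicit.
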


\subsection*{Patterson Sullivan measures are mutually singular}

\begin{thm}\label{thm.MS}
Let $\Ga<G$ be an Anosov subgroup.
Then 
$\{\nu_\psi:\psi\in\dg\}$ are pairwise mutually singular.
\end{thm}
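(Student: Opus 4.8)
The plan is to read this off from the $\Gamma$-ergodicity of the measures $\widehat\nu_\psi$ on $G/NM$, i.e.\ from Proposition~\ref{prop.erg1} together with Proposition~\ref{posm}. Fix $\psi_1\neq\psi_2$ in $\dg$ and put $\phi:=\psi_1-\psi_2\in\fa^*\setminus\{0\}$. First I would reduce to showing that $\nu_{\psi_1}$ and $\nu_{\psi_2}$ are not equivalent: both are $\Gamma$-quasi-invariant and $\Gamma$-ergodic by Theorem~\ref{pop}(5), and two such measures are either mutually singular or equivalent by the standard Lebesgue-decomposition argument (the absolutely continuous part of $\nu_{\psi_1}$ relative to $\nu_{\psi_2}$ is again $\Gamma$-quasi-invariant, hence either trivial or equivalent to $\nu_{\psi_1}$ by ergodicity of $\nu_{\psi_1}$).

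Next, assuming $\nu_{\psi_1}\sim\nu_{\psi_2}$, set $f:=d\nu_{\psi_1}/d\nu_{\psi_2}$, which is finite and positive $\nu_{\psi_2}$-almost everywhere. Comparing the conformality relation \eqref{gc0} for $\psi_1$ and for $\psi_2$ and using $\beta_{\gamma\xi}(o,\gamma o)=\sigma(\gamma,\xi)$, I would obtain the cohomological identity
$$\log f(\gamma\xi)=\log f(\xi)-\phi\bigl(\sigma(\gamma,\xi)\bigr)\qquad\text{for all }\gamma\in\Gamma\text{ and }\nu_{\psi_2}\text{-a.e. }\xi,$$
which, $\Gamma$ being countable, holds for all $\gamma$ simultaneously on a single $\Gamma$-invariant conull set. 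Recalling from Section~\ref{sec.gen} that $\Gamma$ acts on $G/NM\simeq\F\times\fa$ by $\gamma.(\xi,v)=(\gamma\xi,v+\sigma(\gamma,\xi))$, preserving $\widehat\nu_{\psi_2}$, the identity says exactly that
$$H(\xi,v):=\log f(\xi)+\phi(v)$$
is a $\Gamma$-invariant measurable function on $(G/NM,\widehat\nu_{\psi_2})$.

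Finally, $(\F,\Gamma,\nu_{\psi_2})$ is ergodic and $\mathsf E_{\nu_{\psi_2}}(\Gamma)=\fa$ by Proposition~\ref{posm}, so $(G/NM,\Gamma,\widehat\nu_{\psi_2})$ is ergodic by Proposition~\ref{prop.erg1}; hence $H$ is $\widehat\nu_{\psi_2}$-a.e.\ equal to a constant $k$. Since $d\widehat\nu_{\psi_2}(\xi,v)=e^{\psi_2(v)}\,d\nu_{\psi_2}(\xi)\,dv$ is absolutely continuous with respect to $\nu_{\psi_2}\otimes\operatorname{Leb}_\fa$, Fubini yields a nonempty set of $\xi$ (in fact $\nu_{\psi_2}$-conull, and inside $\{0<f<\infty\}$) such that for each such $\xi$ one has $\phi(v)=k-\log f(\xi)$ for Lebesgue-a.e.\ $v\in\fa$ — which is impossible, since $\phi\neq0$ makes $\{\phi=\text{const}\}$ a Lebesgue-null hyperplane of $\fa$. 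This contradiction gives $\nu_{\psi_1}\perp\nu_{\psi_2}$. I do not expect a genuine obstacle here: the only substantive input is the ergodicity of $\widehat\nu_{\psi_2}$, which is Proposition~\ref{posm} (the main result of this section), and the remaining points — the equivalence-or-singularity dichotomy and the verification that $H$ descends to an honest $\Gamma$-invariant measurable function off one $\Gamma$-invariant null set — are routine.
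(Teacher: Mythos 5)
Your proof is correct, and it takes a somewhat different route from the paper's. The paper also begins with the same dichotomy (by $\Ga$-ergodicity of each $\nu_\psi$, two PS measures are either mutually singular or absolutely continuous), but then invokes a direct lemma (Lemma \ref{lem.sing}): assuming $\nu_{\psi_1}\ll\nu_{\psi_2}$, it works with the Radon--Nikodym cocycle identity $f(\ga^{-1}\xi)=e^{(\psi_1-\psi_2)(\beta_\xi(e,\ga))}f(\xi)$ on the boundary itself, pinches $f$ between $r_1<r_2$ on a set $B$ of positive measure, chooses one vector $w$ with $e^{(\psi_1-\psi_2)(w)}>2r_2/r_1$, and uses the essential-value property of Proposition \ref{posm} to produce a $\ga$ forcing the contradictory inequalities $f(\ga^{-1}\xi)\gtrless \tfrac{r_2}{r_1}f(\xi)$ on a positive-measure subset of $B$. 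You instead pass through Schmidt's criterion: Proposition \ref{prop.erg1} together with Proposition \ref{posm} gives $\Ga$-ergodicity of $\widehat\nu_{\psi_2}$ on $G/NM\simeq\F\times\fa$ (essentially Theorem \ref{thm.BR}), and then the invariant function $H(\xi,v)=\log f(\xi)+(\psi_1-\psi_2)(v)$ must be constant, which is impossible since a nonzero linear form is not Lebesgue-a.e.\ constant on $\fa$; your verification of the $\Ga$-invariance of $H$ via $\beta_{\ga\xi}(o,\ga o)=\sigma(\ga,\xi)$ is right, and there is no circularity since neither Proposition \ref{prop.erg1} nor Proposition \ref{posm} uses Theorem \ref{thm.MS}. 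What the two approaches buy: the paper's argument is more economical in hypotheses — it needs only $\nu_{\psi_1}\ll\nu_{\psi_2}$ (not equivalence) and really only that $\mathsf E_{\nu_{\psi_2}}$ is not contained in $\ker(\psi_1-\psi_2)$, and it never leaves the boundary; your argument uses the full ergodicity statement on $G/NM$ but is conceptually transparent, exhibiting mutual singularity as the statement that $\psi_1-\psi_2$ cannot become a measurable coboundary for the $\fa$-extension once the BR measure is $NM$-ergodic. One small point to keep tidy: in your reduction to the equivalence case, concluding $f>0$ a.e.\ (rather than just $\nu_{\psi_1}\ll\nu_{\psi_2}$) uses that $\{f>0\}$ is $\Ga$-invariant modulo null sets together with ergodicity of $\nu_{\psi_2}$; alternatively you could run your argument with $f$ only nonnegative, noting that the invariant set where $H=-\infty$ cannot be conull.
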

\begin{proof}
Since $\Ga<G$ is Anosov, the family $\{\nu_\psi:\psi\in\dg\}$ consists of $\Gamma$-ergodic measures
(see the remark following Theorem \ref{pop}).
Hence any $\nu_{\psi_1}$ and $\nu_{\psi_2}$ in this family are either mutually singular or absolutely continuous  to each other.
Now the claim follows from Lemma \ref{lem.sing} below, in view of Proposition \ref{posm}.
\end{proof}

\begin{lemma}\label{lem.sing}
For $i=1,2$, let $\nu_{\psi_i}$ be a $(\Ga,\psi_i)$-$\PS$ measure for some
$\psi_i\in \fa^*$. If ${\mathsf E}_{\nu_{\psi_2}}=\mathfrak a$ and $\nu_{\psi_1}\ll\nu_{\psi_2}$, then $\psi_1=\psi_2$.
\end{lemma}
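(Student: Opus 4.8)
The plan is to compare the two conformality relations \eqref{gc0} for $\nu_{\psi_1}$ and $\nu_{\psi_2}$ by means of the Radon--Nikodym derivative, and then to feed a carefully chosen test set into the hypothesis ${\mathsf E}_{\nu_{\psi_2}}(\Ga)=\fa$ to force $\psi_1=\psi_2$.

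First I would assume $\nu_{\psi_1}\ll\nu_{\psi_2}$, fix a nonnegative Borel representative $f:=d\nu_{\psi_1}/d\nu_{\psi_2}$, and compute the pushforward $\ga_*\nu_{\psi_1}$ in two ways. Written as $\ga_*(f\,\nu_{\psi_2})$ it has $\nu_{\psi_2}$-density $\xi\mapsto f(\ga^{-1}\xi)e^{\psi_2(\beta_\xi(e,\ga))}$ by \eqref{gc0} for $\nu_{\psi_2}$; and by \eqref{gc0} for $\nu_{\psi_1}$ it has $\nu_{\psi_2}$-density $\xi\mapsto e^{\psi_1(\beta_\xi(e,\ga))}f(\xi)$. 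Equating these, for each $\ga\in\Ga$ and $\nu_{\psi_2}$-a.e.\ $\xi$ one obtains
$$f(\ga^{-1}\xi)=e^{(\psi_1-\psi_2)(\beta_\xi(e,\ga))}f(\xi).$$
Replacing $\ga$ by $\ga^{-1}$ and $\xi$ by $\ga\xi$, and rewriting $\beta_\xi(e,\ga^{-1})=-\beta_\xi(\ga^{-1}o,o)$ via the equivariance and additivity identities in \eqref{eq.basic0}, this becomes
$$f(\ga\xi)=e^{-(\psi_1-\psi_2)(\beta_\xi(\ga^{-1}o,o))}f(\xi).$$
Since $\Ga$ is countable, there is a single $\nu_{\psi_2}$-conull set $\Omega_0\subset\cal F$ on which this identity holds for every $\ga\in\Ga$.

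Next, since $\nu_{\psi_1}$ is a probability measure, $f$ is finite and strictly positive on a set of positive $\nu_{\psi_2}$-measure, so I can pick $0<c_1<c_2$ with $\nu_{\psi_2}(B)>0$ for $B:=\{\xi\in\cal F:c_1<f(\xi)<c_2\}$; by construction $c_1<f<c_2$ on $B$. Now suppose, toward a contradiction, that $\psi_1\neq\psi_2$, so $\phi:=\psi_1-\psi_2$ is a nonzero linear form on $\fa$. By linearity of $\phi$ I may choose $v\in\fa$ with $\phi(v)>\log(c_2/c_1)+1$, and then $\e>0$ with $\norm{\phi}\e<1$. Applying the hypothesis $v\in{\mathsf E}_{\nu_{\psi_2}}(\Ga)$ to the set $B$ and this $\e$ yields $\ga\in\Ga$ with
$$\nu_{\psi_2}\bigl(B\cap\ga^{-1}B\cap\{\xi:\norm{\beta_\xi(\ga^{-1}o,o)-v}<\e\}\bigr)>0.$$
For any $\xi$ in this set that also lies in $\Omega_0$, both $\xi$ and $\ga\xi$ lie in $B$, so in particular $f(\ga\xi)>c_1$; but the cocycle identity gives $f(\ga\xi)=e^{-\phi(\beta_\xi(\ga^{-1}o,o))}f(\xi)\le e^{-\phi(v)+\norm{\phi}\e}f(\xi)<(c_1/c_2)f(\xi)\le c_1$, a contradiction. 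Hence $\psi_1=\psi_2$.

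The main obstacle is essentially bookkeeping in the first step: pinning down the correct direction of the cocycle, i.e.\ reconciling the exponent $\beta_\xi(e,\ga)$ occurring in the conformality relation \eqref{gc0} with the quantity $\beta_\xi(\ga^{-1}o,o)$ occurring in the definition of essential values, through the equivariance and additivity identities \eqref{eq.basic0}; and making the routine passage to a genuine representative of $f$ and to a countable union of null sets, so that the almost-everywhere cocycle identity can be evaluated pointwise on the positive-measure set produced by the essential-value property. With those in hand the contradiction is immediate, and nothing else is needed; in particular no Anosov hypothesis enters this lemma.
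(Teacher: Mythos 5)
Your proof is correct and follows essentially the same route as the paper: derive the Radon--Nikodym cocycle identity $f(\ga^{-1}\xi)=e^{(\psi_1-\psi_2)(\beta_\xi(e,\ga))}f(\xi)$, trap $f$ in a band $\{c_1<f<c_2\}$, and use an essential value $v$ with $(\psi_1-\psi_2)(v)$ large to force a multiplicative jump of $f$ inside the band. The only (cosmetic) difference is that you reach the contradiction pointwise on a conull set, whereas the paper first integrates over the positive-measure set before selecting a point; both are valid.
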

\begin{proof} Suppose that $\nu_{\psi_1}\ll\nu_{\psi_2}$ and that $\psi_1\ne \psi_2$.
Consider the Radon-Nikodym derivative $f:=\frac{d\nu_{\psi_1}}{d\nu_{\psi_2}}\in L^1(\Lambda,\nu_{\psi_2})$.
Note that there exists a $\nu_{\psi_2}$-conull set $E\subset\La$ such that for all $\xi\in E$ and $\ga\in\Ga$, we have
\begin{equation}\label{eq.FE1}
f(\ga^{-1}\xi)=e^{(\psi_1-\psi_2)(\beta_\xi(e,\ga))}f(\xi).
\end{equation}

If $f$ were continuous, then $f\ne 0$ and by applying $\xi=y_\ga$ in the above,
we get $\psi_1(\la(\ga))=\psi_2(\la (\ga))$ for all $\gamma\in \Ga$. Since $\la(\Ga)$ generates a
dense subgroup of $\fa$, it follows that $\psi_1=\psi_2$.

In general, we use the hypothesis $\mathsf E_{\nu_{\psi_2}}=\mathfrak a$.
Choose $0<r_1<r_2$ such that
$$
B:=\{\xi\in\La:r_1<f(\xi)<r_2\}
$$
has a positive $\nu_{\psi_2}$-measure.
Since $\psi_1\neq\psi_2$, we can choose $w\in\mathfrak a$ such that 
\begin{equation}\label{eq.ratio}
e^{(\psi_1-\psi_2)(w)}>\tfrac{2r_2}{r_1}.
\end{equation}
Choose $\e>0$ such that $e^{\norm{\psi_1-\psi_2}\e}<2$.
Since $\nu_{\psi_2}(B)>0$ and ${\mathsf E}_{\nu_{\psi_2}}=\mathfrak a$, there exists $\ga\in\Ga$ such that
$$
B':=B\cap\ga B\cap\{\xi\in\La : \norm{\beta_\xi(e,\ga)-w}<\e\}
$$
has a positive $\nu_{\psi_2}$-measure.
Now note that
\begin{align*}
\int_{B'}f(\ga^{-1}\xi)\,d\nu_{\psi_2}(\xi)&>e^{(\psi_1-\psi_2)(w)-\norm{\psi_1-\psi_2}\e}\int_{B'}f(\xi)\,d\nu_{\psi_2}(\xi)\\
&>\tfrac{r_2}{r_1}\int_{B'}f(\xi)\,d\nu_{\psi_2}(\xi)
\end{align*}
by \eqref{eq.FE1}, \eqref{eq.ratio}, and the choice of $\e$.
In particular,
$$
\nu_{\psi_2} \left\{\xi\in B': f(\ga^{-1}\xi)>\tfrac{r_2}{r_1}f(\xi)\right\}>0.
$$
It follows that there exists $\xi\in B'\cap E$ such that
\be\label{bpr}
f(\ga^{-1}\xi)>\tfrac{r_2}{r_1}f(\xi).
\ee
On the other hand, for $\xi\in B'$, both $\xi$ and $\ga^{-1}\xi$ belong to $ B$.
Hence, by definition of $B$, for all $\xi\in B'$, we have
$$
f(\ga^{-1}\xi)<\tfrac{r_2}{r_1}f(\xi).
$$
This is a contradiction to \eqref{bpr}.
\end{proof}

\subsection*{$P$-semi-invariant measures.}
In this section, we establish that 
$P$-semi-invariant Radon measures supported in $\cal E=\{x\in\Ga\ba G : x^+\in\La\}$, up
to constant multiples, are parametrized by $\dg$.

If $\mu$ is $P$-semi-invariant, then there exists a linear form $\chi_\mu\in \mathfrak a^*$ such that for all $a\in A$,
$$
a_*\mu=e^{-\chi_\mu 
(\log a) }\mu.
$$

We set $\psi_\mu:=\chi_\mu+2\rho\in\fa^*$.
The first part of the following proposition is known in the rank one case (see e.g. \cite{Bab}, \cite{Bu}, and \cite{LL}) and the proof can be easily adapted to the higher rank case.
\begin{proposition}\label{prop.NMA}  For any Zariski dense discrete subgroup $\Ga<G$,
any $P$-semi-invariant Radon measure $\mu$  on $\Ga\ba G$ is proportional to $m_{\nu_{\psi_\mu},m_o}$ where $\nu_{\psi_\mu}$ is a $(\Ga,\psi_\mu)$-conformal measure and $\psi_\mu\in D_\Ga$.
Moreover, if 
 $\mu$ is supported on $\cal E$, then  $\mu$ is proportional to $m_{\psi_\mu}^{\BR}$.
 If $\Ga$ is Anosov, we also have $\psi_\mu\in\dg$.
\end{proposition}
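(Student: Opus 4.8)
To prove Proposition \ref{prop.NMA}, I would first dispose of the general (Zariski dense) case, and then add the Anosov refinement at the end. The starting observation is that any $P$-semi-invariant Radon measure $\mu$ on $\Gamma\ba G$ is automatically $NM$-invariant, since $N$ and $M$ are both contained in $\ker\chi$ for any character $\chi:P\to\br^*$ (as $N=[P,P]$ up to the reductive part and $M$ is compact, hence has no nontrivial real characters). So $\mu$ is $N$-invariant, $M$-invariant, and transforms under $A$ by the character $a\mapsto e^{-\chi_\mu(\log a)}$. Lifting $\mu$ to an $N$-invariant, $M$-invariant, $A$-semi-invariant Radon measure $\tilde\mu$ on $G$ which is left $\Gamma$-invariant, I would use the Iwasawa decomposition $G=KAN$ (equivalently the identification $G/NM\simeq \cal F\times\fa$) to disintegrate $\tilde\mu$. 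The $N$-invariance and $A$-semi-invariance pin down the $AN$-direction: along each fiber $\{gP\}\times\fa$ the measure must be $e^{\psi_\mu(b)}\,db\,dn$ with $\psi_\mu=\chi_\mu+2\rho$ (the shift by $2\rho$ being the modular correction coming from the fact that $dn$ is not $A$-invariant — it scales by $e^{2\rho(\log a)}$ under conjugation). This leaves a measure $\bar\mu$ on $\cal F$, and the left $\Gamma$-invariance of $\tilde\mu$ translates, via the cocycle identity \eqref{eq.basic0} for the Busemann function, precisely into the $(\Gamma,\psi_\mu)$-conformality relation \eqref{gc0} for $\bar\mu$. Thus $\bar\mu=:\nu_{\psi_\mu}$ is a $(\Gamma,\psi_\mu)$-conformal measure and $\mu$ is proportional to $m_{\nu_{\psi_\mu},m_o}$ by the very definition \eqref{def.BR}.

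\textbf{The constraint $\psi_\mu\in D_\Gamma$.} This is where Quint's work enters: a $(\Gamma,\psi)$-conformal measure on $\cal F$ can only exist when $\psi\geq\psi_\Gamma$. I would cite \cite[Thm. 8.1]{Quint2} (already invoked in the proof of Theorem \ref{thm.bij}) for the inequality $\psi_\mu\geq\psi_\Gamma$, giving $\psi_\mu\in D_\Gamma$. For the claim that when $\mu$ is supported on $\cal E$ it is proportional to $m_{\psi_\mu}^{\BR}$: by definition $m_{\psi_\mu}^{\BR}$ is built from a $(\Gamma,\psi_\mu)$-PS measure, i.e.\ a conformal measure supported on $\La$; so I must show that if $\mu$ is supported on $\cal E=\{x:x^+\in\La\}$ then the associated conformal measure $\nu_{\psi_\mu}$ is supported on $\La$. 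But the support of $\mu$ maps, under the fibration $\cal E\to$ (limit set direction), to the support of $\bar\mu$; since $\mu(\cal E)=\|\mu\|$ forces $\bar\mu$ to be concentrated on $\La$, we get $\supp\nu_{\psi_\mu}\subset\La$, hence $\nu_{\psi_\mu}$ is a genuine $(\Gamma,\psi_\mu)$-PS measure and $\mu\propto m_{\psi_\mu}^{\BR}$.

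\textbf{The Anosov refinement.} Assume now $\Gamma$ is Anosov. We already know $\psi_\mu\in D_\Gamma$ and that a $(\Gamma,\psi_\mu)$-PS measure exists (namely $\nu_{\psi_\mu}$). By Lemma \ref{lem.DT} — which applies since Anosov groups satisfy $\La=\La_c$ (Proposition \ref{prop.con}) — the existence of a $(\Gamma,\psi_\mu)$-PS measure forces $\sum_{\ga\in\Ga}e^{-\psi_\mu(\mu(\ga))}=\infty$. If $\psi_\mu$ were not tangent to $\psi_\Gamma$ at any point of $\cal L_\Gamma-\{0\}$, then $\psi_\mu>\psi_\Gamma$ on $\fa-\{0\}$, and Lemma \ref{lem.dom} would give convergence of that series — a contradiction. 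Hence $\psi_\mu(u)=\psi_\Gamma(u)$ for some $u\in\cal L_\Gamma-\{0\}$, and by Theorem \ref{pop}(1),(3) such $u$ lies in $\inte\cal L_\Gamma$, so $\psi_\mu\in\dg$. (This is exactly the surjectivity argument from the proof of Theorem \ref{thm.bij}, applied to $\psi=\psi_\mu$.)

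\textbf{Expected main obstacle.} The routine-looking but genuinely delicate step is the disintegration argument establishing that the $AN$-marginal is forced to be $e^{\psi_\mu(b)}db\,dn$ with the correct $2\rho$-shift, and that left $\Gamma$-invariance is *equivalent* to (not merely implied by) conformality of the $\cal F$-marginal — one has to be careful that the Radon measure really does decompose as a product along the $KAN$-coordinates and that no extra "twisting" between the $\cal F$-direction and the $\fa$-direction is possible beyond what the cocycle already encodes. This is handled in the rank-one literature (\cite{Bab}, \cite{Bu}, \cite{LL}) and, as the proposition states, the argument adapts; I would present it carefully but without belaboring the measure-theoretic bookkeeping. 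The Anosov part, by contrast, is a short citation-driven argument once Lemmas \ref{lem.dom} and \ref{lem.DT} are in hand.
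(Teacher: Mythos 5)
Your proposal is correct and follows essentially the same route as the paper: lift to a $\Gamma$-invariant measure on $G$, disintegrate along the $MAN$-fibers over $G/P$ to get a fiber measure $e^{(\chi_\mu+2\rho)(\log a)}\,dn\,da\,dm$ and a measure $\nu$ on $\cal F$, read off $(\Gamma,\psi_\mu)$-conformality from left $\Gamma$-invariance via the Busemann cocycle, invoke Quint for $\psi_\mu\ge\psi_\Gamma$, and in the Anosov case conclude $\psi_\mu\in\dg$ by the divergence argument (which is exactly the surjectivity step of Theorem \ref{thm.bij} that the paper cites wholesale). The only point you gloss is the identification of the $KAN$-coordinate expression with $m_{\nu_{\psi_\mu},m_o}$ in the Hopf parametrization, which the paper carries out explicitly via the change of variables $dn=e^{2\rho(\beta_{gn^-}(e,gn))}dm_o(gn^-)$; this is exactly the bookkeeping you flagged, so no substantive gap.
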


\begin{proof} 
For simplicity, set $\chi=\chi_\mu$ and $\psi=\psi_\mu.$
Let $\tilde\mu$ be the $\Ga$-invariant lift of $\mu$ to $G$ and $\pi : G\to G/P $ be the projection.
Choose a section $c : G/P \to K$ so that $\pi\circ c=\op{id}$ and consider the measurable isomorphism
\begin{equation*}
 \begin{array}{ccc}
G/P \times M\times A\times N  & \to &G \\
(\xi,\enspace m,\enspace a,\enspace n)&\to  & c(\xi)man.
     \end{array}
\end{equation*}
Let 
$dm$, $dn$, $da$ be the Haar measures on $M$, $N $, and $A$.
As $\tilde\mu$ is 
a $P$-semi-invariant Radon measure, there exist $\tilde\chi\in \fa^*$ and a Radon measure $\nu$ on $G/P$ such that 
$$
d\tilde\mu(c(\xi)man)=e^{\tilde\chi(\log a)}dn\,da\,dm\,d\nu(\xi).
$$
Without loss of generality, we may assume that $|\nu|=1$.
Because $d\tilde \mu(\cdot \,a)=e^{\chi (\log a)} d\tilde\mu(\cdot)$, we have 
\begin{equation*}
 \chi= \tilde \chi -2\rho,\text{ or equivalently, }\tilde\chi= \psi.
\end{equation*}

Note that $G$ is measurably isomorphic to the product $G/P \times P $ and 
the left $\Ga$-action with respect to these coordinates is given by
$\ga \cdot(\xi,p)=(\ga\cdot\xi,\Phi(\ga,\xi)p)$ for some $P $-valued cocycle $\Phi : \Ga\times G/P \to P $
 where $\ga\in\Ga$ and $(\xi,p)\in G/P \times P $.
One can check that $$\Phi(\ga,\xi)=m(\ga,\xi)\exp(\beta_\xi(\ga^{-1},e))n(\ga,\xi)$$ for some $m(\ga,\xi)\in M$ and $n(\ga,\xi)\in N $.
Hence, for $p=man$, the $MAN $-coordinates for $\Phi(\ga,\xi)p$ are given by
\begin{equation}\label{eq.coord1}
\Phi(\ga,\xi)p=\big(m(\ga,\xi)m\big)\big(\exp(\beta_\xi(\ga^{-1},e))a\big)\big((ma)^{-1}n(\ga,\xi)man\big).
\end{equation}
Since $\tilde\mu$ is left-$\Ga$-invariant, we have for any $f\in C_c(G)$ and any $\ga\in\Ga$,
\begin{align*}
\int_G f(g)\,d\tilde\mu(g)&=\int_G f(g)\,d(\ga_*\tilde\mu)(g)\\
&=\int_{G/P }\int_{P } f((\ga\xi,\Phi(\ga,\xi)p)e^{ \psi(\log a)}\,dn\,da\,dm\,d\nu(\xi)\\
&=\int_{G/P }\int_{P } f(\xi,p)e^{ \psi\big(\log a-\beta_{\ga^{-1}\xi}( \ga^{-1},e)\big)}\,dn\,da\,dm\,d(\ga_*\nu)(\xi),
\end{align*}
where in the last equality, we have used \eqref{eq.coord1} and the change of variables $a'=a\exp(\beta_\xi(e,\ga^{-1}))$.
On the other hand, we have
\begin{equation*}
\int_G f(g)\,d\tilde\mu(g)=\int_{G/P }\int_{P } f(\xi,p)e^{ \psi(\log a)}dn\,da\,dm\,d\nu(\xi).
\end{equation*}
By comparing these two identities, we get that for any $\ga\in\Ga$,
$$d(\ga_*\nu)(\xi)=e^{\psi(\beta_\xi(e,\ga))}d\nu(\xi),$$
that is, $\nu$ is a $(\Ga,\psi)$-conformal measure.
By \cite[Thm. 8.1]{Quint2}, $\psi\in D_\Ga$.

Finally, recall that for all $g\in G$ and $\phi\in C_c(G)$,
\begin{equation*}
\int_{N } \phi(gn)\,dn=\int_{G/P }\phi(gn)e^{2\rho(\beta_{gn^-}(e,gn))}dm_o(gn^-).
\end{equation*}
For $g=c(\xi)man\in KAN$, we have $\beta_{g^+}(e,g)=\log a$ and $g^+=\xi$.
Hence, for any $f\in C_c(G)$,
\begin{align*}
\int_Gf(g)d\tilde\mu(g)&=\int_{G/P }\int_{P } f(c(\xi)man)e^{ \psi(\log a)}dn\,da\,dm\,d\nu(\xi)\\
&=\int_{G/M}\int_M f(g)e^{2\rho(\beta_{g^-}(e,g))}e^{\psi(\beta_{g^+}(e,g))}\,dm\,da\,dm_o(g^-)\,d\nu(g^+)\\
&= \tilde m_{\nu,m_o}(f).
\end{align*}
 Therefore $\tilde\mu (f) = \tilde m_{\nu,m_o}(f)$.

 Now, if $\mu$ is supported on $\cal E$, then $\nu$ is supported on $\La$.
Hence $ \nu$ is a $(\Ga,\psi)$-$\PS$ measure; so $ \mu= m_\psi^{\BR}$. 
When $\Ga$ is Anosov, $\psi\in\dg$ by Theorem \ref{thm.bij}.
\end{proof}
Let $\cal P_\Ga$ be the space of all $P$-semi-invariant Radon measures on $\cal E$ up to proportionality.
Let $ {\cal Q}_\Ga$ be the space of all $NM$-invariant, ergodic and $A$-quasi-invariant Radon measures supported on $\cal E$ up to
proportionality.

\begin{thm} \label{thm.MC}
Let $\Gamma<G$ be an Anosov subgroup. 
We have $\cal P_\Ga=\cal Q_\Ga$ and the map $\dg \to \cal Q_\Ga$ given by
$\psi\mapsto [m_\psi^{\BR}]$ is a homeomorphism between $\dg$ and $\cal Q_\Ga$. In particular,
$\cal Q_\Ga$ is homeomorphic to $\br^{\text{rank }G-1}$.
\end{thm}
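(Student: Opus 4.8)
The plan is to first prove the set-theoretic equality $\cal P_\Ga=\cal Q_\Ga$, and then to promote the bijection $\psi\mapsto[m_\psi^{\BR}]$ furnished by Proposition \ref{prop.NMA} to a homeomorphism; the assertion $\cal Q_\Ga\simeq\br^{\text{rank }G-1}$ then follows from Proposition \ref{homeo}.

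First I would check $\cal P_\Ga\subseteq\cal Q_\Ga$. Let $\mu$ be a $P$-semi-invariant Radon measure supported on $\cal E$. By Proposition \ref{prop.NMA} (in the Anosov case), $\mu$ is proportional to $m_\psi^{\BR}=m_{\nu_\psi,m_o}$ for some $\psi\in\dg$. This measure is $NM$-invariant by construction, it is $A$-semi-invariant with $a_*m_\psi^{\BR}=e^{(2\rho-\psi)(\log a)}m_\psi^{\BR}$ by \eqref{eq.SI} (hence $A$-quasi-invariant), it is supported on $\cal E$, and it is $NM$-ergodic by Theorem \ref{thm.BR}. Hence $\mu\in\cal Q_\Ga$.

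For the reverse inclusion, let $m\in\cal Q_\Ga$: it is $NM$-invariant, $NM$-ergodic, $A$-quasi-invariant and supported on $\cal E$. Since $A$ normalizes $NM$, for each $a\in A$ the measure $a_*m$ is again $NM$-invariant and satisfies $a_*m\ll m$; a short computation against test functions (using that $A$ normalizes $NM$) shows that $\tfrac{d\,a_*m}{dm}$ is $NM$-invariant, so by $NM$-ergodicity it equals a constant $c(a)>0$, i.e. $a_*m=c(a)m$. From $(ab)_*m=a_*(b_*m)$ the map $c:A\to\br_{>0}$ is a homomorphism, and it is continuous because $a\mapsto(a_*m)(\phi)$ is continuous for $\phi\in C_c(\Ga\ba G)$; thus $c(a)=e^{\chi(\log a)}$ for some $\chi\in\fa^*$. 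As $MN$ is a closed normal subgroup of $P=MAN$ with $P/MN\simeq A$, the character $\chi$ of $A$ extends to a character of $P$ trivial on $MN$; since $m$ is $MN$-invariant we conclude that $m$ is $P$-semi-invariant, and being supported on $\cal E$ it lies in $\cal P_\Ga$. This establishes $\cal P_\Ga=\cal Q_\Ga$.

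Finally I would show that $\Phi:\psi\mapsto[m_\psi^{\BR}]$ is a homeomorphism $\dg\to\cal Q_\Ga$. Surjectivity is Proposition \ref{prop.NMA} applied to $\cal Q_\Ga=\cal P_\Ga$, and injectivity follows by comparing $A$-scaling exponents ($e^{(2\rho-\psi_1)(\log a)}=e^{(2\rho-\psi_2)(\log a)}$ for all $a$ forces $\psi_1=\psi_2$; alternatively one may invoke the mutual singularity from Theorem \ref{thm.MS}). Continuity of $\Phi$ follows from Theorem \ref{thm.bij}: if $\psi_i\to\psi$ in $\dg$ then $\nu_{\psi_i}\to\nu_\psi$ weakly, and the defining formula \eqref{introbr}, in which $e^{\psi(b)}$ depends continuously on $\psi$ uniformly on compact sets of $\fa$, yields $m_{\psi_i}^{\BR}\to m_\psi^{\BR}$ vaguely. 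For the continuity of $\Phi^{-1}$ I would use the map $\Psi:\cal Q_\Ga\to\fa^*$ sending $[m]$ to the $A$-scaling exponent $\chi_m$ (well-defined independently of the scalar representative): pairing the identity $a_*\tilde m_i=e^{\chi_{m_i}(\log a)}\tilde m_i$ with a fixed $\phi\in C_c(\Ga\ba G)$ having $\tilde m(\phi)>0$ shows $\chi_{m_i}\to\chi_m$ in $\fa^*$ whenever $\tilde m_i\to\tilde m\neq0$ vaguely, so $\Psi$ is continuous; since $\Psi\circ\Phi(\psi)=2\rho-\psi$, we obtain $\Phi^{-1}=(2\rho-\,\cdot\,)\circ\Psi$, which is continuous. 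The main obstacle I anticipate is precisely this last step: one must pin down an invariant of the proportionality class $[m]$ that varies continuously in the vague topology and recovers $\psi$. The substantive inputs — $NM$-ergodicity of $m_\psi^{\BR}$, existence and uniqueness of $\nu_\psi$, and the classification of $P$-semi-invariant measures — are all imported from the earlier sections, so this theorem is essentially an assembly step.
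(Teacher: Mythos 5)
Your proposal is correct and follows essentially the same route as the paper: $\cal Q_\Ga\subset\cal P_\Ga$ via $NM$-ergodicity forcing the Radon--Nikodym derivative $d\,a_*m/dm$ to be constant, $\cal P_\Ga\subset\cal Q_\Ga$ via Proposition \ref{prop.NMA} together with Theorem \ref{thm.BR}, continuity of $\psi\mapsto m_\psi^{\BR}$ via Theorem \ref{thm.bij}, and continuity of the inverse by recovering $\psi$ from the $A$-scaling exponent $e^{(2\rho-\psi)(\log a)}$ in \eqref{eq.SI}. The only cosmetic difference is that you get injectivity from the scaling exponents (the paper cites Lemma \ref{lem.sing}, which you also mention), so there is nothing substantive to add.
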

\begin{proof} For $\mu\in \cal Q_\Ga$ and $a\in A$, $a_*\mu$ and $\mu$
are equivalent to each other, and  by the $NM$-ergodicity of $\mu$,
the Radon-Nikodym derivative $da_*\mu/d\mu$ is constant, say $\chi(a)$.
 Now the function $a\mapsto \chi(a)$ gives the semi-invariance of $\mu$ by $A$ and hence by $P$.
This implies $\cal Q_\Ga\subset\cal P_\Ga$.
The other direction $\cal P_\Ga\subset\cal Q_\Ga$ follows from Proposition \ref{prop.NMA} and Theorem \ref{thm.BR}.

Let ${\cal Q}_\Ga^\spadesuit$ be the space of all $NM$-ergodic $A$-quasi-invariant Radon measures supported on $\{x\in\Ga\ba G : x^+\in\La\}$, so that $\cal Q_\Ga={\cal Q}_\Ga^\spadesuit/\sim$.
Set $\iota(\psi)=m_\psi^{\BR} $ for $\psi\in \dg$.  Since $m_\psi^{\BR}$ is $NM$-ergodic by Theorem \ref{thm.BR}, the map $\iota:\dg\to  {\cal Q}_\Ga^\spadesuit$ is well defined and injective  by Lemma \ref{lem.sing}.
By Proposition \ref{prop.NMA},
$\iota(\dg)$ contains  precisely one representative of each class in $\cal Q_\Ga$.
Hence it suffices to show that the map $\iota$ gives a homeomorphism between $\dg$ and its image
$\iota(\dg)$.
Continuity of $\iota$ follows from Theorem \ref{thm.bij}.
Now, suppose that $m_{\psi_i}^{\BR}\to  m_\psi^{\BR}$ for some sequence $\psi_i, \psi\in \dg$.
Then the $A$-semi-invariance of the BR-measures given by \eqref{eq.SI} and the convergence $a_* m_{\psi_i}^{\BR}\to a_* m_\psi^{\BR}$ implies that $\lim_{i\to \infty} e^{(2\rho -\psi_i)(\log a)}m_{\psi_i}^{\BR}(f)
= e^{(2\rho -\psi)(\log a)} m_{\psi}^{\BR}(f)$ for all $f\in C_c(\Gamma\ba G)$.
Since  $\lim_{i\to \infty}m_{\psi_i}^{\BR}(f)
=  m_{\psi}^{\BR}(f)$, we get
$\lim_{i\to \infty} e^{(2\rho -\psi_i)(\log a)}= e^{(2\rho -\psi)(\log a)}$ for all $a\in A$.
Hence $\psi_i\to \psi$. This proves that $\dg$ and $\cal Q_\Ga$ are homeomorphic to each other. The last claim follows from Proposition \ref{homeo}.
\end{proof}

\end{document}